\definecolor{Myblue}{rgb}{0,0,0.6}  
\pgfplotsset{width=7cm,compat=1.8}
\tikzset{
	string/.style={draw=#1, postaction={decorate}, decoration={markings,mark=at position .51 with {\arrow[color=#1]{>}}}},
	costring/.style={draw=#1, postaction={decorate}, decoration={markings,mark=at position .51 with {\arrow[draw=#1]{<}}}},
	ostring/.style={draw=#1, postaction={decorate}, decoration={markings,mark=at position .47 with {\arrow[draw=#1]{>}}}},
	ustring/.style={draw=#1, postaction={decorate}, decoration={markings,mark=at position .56 with {\arrow[draw=#1]{>}}}},
	oostring/.style={draw=#1, postaction={decorate}, decoration={markings,mark=at position .43 with {\arrow[draw=#1]{>}}}},
	uustring/.style={draw=#1, postaction={decorate}, decoration={markings,mark=at position .59 with {\arrow[draw=#1]{>}}}},
	directed/.style={string=blue!50!black}, 
	odirected/.style={ostring=blue!50!black}, 
	udirected/.style={ustring=blue!50!black}, 
	oodirected/.style={oostring=blue!50!black}, 
	uudirected/.style={uustring=blue!50!black},     
	redirected/.style={costring= blue!50!black},
	redirectedgreen/.style={costring= green!50!black},
	directedgreen/.style={string= green!50!black},
	redirectedlightgreen/.style={costring= green!65!black},
	directedlightgreen/.style={string= green!65!black},
	redirectedred/.style={costring= red!50!black},
	directedred/.style={string= red!50!black}%
}
\tikzset{-dot-/.style={decoration={
			markings,
			mark=at position 0.5 with {\fill circle (1.875pt);}},postaction={decorate}}}
\tikzset{
	Fdot/.style={circle, draw, fill, inner sep=0pt}, 
	Odot/.style={circle, draw, inner sep=0.1pt, minimum size=0.1cm}
}
\newcommand{\eqrefO}[1]{\hyperref[eq:O#1]{\text{(O#1)}}}
\newcommand{\eqrefT}[1]{\hyperref[eq:T#1]{\text{(T#1)}}}
\newcommand\tikzzbox[1]
\newcommand{\raisemath}[1]{\mathpalette{\raisem@th{#1}}}
\newcommand{\raisem@th}[3]{\raisebox{#1}{$#2#3$}}
\newcommand{\A}{\mathcal{A}}
\newcommand{\CC}{\mathcal{C}}
\newcommand{\W}{\mathcal{W}}
\newcommand{\D}{\mathds{D}}
\newcommand{\R}{\mathds{R}}
\newcommand{\Z}{\mathds{Z}}
\def\1{\ifmmode\mathrm{1\!l}\else\mbox{\(\mathrm{1\!l}\)}\fi}
\newcommand{\one}{\mathbbm{1}}
\newcommand{\be}{\begin{equation}}
  \newcommand{\ee}{\end{equation}}
\newcommand{\bes}{\begin{equation*}}
  \newcommand{\ees}{\end{equation*}}
\newcommand{\Hom}{\operatorname{Hom}}
\newcommand{\End}{\operatorname{End}}
\newcommand{\ev}{\operatorname{ev}}
\newcommand{\tev}{\widetilde{\operatorname{ev}}}
\newcommand{\coev}{\operatorname{coev}}
\newcommand{\tcoev}{\widetilde{\operatorname{coev}}}
\newcommand{\too}{\longrightarrow}
\newcommand{\del}{\partial}
\def\lra{\longrightarrow}
\newcommand{\Bord}{\operatorname{Bord}}
\newcommand{\Borddef}{\operatorname{Bord}^{\mathrm{def}}}
\newcommand{\Borddefn}[1] {\operatorname{Bord}^{\mathrm{def}}_{#1}}
\newcommand{\Bordribn}[1] {\operatorname{Bord}^{\mathrm{rib}}_{#1}}
\newcommand{\Borddefen}[1] {\widehat{\operatorname{Bord}}{}^{\mathrm{def}}_{#1}}
\newcommand{\Sphere}{\operatorname{Sphere}^{\mathrm{def}}}
\newcommand{\Disc}{\operatorname{Disc}^{\mathrm{def}}}
\newcommand{\Bordstr}{\textup{Bord}^{\textup{str}}}
\newcommand{\In}{\mathrm{Ins}}
\newcommand{\zz}{\mathcal{Z}}
\newcommand{\zzc}{\mathcal{Z}^{\mathcal C}}
\newcommand{\zza}{\mathcal{Z}_{\mathcal A}}
\newcommand{\zzhat}{\widehat{\mathcal{Z}}}
\newcommand{\zzhata}{\widehat{\mathcal{Z}}_{\A}}
\newcommand{\wa}{{\mathcal W_{\A}}}
\newcommand{\zzwa}{\widehat{\mathcal{Z}}_{\A}^\Gamma}
\newcommand{\tz}{\mathcal{T}_{\mathcal{Z}}}
\newcommand{\tzhat}{\mathcal{T}_{\widehat{\mathcal{Z}}}}
\newcommand{\Vect}{\textrm{Vect}}
\newcommand{\eps}{\varepsilon}
\newcommand\arxiv[2]      {\href{https://arXiv.org/abs/#1}{#2}}
\newcommand\doi[2]        {\href{https://dx.doi.org/#1}{#2}}
\theoremstyle{definition} 
\newtheorem{definition}{Definition}
\newtheorem{proposition}[definition]{Proposition}
\newtheorem{theorem}[definition]{Theorem}
\newtheorem{lemma}[definition]{Lemma}
\newtheorem{corollary}[definition]{Corollary}
\newtheorem{remark}[definition]{Remark}
\newtheorem{example}[definition]{Example}
\newtheorem{examples}[definition]{Examples}
\newtheorem{construction}[definition]{Construction}
\newtheorem{convention}[definition]{Convention}
\numberwithin{equation}{section}
\numberwithin{definition}{section}
\numberwithin{figure}{section}
\newcommand\void[1]{}
\definecolor{DarkViolet} {rgb}{0.580392,0.000000,0.827450}
\begin{document}

\title{%
Orbifold graph TQFTs%
}

\author{%
	Nils Carqueville$^*$ \quad
	Vincentas Mulevi\v{c}ius$^\#$ \quad 
	Ingo Runkel$^\#$ \\ [0.3cm]
	Gregor Schaumann$^\vee$ \quad 
	Daniel Scherl$^\#$	
	\\[0.5cm]
	\normalsize{\texttt{\href{mailto:nils.carqueville@univie.ac.at}{nils.carqueville@univie.ac.at}}} \\  %
	\normalsize{\texttt{\href{mailto:vincentas.mulevicius@uni-hamburg.de}{vincentas.mulevicius@uni-hamburg.de}}} \\  %
	\normalsize{\texttt{\href{mailto:ingo.runkel@uni-hamburg.de}{ingo.runkel@uni-hamburg.de}}}\\[0.1cm]	\normalsize{\texttt{\href{mailto:gregor.schaumann@uni-wuerzburg.de}{gregor.schaumann@uni-wuerzburg.de}}} \\  %
	\normalsize{\texttt{\href{mailto:daniel.scherl@uni-hamburg.de}{daniel.scherl@uni-hamburg.de}}} \\[0.5cm]  %
	\hspace{-1.2cm} {\normalsize\slshape $^*$Fakult\"at f\"ur Physik, Universit\"at Wien, Austria}\\[-0.1cm]
	\hspace{-1.2cm} {\normalsize\slshape $^\#$Fachbereich Mathematik, Universit\"{a}t Hamburg, Germany}\\[-0.1cm]
	\hspace{-1.2cm} {\normalsize\slshape $^\vee$Institut f\"{u}r Mathematik, Universit\"{a}t W\"{u}rzburg, Germany}
}

\date{}
\maketitle

\begin{abstract}
A generalised orbifold of a defect TQFT $\zz$ is another TQFT $\zz_\A$ obtained by performing a state sum construction internal to $\zz$.
As an input it needs a so-called orbifold datum $\A$ which is used to label stratifications coming from duals of triangulations and is subject to conditions encoding the invariance under Pachner moves.
In this paper we extend the construction of generalised orbifolds of $3$-dimensional TQFTs to include line defects. 
The result is a TQFT acting on $3$-bordisms with embedded ribbon graphs labelled by a ribbon category~$\wa$ that we canonically associate to~$\zz$ and~$\A$. 
We also show that for special orbifold data, the internal state sum construction can be performed on more general skeletons than those dual to triangulations. This makes computations with $\zz_\A$ easier to handle in specific examples.
\end{abstract}

\newpage 

\tableofcontents

\newpage

\section{Introduction and summary}

By a generalised orbifold -- or just orbifold for short -- of a topological quantum field theory~$\zz$, we mean a state sum construction internal to~$\zz$, as initiated in \cite{ffrs0909.5013, CR3, CRS1}. 
Here~$\zz$ must necessarily be a defect TQFT, i.\,e.\ a symmetric monoidal functor on a category of stratified and decorated bordisms. 
The defining conditions on the datum~$\A$ from which the orbifold theory $\zz_\A$ is constructed 
encode invariance under decompositions of bordisms in the state sum construction. 
In \cite{CRS1}, these decompositions were taken to be stratifications that are dual to triangulations. 

If~$\zz$ is the trivial defect TQFT, then its orbifolds recover conventional state sum constructions; 
in the 2-dimensional case (where~$\A$ is a $\Delta$-separable symmetric Frobenius $\Bbbk$-algebra) this is implicit in~\cite{DKR}, while in \cite{CRS3} it was shown that Turaev--Viro--Barrett--Westbury models are 3-dimensional orbifolds (in particular, every spherical fusion category gives rise to an orbifold datum~$\A$). 
Another class of examples comes from discrete group actions (which can be ``gauged'') on arbitrary defect TQFTs, see e.\,g.\ \cite{BCP2, CRS3} for detailed discussions of the 2- and 3-dimensional cases, and \cite{SchweigertWoike201802} for a more geometric approach.
Indeed, this embeds the original meaning of ``orbifold'' as gauging a discrete symmetry into the setting of generalised orbifolds we consider here.

There are orbifolds beyond the unification of state sum models and the gauging of symmetry groups. 
For example, based on results of \cite{CR3}, in \cite{CRCR, OEReck} 2-dimensional orbifolds of Landau--Ginzburg models were constructed, which uncovered new relations between homological invariants of isolated singularities.
It was necessary for these applications to have a universal construction of defects for the orbifold TQFT in terms of a  representation theory of orbifold data internal to the original theory. 
In three dimensions, examples of orbifolds for the 3-dimensional Reshetikhin--Turaev theory based on Ising-type categories
were constructed in \cite{MuleRunk2}, inverting the extension of the modular fusion category for $\mathfrak{sl}(2)$ at level 10 by its commutative algebra of type E${}_6$. 

The 3-dimensional examples just mentioned build on the following general construction. 
Let~$\mathcal C$ be a modular fusion category, and let~$\zzc$ be the associated defect TQFT of Reshetikhin--Turaev type described in \cite{CRS2}. 
Then the main result of \cite{MuleRunk} states that for every (simple, special)
orbifold datum~$\A$ for~$\zzc$, one obtains another modular fusion category~$\mathcal C_{\A}$. 
It was conjectured in \cite{MuleRunk} that 
\be 
\label{eq:orb=orb}
\zz^{\mathcal C_{\A}} \cong \big(\zzc \big)_{\A} \, , 
\ee 
i.\,e.\ the Reshetikhin--Turaev defect TQFT associated to~$\mathcal C_{\A}$ is isomorphic to the $\A$-orbifold of the theory associated to~$\mathcal C$. 

\medskip 

In the present paper we develop a general theory of (Wilson) line defects in 3-dimensional orbifold TQFTs. 
This will be applied in the companion paper \cite{CMRSS2} to prove the conjecture~\eqref{eq:orb=orb}.
One notable consequence of this is that Reshetikhin--Turaev theories close under orbifolds. 

We now outline our general constructions, highlighting the three main contributions of this paper, which may be of independent interest: 
(i) the construction of orbifolds from decompositions that are computationally easier to deal with than dual triangulations, namely so-called admissible skeleta; 
(ii) the construction of a ribbon category~$\wa$ 
	of Wilson lines
associated to any orbifold datum~$\A$; 
(iii) the construction of a TQFT on bordisms with $\wa$-labelled ribbon graphs. 
The latter is the orbifold graph TQFT which gives this paper its title.

\subsubsection*{Admissible skeleta} 

According to \cite{CRS1, CRS3} 
(and as reviewed in some detail in Section~\ref{subsec:OrbifoldTQFTs}), 
an orbifold datum~$\A$ of a 3-dimensional defect TQFT~$\zz$ consists of defect labels $\A_3, \A_2, \A_1, \A_0^+, \A_0^-$ as in 
\be 
\includegraphics[scale=1.0, valign=c]{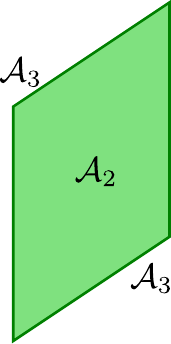}\, , \;\;
\includegraphics[scale=1.0, valign=c]{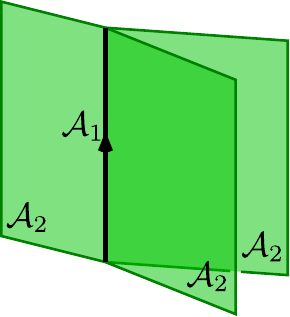}\, , \;\;
\includegraphics[scale=1.0, valign=c]{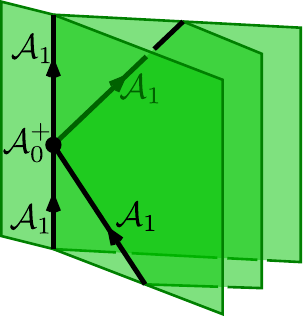}\, , \;\;
\includegraphics[scale=1.0, valign=c]{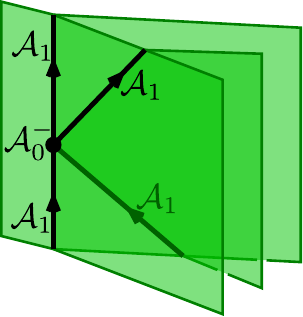}\, , 
\ee 
such that evaluating~$\zz$ on $\A$-decorated stratifications which are dual to suitably
oriented triangulations is independent of the choice of triangulation. 
Then the orbifold~$\zza$ is constructed as a colimit that arises from applying~$\zz$ to all these stratifications. 

We will show that instead of practically cumbersome stratifications dual to triangulations, for those~$\A$ which are ``special'' in the sense explained in Section~\ref{subsubsec:SOD}, one can compute~$\zza$ in terms of a simple type of stratification that we call ``admissible skeleta'' (which are fully oriented variants of the ``special skeleta'' of \cite{TVireBook}). 
These are stratifications where every 3-stratum is a ball and every point has one of the neighbourhoods listed in Figure~\ref{fig:skeleta}, see Definitions~\ref{def:Skeleta} and~\ref{def:admissible-skeleton} for details. 
We show that any two admissible skeleta can be related by three types of moves (bubble, lune, and triangle, BLT for short), see Figure~\ref{fig:BLTMoves}. 

In particular, every admissible skeleton can be consistently decorated with a special orbifold datum~$\A$, and the defining conditions on~$\A$ ensure invariance under BLT moves.
Then Theorem~\ref{thm:MainResultCRS1} explains how to construct~$\zza$ from admissible skeleta. 
As an example, note that an embedding $S^2 \subset S^3$ gives an admissible skeleton which is not dual to a triangulation.

\subsubsection*{A ribbon category of Wilson lines}

Every 3-dimensional defect TQFT~$\zz$ gives rise to a 3-category~$\tz$, see \cite{CMS}. 
The objects of~$\tz$ are interpreted as bulk theories, while 1-, 2-, and 3-morphisms are interpreted as surface, line, and point defects, respectively. 
In Sections~\ref{subsec:MonoidalCatFromDefectTQFT} and~\ref{subsec:RibbonCategoriesFromSOD} we will explain how this implies that for every orbifold datum~$\A$ for~$\zz$, one obtains a ribbon category~$\wa$. 
Objects of~$\wa$ are line defects~$X$ in $\A_2$-decorated surface defects which can cross $\A_1$-decorated line defects at point defects $\tau_1^X, \tau_2^X$, 
\be 
\includegraphics[scale=1.0, valign=c]{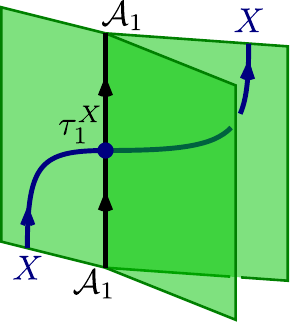} 
\, , \quad 
\includegraphics[scale=1.0, valign=c]{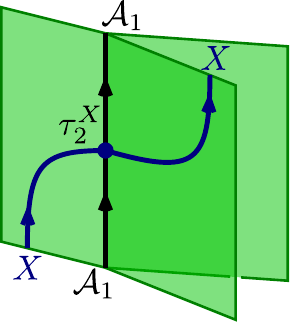} 
\, , 
\ee 
that are subject to the compatibility conditions in Figure~\ref{fig:CrossingIdentities}. 
We allow the line defects to have non-trivial ``internal structure''; for example, 
\be 
\label{eq:LineDefectWithInternalStructure}
\includegraphics[scale=1.0, valign=c]{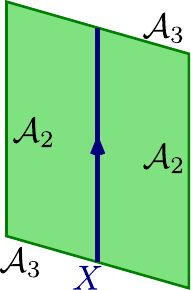} \quad = \,
\includegraphics[scale=1.0, valign=c]{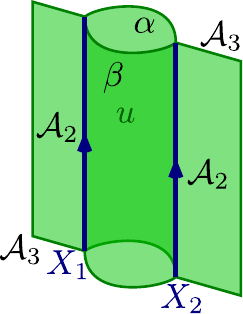}
\ee 
is a ``line defect'', where any (line; surface; bulk) defect labels $X_1,X_2$; $\alpha, \beta$; $u$ allowed by~$\zz$ may occur. 

Morphisms in~$\wa$ by definition have to intertwine with $\tau_1^X, \tau_2^X$, and it is straightforward to give~$\wa$ the structure of a rigid monoidal category. 
Moreover, the diagrams
\be 
\includegraphics[scale=1.0, valign=c]{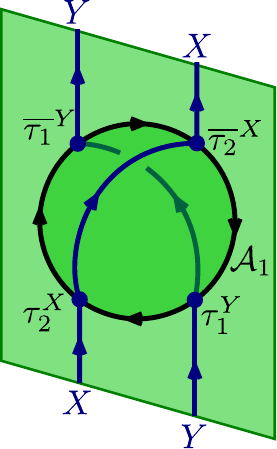}
\, , \quad
\includegraphics[scale=1.0, valign=c]{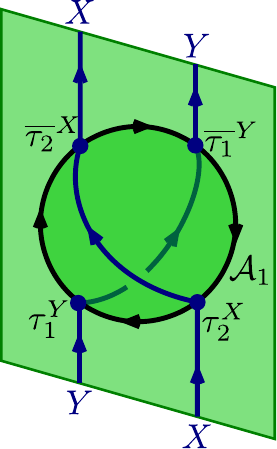}
\, , 
\ee 
when evaluated with a certain completion~$\zzhat$ of~$\zz$ (that can handle line defects as in~\eqref{eq:LineDefectWithInternalStructure}, see Section~\ref{subsec:RibbonCategoriesFromSOD} for details) endow~$\wa$ with a braiding (Proposition~\ref{prop:WA-ribbon-cat}). 

We view the ribbon category~$\wa$ as a natural algebraic invariant attached to the orbifold datum~$\A$ for~$\zz$.
In \cite{CMRSS2} we will show that if~$\zz$ is the Reshetikhin--Turaev defect TQFT of \cite{CRS2}, then~$\wa$ is equivalent to the modular fusion category~$\mathcal C_\A$ of \cite{MuleRunk}. 
There is however no reason for~$\wa$ to be semisimple in general.

\subsubsection*{Orbifold graph TQFTs}

Recall from \cite{turaevbook, TVireBook} that a graph TQFT is a symmetric monoidal functor on the bordism category $\Bordribn{3}(\mathcal C)$ with embedded ribbon graphs that are labelled by some fixed $\Bbbk$-linear
ribbon category~$\mathcal C$ for a field $\Bbbk$. 
Our main result is a lift of the orbifold TQFT $\zza \colon \Bord_3 \lra \Vect$ to a graph TQFT 
\be 
\label{eq:MainResult}
\zzwa \colon \Bordribn{3}(\wa) \lra \Vect 
\, , 
\ee 
where $\Vect$ denotes the category of 
	$\Bbbk$-vector spaces. 
To do so, we adapt the formalism of \cite{TVireBook} to ``represent'' every $\wa$-labelled ribbon graph in a given bordism~$M$ by pushing it into an $\A$-decorated admissible skeleton of~$M$, see Figure~\ref{fig:PushingRibbonGraphsAround} for an illustration. 
Analogously to how one finds that the choice of admissible skeleton is immaterial in the construction of~$\zza$, we prove (see Theorem~\ref{thm:MainResultForEulerComplete}) that the construction of~$\zzwa$ is independent of the choice of admissible skeleton and how precisely the $\wa$-labelled ribbon graph is pushed into it. 

\begin{figure}
\centering
\captionsetup[subfigure]{labelformat=parens}
\begin{subfigure}[b]{0.20\textwidth}
	\centering
	\includegraphics[scale=1.4, valign=c]{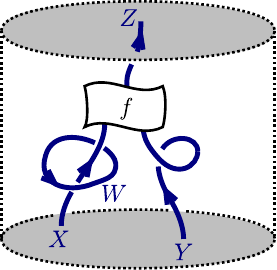}
	\caption{}
\end{subfigure}
\begin{subfigure}[b]{0.78\textwidth}
	\centering
	\includegraphics[scale=1.4, valign=c]{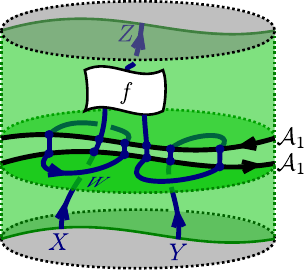} ,
	\includegraphics[scale=1.4, valign=c]{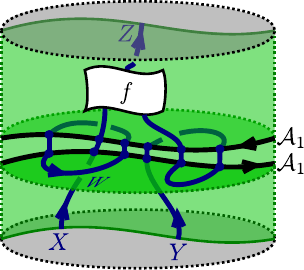}
		\hspace*{-7em}~
	\caption{}
\end{subfigure}
\caption{(a) A local patch of a bordism~$M$ with embedded $\wa$-labelled ribbon graph~$\mathcal R$. 
		 (b) A local patch of an admissible $\A$-decorated skeleton~$\mathcal S$ for~$M$ (where all green 2-strata implicitly carry a label~$\A_2$) together with two choices to represent~$\mathcal R$ in~$\mathcal S$, which are necessarily related by the moves in Figures~\ref{fig:BLTMoves} and~\ref{fig:omegaMoves}.}
\label{fig:PushingRibbonGraphsAround}
\end{figure}

\medskip 

The remainder of the present paper is organised as follows. 
In Section~\ref{sec:TopologicalPreliminaries} we introduce admissible skeleta, representations of ribbon graphs with respect to such skeleta, as well as the moves that connect them. 
Most of the technical details related to this discussion are contained in Appendix~\ref{app:A}. 
In Section~\ref{sec:DefectTQFTs} we briefly review 3-dimensional defect as well as 
graph TQFTs, and we construct a ribbon category of Wilson lines from any defect TQFT. 
In Section~\ref{sec:OrbGrphTQFTs}, after a short recollection of orbifold TQFTs, we define a ribbon category~$\wa$ associated to a special orbifold datum~$\A$, and then construct the orbifold graph TQFT~\eqref{eq:MainResult}.

\subsubsection*{Acknowledgements}

N.\,C.\ is supported by the DFG Heisenberg Programme. 
V.\,M.\ is partially supported by the DFG Research Training Group 1670.
I.\,R.\ is partially supported by the Cluster of Excellence EXC 2121.

\section{Topological preliminaries}
\label{sec:TopologicalPreliminaries}

In this section we set the topological stage for our constructions. 
Section~\ref{subsec:stratifications} collects our conventions for 3-dimensional stratified bordisms. 
In Section~\ref{subsec:Skeleta} we introduce a particular class of stratifications called ``admissible skeleta''. 
We show that these are related by the ``BLT moves'' of Figure~\ref{fig:BLTMoves}, which will feature prominently in later sections. 
Then a brief review of bordisms with embedded ribbon graphs in Section~\ref{sec:unlabelled-ribbon-bordisms} is followed by an account of how to represent ribbon graphs with respect to admissible skeleta in Section~\ref{subsuc:RibbonDiagramsOmegaMoves}, and how different such representations are related by the ``$\omega$-moves'' of Figure~\ref{fig:omegaMoves}. 

Our discussion here heavily draws from \cite{TVireBook}. 
The main novelty is that we carefully check that everything can be made admissibly oriented in our sense.

\subsection{Stratifications}
\label{subsec:stratifications}

We recall the stratifications used in \cite{CMS, CRS1}, to which we refer for more details.

\subsubsection{Stratified manifolds} 
\label{sssec:stratMan}

By an \textsl{$n$-dimensional stratified manifold} we mean an $n$-dimensional topological manifold $M$ (without boundary) together with a \textsl{stratification} $S$ of $M$, which is given by a filtration $\varnothing = F^{(-1)} \subset F^{(0)} \subset F^{(1)} \subset \ldots \subset F^{(n)} = M$ of topological spaces such that for each $j\in \{0,1,\ldots,n\}$, $S^{(j)} := F^{(j)}\setminus F^{(j-1)}$ has the structure of a smooth $j$-dimensional manifold (such that the smooth structure is compatible with the subspace topology). 
The connected components of $S^{(j)}$ are called \textsl{$j$-strata}. 
We denote the set of $j$-strata by $S_j$, and we ask each $S_j$ to be finite. 
For $s \in S_i$, $t \in S_j$ we require that whenever $s \cap \overline{t} \neq \varnothing$, then already $s \subset \overline{t}$. 
In this case necessarily $i<j$ and we say that $s$ and $t$ are \textsl{incident} to each other.
For any $x \in M$ we say that $x$ and $s$ are incident to each other if $x \in \bar{s}$. 

We denote by $S_j(x)$ the set of germs of $j$-strata around $x$, i.\,e.\ the inverse limit of the canonical maps $S_j^\eps(x) \too S_j^\delta(x)$ for $\eps < \delta$, where $S_j^\eps(x)$ is the set whose elements are intersections of $j$-strata and a ball of radius~$\eps$ around~$x$ (in some chart).

\begin{example}\label{ex:partTor}
  Consider the following stratification~$S$ of the 3-sphere which has two 3-strata (the interior of the coloured solid torus with one disc removed, as well as its complement in~$S^3$ -- which is also a torus, and not coloured in the picture), two 2-strata and one 1-stratum, with a chosen point~$x$ in the disc-shaped 2-stratum: 
  \be
  	\includegraphics[scale=1.0, valign=c]{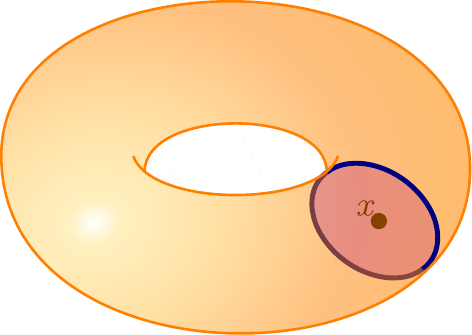}
  \ee
  Then $S_3(x)$ has two elements, even though $x$ is only incident to a single 3-stratum.
\end{example}

An \textsl{$n$-dimensional stratified manifold with boundary} is an $n$-dimensional topological manifold $M$ with boundary $\del M$ together with a filtration as above such that the interior of $M$ is a stratified manifold without boundary.
Furthermore we demand that each stratum $s$ satisfies $\del s = s \cap \del M$. 
It follows that $\del M$ canonically inherits the structure of an $(n-1)$-dimensional stratified manifold (without boundary).
Let us stress again that while the overall manifold~$M$ is topological, each stratum is a smooth manifold, cf.\ \cite[Footnote\,4]{CRS1}. 

A map $f\colon M \too M'$ of $n$-dimensional stratified manifolds with boundary is a continuous map that sends strata to strata and restricts to a smooth map on each stratum, and such that~$f$ restricts to a map $f\rvert_{\del M}\colon \del M \too \del M'$ that is a map of $(n-1)$-dimensional stratified manifolds. 

An \textsl{oriented stratified manifold} (possibly with boundary) is a stratified manifold (possibly with boundary) such that the underlying manifold~$M$ and all strata carry a prescribed orientation, such that each top-dimensional stratum carries the orientation induced from $M$. 
Maps of oriented stratified manifolds are defined as above, except that in each step we additionally require that the restriction to each stratum is orientation-preserving.

\subsubsection{Stratified bordisms}
\label{subsubsec:StratifiedBordisms}

An \textsl{$n$-dimensional oriented stratified bordism} is a tuple $M=(M,\Sigma_-, \Sigma_+, \varphi_-, \varphi_+)$, where $M$ is an $n$-dimensional compact oriented stratified manifold (possibly with boundary), $\Sigma_-$ and $\Sigma_+$ are $(n-1)$-dimensional compact closed oriented stratified manifolds and $\varphi_{\mp}$ are germs (in $\eps \in \R_{>0}$) of embeddings of stratified manifolds 
\be
\varphi^{\eps}_\mp \colon \Sigma_\mp \times [0,\eps) \too M \, .
\ee 
Each $\varphi^\eps_+$ is required to be orientation-preserving, and each $\varphi^\eps_-$ is orientation-reversing. 
Furthermore $\del M$ splits as a disjoint union 
\be
\del M = \varphi_-(\Sigma_-\times\{0\}) \sqcup \varphi_+(\Sigma_+\times\{0\}) \, .
\ee 
We refer to $\varphi_\mp(\Sigma_\mp) \times \{0\}$ as the \textsl{in-} and \textsl{out-boundary} of~$M$, respectively. 
The \textsl{source} of $M$ is $\Sigma_-$ and the \textsl{target} is $\Sigma_+$.
A morphism of stratified bordisms $M$ and $M'$ is a map $f\colon M \too M'$ of oriented stratified manifolds such that $f\varphi_{\mp}=\varphi'_{\mp}$.

There is a symmetric monoidal category of $n$-dimensional oriented stratified bordisms $\Bordstr_n$ as follows. 
Objects of $\Bordstr_n$ are $(n-1)$-dimensional oriented stratified closed manifolds. 
A morphism $\Sigma \too \Sigma'$ is an isomorphism class of $n$-dimensional oriented stratified bordisms with source $\Sigma$ and target $\Sigma'$. 
We will often use a bordism and its isomorphism class synonymously. 
Composition of morphisms is defined by gluing along common boundaries. 
Requiring germs of embeddings $\Sigma_{\mp}\times [0,\eps) \too M$ rather than just embeddings of the boundaries themselves ensures a canonical smooth structure on the strata of a composite bordism. 
The tensor product of $\Bordstr_n$ is given by disjoint union, and the symmetric braiding by mapping cylinders of the twist maps on disjoint unions.

\subsubsection{Defect bordisms}
\label{subsubsec:DefectBordisms}

\textsl{Defect bordisms} are oriented stratified bordisms satisfying an additional regularity condition, imposed by requiring the existence of certain local neighbourhoods around each point. 
The sets of \textsl{local neighbourhoods for $n$-dimensional defect bordisms} are denoted by $\mathcal{N}_n$. The elements of $\mathcal{N}_n$ are oriented stratified open manifolds of dimension $n$.
They are defined inductively for arbitrary~$n$ in \cite[Sect.\,2.2]{CRS1}.
Here we only give a brief discussion of the case $n=3$. 

\medskip

In order to define the local neighbourhoods for 3-dimensional defect bordisms, we first have to consider the 2-dimension case. 
There are three types of local neighbourhoods for 2-dimensional defect bordisms in $\mathcal{N}_2$: 
\be \label{eq:N2-neighbourhoods}
\includegraphics[scale=1.0, valign=c]{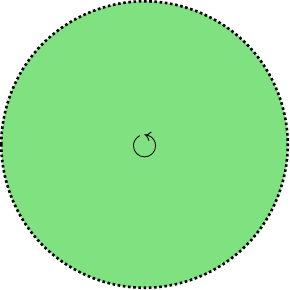} \, , \quad
\includegraphics[scale=1.0, valign=c]{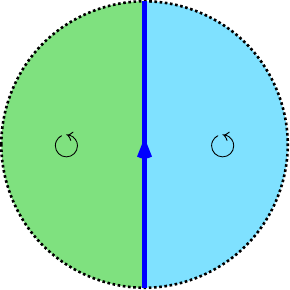} \, , \quad
\includegraphics[scale=1.0, valign=c]{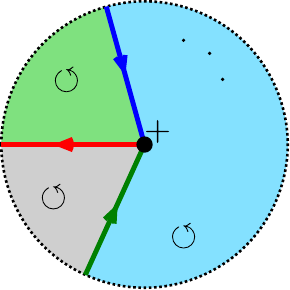} \, .
\ee
Note that there are infinitely many neighbourhoods of the third type, and any choice of orientation for the 0-stratum and the 1-strata is allowed.
A \textsl{defect 2-manifold} is an oriented stratified manifold, such that each point has a neighbourhood isomorphic (as an oriented stratified manifold) to an element of $\mathcal{N}_2$.
     A \textsl{defect 2-sphere} is a defect 2-manifold with  underlying  manifold is $S^2$. 
\medskip

A \textsl{3-dimensional defect bordism} is a 3-dimensional stratified bordism such that each point has a neighbourhood isomorphic (as an oriented stratified manifold) to one of the following list: 
\begin{enumerate}
	\item 
	Open cylinders $X \times (-1,1)$, where $X \in \mathcal{N}_2$, with orientations induced from~$X$ and the standard orientation of $(-1,1)$. For example:
	\be
	X =
	\includegraphics[scale=1.0, valign=c]{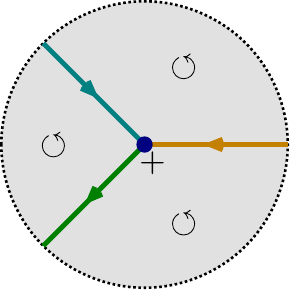}
	\in \mathcal{N}_2 \, , \quad
	X \times (-1,1) =
	\includegraphics[scale=1.0, valign=c]{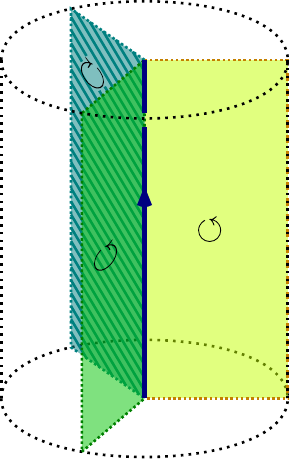} 
	\ee
	\item Open cones
	\[
	C(\Sigma) = (\Sigma \times [0,1)) / (\Sigma \times \{0\}) 
	\, ,
	\] 
	where $\Sigma$ is a defect 2-sphere. 
	Open cones have a natural structure of stratified manifolds with underlying manifold the open $3$-ball $B^{3} \subset \R^{3}$. 
	The cone point defines a 0-stratum at $0 \in \R^{3}$. Due to the choice of orientation~$\pm$ for the cone point each defect 2-sphere gives rise to two elements of $\mathcal{N}_{3}$. 
	An example for orientation ``$+$'' is:
	\be
	\includegraphics[scale=1.0, valign=c]{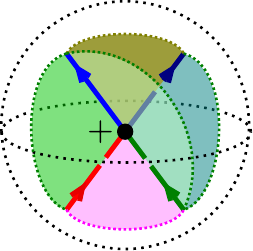}
	\ee
\end{enumerate}

We obtain the symmetric monoidal non-full subcategory $\Borddef_3$ of $\Bordstr_3$ whose objects
are compact closed defect 2-manifolds, and whose morphisms are given by only those stratified bordisms that locally look as specified above.

\subsection{Skeleta}
\label{subsec:Skeleta}

In this subsection we present a class of stratifications, called skeleta, that are well-suited for the procedure of ``orbifolding'' in Section~\ref{sec:OrbGrphTQFTs} below. 
Duals of triangulations form a proper subset of all skeleta, which in turn form a proper subset of the stratifications allowed for defect bordisms.

\begin{definition}\label{def:Skeleta}
	Let $M$ be an (unoriented) 3-manifold, possibly with boundary. A \textsl{skeleton} $S$ of $M$ is a stratification of $M$ that satisfies the following additional requirements.
	\begin{enumerate}[label={(\roman*)}]
		\item 
		\label{item:Skeleton1}
		Every 3-stratum is diffeomorphic to either an open 3-ball if it does not intersect $\del M$, or to an open half-ball otherwise.
		\item 
		Each $x \in M$ has a neighbourhood $B_x$ that is isomorphic (as a stratified manifold) to one from the list in Figure \ref{fig:skeleta}. 
		In each case $B_x$ is an open ball if $x \notin \del M$, and an open half-ball otherwise. 
	\end{enumerate} 
\end{definition}

\begin{figure}
	\begin{enumerate}[label={(\roman*)}]
		\item 
		If $x \in S^{(3)}$ then $B_x$ contains no 2- or lower strata.
		\item 
		If $x \in S^{(2)}$ and $x \notin \del M$ then $B_x$ contains a single 2-stratum and is given by
		\[
		\includegraphics[scale=1.0, valign=c]{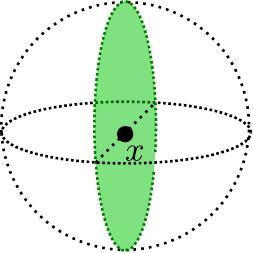}.
		\]
		\item 
		If $x \in S^{(2)}$ and $x \in \del M$ then $B_x$ is given by
		\[
		\includegraphics[scale=1.0, valign=c]{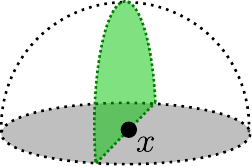}.
		\]
		\item 
		If $x \in S^{(1)}$ and $x \notin \del M$ then $B_x$ is given by 
		\[
		\includegraphics[scale=1.0, valign=c]{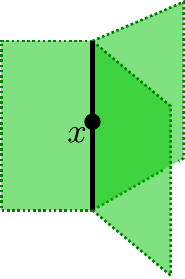}.
		\]
		\item 
		If $x \in S^{(1)}$ and $x \in \del M$ then $B_x$ is given by 
		\[
		\includegraphics[scale=1.0, valign=c]{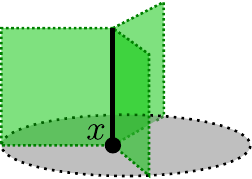}.
		\]
		\item 
		\label{item:Skeleta6}
		$S^{(0)} \cap \del M = \varnothing$ and if $x \in S^{(0)}$ then $B_x$ is given by 
		\[
		\includegraphics[scale=1.0, valign=c]{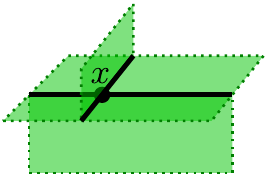}.
		\]
	\end{enumerate}
	\caption{List of allowed local neighbourhoods for skeleta~$S$ of a 3-manifold with boundary~$M$. 
		As in Section~\ref{sssec:stratMan}, $S^{(j)}$ denotes the union of all $j$-strata. 
		2-strata are depicted in green, 1-strata in black, and the boundary of~$M$ is grey (colour available online).} 
	\label{fig:skeleta}
\end{figure}
	
\begin{remark}\label{rem:3CellsBoundary}
	Condition~\ref{item:Skeleton1} of Definition~\ref{def:Skeleta} implies that no 3-stratum of a skeleton intersects both the in- and out-boundary of~$M$ non-trivially.
\end{remark}

If $M$ is oriented, then an \textsl{oriented skeleton} of~$M$ is a skeleton that is oriented as a stratification.  
In particular each 3-stratum carries the same orientation as~$M$, but there are no restrictions on the orientations of 2-, 1- and 0-strata.

Every stratification that is obtained as the Poincar\'e dual of a triangulation is a skeleton. 
An example of a skeleton that does not arise in this way is the (unoriented version of the) skeleton of $S^3$ in Example~\ref{ex:oriented_adm_skeleta}\,\ref{item:SphereSkeletonExample} below. 
An example of a stratification that is not a skeleton is the stratification of $S^3$ in Example~\ref{ex:partTor} 
(the uncoloured 3-stratum outside of the coloured solid torus is not a 3-ball).

\begin{remark}
In the terminology used in \cite[Sect.\,11.5.1]{TVireBook}, condition~\ref{item:Skeleta6} of Figure~\ref{fig:skeleta} means that each 0-stratum is a \textsl{special point}. 
The notion of skeleta given in \cite[Sect.\,11.2.1]{TVireBook} is more general than the one given here in that it allows for more diverse local situations than the ones specified in Figure~\ref{fig:skeleta}.
Our skeleta are the ``s-skeleta'' of loc.\ cit., except that we do not demand at least one special point in each connected component, and we allow for circles as 1-strata. 
\end{remark}

\subsubsection{Admissible skeleta}
\label{subsubsec:AdmissibleSkeleta}

For the remainder of Section~\ref{sec:TopologicalPreliminaries}, $M$ will denote an arbitrary but fixed oriented 3-manifold (possibly with boundary), and all skeleta are to be taken within $M$, if not explicitly stated otherwise. 
Recall from Section~\ref{sssec:stratMan} the definition of $S_j(x)$ as the set of germs of $j$-strata around a point $x \in M$. 

\begin{definition} \label{def:LocalOrder}
	A \textsl{local order} on a skeleton~$S$ is for each point $x \in M$ a choice of total order on $S_3(x)$ such that for any two points $x,y$ in the closure of a given $j$-stratum with $j\in\{0,1,2\}$, 
	the corresponding orders are compatible in the following sense: 
	whenever~$A$, $B$ are 3-strata incident with $x$ and $y$ that induce the elements~$a$ and~$b$ in $S_3(x)$ as well as~$a'$ and~$b'$ in $S_3(y)$, respectively, then $a<b \Longleftrightarrow a'<b'$.
\end{definition}

Note that if~$x$ is a point in a $j$-stratum of a skeleton~$S$, then $S_3(x)$ has precisely $4-j$ elements (but possibly fewer incident 3-strata). 
If $x,y\in M$ are both contained in the same stratum $s$, then\footnote{More precisely, there is a canonical isomorphism along which we identify the two sets.} $S_3(x)=S_3(y)$, and we define $S_3(s) := S_3(x)$ where $x \in s$ is arbitrary. 
Indeed, let $x,y \in s$. If $x$ is contained in some $B_y$ or $y$ is contained in some $B_x$ as in Figure \ref{fig:skeleta}, the claim $S_3(x)=S_3(y)$ is clear.
Otherwise, since $s$ is path-connected, we can consider a path~$\gamma$ from~$x$ to~$y$ that lies in~$s$, and transport the order along an open cover of~$\gamma$.
In light of this we can also define a local order on $S$ as a choice of total order on $S_3(s)$ for each stratum $s$ of $S$ such that for any two strata $s,t$ the induced orders on $S_3(s) \cap S_3(t)$ agree. 
Hence a local order on $S$ orders the germs of 3-strata of $S$ around each lower-dimensional stratum.

Any local order on a skeleton~$S$ turns it into an oriented skeleton by the following convention.

\begin{convention}\label{conv:indOrd}
	All 3-strata carry the orientation induced by the orientation of~$M$.
	The orientations for 2-strata are obtained via the right-hand rule:
	\be\label{eq:orientation-2strata}
		\includegraphics[scale=1.0,valign=c]{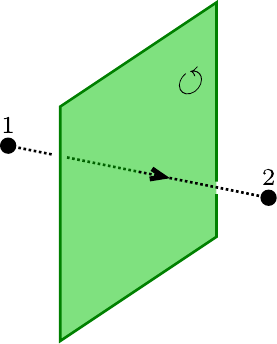}~,
	\ee
	where here and below the numbers on free-floating vertices indicate the local order on the ambient germs of 3-strata. 
	The orientations of 1- and 0-strata are determined as follows:
	\be\label{eq:orientation-10strata}
		\includegraphics[scale=1.0,valign=c]{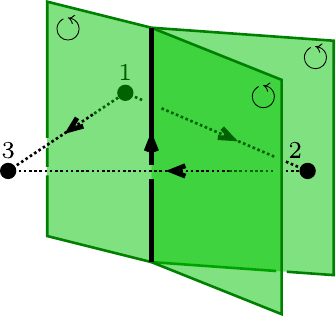} \, , \quad
		\includegraphics[scale=1.0,valign=c]{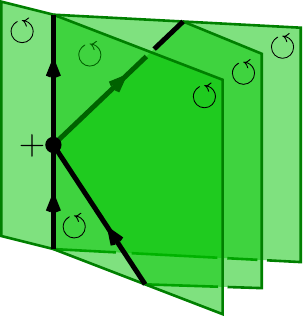} \, , \quad
		\includegraphics[scale=1.0,valign=c]{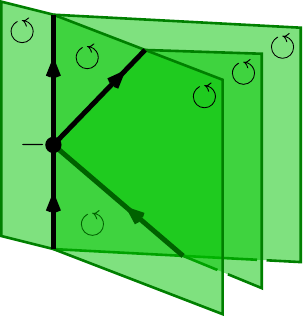} \, .
	\ee	
	As pictured above, by default we assume that 2-strata have the standard orientation of the paper/screen plane. 
	We indicate the opposite, i.\,e.\ clockwise, orientation by a stripy pattern, for example\footnote{ 
	These conventions are consistent with those in \cite{CRS2,CRS3,MuleRunk,MuleRunk2}, but they differ slightly from those in \cite{CRS1}, where the orientations of 2-strata are flipped.}
	\be
		\includegraphics[scale=1.0,valign=c]{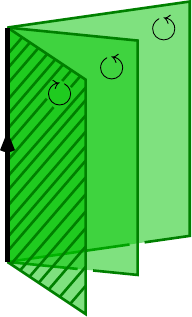}
		\, . 
	\ee 
\end{convention}

\begin{definition}\label{def:admissible-skeleton}
	An \textsl{admissible} skeleton of~$M$ is an oriented skeleton whose orientation is induced by a local order. 
	We denote the set of admissible skeleta of~$M$ by $\mathscr{S}(M)$.
\end{definition}

\begin{remark}\label{rem:ltovsorient}
	\begin{enumerate}[label={(\roman*)}]
		\item 
		\label{item:RemarkLocalOrder1}
		A local order is uniquely determined by the oriented skeleton that it induces. 
		In fact the local order can be recovered from only the orientations of 2-strata in the induced oriented skeleton via Convention \ref{conv:indOrd}. 
		Hence the datum of an admissible skeleton is the same as that of an unoriented skeleton together with a local order.
		\item 
		\label{item:RemarkLocalOrder2}
		If~$S$ is dual to a triangulation $T$ of~$M$, then Definition~\ref{def:LocalOrder} reduces to the notion of an \textsl{ordering} of a simplicial complex, see e.\,g.\ \cite[p.\,2]{JT}. 
		Such an ordering is given by a total order on the vertices of each 3-simplex of~$S$ such that the induced orders on shared faces agree. 
		By dualising Convention~\ref{conv:indOrd}, any ordering of a simplicial complex induces an orientation of all its simplices, in particular 1-simplices are oriented away from vertices of lower order. 
		If the orientation of~$T$ is induced by an ordering, then it can be uniquely recovered from just the orientations of 1-simplices.
		In turn the datum of an ordering on~$T$ is the same as that of an orientation of each 1-simplex of~$T$ such that no loops are formed around any single simplex. In this case we call $T$ an \textsl{admissible triangulation}.
	\end{enumerate}
\end{remark} 

\begin{examples}\label{ex:oriented_adm_skeleta}	
	\begin{enumerate}[label={(\roman*)}]
		\item 
		\label{item:SphereSkeletonExample}
		A local order on a skeleton $S$ does not necessarily induce a total order on the set $S_3$ of all 3-strata of $S$, as is illustrated by the following admissible skeleton of $S^3$:
		\be
		\includegraphics[scale=1.0,valign=c]{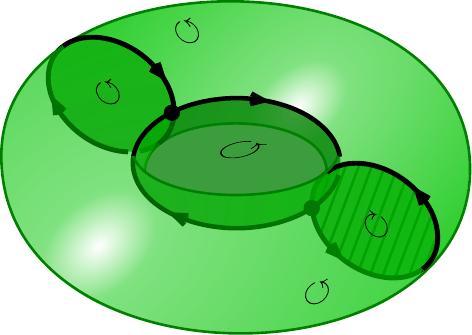} 
		\ee
		Indeed, for any given order on the two 3-strata making up the two halves of the solid torus, at least one of the two disc-shaped 2-strata separating them would have the wrong orientation. 
		\item 
        Using Remark \ref{rem:ltovsorient}\,\ref{item:RemarkLocalOrder1} it is straightforward to construct examples of oriented skeleta that are not admissible by simply choosing orientations for 2-strata that do not match any of those allowed by Convention \ref{conv:indOrd}. 
        For example, any oriented skeleton that locally looks as follows is not admissible:
		\be
			\includegraphics[scale=1.0,valign=c]{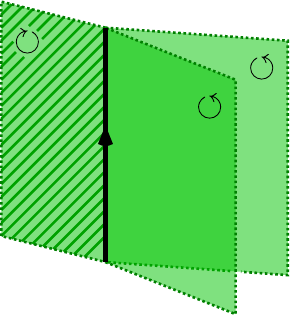} 
		\ee	
	\end{enumerate}
\end{examples}

\subsubsection{Local moves on skeleta}
\label{subsubsec:BLTmoves}

We now introduce local moves on oriented skeleta. 
We refer to these moves as \textsl{BLT moves} (short for bubble, lune, and triangle moves).
Our list of moves is a slight modification of the moves that are considered in \cite[Sect.\,11.3--11.4]{TVireBook}, and they are equivalent to the set of moves considered in \cite[Def.\,3.13]{CRS1}, see Lemma~\ref{lem:PachnerFromBLT} below. 
We show that all admissible skeleta are related by admissible BLT moves.

\begin{definition} Let $M$ be a stratified 3-manifold. 
	\begin{enumerate}
		\item 
		The \textsl{unoriented BLT move B, L or T} is given by the two stratified open 3-balls shown in Figure \ref{fig:BLTMoves}\,\ref{item:BubbleMoves}, \ref{item:LuneMoves} or \ref{item:TriangleMoves} respectively, considered up to isomorphism.
		\item 
		An \textsl{oriented BLT move $X$} consists of
		the unoriented BLT move $X$ with a choice of orientation of the two respective stratified 3-balls, such that the orientations of strata intersecting the boundaries of the balls agree. 
		We call an oriented BLT move \textsl{admissible} if the two respective stratified 3-balls are admissibly oriented. 
		\item 
		An \textsl{application of an unoriented BLT move $X$ to $M$} is the stratified 3-manifold $X(M)$ which consists of replacing an embedding of the open stratified 3-ball $B$ on the left of the move~$X$ 
		in $M$ with the stratified 3-ball on the right of $X$. 
		Analogously we define the application of the inverse of a BLT move and the application of an oriented BLT move.
	\end{enumerate}
\end{definition}

We remark that 
-- at least in the local neighbourhood shown in Figure \ref{fig:BLTMoves}\,\ref{item:LuneMoves} --
an application of a lune move splits up an oriented 2-dimensional region into two parts that consequently will have the same orientations in the target. 
This in turn puts a restriction on the orientations of 2-strata for when an inverse lune move can be applied.

\medskip 

\begin{figure}
	\begin{enumerate}[label={(\roman*)}]
		\item 
		\label{item:BubbleMoves}
		The \textsl{bubble moves} $B$ and their inverses:
		\[
		\includegraphics[scale=1.0, valign=c]{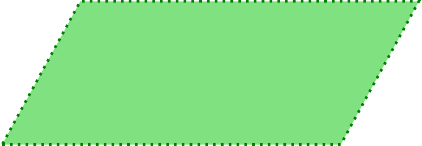} \stackrel[B^{-1}]{B}{\rightleftarrows}
		\includegraphics[scale=1.0, valign=c]{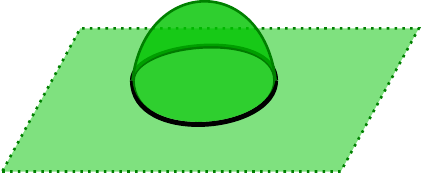}.
		\]
		\item 
		\label{item:LuneMoves}
		The \textsl{lune moves} $L$ and their inverses:
		\[
		\includegraphics[scale=1.0, valign=c]{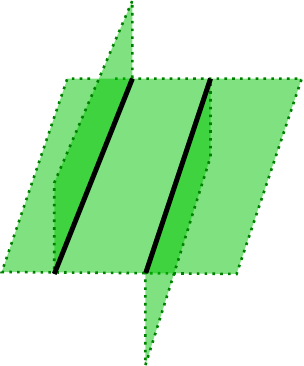} \stackrel[L^{-1}]{L}{\rightleftarrows}
		\includegraphics[scale=1.0, valign=c]{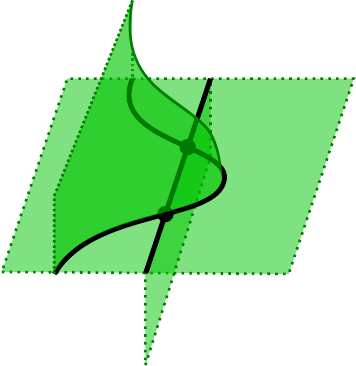}.
		\]
		\item 
		\label{item:TriangleMoves}
		The \textsl{triangle moves} $T$ and their inverses:
		\[
		\includegraphics[scale=1.0, valign=c]{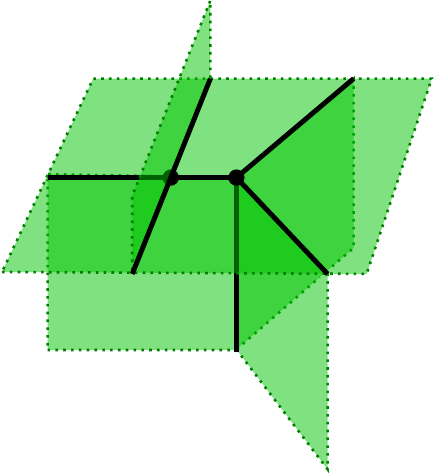} \stackrel[T^{-1}]{T}{\rightleftarrows}
		\includegraphics[scale=1.0, valign=c]{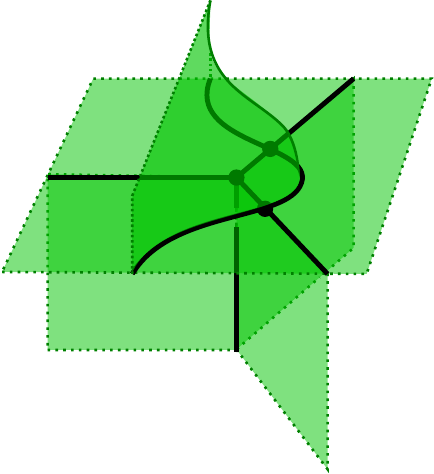}.
		\]    
	\end{enumerate}
	\caption{BLT moves without orientations.
	The dotted lines indicate where the 2-strata meet the boundary of the 3-ball in which they are embedded.
	}
	\label{fig:BLTMoves}
\end{figure}

In Theorem \ref{thm:ConnectSkeleta} below we show that any two admissible skeleta of $M$ that agree on $\del M$ can be transformed into one another by a finite sequence of admissible BLT moves.
Since for an admissible move we require the source and target to be admissibly oriented,  by Remark \ref{rem:ltovsorient}\,\ref{item:RemarkLocalOrder1} we only need to specify how the orientations of 2-strata are changed. 
One can check that this leaves us with a total of 32 admissible moves (up to isomorphisms of oriented stratified manifolds which do not necessarily fix the boundary): 
3 bubble moves, 9 lune moves and 20 triangle moves. 
Some examples of possible orientations are listed in Figure~\ref{fig:OrientedBLTMoves}.

We want to relate the BLT moves to two types of moves that are considered in \cite{CRS1}.
We call a set~$A$ of moves \textsl{stronger} than a set~$B$, if each of the moves of~$B$ is an application of a finite sequence of moves of~$A$. 
We say~$A$ is \textsl{equivalent} to~$B$ if~$A$ is stronger than~$B$ and~$B$ is stronger than~$A$.

We first consider the \textsl{admissible Pachner moves} which are the oriented Pachner moves whose source and targets are admissible triangulations, cf.\ Remark~\ref{rem:ltovsorient}\,\ref{item:RemarkLocalOrder2}, see also \cite{CRS1}.
Another set of moves that is considered in \cite[Def.\,3.13]{CRS1} are the \textsl{special orbifold data moves}, i.\,e.\ the moves relating the left- and right-hand sides of the identities in Figure~\ref{fig:SpecialOrbifoldData} below (without the $\A$-decorations).
Note that the latter consist of all 3 bubble moves, 6 of the 9 lune moves, and one triangle move. We consider all of these moves as moves between oriented skeleta. 

\begin{lemma}\label{lem:PachnerFromBLT}
	The admissible BLT moves are equivalent to the special orbifold data moves,	and both are stronger than the admissible Pachner moves. 
\end{lemma}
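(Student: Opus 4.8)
\emph{Approach.}
The statement splits into two halves.
For the equivalence of the admissible BLT moves with the special orbifold data moves, one direction is immediate: by the discussion preceding the lemma, every special orbifold data move literally \emph{is} one of the $32$ admissible BLT moves (one of the $3$ bubble, $6$ of the $9$ lune, or the $1$ triangle move), so the admissible BLT moves are trivially stronger than the special orbifold data moves.
The substance lies in the converse: that the special orbifold data moves are stronger than the admissible BLT moves, i.\,e.\ that these $3+6+1$ moves already generate all $32$ of them.
Since all three bubble moves and six of the nine lune moves are on the list, what must be shown is that the $3$ remaining admissible lune moves and the $19$ remaining admissible triangle moves are finite composites of special orbifold data moves.

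To do this I would separate the underlying combinatorics from the orientation data.
On the level of unoriented moves I would invoke \cite[Sect.\,11.3--11.4]{TVireBook}, where the unoriented triangle move is shown to follow from a single triangle move together with the bubble and lune moves; this reduces the combinatorial part to a bounded check.
To lift this to admissibly oriented skeleta I would use the standard ``create, manipulate, destroy'' device: introduce an auxiliary $2$-sphere by a bubble move, carry out the available admissible lune and triangle moves on the resulting more flexible configuration, and remove the auxiliary strata by an inverse bubble move.
Concretely, each of the three missing lune moves differs from an available one only in the orientations of the two sheets forming the bigon, and is obtained by conjugating an available lune move by such an auxiliary bubble together with a triangle move to slide the extra sheet into position; the missing triangle moves are treated analogously, keeping track of $2$-stratum orientations via Convention~\ref{conv:indOrd} and Remark~\ref{rem:ltovsorient}\,\ref{item:RemarkLocalOrder1} (which guarantees that these determine the whole admissible structure).
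I expect the main obstacle to be precisely this orientation bookkeeping: one must verify, in each of the finitely many cases, that the intermediate skeleta produced along the way can all be chosen admissible, so that every move in the sequence is defined.
This is tedious rather than conceptually deep, and I would relegate the case-by-case verification to Appendix~\ref{app:A}.

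For the final assertion, since ``stronger than'' is transitive and the two sets of moves have just been shown to be equivalent, it suffices to prove that the admissible BLT moves are stronger than the admissible Pachner moves.
Here I would use the Poincar\'e-dual dictionary recalled in Remark~\ref{rem:ltovsorient}\,\ref{item:RemarkLocalOrder2}: the dual of a triangulation is a skeleton, and an ordering of the triangulation dualises to a local order on this skeleton, so an admissible triangulation dualises to an admissible skeleton.
Under this dictionary the 2--3 Pachner move (two tetrahedra sharing a face versus three tetrahedra around an edge) dualises, up to isomorphism of stratified $3$-balls and possibly after auxiliary lune moves, to a triangle move, while the 1--4 Pachner move (a tetrahedron versus the cone over its boundary) dualises to an explicit finite composite of a bubble move and several lune and triangle moves -- these decompositions are again those of \cite[Ch.\,11]{TVireBook}.
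It then remains to check that the orderings of source and target of each admissible Pachner move induce, via Convention~\ref{conv:indOrd}, admissible orientations on every skeleton occurring in the corresponding dual sequence, so that all moves in that sequence are genuine admissible BLT moves.
Since every admissible Pachner move is thereby realised as a finite sequence of admissible BLT moves, both the admissible BLT moves and -- via the equivalence -- the special orbifold data moves are stronger than the admissible Pachner moves.
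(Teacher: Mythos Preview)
Your overall strategy is correct and matches the paper's: one inclusion is trivial, and the real work is generating the $3$ missing lune moves and $19$ missing triangle moves from the special orbifold data moves. The paper, however, takes different shortcuts than you propose. For the missing triangle moves it simply cites \cite[Lem.\,3.15]{CRS1}, which already shows that all admissibly oriented versions of the 2--3 move follow from \eqrefO{1} together with \eqrefO{2}--\eqrefO{7}; no appeal to \cite{TVireBook} or a separate unoriented-to-oriented lifting is needed. For the three missing lune moves, your characterisation is slightly off: they are not variants with the \emph{original} sheets reoriented, but rather \eqrefO{4}--\eqrefO{6} with the orientation of the \emph{newly created} bigon 2-stratum on the right-hand side reversed. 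The paper obtains these by invoking the orientation-flipping device of Lemma~\ref{lem:delsBLT} and Remark~\ref{rem:flip_orientation_flower} (create a parallel copy, making the stratum superfluous, then delete and recreate with opposite orientation), which is close in spirit to your ``create, manipulate, destroy'' idea but more targeted. Finally, for ``stronger than admissible Pachner moves'' the paper again cites an existing result, \cite[Prop.\,3.18]{CRS1}, rather than reproving the dual decomposition of Pachner moves. Your route via \cite{TVireBook} would also work, but is more laborious than necessary given what is already available in \cite{CRS1}.
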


\begin{proof}
	By definition, the special orbifold data moves are a subset of the admissible BLT moves and consequently admissible BLT moves are stronger than special orbifold data moves. 
	In \cite[Prop.\,3.18]{CRS1} it is shown that the special orbifold data moves are stronger than the globally ordered Pachner moves. A slight modification of the arguments presented there shows the same for admissible Pachner moves.
	
	To verify that special orbifold moves are stronger than admissible BLT moves we observe that 
	\begin{itemize}
		\item 
		a T-move is the same as the 2-3 move in \eqrefO{1} (without its decoration), or one of the 19 variants thereof with a different admissible orientation; 
		\item 
		an L-move is the same as one of the six identities \eqrefO{2}--\eqrefO{7} in Figure~\ref{fig:SpecialOrbifoldData}, or one of the three variants thereof where the orientation of the additional 2-stratum on the right-hand sides of \eqrefO{4}--\eqrefO{6} is reversed; 
		\item 
		a B-move is the same as a bubble move in \eqrefO{8}. 
	\end{itemize}
	By \cite[Lem.\,3.15]{CRS1}, every admissible orientation for the 2-3 moves can be obtained from \eqrefO{1} and the six identities \eqrefO{2}--\eqrefO{7}.
	Moreover, the three L-moves which are not among \eqrefO{2}--\eqrefO{7} can be obtained by flipping the orientation of the new 2-stratum on the right-hand side: it follows from the proof of Lemma~\ref{lem:delsBLT} and Remark~\ref{rem:flip_orientation_flower} that this can be achieved only with T- and B-moves, and with the moves \eqrefO{2}--\eqrefO{7}. 
\end{proof}

\begin{figure}
\captionsetup[subfigure]{labelformat=empty}
\begin{subfigure}[b]{1.0\textwidth}
	\centering
	\includegraphics[scale=1.0, valign=c]{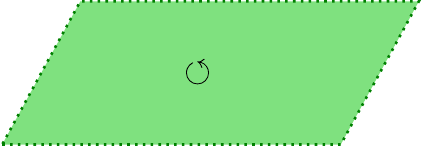} $\stackrel[B^{-1}]{B}{\rightleftarrows}$
	\includegraphics[scale=1.0, valign=c]{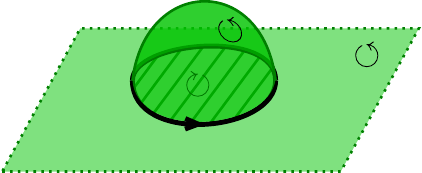},
	\caption{}
\end{subfigure}\\
\hspace{-25pt}
\begin{subfigure}[b]{1.1\textwidth}
\centering
\includegraphics[scale=1.0, valign=c]{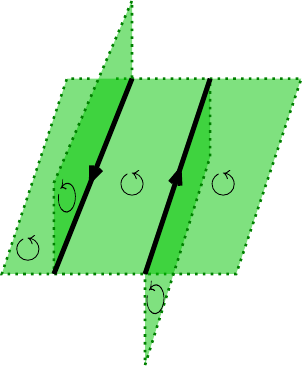} $\stackrel[L^{-1}]{L}{\rightleftarrows}$
\includegraphics[scale=1.0, valign=c]{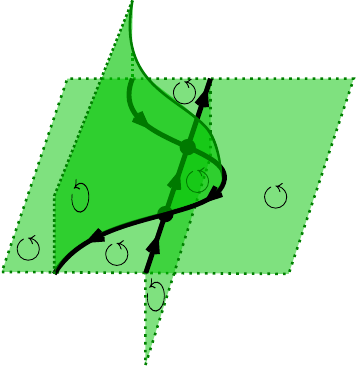}, 
\includegraphics[scale=1.0, valign=c]{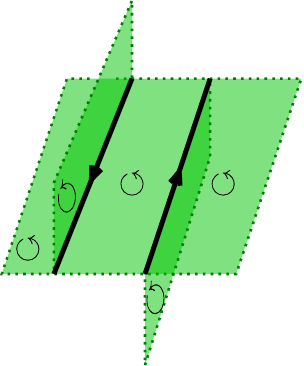} $\stackrel[L^{-1}]{L}{\rightleftarrows}$
\includegraphics[scale=1.0, valign=c]{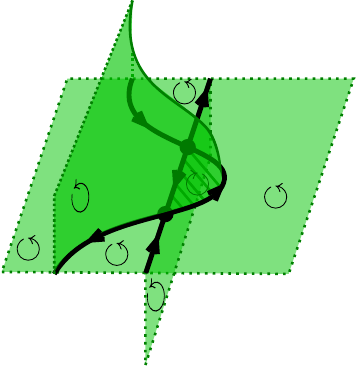},
\caption{}
\end{subfigure}\\
\begin{subfigure}[b]{1.0\textwidth}
	\centering
	\includegraphics[scale=1.0, valign=c]{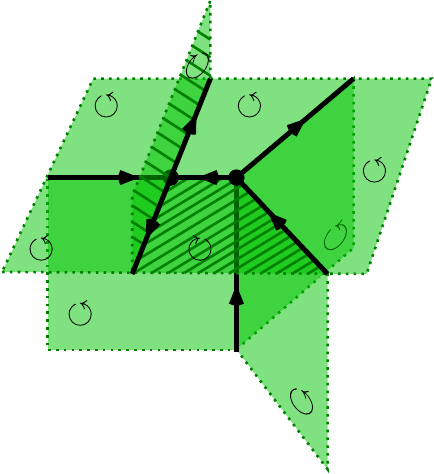} $\stackrel[T^{-1}]{T}{\rightleftarrows}$
	\includegraphics[scale=1.0, valign=c]{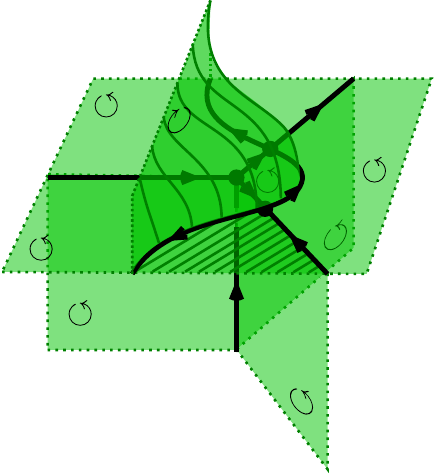},
	\caption{}
\end{subfigure}\\
\caption{Examples of BLT moves with orientations (of a total of 32). 
}
\label{fig:OrientedBLTMoves}
\end{figure}

We can now state the main result of this section.

\begin{theorem}
	\label{thm:ConnectSkeleta}
	Any two admissible skeleta that agree on $\del M$ are connected by a finite sequence of admissible BLT moves.
\end{theorem}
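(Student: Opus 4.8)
The plan is to reduce the claim to the analogous statement for \emph{unoriented} skeleta, which is established in \cite{TVireBook}, and then to add a layer of orientation bookkeeping; the latter is where the real content lies.

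\emph{Reduction to the unoriented case.} Write $\underline S$, $\underline{S'}$ for the underlying unoriented skeleta of $S$, $S'$. As recalled in Section~\ref{subsec:Skeleta}, our skeleta differ from the s-skeleta of \cite[Sect.\,11.2.1]{TVireBook} only by dropping the special-point requirement and allowing circle $1$-strata, and the unoriented BLT moves of Figure~\ref{fig:BLTMoves} differ only slightly from the moves of \cite[Sect.\,11.3--11.4]{TVireBook}. These differences do not affect the move-connectivity argument of loc.\ cit.\ (the special-point hypothesis plays no role in connecting skeleta, and circle $1$-strata are handled by first creating trivalent vertices on them), so I would first invoke \cite{TVireBook} to obtain a finite chain of unoriented BLT moves $\underline S = P_0 \rightsquigarrow P_1 \rightsquigarrow \dots \rightsquigarrow P_N = \underline{S'}$, each performed in an open $3$-ball disjoint from $\partial M$; the details of this adaptation would go into Appendix~\ref{app:A}.

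\emph{Orienting the chain admissibly.} The main step is to promote $P_0 \rightsquigarrow \dots \rightsquigarrow P_N$ to a chain of \emph{admissible} BLT moves connecting $S$ and $S'$. I would process these moves one at a time while carrying along an admissible orientation on the current skeleton, initialised to that of $S$; by Remark~\ref{rem:ltovsorient}\,\ref{item:RemarkLocalOrder1} such an orientation is the same datum as a local order and is pinned down by the orientations of the $2$-strata via Convention~\ref{conv:indOrd}. At the move $P_i \rightsquigarrow P_{i+1}$, performed in a $3$-ball $B$, the current orientation prescribes the orientations of all strata meeting $\partial B$, and one must extend these to an admissible orientation of $P_{i+1}$ inside $B$ and recognise the resulting oriented move among the $32$ admissible BLT moves of Figure~\ref{fig:OrientedBLTMoves} (or as a composite of these). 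The obstruction -- already visible for the inverse lune move, which can only be applied when the two $2$-strata to be merged are co-oriented -- is that the orientations forced on $\partial B$ need not admit an admissible extension over the target side of $B$.

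\emph{Correcting orientations; the main obstacle.} To overcome this I would rely on the fact that the orientation of any single $2$-stratum of an admissible skeleton can be reversed by a finite sequence of admissible BLT moves fixing the rest of the skeleton -- realised, exactly as in the proof of Lemma~\ref{lem:PachnerFromBLT}, by the proof of Lemma~\ref{lem:delsBLT} together with Remark~\ref{rem:flip_orientation_flower}, using only $T$-, $B$- and \eqrefO{2}--\eqrefO{7}-moves. Pre- and post-composing the oriented move at $B$ with such ``flips'' one can bring the orientations near $B$ into a configuration from which the move is one of the $32$ admissible moves; that this always works is a finite inspection over the $3$ bubble, $9$ lune and $20$ triangle types, aided by Figures~\ref{fig:BLTMoves}--\ref{fig:OrientedBLTMoves}. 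Running this over all $i$ brings $S$ to an admissible skeleton with underlying unoriented skeleton $\underline{S'}$, and one final batch of flips matches its $2$-stratum orientations to those of $S'$. The hard part will be this orientation flip: it must be performed through skeleta that are themselves admissibly orientable, so it cannot be seen at the unoriented level and requires analysing the configuration of $1$-strata on the boundary of the $2$-stratum -- the ``flower'' analysis behind Remark~\ref{rem:flip_orientation_flower}.
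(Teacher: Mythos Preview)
Your approach is genuinely different from the paper's, and it contains a real gap at exactly the point you flag as ``the main obstacle''.

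The paper does \emph{not} reduce to the unoriented move-connectivity result of \cite{TVireBook} and then fix orientations along the chain. Instead it stays inside the admissible world throughout: it first shows (Lemma~\ref{lem:adm-triang}) that any admissible skeleton can be refined via admissible BLT moves to the dual of a triangulation; then (Lemma~\ref{lemma:connect-adm-oriented}) that two admissible orientations on the \emph{same} triangulation-dual are connected, using dual $1$-$4$ and $2$-$3$ Pachner moves to make every edge admissibly flippable; and finally uses that admissible BLT moves dominate admissible Pachner moves (Lemma~\ref{lem:PachnerFromBLT}). The boundary is handled via pseudo-skeletal cylinders implementing $2$-dimensional bl moves.

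The gap in your argument is the sentence ``the orientation of any single $2$-stratum of an admissible skeleton can be reversed by a finite sequence of admissible BLT moves fixing the rest of the skeleton''. This is false as stated: Remark~\ref{rem:flip_orientation_flower} only lets you flip a $2$-stratum $s$ when the skeleton obtained by flipping $s$ is \emph{again admissible}. That is a genuine constraint, because the orientations of the three $2$-strata around a $1$-stratum are rigidly correlated by Convention~\ref{conv:indOrd}; flipping one of them generically destroys admissibility at that $1$-stratum (cf.\ Example~\ref{ex:oriented_adm_skeleta}\,(ii)). Consequently, when you try to process an inverse $L$-move in your chain $P_i \rightsquigarrow P_{i+1}$ and the two $2$-strata to be merged carry opposite orientations, you may be unable to flip either one without leaving the admissible class, and the target $P_{i+1}$ with the forced boundary orientations may simply not carry any admissible extension. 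The ``finite inspection'' you propose cannot save this, because the obstruction is not local to the ball $B$: it lives in the global combinatorics of $1$-strata bounding the $2$-stratum you want to flip. This is precisely why the paper passes through triangulation duals, where Lemma~\ref{lemma:connect-adm-oriented} manufactures extra room (via $1$-$4$ moves) so that every edge becomes flippable.
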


The strategy for the proof is as follows: First we show that any admissible skeleton can be refined to be dual to a triangulation. We then show that any two such skeleta are related by a sequence of Pachner moves that by Lemma~\ref{lem:PachnerFromBLT} can be obtained from admissible BLT moves.
Since we do not need the technical details of the proof of Theorem~\ref{thm:ConnectSkeleta}, which may be no surprise to the expert, we defer it to Appendix~\ref{app:A}. 
There it is explained how to make the construction of \cite{TVireBook} compatible with admissibility in every step.

\subsection{Bordisms with embedded ribbon graphs}
\label{sec:unlabelled-ribbon-bordisms}

In this section we review the category $\Bordribn{3}$ of 3-dimensional bordisms with embedded ribbon graphs. 
A variant of this category that includes labels from a modular fusion category~$\mathcal{C}$ is reviewed in Section~\ref{ssec:3dimGraphTQFT} below. 
For more details we refer to \cite[Sect.\,IV]{turaevbook}.

\medskip

A \textsl{ribbon punctured surface} is a compact oriented 2-dimensional manifold together with a finite set of \textsl{marked points} (or \textsl{punctures}) which are labelled by tuples of the form $(v,\varepsilon)$, where $v$ is a non-zero tangent vector, and $\varepsilon \in \{+1,-1\}$.
A diffeomorphism $\varphi\colon \Sigma \lra \Sigma'$ of ribbon punctured surfaces $\Sigma,\Sigma'$ is an orientation-preserving diffeomorphism of the underlying surfaces, mapping punctures to punctures such that if a puncture $p\in\Sigma$ is labelled by $(v,\varepsilon)$, then $\varphi(p)$ is labelled by $(d\varphi(v), \varepsilon)$.
The \textsl{orientation reversal} $-\Sigma$ of a punctured surface~$\Sigma$ is defined to be the ribbon punctured surface~$\Sigma$ except with opposite orientation, and if a puncture of~$\Sigma$ is labelled by~$(v,\varepsilon)$ then the corresponding puncture of $-\Sigma$ is labelled by $(v,-\varepsilon)$. 

By a \textsl{ribbon bordism} we mean a compact oriented $3$-dimensional bordism~$M$ together with an embedded ribbon graph~$R$, such that the loose strands of~$R$ meet $\partial M$ transversally. 
This induces the structure of a punctured surface on $\partial M$, whose punctures are in $\partial M \cap R$.
The punctures carry the labels $(v,\varepsilon)$, where~$v$ is the framing of the corresponding strand of~$R$ and $\varepsilon = +1$ if the strand is directed out of $\partial M$, and $-1$ otherwise.

Diffeomorphisms of ribbon bordisms by definition preserve embedded ribbon graphs, and their restrictions to the boundary are compatible with the parametrisation maps from the collared punctured surfaces to the boundary. 
Then the symmetric monoidal \textsl{category of ribbon bordisms} $\Bordribn{3}$ is obtained by a standard construction in analogy to the regular bordism category. 
Morphisms in $\Bordribn{3}$ are diffeomorphism classes of ribbon bordisms, but we usually will not make a notational distinction between these morphisms and their representatives.

\subsection[Ribbon diagrams and ${\omega}$-moves]{Ribbon diagrams and $\boldsymbol{\omega}$-moves}
\label{subsuc:RibbonDiagramsOmegaMoves}

Let~$M$ be a bordism, and let~$S$ be an admissible skeleton for~$M$.  
We adopt the nomenclature of \cite[Ch.\,14]{TVireBook}, to which we refer for a detailed discussion of the following notions. 

A \textsl{plexus} (Latin for ``braid'') is 
an (``abstract'', i.\,e.\ not embedded into $\R^n$) topological space which is 
made up of a finite number of oriented circles, oriented arcs, and coupons; arcs may meet only coupons, and only at pairwise distinct points at the coupon's top or bottom. 
Circles and arcs are collectively called \textsl{strands}. 

A \textsl{knotted plexus} in~$S$ (originally defined in \cite[Sect.\,14.1.2]{TVireBook}) is a local embedding~$\iota$ of a plexus~$d$ into~$S$, such that 
\begin{enumerate}
	\item 
	the coupons of~$d$ are embedded in~$S^{(2)}$ with their orientations preserved; 
	\item 
	if $\iota(d)$ has multiple points, then they are transversal double points of strands in~$S^{(2)}$ and they are (labelled as) either over-crossings or under-crossings; 
	\item 
	$\iota(d) \cap S^{(0)} = \varnothing$; 
	\item 
    $\iota(d) \cap \partial S$ consists only of endpoints of arcs in~$d$, and arcs meet $\partial S$ transversally; 
	\item 
	if a strand~$r$ of~$\iota(d)$ meets a 1-stratum~$L$ of~$S$ at a point~$w$, such that a neighbourhood of~$w$ is given by one of the four options
	\be 
		\hspace{-10pt}
		\includegraphics[scale=1.0, valign=c]{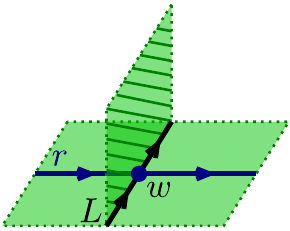}
		\includegraphics[scale=1.0, valign=c]{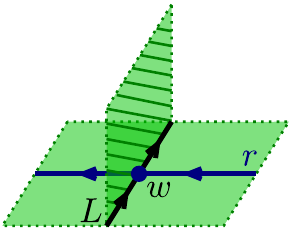} ~
		\includegraphics[scale=1.0, valign=c]{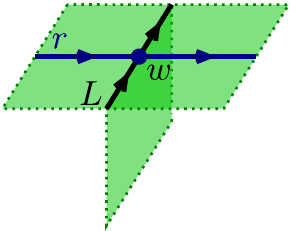}
		\includegraphics[scale=1.0, valign=c]{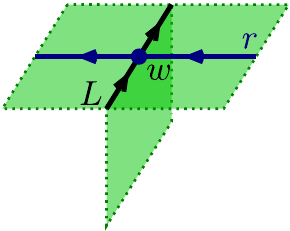}
		\, ,
	\ee 
	then the intersection point~$w$ is called a \textsl{positive switch}. 
	Without the above restriction on neighbourhoods, the intersection point is just called a \textsl{switch}.
\end{enumerate}
We usually refer to a knotted plexus $(d,\iota)$ simply by~$d$. 
Note that we depict strands in dark blue. 

The case of knotted plexi in admissible skeleta will be important for us: 
\begin{definition}
  	\label{def:AdmissibleRibbonDiagram1}
	Let~$M$ be a bordism. 
		An \textsl{admissible ribbon diagram} in~$M$ is a pair $(S,d)$, where~$S$ is an admissible skeleton of~$M$, and~$d$ is a knotted plexus in~$S$.
		An admissible ribbon diagram is \textsl{positive} if each of its switches is positive. 
\end{definition}

Recall that our starting point is an (unstratified) bordism~$M$, hence a smooth 3-manifold, and it makes sense to talk about tangent vectors at all points of $M$. 
To express the relation between ribbon graphs and embedded plexi, we will need in addition the notion of transversality on strata. For this reason, we will make the
\begin{quote}
\textbf{Assumption:}  
	In ribbon diagrams $(S,d)$ in~$M$, all strata of the skeleton~$S$ are smooth submanifolds of~$M$. (The image $\iota(d)$ is not required to be a smooth submanifold.)
\end{quote}
We can now define: 
A \textsl{framing}~$f$ of a positive admissible ribbon diagram is a function that continuously assigns a
direction $f(x)$ at~$\iota(x)$ in~$M$ to each $x\in d$ (hence double points $\iota(x) = \iota(y)$ for $x\neq y$ in~$d$ can have two different directions), 
such that 
(i) if~$x$ lies in a stratum~$t$, then $f(x)$ is transverse to~$t$, and for 2-strata~$t$, the orientation of~$t$ followed by the direction $f(x)$ agrees with the orientation of~$M$; 
(ii) if~$x$ lies in a coupon~$c$, then $f(x)$ is transverse to~$c$;
(iii) if $x\in\partial M$, then $f(x)$ is tangent to~$\partial M$. 
If an admissible ribbon diagram $(S,d)$ is positive, there exists a framing for~$d$, and any two framings that agree on $\partial M$ are isotopic relative to the boundary.

\medskip

From a positive admissible ribbon diagram $(S,d)$ we obtain a ribbon graph~$d^f$ in~$M$ as follows: 
Pick a framing~$f$ of~$d$ and slightly push the over-crossing strands of~$d$ at crossings in the direction of the framing~$f$, and then use~$f$ to provide the resulting graph with a ribbon structure. 
Two framings that agree on $\partial M$ give isotopic ribbon graphs. 

Conversely, we say that a positive admissible ribbon diagram $(S,d)$ \textsl{represents} a ribbon graph $R$ in $M$, if $R$ is isotopic to $d^f$.
The set of positive admissible ribbon diagrams in~$M$ that represent~$R$ is denoted $\mathscr S(M,R)$.  In the case $R=\varnothing$ this reduces to the set of admissible skeleta $\mathscr S(M)$ (cf.\ Definition~\ref{def:admissible-skeleton}). 

\begin{figure}
	\centering
	\begin{subfigure}[b]{0.2\textwidth}
		\centering
		\includegraphics[scale=1.2, valign=c]{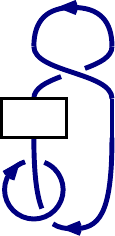}
		\caption{}
	\end{subfigure}
	\begin{subfigure}[b]{0.4\textwidth}
		\centering
		\includegraphics[scale=1.2, valign=c]{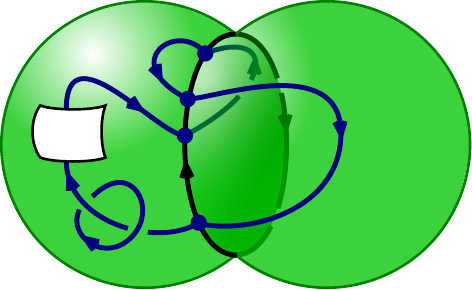}
		\caption{}
	\end{subfigure}
	\caption{A ribbon graph in $S^3$ (a) and an example of an admissible ribbon diagram representing it (b)}
	\label{fig:RibbonDiagram}
\end{figure}

\begin{remark}
The notion of ``positive admissible ribbon diagram'' is that of a ``positive ribbon diagram'' in the sense of \cite[Sect.\,14.2]{TVireBook}, but with local neighbourhoods as in Figure~\ref{fig:skeleta} and such that the orientations of 2-strata can be extended to an admissible orientation in the sense of Section~\ref{subsubsec:AdmissibleSkeleta}; if an admissible choice of orientations exists, then it is unique, cf.\ Remark~\ref{rem:ltovsorient}\,\ref{item:RemarkLocalOrder2}. 

We stress that a ``ribbon diagram'' (whether it is admissible, positive, or plain) always relates to a prescribed skeleton. 
Hence a ribbon diagram is not just an (embedded) string diagram, even though the phrase might suggest otherwise. 
\end{remark}

It is shown in \cite[Lem.\,14.1]{TVireBook} that every ribbon graph is representable by a positive ribbon diagram. 
The analogous result in our framework can be proven similarly: 

\begin{lemma}
	Every ribbon graph~$R$ in a bordism~$M$ can be represented by a positive admissible ribbon diagram, i.\,e.\ $\mathscr S(M,R) \neq \varnothing$. 
\end{lemma}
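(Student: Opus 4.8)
The plan is to follow the strategy of \cite[Lem.\,14.1]{TVireBook} and adapt it to guarantee admissibility at every step. Recall that an unstratified bordism~$M$ carries a ribbon graph~$R$ whose strands and coupons we want to ``push into'' a skeleton. First I would choose any admissible skeleton~$S_0$ of~$M$; one exists because, for instance, one can take the dual of a triangulation of~$M$ that restricts to the given triangulation-type stratification on $\partial M$ induced by the punctures of $R$, and by Remark~\ref{rem:ltovsorient}\,\ref{item:RemarkLocalOrder2} any triangulation can be equipped with an ordering (choose orientations of $1$-simplices with no loops around a single simplex), making the dual skeleton admissible. Next I would isotope~$R$ so that it is in ``general position'' with respect to~$S_0$: the coupons of~$R$ are disjoint from $S_0^{(1)} \cup S_0^{(0)}$ and meet $S_0^{(2)}$ transversally, the strands meet $S_0^{(2)}$ transversally in finitely many points and avoid $S_0^{(0)}$, and the strands meet $S_0^{(1)}$ transversally in finitely many points away from the coupons; on $\partial M$ the isotopy is the identity. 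This is a standard transversality/general-position argument using that the strata of an admissible skeleton are smooth submanifolds (our Assumption above), together with the fact that the codimensions add up favourably.

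Having put $R$ in general position, I would use the collar structure and a fine enough subdivision (obtained by applying triangle and lune BLT moves, which refine the skeleton while preserving admissibility by Theorem~\ref{thm:ConnectSkeleta} and Lemma~\ref{lem:PachnerFromBLT}) to arrange that each coupon of~$R$ lies inside a single $2$-stratum of the refined skeleton~$S$, and that each strand of~$R$, away from small neighbourhoods of the coupons, is cut by~$S^{(2)}$ into short sub-arcs each of which lies in the closure of at most two adjacent $3$-strata. Then I would ``flatten'' each such sub-arc onto the adjacent $2$-strata: an arc passing from one $3$-stratum to the next through a $2$-stratum face can be homotoped, rel endpoints, to run along that $2$-stratum; whenever two sub-arcs on the same side of a $2$-stratum would cross after flattening, the crossing becomes a transversal double point in $S^{(2)}$, which we label as an over- or under-crossing according to which arc was pushed (remembering which one was in the $3$-stratum on the ``positive'' side). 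Where a strand crosses a $1$-stratum~$L$ of~$S$, after flattening the strand continues from one $2$-stratum incident to~$L$ into another; this is precisely a switch in the sense of the definition. The local models in Figure~\ref{fig:skeleta} for neighbourhoods of points of $S^{(1)}$ and $S^{(0)}$, together with further local subdivision near $1$-strata, let us arrange that every such switch looks like one of the four allowed pictures, i.\,e.\ is a \emph{positive} switch; this is exactly the point where one uses admissibility of~$S$ (so that the local order and the induced orientations of the $2$-strata around each $1$-stratum are as in Convention~\ref{conv:indOrd}). Finally I would record, for each over-crossing, that the over-strand was pushed off in the direction determined by the framing of~$R$, and check that the framing of~$R$ restricts to a framing of the resulting knotted plexus~$d$ in the sense defined above (transverse to strata, compatible with the orientation of~$M$ on $2$-strata, tangent to $\partial M$ on the boundary). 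By construction $d^f$ is isotopic to~$R$, so $(S,d) \in \mathscr{S}(M,R)$ and in particular $\mathscr{S}(M,R) \neq \varnothing$.

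The main obstacle I expect is the bookkeeping needed to make every switch positive: a priori, when a strand crosses a $1$-stratum~$L$, the two $2$-strata it connects and the direction of approach may not match any of the four admissible local pictures, and one must show that a suitable sequence of admissible BLT moves near~$L$ (subdividing $L$ and the adjacent $2$-strata, possibly inserting small lune or bubble configurations) converts the switch into a positive one without creating new non-positive switches elsewhere. This is the place where our setup genuinely differs from \cite{TVireBook}: we must check that all the local modifications can be carried out within admissibly oriented skeleta, which is exactly the kind of verification already packaged into Theorem~\ref{thm:ConnectSkeleta} and the appendix. Everything else — general position of~$R$ with respect to~$S_0$, flattening arcs onto $2$-strata, resolving crossings, and extending the framing — is routine and parallels \cite[Lem.\,14.1]{TVireBook} essentially verbatim, so I would state those steps and refer to loc.\ cit.\ for the details, emphasising only the points where admissibility enters.
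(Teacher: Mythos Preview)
Your approach differs substantially from the paper's (and, contrary to your opening sentence, from \cite[Lem.\,14.1]{TVireBook} as well). You start from an arbitrary admissible skeleton~$S_0$ and attempt to push~$R$ onto it, then repair the switches afterwards. The paper instead builds the skeleton \emph{around}~$R$: take a closed tubular neighbourhood~$U$ of~$R$, triangulate $M\setminus U^\circ$ with a total order on vertices chosen so that in the dual skeleton~$t^*$ every 2-stratum lying in $\partial U$ has its normal pointing \emph{out} of~$U$, push~$R$ along its framing into $\partial U$, and finally add enough meridional discs of~$U$ to cut~$U$ into 3-balls. The point is that once the plexus~$d$ lives in $\partial U$ and all 2-strata there are oriented outward, every switch is automatically positive; no after-the-fact repair is needed.

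Your proposal has a genuine gap precisely at the step you flag as the main obstacle. Making a non-positive switch into a positive one is not a matter of applying BLT moves near the 1-stratum~$L$: BLT moves are $\omega_0$-moves, and by definition they act away from the knotted plexus and do not alter how a strand meets~$L$. What is needed is either a move that re-routes the strand through a different pair of branches at~$L$ (an $\omega$-type move, but those are used to relate elements of $\mathscr S(M,R)$, not to produce one), or a global argument that a suitable skeleton exists in the first place. Theorem~\ref{thm:ConnectSkeleta} does not help here: it connects admissible skeleta to one another, but says nothing about the compatibility of a given embedded plexus with the local order at 1-strata. The paper's construction sidesteps this entirely by arranging the orientations of the 2-strata in $\partial U$ before the plexus is placed, so that positivity is built in from the start rather than achieved by local surgery.
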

\begin{proof}
	We sketch the proof of \cite[Lem.\,14.1]{TVireBook} and point out how to adapt it for our purposes along the way. 
	
	Pick a tubular closed neighbourhood~$U$ of~$R$ in~$M$. 
	Pick a triangulation~$t$ of $M\setminus U^\circ$ and a total order on the vertices of~$t$, such that the induced admissible orientation on the dual~$t^*$ satisfies the condition that all 2-strata in $\partial U = U \setminus U^\circ$ are oriented by the normal pointing out of~$U$. 
	Next push~$R$ along its framing into $\partial U$, such that no coupon of the resulting knotted plexus~$d$ intersects a 0- or a 1-stratum of $t^*\cap\partial U$, no strand of~$d$ meets a 0-stratum of $t^*\cap\partial U$, and strands meet 1-strata of $t^*\cap\partial U$ only transversally. 
	
	Pick enough open meridional discs~$D_i$ of~$U$ such that their boundaries $\partial\overline D_i$ do not intersect coupons and 0-strata in $\partial U$, and which intersect strands and 1-strata in $\partial U$ only transversally, such that the complement of $\bigcup_i D_i$ in~$U$ is a disjoint union of 3-balls. 
	By declaring the boundaries $\partial\overline D_i$ to be new 1-strata and orienting them arbitrarily, 
	this lifts~$t^*$ to an admissible skeleton~$S$ of~$M$ (by adding the 2-strata~$D_i$ as well as the 1- and 0-strata in $\partial\overline D_i$, where 0-strata are intersection points with 1-strata of $t^* \cap \partial U$).
	In doing so, we endow every new disc-shaped 2-stratum~$D_i$ in~$U^\circ$ with the orientation dictated by that of $\partial\overline D_i$ and the orientations of the 2-strata in $\partial U$ adjacent to the meridian.
	Since every 2-stratum in $\partial U$ is oriented by the normal pointing out of~$U$, and since $d\subset \partial U$, every switch of the admissible ribbon diagram $(S,d)$ is positive. 
	Thus by construction, the positive admissible ribbon diagram $(S,d)$ represents the ribbon graph~$R$ in~$M$.
\end{proof}

\begin{figure}
	\captionsetup[subfigure]{labelformat=empty}
	\centering
	\begin{subfigure}[b]{0.48\textwidth}
		\centering
		\includegraphics[scale=1.0, valign=c]{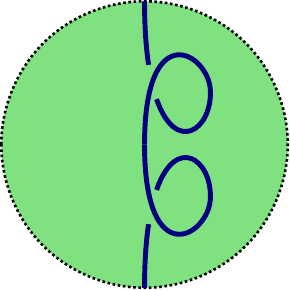} $\xrightarrow{\omega_1}$
		\includegraphics[scale=1.0, valign=c]{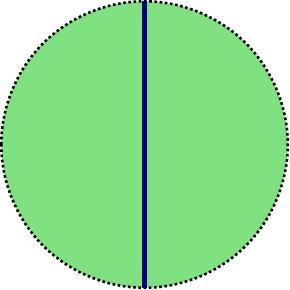}
		\caption{}
	\end{subfigure}
	\begin{subfigure}[b]{0.48\textwidth}
		\centering
		\includegraphics[scale=1.0, valign=c]{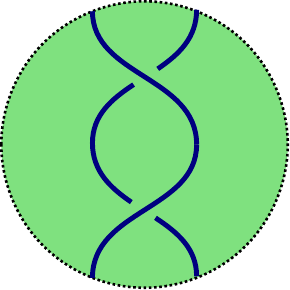} $\xrightarrow{\omega_2}$
		\includegraphics[scale=1.0, valign=c]{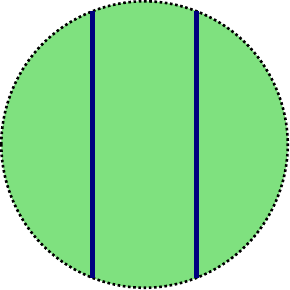}
		\caption{}
	\end{subfigure}\\
	\begin{subfigure}[b]{0.48\textwidth}
		\centering
		\includegraphics[scale=1.0, valign=c]{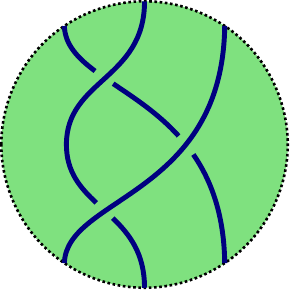} $\xrightarrow{\omega_3}$
		\includegraphics[scale=1.0, valign=c]{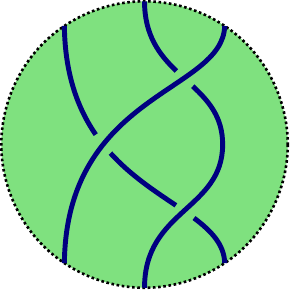}
		\caption{}
	\end{subfigure}
	\begin{subfigure}[b]{0.48\textwidth}
		\centering
		\includegraphics[scale=1.0, valign=c]{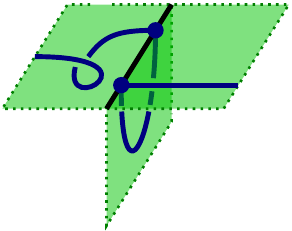} $\xrightarrow{\omega_4}$
		\includegraphics[scale=1.0, valign=c]{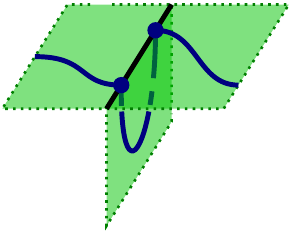}
		\caption{}
	\end{subfigure}\\
	\begin{subfigure}[b]{0.48\textwidth}
		\centering
		\includegraphics[scale=1.0, valign=c]{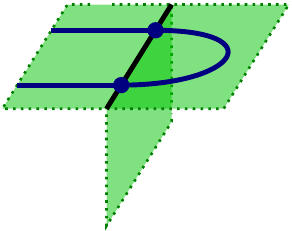} $\xrightarrow{\omega_5}$
		\includegraphics[scale=1.0, valign=c]{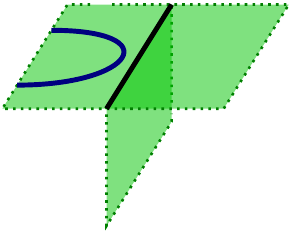}
		\caption{}
	\end{subfigure}
	\begin{subfigure}[b]{0.48\textwidth}
		\centering
		\includegraphics[scale=1.0, valign=c]{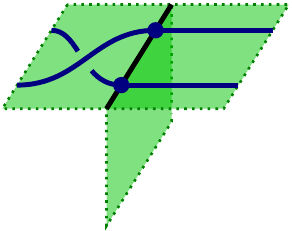} $\xrightarrow{\omega_6}$
		\includegraphics[scale=1.0, valign=c]{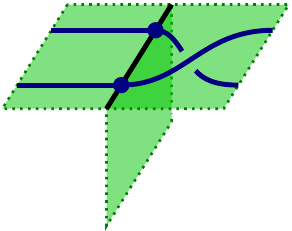}
		\caption{}
	\end{subfigure}\\
	\begin{subfigure}[b]{0.48\textwidth}
		\centering
		\includegraphics[scale=1.0, valign=c]{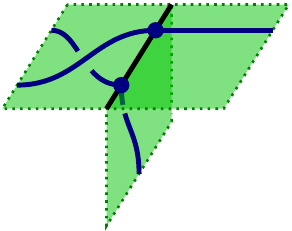} $\xrightarrow{\omega_7}$
		\includegraphics[scale=1.0, valign=c]{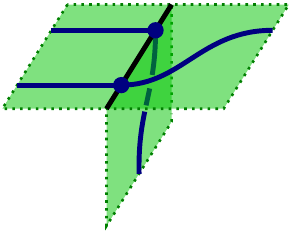}
		\caption{}
	\end{subfigure}
	\begin{subfigure}[b]{0.48\textwidth}
		\centering
		\includegraphics[scale=1.0, valign=c]{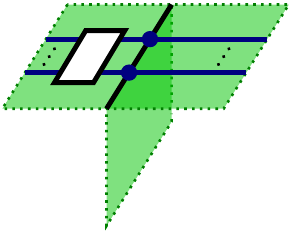} $\xrightarrow{\omega_8}$
		\includegraphics[scale=1.0, valign=c]{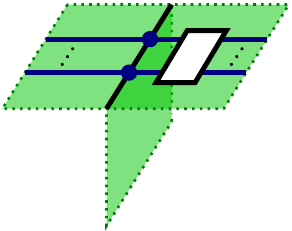}
		\caption{}
	\end{subfigure}\\
	\begin{subfigure}[b]{0.64\textwidth}
		\centering
		\includegraphics[scale=1.0, valign=c]{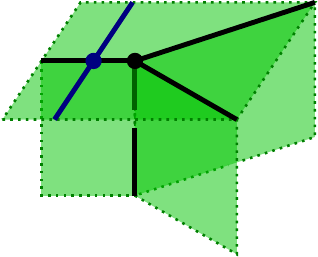} $\xrightarrow{\omega_9}$
		\includegraphics[scale=1.0, valign=c]{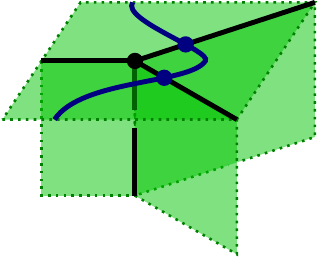}
		\caption{}
	\end{subfigure}\\
	\begin{subfigure}[b]{0.64\textwidth}
		\centering
		\includegraphics[scale=1.0, valign=c]{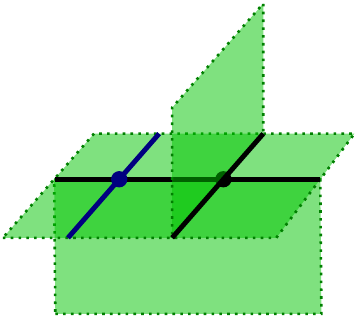} $\xrightarrow{\omega_{10}}$
		\includegraphics[scale=1.0, valign=c]{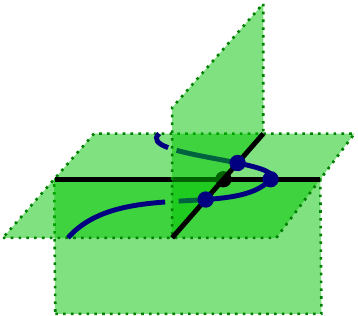}
		\caption{}
	\end{subfigure}
	\caption{$\omega$-moves}
	\label{fig:omegaMoves}
\end{figure}

By Theorem~\ref{thm:ConnectSkeleta}, any two admissible skeleta of a given bordism~$M$, i.\,e.\ any two elements of $\mathscr S(M)$, are related by a finite sequence of admissible BLT moves. 
Similarly, for a ribbon graph~$R$ in~$M$, any two positive admissible ribbon diagrams representing~$R$, i.\,e.\ any two elements of $\mathscr S(M,R)$, are related by a finite sequence of local moves between positive ribbon diagrams, namely those of type BLT or of type $\omega_1, \dots, \omega_{10}$ as in Figure~\ref{fig:omegaMoves}, and their inverses. 
In these pictures the knotted plexi have orientations that agree on both sides of every move, but orientations are not depicted. 
Neither are the orientations of the strata of the admissible skeleta in Figure~\ref{fig:omegaMoves} depicted. 
This means that there is one $\omega$-move for each choice of (admissible) orientation; e.\,g.\ there are $2\cdot 2 = 4$ moves of type~$\omega_4$.\footnote{To see this, pick any orientation of the leftmost 2-stratum in the figure (two choices), by positivity of the switches this fixes the orientations of the remaining two 2-strata on each side; the orientation of the 1-stratum is then determined by admissibility; pick any orientation of the strand (two choices).}

We collectively refer to BLT moves as \textsl{$\omega_0$-moves}, and we observe that the moves of types $\omega_1,\omega_2,\omega_3$ are framed Reidemeister moves. 
The moves $\omega_4, \omega_5, \dots, \omega_{10}$ appear in \cite[Sect.\,14.3]{TVireBook} (where those corresponding to our moves~$\omega_9$ and~$\omega_{10}$ are denoted $\omega_{9,0,1}$ and $\omega_{9,1,0}$, respectively) for the case of skeleta whose 0- and 1-strata are unoriented. 
Moreover, any two such positive ribbon diagrams representing the same ribbon graph in a given bordism are related by $\omega$-moves \cite[Lem.\,14.2\,\&\,Thm.\,14.4]{TVireBook}. 
The analogous result is proven similarly in our setting: 

\begin{proposition}
	\label{prop:omegaMoves}
	Let~$R$ be a ribbon graph in a bordism~$M$. 
	Any two elements in $\mathscr S(M,R)$ that agree on $\partial M$ are related by a finite sequence of moves of type $\omega_0, \omega_1, \dots, \omega_{10}$. 
\end{proposition}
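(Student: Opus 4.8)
The plan is to follow the strategy of \cite[Lem.\,14.2 and Thm.\,14.4]{TVireBook}, adapting each step to the admissible (fully oriented) setting. The overall logic is: reduce the claim about two arbitrary representatives of $R$ to a statement about representatives that are adapted to a common skeleton, then invoke Theorem~\ref{thm:ConnectSkeleta} for the skeleton part and handle the remaining "moving the plexus around a fixed skeleton" part by $\omega_1,\dots,\omega_{10}$ directly.

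First I would set up the relative version. Given $(S,d)$ and $(S',d')$ in $\mathscr S(M,R)$ agreeing on $\partial M$, both $d^f$ and $d'^{f'}$ are isotopic to $R$ rel boundary. Using a tubular neighbourhood of (a representative of) $R$, one isotopes both diagrams so that, outside a collar of $R$, they lie in a common region where the two skeleta can be compared; more precisely, I would first show that it suffices to treat the case where the two underlying skeleta $S$ and $S'$ coincide. This is where Theorem~\ref{thm:ConnectSkeleta} enters: $S$ and $S'$ are connected by a finite sequence of admissible BLT moves, and one must check that each such move can be performed "away from" or "compatibly with" the embedded plexus — i.e.\ by a general-position argument the plexus can be isotoped (using $\omega_1,\omega_2,\omega_3$ to slide strands and crossings, and $\omega_9,\omega_{10}$ to slide coupons) off the small ball in which the BLT move is supported, so that the BLT move lifts to a move between positive admissible ribbon diagrams. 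One must be careful that positivity of switches is preserved throughout; this is exactly the point where the admissibility bookkeeping of Section~\ref{subsubsec:AdmissibleSkeleta} is needed, and where the remark after Definition~\ref{def:AdmissibleRibbonDiagram1} (uniqueness of the admissible orientation, if it exists) keeps the orientation data under control.

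Second, once $S=S'$, I would argue that two knotted plexi $d,d'$ in the \emph{same} admissible skeleton $S$ that represent isotopic ribbon graphs rel boundary are related by $\omega_1,\dots,\omega_{10}$ alone. Here one follows \cite{TVireBook} essentially verbatim: choose an isotopy $H$ from $d^f$ to $d'^{f'}$, put it in general position with respect to the strata of $S$, and decompose $H$ into finitely many elementary stages. Each elementary stage is either an ambient isotopy of the plexus within a single 2-stratum (framed Reidemeister moves $\omega_1,\omega_2,\omega_3$, plus planar isotopy of coupons), or a passage of a strand or a coupon across a 1-stratum or a $0$-stratum of $S$, which is precisely what the moves $\omega_4,\dots,\omega_{10}$ encode; the only additional check compared to loc.\ cit.\ is that, because our $0$- and $1$-strata are \emph{oriented} (and their orientations are fixed by admissibility), each elementary passage occurs in exactly one of the finitely many admissibly oriented local models, so no new moves are needed beyond the oriented variants already built into the count of $\omega$-moves. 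Finally, combining the two parts: an arbitrary pair in $\mathscr S(M,R)$ is connected first by $\omega_0$-moves realising the BLT sequence between the skeleta (with the plexus carried along, introducing $\omega_1,\dots,\omega_{10},\omega_9,\omega_{10}$ as above) and then by $\omega_1,\dots,\omega_{10}$ relating the two plexi in the now-common skeleton, which gives the claim.

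The main obstacle I anticipate is the general-position/transversality bookkeeping in the first part: making precise that a BLT move on the skeleton can always be performed after isotoping the plexus out of the supporting ball \emph{while keeping the ribbon diagram positive admissible at every intermediate stage}, and that the isotopy used is itself a composition of $\omega$-moves. In \cite{TVireBook} the analogous statement is handled for skeleta with unoriented $0$- and $1$-strata; the extra work — and the genuinely novel content here — is verifying that the orientation constraints of Convention~\ref{conv:indOrd} propagate correctly through these isotopies, i.e.\ that one never needs to pass through a non-admissible intermediate configuration. Since, as the authors note, these technicalities are routine for the expert and parallel to the proof of Theorem~\ref{thm:ConnectSkeleta}, I would present the argument at the level of detail above and, if desired, defer the transversality lemmas to Appendix~\ref{app:A}.
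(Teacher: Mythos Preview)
Your strategy is sound in principle but takes a different, and somewhat harder, route than the paper's. You propose a two-stage decomposition: (1) first reduce to a common skeleton by clearing the plexus out of each BLT-move ball via $\omega_1,\dots,\omega_{10}$ and then applying Theorem~\ref{thm:ConnectSkeleta}; (2) with the skeleton now fixed, relate the two plexi by a general-position isotopy argument. The obstacle you flag at the end --- ensuring the plexus can always be pushed off the BLT support while remaining positive admissible, and that this pushing is itself a composite of $\omega$-moves --- is genuine work, and note that step~(1) essentially presupposes the fixed-skeleton statement of step~(2), so the order of the argument would need to be reversed.

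The paper bypasses this decomposition entirely. Rather than separating skeleton moves from plexus moves, it observes that \cite[Lem.\,14.2 and Thm.\,14.4]{TVireBook} already prove the \emph{full} statement (skeleta and plexi simultaneously) in the setting where $0$- and $1$-strata are unoriented, via a specific sequence of $\omega^{\mathrm{TV}}$-moves. The paper then simply inspects that proof and checks that every $\omega^{\mathrm{TV}}$-move appearing in it lifts to an admissible $\omega$-move. The only place this could fail is when a new $2$-stratum is created (``attaching a bubble'', cf.\ \cite[Lem.\,14.7 and Fig.\,14.13]{TVireBook}), and there one verifies directly that one of the two possible orientations of the new $2$-stratum extends to an admissible orientation of the whole local configuration. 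Your approach would end up reproving a fair amount of \cite[Sect.\,14.4--14.7]{TVireBook} in the admissible setting; the paper's approach is shorter because it reuses that work wholesale and only audits it for admissibility.
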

\begin{proof}	
	Only the moves of type~$\omega_0$, which by definition are admissible BLT moves, change the underlying admissible skeleta, while not affecting the knotted plexi of ribbon diagrams. 
	Hence when restricting to ribbon diagrams which differ only away from their knotted plexi, the statement follows from Theorem~\ref{thm:ConnectSkeleta}. 
	
	Recall that in \cite{TVireBook}, the notion of skeleton comes with orientations for 2-strata, while 0- and 1-strata do not carry orientations (contrary to our setting). 
	Accordingly, the original variant of $\omega$-moves in \cite[(14.1)]{TVireBook}, to which we refer here as $\omega^{\textrm{TV}}$-moves, is between positive ribbon diagrams without orientations for 0- and 1-strata.
	In Sections 14.4--14.7 of loc.\ cit., it is shown that any two positive ribbon diagrams representing $(M,R)$ are related by $\omega^{\textrm{TV}}$-moves. 
	Note that elements of $\mathscr S(M,R)$ are positive admissible ribbon diagrams.
	Moreover, ``our'' $\omega$-moves in Figure~\ref{fig:omegaMoves} are $\omega^{\textrm{TV}}$-moves between positive ribbon diagrams which are endowed with an admissible orientation for all strata. 
	
	It follows that Proposition~\ref{prop:omegaMoves} holds if in the proofs of \cite{TVireBook}, we can restrict to $\omega^{\textrm{TV}}$-moves which lift to $\omega$-moves. 
	This is indeed the case: whenever a new 2-stratum appears in the construction of \cite[Sect.\,14.4--14.7]{TVireBook} (i.\,e.\ when ``attaching a bubble'', cf.\ Lemma~14.7 and Figure~14.13 of loc.\ cit.), there is a choice of orientation for this 2-stratum, and upon close inspection we notice that one of these choices is compatible with a (unique) choice of orientations for the new 0- and 1-strata which makes the entire positive ribbon graph admissibly oriented. 
\end{proof}

\section{Defect TQFTs}
\label{sec:DefectTQFTs}

In this section we first review the notions of 3-dimensional defect bordisms and defect TQFTs from \cite{CMS,CRS1}, and that to every defect TQFT~$\zz$ there is a naturally associated 3-category~$\tz$. 
After a brief reminder on coloured ribbon bordisms and graph TQFTs, we then present a construction that produces new line defect labels from~$\tz$, which can be thought of as a completion procedure on defect data. 
This will be important in Section~\ref{subsec:RibbonCategoriesFromSOD}, where we will construct a canonical ribbon category from orbifold data and completed defect data.

\subsection{Review of 3-dimensional defect TQFT}
\label{subsec:Review3dDefectTQFT}

A 3-dimensional defect TQFT is by definition a symmetric monoidal functor $\zz \colon \Bordd[3] \longrightarrow \Vect$, where $\D$ are 3-dimensional defect 
data, and $\Bordd[3]$ is the symmetric monoidal category of 3-dimensional defect bordisms decorated with defect data $\D$. 
We start by recalling the relevant definitions.

\subsubsection{Defect data and defect bordisms}

A list of \textsl{3-dimensional defect data $\D=(D_{3},D_{2},D_{1},s,t,f)$} consists of \cite[Def.\,2.6]{CMS}
\begin{enumerate}
  \item 
  three sets $D_{3},D_{2},D_{1}$,
  \item  
  \textsl{source} and \textsl{target} maps $s,t \colon D_{2} \longrightarrow D_{3}$,
  \item 
  and a \textsl{folding} map $f \colon D_{1} \longrightarrow [\Sphere_{1}(\D)]$.
\end{enumerate}
Here $\Sphere_{1}(\D)$ is the set  of all \textsl{defect circles}: an element $S \in \Sphere_{1}(\D)$  is a  stratified oriented circle $S^{1}$ whose 1-strata are decorated with elements in $D_{3}$, and whose $0$-strata are decorated with pairs $(\alpha, \pm)$, $\alpha \in D_{2}$, subject to the condition that the 1-strata oriented away from (resp.\ towards) an $(\alpha,+)$-decorated 0-stratum are decorated by $t(\alpha)$ (resp.\ $s(\alpha)$), while for an $(\alpha,-)$-decorated 0-stratum the decorations are swapped. 
The bracket around $\Sphere_{1}(\D)$ signals the set of equivalence classes of such stratified decorated circles, where $S$ and $S'$ are equivalent if they are related by a decoration-preserving isomorphism of stratified manifolds. 
Thus the remaining information of a class $[S] \in [\Sphere_{1}(\D)]$ is just the cyclic set of compatible decorations on the 0-strata, which is the point of view taken in~\cite[Def.\,2.6]{CMS}. 
We extend the map~$f$ to $f \colon  D_{1} \times \{\pm\} \longrightarrow [\Sphere_{1}(\D)]$ by setting $f((x,+))=f(x)$ and $f((x,-))= f(x)^{\mathrm{rev}}$ for $x \in D_1$, where the reverse of a defect circle is the defect circle with the orientation of all strata reversed. 

For 3-dimensional defect data $\D$, there is a symmetric monoidal category \textsl{$\Bordd[3]$ of 3-dimensional decorated defect bordisms}, see \cite[Def.\,2.4]{CRS1}. 
By definition, a morphism in $\Bordd[3]$ is a morphism in $\Borddefn{3}$ (cf.\ Section~\ref{subsubsec:DefectBordisms}) together with a decoration by $\D$: each $j$-stratum is labelled with an element of $D_{j}$ for $j\in\{1,2,3\}$, such that the decoration is compatible with the maps $s,t,f$, namely that the 3-strata adjacent to an $\alpha$-labelled 2-stratum are labelled by $s(\alpha)$ and $t(\alpha)$, and the labels of 2-strata adjacent to a 1-stratum~$L$ are read off of $f(L)$. 
Similarly, objects $\Bordd[3]$ are objects of $\Borddefn{3}$ together with a label in $D_{k+1}$ for each $k$-stratum, $k\in\{0,1,2\}$, and the decorations induced at the boundary of a morphism in $\Bordd[3]$ must match with the decorations of the source and target objects. 

\begin{definition}
\label{def:DefectTQFT}
A \textsl{3-dimensional defect TQFT with defect data $\D$} is a symmetric monoidal functor
\be 
\zz \colon \Bordd[3] \lra \Vect \, . 
\ee 
\end{definition}

Examples of defect TQFTs can be obtained from anomaly-free modular fusion categories $\CC$: 
As explained in \cite{CRS2}, the Reshetikhin--Turaev TQFT associated to~$\CC$ lifts to a defect TQFT 
\be 
\label{eq:RTdefectTQFT}
\zzc \colon \Borddefn{3}(\D^\CC) \lra \Vect \, , 
\ee 
where~$D^\CC_2$ consists of $\Delta$-separable symmetric Frobenius algebras in~$\CC$, and~$D^\CC_1$ consists of certain multi-modules. 
(If~$\CC$ does have an anomaly, Reshetikhin--Turaev theory is instead defined on an ``extended'' defect bordism category $\Borddefen{3}(\D^\CC)$, see \cite{CRS2} for details.)

\subsubsection{The 3-category associated to a defect TQFT}
\label{subsubsec:3catForDefectTQFT}

For each defect TQFT $\zz \colon \Bordd[3] \lra \Vect$, there is an associated 3-category $\tz$, see \cite[Sect.\,3.3--3.4]{CMS}. 
More precisely, $\tz$ is a ``Gray category with duals'', which is analogous to the fact that every 2-dimensional defect TQFT gives rise to a pivotal 2-category as explained in \cite{DKR}. 

We refer to \cite{CMS} for the detailed construction of~$\tz$ as well as all relevant definitions. 
For our purposes here it suffices to recall that 
\begin{itemize}
	\item 
	objects of~$\tz$ are elements of~$D_3$, pictured as labelling an (oriented yet otherwise structure-less) 3-cube; 
	\item 
	1-morphisms are (equivalence classes of) parallel $D_2$-labelled planes inside a 3-cube;
	\item 
	2-morphisms are (equivalence classes of) decorated stratified 3-cubes~$X$ that are cylinders over string diagrams of the pivotal pre-2-category freely generated by~$\D$, such that $j$-strata of~$X$ are labelled by elements of~$D_j$;
	\item 
	3-morphisms are elements of the vector spaces that~$\zz$ assigns to defect spheres. 
\end{itemize}

For illustration, note that
\be
\alpha = \includegraphics[scale=1.0, valign=c]{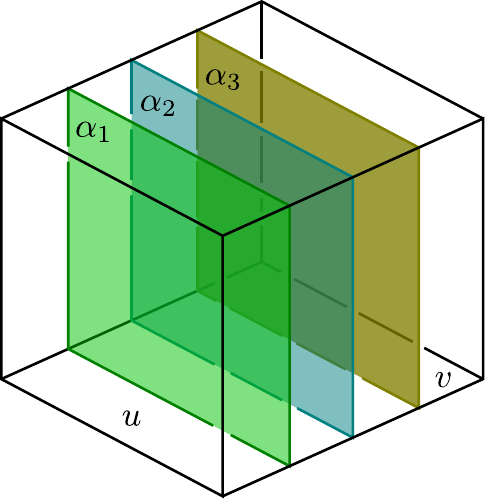}
~\text{ and }~
X = \includegraphics[scale=1.0, valign=c]{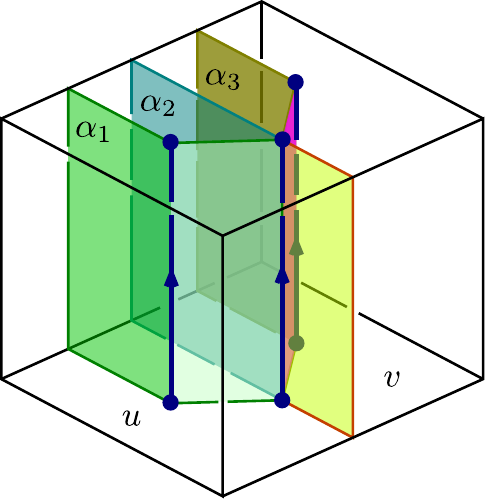}
\ee
represent a 1-morphism $\alpha\colon u \lra v$ and a 2-morphism~$X$ with $t(X) = \alpha$, respectively, and only parts of the decorations are shown. 
Note that contrary to generic $(\infty,3)$-categories, 3-morphisms in the 3-category~$\tz$ form vector spaces which do not carry any further homotopical information.

\subsection{Review of 3-dimensional graph TQFT} 
\label{ssec:3dimGraphTQFT}

The relation between unlabelled and labelled ribbon bordism categories is analogous to the relation between the defect bordism categories $\Borddef_3$ and $\Bordd[3]$. 
Indeed, recall from Section~\ref{sec:unlabelled-ribbon-bordisms} the unlabelled ribbon bordism category $\Bordribn{3}$, and  let~$\CC$ be a $\Bbbk$-linear \textsl{ribbon category} for a field $\Bbbk$, i.\,e.\ a $\Bbbk$-linear braided pivotal category whose left and right twists coincide, see e.\,g.\ \cite[Sect.\,3.3]{TVireBook}. 
Objects of the labelled ribbon bordism category $\Bordribn{3}(\CC)$ are objects $\Sigma \in \Bordribn{3}$ together with a label $X_i\in\CC$ for every marked point~$p_i$ of~$\Sigma$. 
Morphisms in $\Bordribn{3}(\CC)$ are morphisms $(M,R)$ in $\Bordribn{3}$ as in Section~\ref{sec:unlabelled-ribbon-bordisms}, where in addition each strand and coupon of the ribbon graph~$R$ is (compatibly) labelled with an object and morphism in~$\CC$, respectively. 

We will consistently use calligraphic Roman letters for such $\CC$-coloured ribbon graphs~$\mathcal R$, and non-calligraphic letters for the underlying ribbon graphs~$R$. 
For more details we refer to \cite[Sect.\,15.2.1]{TVireBook}, where $\Bordribn{3}(\CC)$ is denoted $\textrm{Cob}_3^\CC$. 

\begin{definition}
A \textsl{graph TQFT} over a $\Bbbk$-linear ribbon category~$\CC$ is a symmetric monoidal functor 
\be 
\Bordribn{3}(\CC) \lra \Vect \, ,
\ee 
where $\Vect$ denotes the symmetric monoidal category $\Bbbk$-vector spaces. 
\end{definition}

\begin{remark}
\label{rem:RTandD0}
\begin{enumerate}[label={(\roman*)}]
\item 
For an anomaly-free modular fusion category~$\CC$, the Resheti\-khin--Turaev construction \cite{turaevbook} produces a graph TQFT 
\be 
	\label{eq:RTgraphTQFT}
	\zz^{\textrm{RT},\CC} \colon \Bordribn{3}(\CC) \lra \Vect \, . 
\ee 
In \cite{CMRSS2} we will apply the results of the present paper to combine~\eqref{eq:RTgraphTQFT} and the defect TQFT~\eqref{eq:RTdefectTQFT} to construct an ``orbifold graph TQFT'' (as introduced in Section~\ref{subsec:OrbifoldGraphTQFTs} below), including the case of anomalous~$\CC$.  
\item 
\label{item:D0completion}
Recall from \cite[Sect.\,2.4]{CRS1} that the $D_0$-completion $\zz^\bullet \colon \Borddef_3(\D^\bullet) \lra \Vect$ is a canonical extension for any defect $\zz \colon \Bordd[3] \lra \Vect$ to a symmetric monoidal functor on defect bordisms that may also have decorated 0-strata in addition to $j$-strata for $j\geqslant 1$. 
Just as in the case without 0-strata (Definition~\ref{def:DefectTQFT}), we refer to~$\zz^\bullet$ as a defect TQFT.
\end{enumerate}
\end{remark}

\subsection{The line completion for defect TQFTs}
\label{subsec:MonoidalCatFromDefectTQFT}

Given a 3-dimensional defect TQFT $\zz$, we would like to be able to use the $k$-morphisms of the associated 3-category $\tz$ as $(3-k)$-dimensional defects. 
Here we will describe a completion of the defect data~$\D$ which implements this for $k=2$.

The intuitive picture is as follows: a morphism $g$ in $\tz$ is a certain stratified cube; if a stratum in a bordism is decorated by $g$ one would like to ``replace the corresponding stratum by the cube representing $g$''. 
However, the local neighbourhoods around the strata in a bordism are modelled by spheres and their 
cones and cylinders. 
Hence to avoid making additional choices, in this section we first ``complete'' the defect data~$\D$ of~$\zz$ to new defect data $\widehat{\D}$ which have additional line defect labels modelled on defect discs; the line defect label $X \in \widehat{D}_{1}$ in~\eqref{eq:LineDefectWithInternalStructure} is an example of such an additional label. 
In a second step we will then see how the label set $\widehat{D}_{1}$ indeed corresponds to certain 2-morphisms of~$\tz$. 
	
The purpose of this section is to make precise the idea of ``tensoring line defect labels''; this is a prerequisite of the construction (in Section~\ref{subsec:RibbonCategoriesFromSOD} below) of the ribbon categories~$\wa$ attached to~$\zz$. 

\medskip 

We start by defining \textsl{decorated defect 2-manifolds} (possibly with boundary) as stratified 2-manifolds with local neighbourhoods in $\mathcal{N}_{2}$ for points in the interior,  whose 2-strata are decorated by $D_{3}$, 1-strata are decorated by $D_{2}$, and 0-strata by $D_{1}$, such that the local neighbourhoods are compatible with the decorations as allowed by the maps $s,t,f$ of~$\D$. 
We denote by $\Disc(\D)$ (resp.\ $\Sphere_{2}(\D)$) the $\D$-decorated stratified 2-manifolds with underlying manifold being discs (resp.\ spheres). 
For example, we have  
\be 
\includegraphics[scale=1.0, valign=c]{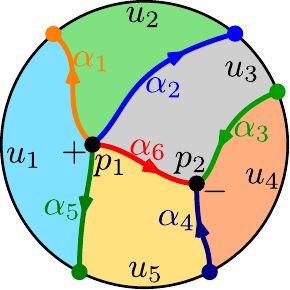} 
\in \Disc(\D)
\, , \qquad 
\includegraphics[scale=1.0, valign=c]{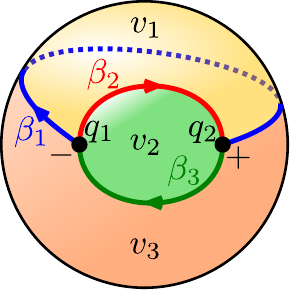} 
\in \Sphere_{2}(\D) \, , 
\ee 
where $u_i, v_j\in D_3$, $\alpha_i, \beta_j \in D_2$, $p_i, q_j\in D_1$ with adjacency of the strata as illustrated. 
Note that closed decorated defect 2-manifolds are precisely the objects of $\Borddef_3(\D)$. 

In particular we have a map $C\colon D_{1} \times \{\pm\}\longrightarrow [\Disc(\D)]$, mapping $(x,\varepsilon) \in  D_{1} \times \{\pm\}$ to the equivalence class of the cone $[Cf((x,\varepsilon))]$ with the 0-stratum corresponding to the cone point decorated by~$x$, for example
\be
\label{eq:starlikedisc}
C(x,+) = \includegraphics[scale=1.0, valign=c]{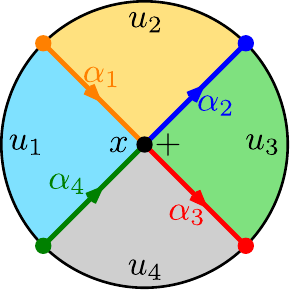} \, , 
\ee
where $\alpha_i \in D_2$, $u_i \in D_3$, and the adjacent strata of $x\in D_1$ are as indicated.

\begin{definition}
Let $\D$ be a list of 3-dimensional defect data. 
The \textsl{line defect completion of~$\D$} is the list of 3-dimensional defect data~$\widehat{\D}$ consisting of
\begin{enumerate}
	\item 
	the sets $\widehat{D}_{3}=D_{3}$ and $\widehat{D}_{2}=D_{2}$ with the maps $s,t$ from $\D$,
	\item 
    the set $\widehat{D}_{1}=\Disc(\D)$,
	\item 
	the map $\widehat{f}=\partial\colon \widehat{D}_{1} \longrightarrow [\Sphere_{1}(\D)] $ which assigns to $X\in \widehat{D}_{1}$ the isomorphism class represented by the boundary of~$X$.
\end{enumerate}
\end{definition}

Note that here we do not consider isomorphism classes for the elements in $\widehat{D}_{1}$.

\subsubsection{The line defect completion of a defect TQFT}

For a given defect TQFT  $\zz \colon \Bordd[3] \longrightarrow \Vect$ we will define a ``line defect completed'' defect TQFT
$\widehat{\zz} \colon \Borddefn{3}(\widehat{\D}) \longrightarrow \Vect$ by defining a symmetric monoidal \textsl{insertion functor} 
\be 
\In \colon \Borddefn{3}(\widehat{\D}) \longrightarrow \Borddefn{3}(\D) \, . 
\ee 
First, by shrinking the local neighbourhoods in the definition of a defect manifold, we can assume that these specify for every object $\Sigma \in \Borddefn{3}(\widehat{\D})$ for each 0-stratum $p\in\Sigma$ a closed neighbourhood $N_p$ and an isomorphism $\varphi^{\Sigma}_p \colon N_p \longrightarrow CS_{p}$, where~$S_{p}$ is the boundary of the specified local neighbourhood (i.\,e.\ of the specified element in~$\mathcal{N}_{2}$, see~\eqref{eq:N2-neighbourhoods}) at~$p$, and $CS_{p}$ is its cone.

Similarly, for a morphism~$M$ in $\Borddefn{3}(\widehat{\D})$ we can choose for each 1-stratum~$L$ of~$M$ with corresponding defect circle~$S_{L}$ a tubular closed neighbourhood~$N_L$ with a specified isomorphism
$\varphi_L$, which is either 
\be 
\varphi_L \colon N_L \longrightarrow CS_{L} \times [0,1]
\quad \text{ or } \quad 
\varphi_L \colon N_L \longrightarrow CS_{L} \times S^1
\, , 
\ee 
depending on whether $L$ meets the boundary of $M$ or not. 
In the first case the neighbourhood $N_L$ is required to restrict to the already chosen neighbourhood of the corresponding 0-stratum on the boundary.

Now we define for an object $\Sigma \in \Borddefn{3}(\widehat{\D})$ the object 
 \begin{equation}
  \label{eq:repl-bdy}
  \In(\Sigma) = \Big( \Sigma \setminus \bigcup_{p \in \Sigma_0}N_p\Big) \cup_{\varphi_p} X_p
\end{equation}
of $\Borddefn{3}(\D)$, where $p$ runs over all 0-strata of $\Sigma$ and $X_p \in \widehat{D}_{1}=\Disc(\D)$ is 
the decoration at the 0-stratum $p$.
That is, we remove the neighbourhoods~$N_p$ and glue in the discs $X_p$ instead. 
Different choices of neighbourhoods lead to isomorphic functors $\In$, here we fix one such choice for each $\Sigma$.

To define $\In$ on morphisms in the case where~$L$ with decoration $X_{L}$ meets the boundary of~$M$, we set
\begin{equation}
  \label{eq:repl-L}
  \In_L(M) = \big( M \setminus N_L \big) \cup_{\varphi_L} \big(X_{L} \times [0,1]\big) \, . 
\end{equation}
That is, we insert the cylinder over the defect disc $X_{L}$ in place of (a cylindrical neighbourhood of) $L$. 
In the other case, where~$L$ forms a circle in $M$, we set 
$\In_L(M) = (M \setminus N_L) \cup_{\varphi_L} (X_{L} \times S^1)$. 
Finally we define
\be 
\In(M)=\In_{L_m} \ldots \In_{L_1}(M) \, , 
\ee 
where $L_1, \ldots, L_m$ are all 1-strata of $M$. 
Clearly this is independent of the order of $L_1, \ldots, L_m$ and defines for  $M\colon \Sigma \longrightarrow \Sigma'$ a morphism  $\In(M) \colon \In(\Sigma) \longrightarrow \In(\Sigma')$ which does not depend on the  choices of closed neighbourhoods $N_{L}$. 
The functor $\In$ is symmetric monoidal by construction, and we thus obtain: 

\begin{definition}
\label{def:LineDefectCompletionOfZ}
  Let  $\zz \colon \Bordd[3] \longrightarrow \Vect$  be a defect TQFT. The \textsl{line defect completion} of $\zz$ is the defect TQFT 
  \be 
  \widehat{\zz} := \zz\circ \In \colon \Borddefn{3}(\widehat{\D}) \longrightarrow \Vect \, . 
  \ee 
\end{definition}

Recall from Section~\ref{subsubsec:3catForDefectTQFT} that to any 3-dimensional defect TQFT~$\zz$ there is an associated 3-category~$\tz$. 

\begin{proposition}
\label{proposition:tzhat}
We have an equivalence of Gray categories with duals: 
\be 
\tz \cong \mathcal T_{\widehat{\zz}} \, .
\ee 
\end{proposition}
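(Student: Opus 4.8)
The plan is to exhibit a strict functor $\Phi\colon \tz \to \mathcal T_{\widehat\zz}$ that is the identity on $0$- and $1$-cells, then check it is an equivalence (indeed an isomorphism) on $2$- and $3$-cells, and finally verify it is compatible with all the duality and Gray structure. Since $\widehat{D}_3 = D_3$ and $\widehat{D}_2 = D_2$, the objects of both Gray categories are elements of $D_3$ and the $1$-morphisms are parallel $D_2$-labelled planes; so $\Phi$ is forced to be the identity there. For $2$-morphisms: recall that a $2$-morphism of $\tz$ is (an equivalence class of) a decorated stratified $3$-cube $X$, built as a cylinder over a string diagram of the pivotal pre-$2$-category generated by~$\D$, with $j$-strata labelled by $D_j$. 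On the other side, a $2$-morphism of $\mathcal T_{\widehat\zz}$ is the same kind of cylinder but over a string diagram generated by~$\widehat\D$ — whose $1$-label set is now $\widehat{D}_1 = \Disc(\D)$, i.e. $\D$-decorated defect discs. The key observation is that a cylinder over a single $\widehat{D}_1$-coupon is, after applying the insertion functor $\In$ (or rather its ``open'' analogue relating $2$-morphism cubes), literally the $3$-cube obtained by gluing in that disc $X_p \in \Disc(\D)$; conversely, any $\D$-labelled $2$-morphism cube of $\tz$ can be read as a $\widehat\D$-labelled cube by collapsing each of its elementary pieces (a single point defect surrounded by its cone of line and surface defects, cf.~\eqref{eq:starlikedisc}) into a single $\widehat{D}_1$-coupon. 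First I would make this precise: define $\Phi$ on generators of $2$-morphisms, extend by the pivotal $2$-category structure, and check well-definedness on equivalence classes using that the equivalence relation on both sides is generated by the same $2$-dimensional moves (since discs in $\Disc(\D)$ are not taken up to isomorphism, no information is lost).

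Next I would treat the $3$-morphisms. Here $3$-morphisms of $\tz$ are elements of $\zz$ applied to defect spheres (built from $\D$), and $3$-morphisms of $\mathcal T_{\widehat\zz}$ are $\widehat\zz = \zz \circ \In$ applied to defect spheres built from $\widehat\D$. So I need a natural isomorphism between $\zz$ evaluated on a $\D$-sphere and $\widehat\zz$ evaluated on the corresponding $\widehat\D$-sphere. But this is exactly what the insertion functor gives: $\In$ sends a $\widehat\D$-decorated $2$-sphere (with $\widehat{D}_1$-labelled $0$-strata, i.e. defect discs) to the $\D$-decorated sphere obtained by excising neighbourhoods of those $0$-strata and gluing in the discs, and $\widehat\zz = \zz \circ \In$ by definition. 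Thus on $3$-morphism spaces $\Phi$ is literally the identity map of vector spaces (once the two sides are identified via $\In$), which is trivially an isomorphism; the only thing to check is that this identification is compatible with the vertical and horizontal composition of $3$-cells, which follows because $\In$ is a symmetric monoidal functor (Definition~\ref{def:LineDefectCompletionOfZ}) and hence respects the gluing operations that define these compositions.

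Finally I would verify that $\Phi$ respects the full structure of a Gray category with duals: the interchange/Gray $2$-cells, the adjunction (cup/cap) data for $1$- and $2$-morphisms, and the $\dagger$-type involutions. Each of these is built from elementary bordism pieces — cylinders, cones, mapping cylinders of twists — and since $\In$ replaces only $0$-strata neighbourhoods by discs (respectively $1$-strata by cylinders over discs) and commutes with disjoint union, gluing, and orientation reversal, all the structural bordisms on the $\widehat\D$-side map under $\In$ to the corresponding ones on the $\D$-side. Concretely I expect to argue diagram-by-diagram that $\Phi$ intertwines the structural isomorphisms, invoking at each step only that $\In$ is symmetric monoidal and that it is the identity on $2$- and $3$-strata.

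The main obstacle I anticipate is bookkeeping rather than conceptual: making the comparison of $2$-morphisms genuinely precise requires fixing, once and for all, how a string diagram over $\widehat\D$ (with $\Disc(\D)$-labelled coupons) is ``unpacked'' into a string diagram over~$\D$, i.e. choosing cone/cylinder identifications $\varphi_p, \varphi_L$ consistently, and then showing that different choices give the same $2$-morphism in $\tz$ (this is the ``different choices of neighbourhoods lead to isomorphic functors $\In$'' remark in the construction). One must be careful that the pivotal structure — cups, caps, and the resulting rotation of coupons — is matched under this unpacking; here the fact that the discs in $\Disc(\D)$ carry their own stratification and are \emph{not} quotiented by isomorphism is exactly what makes $\Phi$ faithful, while the compatibility of $\widehat f = \partial$ with $f$ is what makes it full. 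Once these identifications are nailed down, essential surjectivity on $2$-cells is immediate (every $\D$-labelled cube is tautologically a $\widehat\D$-labelled cube with all coupons of the ``elementary'' form), so $\Phi$ is an equivalence, and in fact the argument shows it is an isomorphism of Gray categories with duals.
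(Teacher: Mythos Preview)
Your functor $\Phi$ runs in the opposite direction from the paper's: you build $\Phi\colon \tz \to \tzhat$ from the inclusion $D_1\hookrightarrow \widehat{D}_1$, whereas the paper builds $\In\colon \tzhat\to\tz$ by unpacking each $\widehat{D}_1$-label (a defect disc) into a progressive $\D$-diagram inside the cube. Either direction can be made to work, but the location of the nontrivial step depends on the direction chosen, and you have misidentified it.

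For your $\Phi$, the nontrivial step is \emph{essential surjectivity on $2$-morphisms}: given an arbitrary $\widehat\D$-decorated cube~$X$ (whose $1$-strata carry general labels $Y\in\Disc(\D)$), you must produce a $\D$-decorated cube~$X'$ and an invertible $3$-morphism $X\cong \Phi(X')$ in~$\tzhat$. Your parenthetical justification, ``every $\D$-labelled cube is tautologically a $\widehat\D$-labelled cube with all coupons of the elementary form'', is the statement that $\Phi$ is \emph{defined}, not that it is essentially surjective; the quantifier is on the wrong side. The natural candidate for~$X'$ is of course $\In(X)$, but then one still has to exhibit the invertible $3$-cell $X\cong\Phi(\In(X))$ in~$\tzhat$, and this is precisely where the insertion argument does its work --- it is not immediate. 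In the paper's direction this step becomes a one-liner: $\In\circ\iota$ is the identity on $2$-morphisms of~$\tz$, so $\In$ is essentially surjective for free, while fully-faithfulness on $3$-morphisms follows because the defect spheres computing $\Hom_{\tzhat}(X,X')$ and $\Hom_{\tz}(\In X,\In X')$ literally coincide by $\widehat\zz=\zz\circ\In$.

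A smaller point: your final claim that $\Phi$ is an \emph{isomorphism} of Gray categories cannot be right. The set of $2$-morphisms of~$\tzhat$ is strictly larger than that of~$\tz$, since $\widehat{D}_1=\Disc(\D)$ strictly contains the image of the cone map $C\colon D_1\times\{\pm\}\to\Disc(\D)$, and the equivalence relation on cubes does not identify distinct labels. So one obtains an equivalence, not an isomorphism.
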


\begin{proof}
  To see this, we apply the insertion functor to the cubes that correspond to morphisms in $\tzhat$. 
  More precisely, consider the functor $\In \colon \tzhat \longrightarrow \tz$ defined as follows. 
  It is the identity on objects and 1-morphisms. 
  
  To define $\In$ on 2-morphisms, first pick for each $Y \in \widehat{D}_{1}$ a square around~$Y$ and extend~$Y$ to a progressive diagram $\mathrm{prog}(Y)$ in the square.\footnote{In case there is a horizontal 1-stratum in $Y$, first pass to a choice of isomorphic progressive defect disc.}
   
  For a 2-morphism~$X$ in $\tzhat$ define $\In(X)$ by first picking a cube that represents~$X$, then insert  for  each 1-stratum with decoration $Y$ the corresponding progressive diagram $\mathrm{prog}(Y)$, where we pick the local neighbourhoods of the  1-strata small enough  to ensure  that their projections to the $x$-axis do not overlap (here we use the conventions of \cite[Sect.\,3.1.2]{CMS}). 
  After passing again to equivalence classes we obtain a well-defined 2-morphism of $\tz$.
  
  For the 3-morphisms we use that by \cite[Sect.\,3.3]{CMS}, the 3-morphisms in $\tzhat$ and in $\tz$ are obtained by applying $\zzhat$ and $\zz$, respectively, to defect spheres. 
  By definition, the corresponding defect spheres for $\Hom_{\tzhat}(X,X')$ and $\Hom_{\tz}(\In(X), \In(X'))$ match and we can identify the 3-morphisms.

  To see that $\In$ is an equivalence of Gray categories, it suffices to show that it is essentially surjective on 2-morphisms. 
  This is the case since each 2-morphism~$X$ of~$\tz$ gives a 2-morphism $\iota(X)$ of $\tzhat$, using the obvious inclusion $D_{1} \longrightarrow \widehat{D}_{1}$, which lifts to a functor $\iota \colon \Borddefn{3}(\D) \lra \Borddefn{3}(\widehat\D)$. 
  Thus $\In(\iota(X))=X$, showing that $\In$ is an equivalence. 
  Moreover, $\In$ is obviously compatible with the duals. 
\end{proof}

\subsubsection{A 2-category associated to line defect completion}
\label{sec:bicat-from-wideh}
By the general construction of the 3-category $\tz$ for a defect TQFT $\zz$, for all $u,v \in D_{3}$ there is a full sub-2-category $\tz^{1}(u,v)$ of $\tz(u,v)$ whose objects form the set $\{ \alpha \in D_{2} \,|\, s(\alpha)=u, \, t(\alpha)=v \}$. 
Thus, the 1-morphisms of $\tz^{1}(u,v)$ correspond almost to elements of $\widehat{D}_{1}$: 
the difference is that the elements of $\widehat{D}_{1}$ are neither 3-cubes (but defect 2-discs) nor isomorphism classes.
These are minor differences, however we want to use the elements of $\widehat{D}_{1}$ directly as morphisms of a 2-category for our construction in Section \ref{subsec:RibbonCategoriesFromSOD}. 
Thus we now mimic the construction of~$\tz$ to define 2-categories $\W(u,v)$ whose 1-morphisms are precisely elements of $\widehat{D}_{1}$. 
Then we show that $\W(u,v)$ is equivalent to $\tz^{1}(u,v)$.
 
\medskip 

We start with two operations for $\widehat{D}_{1}$. 
For $X \in \widehat{D}_{1}$, we denote by $X^{*} \in \widehat{D}_{1}$ the decorated 2-disc which is obtained by reversing the orientations of all strata of $X$.
Second, for $X, Y \in \widehat{D}_{1}$ with matching boundary in the sense that  $\partial X = \partial (Y^{*})$, 
we can consider $X \circ Y \in \Sphere_{2}(\D)$, which by definition is the defect 2-sphere that is obtained from gluing the 2-disc~$Y$ on top of the 2-disc~$X$ along their common boundary. 

For fixed elements $u,v \in D_{3}$, consider $\alpha,\beta \in D_{2}$ with $s(\alpha)=t(\beta)=u$ and $t(\alpha)=s(\beta)=v$, and set 
\be\label{eq:starlikecircle}
S_{\alpha,\beta} := \includegraphics[scale=1.0, valign=c]{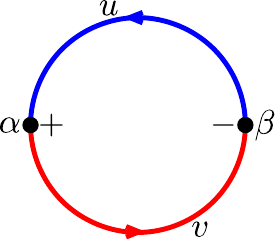} 
\, \in \Sphere_{1}(\D)
\, .
\ee
A choice of $S_{\alpha,\beta}$ defines the lower arrow in the following pullback diagram of sets:
\begin{equation}
  \label{eq:pullbackW1}
  \begin{tikzcd}
    \mathcal D(\alpha,\beta) \ar{d} \ar{r} &\widehat{D}_{1} \ar{d}{ \widehat{f}} \\
    \ast \ar[r,"S_{\alpha,\beta}"'] & \Sphere_{1}(\D) 
  \end{tikzcd}
\end{equation}
Thus $\mathcal D(\alpha,\beta)$ consists of the elements of $\widehat{D}_{1}$ with specified boundary.
We call~$\alpha$ the source of an element of $\mathcal D(\alpha,\beta)$ and $\beta$ the target.
In particular we can consider for $X,Y \in \mathcal D(\alpha,\beta)$ the defect 2-sphere $X^{*} \circ Y$.

\begin{lemma}
Let $\zz \colon \Bordd[3] \longrightarrow \Vect$ be a defect TQFT. 
For all $u,v \in D_{3}$ there is an associated linear pivotal 2-category  $\W(u,v)$  such that   
\begin{enumerate}
\item the objects of $\mathcal W(u,v)$ form the set $\{ \alpha \in D_{2} \,|\, s(\alpha)=u, \, t(\alpha)=v \}$,
\item the set of 1-morphisms of $\W(u,v)$ from $\alpha$ to $\beta$ is $\mathcal D(\alpha,\beta)$ as in~\eqref{eq:pullbackW1},
\item for $X,Y \in \mathcal D(\alpha,\beta)$, the set of 2-morphisms is
  \begin{equation}
    \label{eq:2-morph}
    \Hom_{\W(u,v)}(X,Y)=\zz(X^{*} \circ Y) \, . 
  \end{equation}
\end{enumerate}
\end{lemma}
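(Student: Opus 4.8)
The plan is to mimic the construction of the 3-category $\tz$ from \cite{CMS}, but with 3-cubes replaced by defect 2-discs, and to verify that the axioms of a linear pivotal 2-category are inherited from the corresponding structure on $\tz$.

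First I would define the compositions. Horizontal composition of 1-morphisms $X \in \mathcal D(\alpha,\beta)$ and $Y \in \mathcal D(\beta,\gamma)$ is given by gluing the two decorated defect 2-discs along the arc of their boundary circles decorated by~$\beta$, i.e.\ stacking $S_{\alpha,\beta}$ and $S_{\beta,\gamma}$ to produce $S_{\alpha,\gamma}$ and correspondingly gluing~$X$ and~$Y$; one checks the boundary of the glued disc is (isomorphic to) $S_{\alpha,\gamma}$, so the result lies in $\mathcal D(\alpha,\gamma)$. The identity 1-morphism on~$\alpha$ is the ``trivial'' defect disc with a single 1-stratum decorated by~$\alpha$ and the two halves decorated by $u=s(\alpha)$, $v=t(\alpha)$, whose boundary is $S_{\alpha,\alpha}$. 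For 2-morphisms, vertical composition $\zz(X^*\circ Y)\otimes\zz(Y^*\circ Z)\to\zz(X^*\circ Z)$ is obtained exactly as in \cite[Sect.\,3.3]{CMS}: glue the defect balls (cylinders over the relevant discs) that exhibit the spheres $X^*\circ Y$ and $Y^*\circ Z$ along the common $Y$-disc and apply~$\zz$ together with its monoidal structure; the identity $1_X\in\zz(X^*\circ X)$ is the image under~$\zz$ of the defect ball that is the cylinder over~$X$. Horizontal composition of 2-morphisms is defined analogously by gluing the corresponding defect balls along the $\beta$-decorated face.

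Next I would check the 2-category axioms. Associativity and unitality of horizontal composition of 1-morphisms hold on the nose (up to the canonical isomorphisms of defect 2-discs), and the interchange law, associativity and unitality for 2-morphisms follow from functoriality and monoidality of~$\zz$ applied to the evident diffeomorphisms of the glued defect 3-balls — this is the same bookkeeping as in \cite{CMS}. $\Bbbk$-linearity of the $\Hom$-spaces and bilinearity of the compositions are immediate since $\zz$ lands in $\Vect$ and the compositions are built from $\zz$ applied to gluings, which $\zz$ turns into $\Bbbk$-linear maps. For the pivotal structure: left and right duals of a 1-morphism $X\in\mathcal D(\alpha,\beta)$ are both given by $X^*\in\mathcal D(\beta,\alpha)$, and the (co)evaluation 2-morphisms are the images under~$\zz$ of the ``cap'' and ``cup'' defect 3-balls, with the snake identities and the coincidence of the two dualities following from the corresponding statements for the Gray category with duals~$\tz$ — equivalently, from the pivotality of~$\tz^1(u,v)$ established in \cite{CMS}.

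The cleanest way to organise all of this, and what I would actually do, is to construct the comparison functor $\W(u,v)\to\tz^1(u,v)$ \emph{simultaneously} with the definition: send an object~$\alpha$ to itself, a 1-morphism $X\in\widehat D_1$ to the 3-cube $\In(\mathrm{prog}(X))$ obtained via the insertion procedure of Proposition~\ref{proposition:tzhat} (or rather its 2-categorical incarnation), and use the identification $\zz(X^*\circ Y)\cong\Hom_{\tz}(\dots)$ of 3-morphism spaces from \cite[Sect.\,3.3]{CMS} on 2-morphisms. One then transports all the pivotal 2-category structure along this comparison, so that the axioms need not be reverified by hand; the content reduces to checking that this assignment is well-defined (independent of the choice of progressive diagram and of local neighbourhoods, exactly as in the proof of Proposition~\ref{proposition:tzhat}) and that it is essentially surjective and fully faithful. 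Essential surjectivity holds because every object of $\tz^1(u,v)$ is an $\alpha\in D_2$ with $s(\alpha)=u,t(\alpha)=v$, hence an object of $\W(u,v)$; full faithfulness on 2-morphisms is the identification \eqref{eq:2-morph}, and on 1-morphisms it follows since every 1-morphism of $\tz^1(u,v)$ is represented by a progressive defect disc, i.e.\ comes from some $X\in\widehat D_1$ with the right boundary.

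The main obstacle I anticipate is purely bookkeeping rather than conceptual: making precise, as in \cite{CMS}, the various gluings of defect 2-discs and defect 3-balls so that the results again lie in $\Disc(\D)$, $\Sphere_2(\D)$ and carry well-defined decorations compatible with $s,t,f$ — and checking that all the structural isomorphisms are independent of the auxiliary choices (collars, progressive representatives, orderings of strata). Since the parallel constructions for~$\tz$ are carried out in detail in \cite{CMS} and the line defect completion and insertion functor have just been set up, the honest statement is that $\W(u,v)$ is defined by copying that construction one dimension down, and the lemma is proved by exhibiting the equivalence with $\tz^1(u,v)$ above.
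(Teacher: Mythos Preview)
Your overall strategy --- mimic the construction of~$\tz$ from \cite{CMS} and rely on isotopy-invariance of~$\zz$ for the axioms --- is exactly what the paper does. However, two of your concrete geometric descriptions are off, and they happen to be the ones the paper spells out.

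For the horizontal composition of 1-morphisms you propose ``gluing the two decorated defect 2-discs along the arc of their boundary circles decorated by~$\beta$''. But $S_{\alpha,\beta}$ has only a single $0$-stratum carrying the label~$(\beta,-)$, not an arc; there is nothing to glue along, and gluing two discs at a point does not return a disc. The paper instead defines $X\otimes Y$ by placing the rescaled discs $\tfrac{1}{3}X$ and $\tfrac{1}{3}Y$ side by side inside the cone disc $CS_{\alpha,\gamma}$; the ambient disc supplies the correct boundary $S_{\alpha,\gamma}$ for free. For vertical composition of 2-morphisms you speak of ``gluing the defect balls (cylinders over the relevant discs) along the common $Y$-disc'', but a cylinder $X\times[0,1]$ does not have boundary sphere $X^{*}\circ Y$, and gluing two solid balls along a hemisphere yields a single ball, i.e.\ a bordism $\varnothing \to X^{*}\circ Z$, not a composition map. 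The paper (following \cite{CMS}) instead uses a $3$-ball with two interior $3$-balls removed, giving a bordism $B(X,Y,Z)\colon (Y^{*}\circ Z)\sqcup(X^{*}\circ Y)\to X^{*}\circ Z$, and the composition is $\zz(B(X,Y,Z))$. Your ``transport along the comparison with $\tz^{1}(u,v)$'' suggestion is reasonable, but note that this equivalence is precisely the content of the lemma immediately following, and its proof presupposes that $\W(u,v)$ has already been given its $2$-categorical structure; so you cannot use it to avoid the direct construction.
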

\begin{proof}
  The proof is essentially contained in the proof of \cite[Thm.\,3.13]{CMS}, albeit in a slightly different setting.  We will need some details on the construction of $\W(u,v)$ later, thus we recall the main ingredients.  
  All compositions of 2-morphisms in $\W(u,v)$ are canonically obtained from $\zz$ by evaluating $\zz$ on defect 3-balls with 3-balls in the interior removed: 
  For 1-morphisms $X,Y,Z \in \mathcal D(\alpha,\beta)$, the vertical composition of 2-morphisms is a linear map
  \begin{equation}
    \label{eq:2}
    	 \Hom_{\W(u,v)}(Y,Z) \otimes_\Bbbk \Hom_{\W(u,v)}(X,Y) \lra \Hom_{\W(u,v)}(X,Z) \, ,
  \end{equation}
  which is given as $\zz(B(X,Y,Z))$ with the bordism 
  \be 
  B(X,Y,Z)\colon (Y^{*} \circ Z) \sqcup (X^{*} \circ Y) \longrightarrow X^{*} \circ Z 
  \ee 
  in $\Borddefn{3}(\D)$ 
  defined as follows. 
 The decorated 1-sphere $S_{\alpha,\beta}$ from  \eqref{eq:starlikecircle} gives the cylinder   $CS_{\alpha,\beta} \times [0,1]$ over the cone with cone point~0. 
 Remove the solid cylinder $B_{1/2}(0) \times [0,1]$ from the interior, then glue
  	 $
  	 (\frac{1}{2} X \times [0,\frac{1}{5}]) \cup (\frac{1}{2} Y 
  	 \times [\frac{2}{5},\frac{3}{5}]) \cup (\frac{1}{2} Z \times 
  	 [\frac{4}{5},1])
  	 $ 
 back in and collapse the outer boundary $S_{\alpha,\beta} \times [0,1]$ as well as the inner boundaries $\frac{1}{2}S_{\alpha,\beta} \times [\frac{1}{5},\frac{2}{5}]$ and  $\frac{1}{2}S_{\alpha,\beta} \times [\frac{3}{5},\frac{4}{5}]$ to obtain the ball $B(X,Y,Z)$ with two inner balls removed.
 As a result, the parallel 1-morphisms  and their 2-morphisms form categories $D(\alpha,\beta)$ (with $\textrm{Ob}(D(\alpha,\beta)) = \mathcal D(\alpha,\beta)$) with units  given by the value of $\zz$ on solid balls, see \cite[Sect.\,3.3]{CMS}. 

 The horizontal composition consists of linear functors $\otimes \colon D(\beta,\gamma) \times D(\alpha,\beta) \longrightarrow D(\alpha,\gamma)$ as follows:
 For $X \in D(\beta,\gamma)$ and $Y \in D(\alpha,\beta)$, the  object $X \otimes Y \in D(\alpha,\gamma)$ is defined to be the 2-disc which is obtained by placing the rescaled discs $\frac{1}{3}X$ and $\frac{1}{3} Y$
 next to each other in the disc $CS_{\alpha,\gamma}$. 
 
 To define $\otimes$ on morphisms we again use a 3-ball with two inner 3-balls removed that is defined similarly to the  case of the vertical composition and apply the  functor $\zz$, see \cite[Sect.\,3.3]{CMS}.

 Since $\zz$ is invariant under isotopies of bordisms, all axioms of a 2-category follow directly. The duals in $\W(u,v)$ are obtained as in \cite[Sect.\,3.4]{CMS}, i.\,e.\ the dual of a 1-morphism~$X$ is $X^{*}$. 
\end{proof} 

The equivalence $\In \colon \tzhat \longrightarrow \tz$ of Gray categories with duals from Proposition~\ref{proposition:tzhat} restricts to an equivalence $\W(u,v) \longrightarrow \tz^{1}(u,v)$: 
  
\begin{lemma}
For all $u,v \in D_{3}$, we have an equivalence of pivotal 2-categories: 
\be 
\W(u,v) \cong \tz^{1}(u,v) \, . 
\ee 
\end{lemma}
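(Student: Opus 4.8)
The plan is to construct the functor $\W(u,v) \longrightarrow \tz^1(u,v)$ as the restriction of the Gray-functor $\In\colon \tzhat \longrightarrow \tz$ from Proposition~\ref{proposition:tzhat}, and then to check that it is a (pivotal) equivalence of 2-categories by showing it is essentially surjective on objects and fully faithful on 1- and 2-morphisms. The subtlety is that $\W(u,v)$ was built directly from defect 2-discs in $\Disc(\D)$ rather than from 3-cubes or isomorphism classes thereof, so the identification with $\tz^1(u,v)$ is not literally a restriction of $\In$ but rather ``$\In$ followed by the obvious passage to cubes''; I would make this precise first.

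\textbf{Step 1: Definition of the functor.} First I would recall that $\tz^1(u,v)$ is by definition the full sub-2-category of $\tz(u,v)$ on the objects $\{\alpha\in D_2 \mid s(\alpha)=u,\,t(\alpha)=v\}$, which is exactly the object set of $\W(u,v)$ by part~(i) of the previous Lemma; so the functor is the identity on objects. On a 1-morphism $X\in\mathcal D(\alpha,\beta)\subset\widehat D_1$ I would send $X$ to the 2-morphism of $\tz$ obtained by: choosing a square around $X$, extending to a progressive diagram $\mathrm{prog}(X)$ and passing to the equivalence class of the corresponding cube — precisely the recipe used to define $\In$ on 2-morphisms in the proof of Proposition~\ref{proposition:tzhat}. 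On a 2-morphism $\phi\in\zz(X^*\circ Y)=\Hom_{\W(u,v)}(X,Y)$ I would use that, by \cite[Sect.\,3.3]{CMS}, the 3-morphisms of $\tz^1(u,v)$ between $\In(X)$ and $\In(Y)$ are also computed as the value of $\zz$ on the defect sphere glued from the corresponding discs, and that this defect sphere is isomorphic (as a decorated stratified 2-sphere) to $X^*\circ Y$; hence $\phi$ transports to a 3-morphism of $\tz$ between $\In(X)$ and $\In(Y)$. Functoriality — compatibility with vertical composition, horizontal composition and units — follows because, on both sides, all compositions are defined by evaluating $\zz$ on the same defect 3-balls with inner balls removed (the balls $B(X,Y,Z)$ and their horizontal analogues recalled in the proof of the Lemma above, which are literally the cube-model balls of \cite[Sect.\,3.3]{CMS} after the insertion/cone identifications), so the structure maps match on the nose up to the canonical isomorphisms already fixed.

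\textbf{Step 2: Equivalence.} It is the identity on objects, hence essentially surjective. For full faithfulness on 1-morphisms, I would invoke exactly the argument in the proof of Proposition~\ref{proposition:tzhat}: the inclusion $D_1\hookrightarrow\widehat D_1$ gives a section $\iota$ on the level of cubes, so every 1-morphism of $\tz^1(u,v)$ is isomorphic to one in the image, while an isomorphism of 1-morphisms in $\tz^1(u,v)$ is a 3-morphism, i.e.\ an element of $\zz$ on the relevant defect sphere, and such 3-morphisms are by construction already present between objects of $\W(u,v)$ with the same boundary data — so no new objects and no identifications among the $\mathcal D(\alpha,\beta)$ are created. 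For full faithfulness on 2-morphisms (equivalently, bijectivity of the hom-maps on 3-morphisms), this is immediate from the definition: $\Hom_{\W(u,v)}(X,Y)=\zz(X^*\circ Y)$ and $\Hom_{\tz^1(u,v)}(\In X,\In Y)$ is $\zz$ applied to a defect sphere canonically isomorphic to $X^*\circ Y$, and the functor acts as this canonical isomorphism, which is a bijection. Finally, compatibility with the pivotal structure: the duals in $\W(u,v)$ are $X\mapsto X^*$ (orientation reversal of all strata), and $\In$ was noted to be compatible with duals in Proposition~\ref{proposition:tzhat}; so the pivotal structures correspond, giving an equivalence of pivotal 2-categories.

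\textbf{Main obstacle.} The genuinely delicate point is \emph{not} any 2-categorical bookkeeping but the careful matching of geometric models: one must verify that the defect 2-sphere appearing in the $\tz$-description of $\Hom_{\tz^1(u,v)}(\In X,\In Y)$ really is canonically isomorphic, as a decorated stratified 2-sphere, to $X^*\circ Y$ as defined via gluing of discs — and likewise that the composition 3-balls on the two sides agree after the cone/cylinder identifications used to define $\In$. Both are ``obvious from the pictures'' but need the conventions of \cite[Sect.\,3.1.2, 3.3]{CMS} to be spelled out; once that dictionary is fixed, everything else is forced by invariance of $\zz$ under isotopies and by the already-established Proposition~\ref{proposition:tzhat}.
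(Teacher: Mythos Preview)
Your proposal is correct and follows the same approach as the paper: the paper's entire ``proof'' is the single sentence immediately preceding the lemma, namely that the equivalence $\In\colon\tzhat\longrightarrow\tz$ of Gray categories with duals from Proposition~\ref{proposition:tzhat} restricts to an equivalence $\W(u,v)\longrightarrow\tz^1(u,v)$. You have simply unpacked what this restriction means and why it is an equivalence, correctly identifying along the way the minor subtlety that $\W(u,v)$ is built from defect discs rather than equivalence classes of cubes, so that ``restricts'' really means composing with the passage $X\mapsto\mathrm{prog}(X)$ to cubes used in the proof of Proposition~\ref{proposition:tzhat}.
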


\section{Orbifold graph TQFTs}
\label{sec:OrbGrphTQFTs}

In this section we review orbifold TQFTs~$\zza$ of 3-dimensional defect TQFTs~$\zz$ as introduced in \cite{CRS1}, and we formulate their construction in terms of decorated skeleta (Section~\ref{subsec:OrbifoldTQFTs}). 
To any special orbifold datum~$\A$ for~$\zz$, we construct an associated ribbon category~$\wa$ (Section~\ref{subsec:RibbonCategoriesFromSOD}), generalising the construction of \cite{MuleRunk} for Reshetikhin--Turaev theories to arbitrary defect TQFTs. 
Finally, we lift~$\zza$ to an orbifold graph TQFT $\zzwa$, which acts on bordisms with embedded $\wa$-coloured ribbon graphs (Section~\ref{subsec:OrbifoldGraphTQFTs}). 
A variant of this result which is useful for applications is described in Appendix~\ref{app:ConstructionForNonEulerCompletedCase}.

\subsection{Orbifold TQFTs}
\label{subsec:OrbifoldTQFTs}

We start by recalling the notion of 3-dimensional orbifold TQFTs from \cite[Sect.\,3.4]{CRS1} and then
explain how this construction can be generalised and computationally simplified in terms of admissible skeleta in the case of special orbifold data.

\subsubsection{Special orbifold data}
\label{subsubsec:SOD}

Let $\zz\colon \Borddefn3(\mathds{D}) \lra \Vect$ be a defect TQFT as reviewed in Section~\ref{subsec:Review3dDefectTQFT}, with defect data $\mathds{D}=(D_3,D_2,D_1,s,t,f)$. 

\begin{definition}\label{def:special-orb-data}
A \textsl{special orbifold datum}~$\A$ for~$\zz$ consists of 
\begin{itemize}
	\item 
	an element $\A_3 \in D_3$, 
	\item 
	an element $\A_2 \in D_2$ with $s(\A_2) = \A_3 = t(\A_2)$, 
	\item 
	an element $\A_1 \in D_1$ with $f(\A_1) = (\A_2,+) \times (\A_2,+) \times (\A_2,-)$, 	
	\item 
	elements $\A_0^+ \in \zz({S}_{\A}^2)$ and $\A_0^- \in \zz(({S}_{\A}^2)^{\text{rev}})$, 
\end{itemize}
where 
\be 
{S}_{\A}^2 
=
\includegraphics[scale=1.0, valign=c]{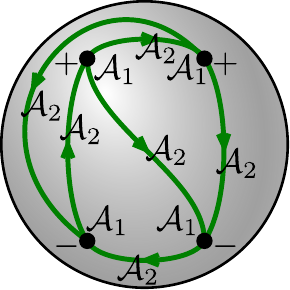} 
\, , \qquad 
({S}_{\A}^2)^{\text{rev}}
=
\includegraphics[scale=1.0, valign=c]{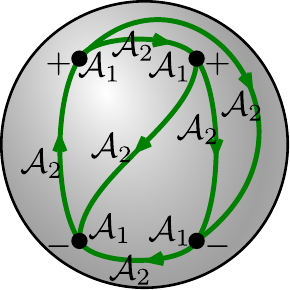} 
\ee 
are $\A$-decorated defect spheres, and in particular objects in $\Bordd[3]$. 
The tuple $\A = (\A_3,\A_2,\A_1,\A_0^\pm)$ satisfies the identities depicted in Figure~\ref{fig:SpecialOrbifoldData}, where it is understood that~$\zz$ is applied to the defect balls displayed  on either side of the equal sign.
\end{definition}

In drawing the defect balls in Figure~\ref{fig:SpecialOrbifoldData} we used that 
$\A$-decorated defect bordisms locally look as follows (where all 2-strata are oriented counterclockwise in the paper/screen frame, and an $\A_0^{\eps}$-decorated 0-stratum has orientation~$\eps$): 
\be
\label{eq:3dSpecialOrbifoldData}
\includegraphics[scale=1.0, valign=c]{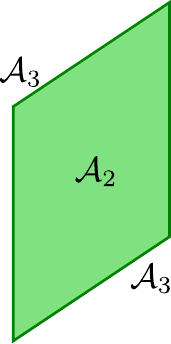} \, , \;\;
\includegraphics[scale=1.0, valign=c]{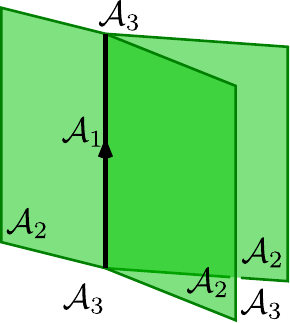} \, , \;\;
\includegraphics[scale=1.0, valign=c]{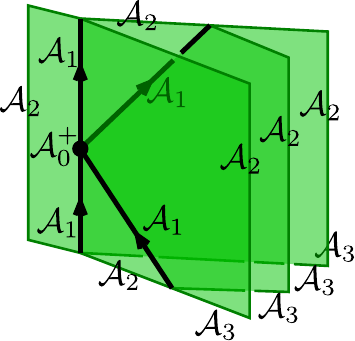} \, , \;\;
\includegraphics[scale=1.0, valign=c]{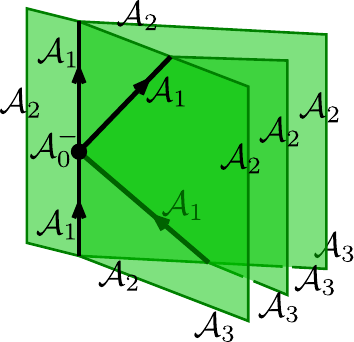} \, .
\ee
Note that $\A_0^\pm$-decorated 0-strata are interpreted as small defect 3-balls around them removed, with the resulting linear map after evaluation with~$\zz$ applied to (tensor products of the vectors) $\A_0^\pm$; this is made precise in terms of the $D_0$-completion mentioned in Remark~\ref{rem:RTandD0}\,\ref{item:D0completion}. 

\begin{figure}
	\captionsetup[subfigure]{labelformat=empty}
	\centering
	\vspace{-60pt}
	\begin{subfigure}[b]{0.5\textwidth}
		\centering
		\includegraphics[scale=0.85, valign=c]{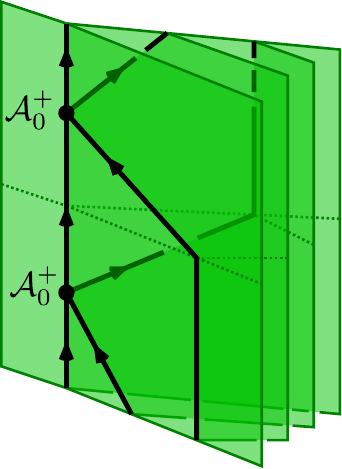} $=$
		\includegraphics[scale=0.85, valign=c]{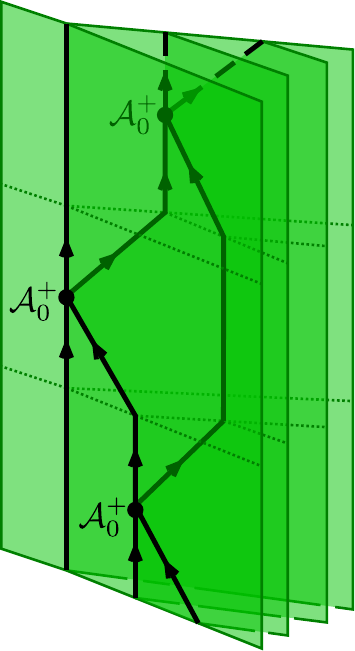}
		\caption{}
		\label{eq:O1}
	\end{subfigure}\raisebox{8em}{(O1)}\\
	\vspace{-15pt}
	\hspace{-60pt}
	\begin{subfigure}[b]{0.53\textwidth}
		\centering
		\includegraphics[scale=0.85, valign=c]{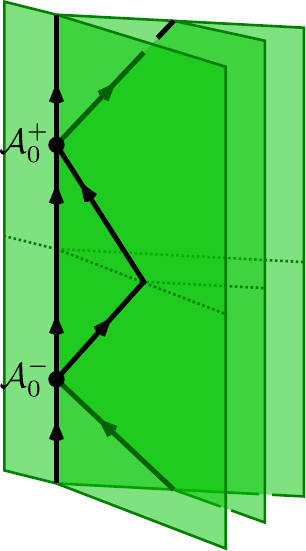} $=$
		\includegraphics[scale=0.85, valign=c]{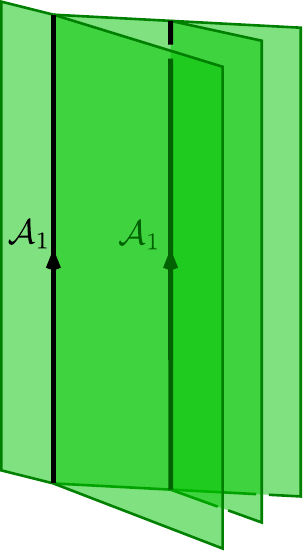}
		\caption{}
		\label{eq:O2}
	\end{subfigure}\hspace{-2em}\raisebox{6.5em}{(O2)}
	\hspace{-15pt}
	\begin{subfigure}[b]{0.53\textwidth}
		\centering
		\includegraphics[scale=0.85, valign=c]{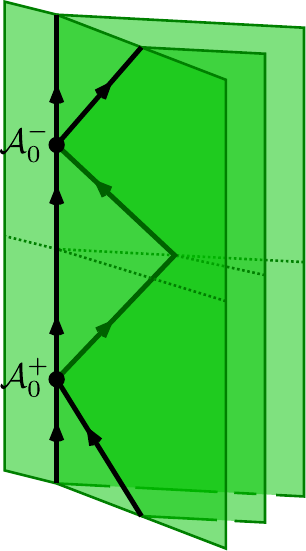} $=$
		\includegraphics[scale=0.85, valign=c]{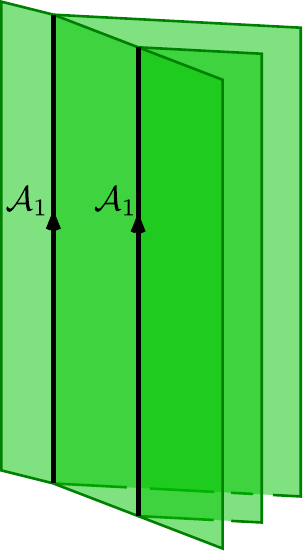}
		\caption{}
		\label{eq:O3}
	\end{subfigure}\hspace{-2em}\raisebox{6.5em}{(O3)}\\  
	\vspace{-15pt}
	\hspace{-60pt}
	\begin{subfigure}[b]{0.53\textwidth}
		\centering
		\includegraphics[scale=0.85, valign=c]{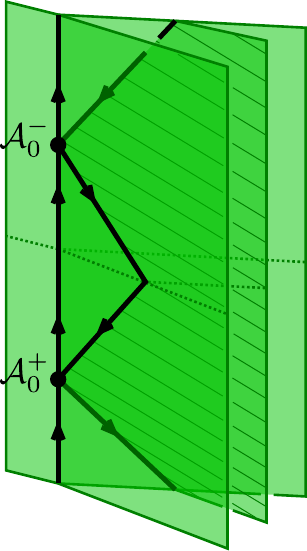} $=$
		\includegraphics[scale=0.85, valign=c]{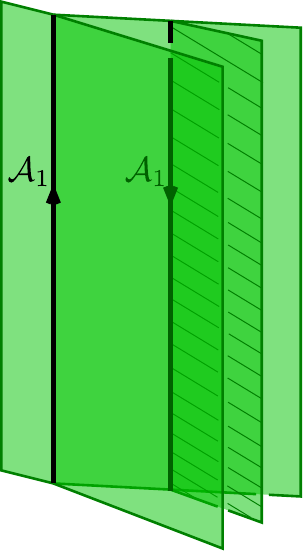}
		\caption{}
		\label{eq:O4}
	\end{subfigure}\hspace{-2em}\raisebox{6.5em}{(O4)}
	\hspace{-15pt}
	\begin{subfigure}[b]{0.53\textwidth}
		\centering
		\includegraphics[scale=0.85, valign=c]{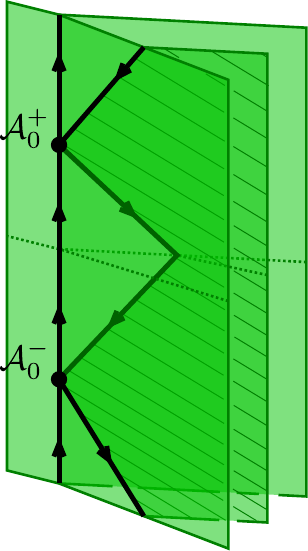} $=$
		\includegraphics[scale=0.85, valign=c]{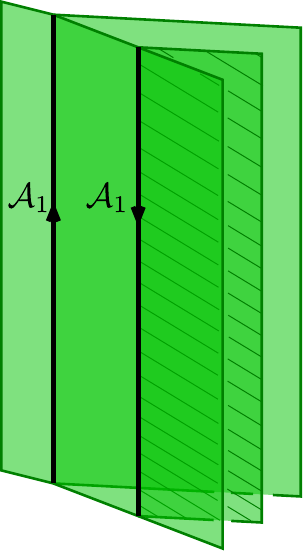}
		\caption{}
		\label{eq:O5}
	\end{subfigure}\hspace{-2em}\raisebox{6.5em}{(O5)}\\
	\vspace{-15pt}
	\hspace{-60pt}
	\begin{subfigure}[b]{0.53\textwidth}
		\centering
		\includegraphics[scale=0.8, valign=c]{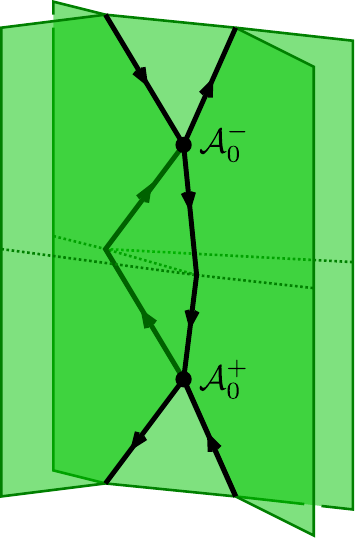} $=$
		\includegraphics[scale=0.8, valign=c]{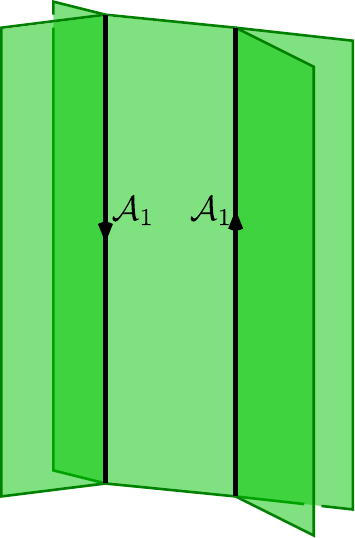}
		\caption{}
		\label{eq:O6}
	\end{subfigure}\hspace{-1.5em}\raisebox{6em}{(O6)}
	\hspace{-15pt}
	\begin{subfigure}[b]{0.53\textwidth}
		\centering
		\includegraphics[scale=0.8, valign=c]{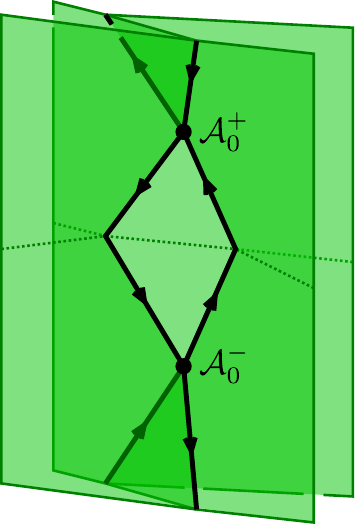} $=$
		\includegraphics[scale=0.8, valign=c]{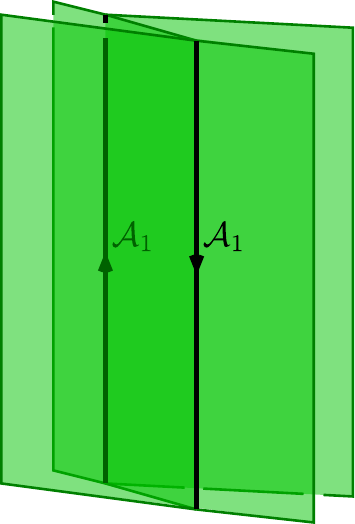}
		\caption{}
		\label{eq:O7}
	\end{subfigure}\hspace{-1.5em}\raisebox{6em}{(O7)}\\
	\vspace{-15pt}
	\hspace{-60pt}
	\begin{subfigure}[b]{1.0\textwidth}
		\centering
		\includegraphics[scale=0.8, valign=c]{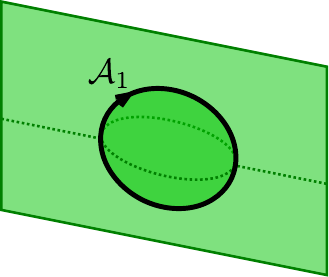} $=$
		\includegraphics[scale=0.8, valign=c]{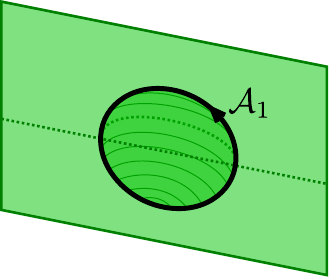} $=$
		\includegraphics[scale=0.8, valign=c]{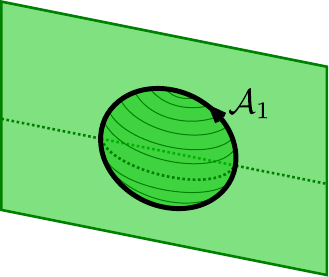} $=$
		\includegraphics[scale=0.8, valign=c]{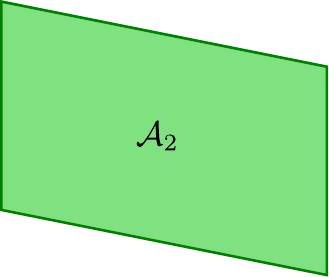}
		\caption{}
		\label{eq:O8}
	\end{subfigure}\hspace{-2em}\raisebox{4em}{(O8)}
	\vspace{-15pt}
	\caption{Defining conditions on special orbifold data~$\A$ for a defect TQFT~$\zz$; only the $\A_j$-labels for $j$-strata of lowest dimension are shown.
	Each picture represents a defect ball viewed as a bordism from~$\varnothing$ to the boundary, and the identities hold only after application of~$\zz$. }
	\label{fig:SpecialOrbifoldData}
\end{figure}

\begin{remark}
Note that the underlying stratified 3-balls of the defect bordisms depicted in~\eqref{eq:3dSpecialOrbifoldData} are, when read from left to right, Poincar\'{e} dual to a 1-simplex, a 2-simplex, and two 3-simplices (all oriented),
cf.~\eqref{eq:orientation-2strata} and \eqref{eq:orientation-10strata}.
Accordingly, general ``orbifold data'' for~$\zz$ are defined in \cite{CRS1} as above, but with the defining conditions in Figure~\ref{fig:SpecialOrbifoldData} replaced by (the larger number of) conditions arising from the Poincar\'{e} duals of oriented versions of the 2-3 and 1-4 Pachner moves (relating triangulations of 3-dimensional bordisms). 
In fact, orbifold data for any $n$-dimensional defect TQFT are defined for arbitrary $n\in\Z_+$ in \cite[Def.\,3.5]{CRS1} in terms of invariance under $n$-dimensional oriented Pachner moves. 

Special orbifold data are special cases of 3-dimensional orbifold data. 
From here on we will only consider special orbifold data.
\end{remark}

\subsubsection{Decorated skeleta}
\label{subsubsec:OrbifoldsFromSkeleta}

We now move to consider admissible skeleta that are decorated with special orbifold data.
We will see that special orbifold data algebraically encode invariance under admissible BLT moves. 

\begin{definition}
	\label{def:AdecoratedSkeleta}
	Let~$\zz$ be a defect TQFT, and let $\A = (\A_3, \A_2, \A_1, \A^\pm_0)$ be a special orbifold datum for~$\zz$. 
	An \textsl{$\mathcal A$-decorated skeleton} $\mathcal S$ of a bordism~$M$ in $\Bord_3$ is an admissible skeleton~$S$ of~$M$ together with a decoration as follows: 
	\begin{itemize}
		\item 
		each 3-stratum of~$S$ is decorated by~$\A_3$, 
		\item 
		each 2-stratum of~$S$ is decorated by~$\A_2$, 
		\item 
		each 1-stratum of~$S$ is decorated by~$\A_1$, 
		\item 
		for $\varepsilon \in \{+,-\}$, each $\varepsilon$-oriented 0-stratum of~$S$ is decorated by~$\A^\varepsilon_0$. 
	\end{itemize}
\end{definition}

One similarly arrives at the notion of \textsl{$\A$-decorated BLT moves}: these are local changes of $\A$-decorated skeleta (and hence of $\A$-decorated defect bordisms) whose effect on the underlying admissible skeleta are precisely the admissible BLT moves of Section~\ref{subsubsec:BLTmoves}. 
The evaluation with a TQFT is invariant under these moves, and special orbifold data precisely encode this invariance: 

\begin{proposition}
	\label{prop:SODBLT}
	Let $\zz\colon \Borddefn3(\mathds{D}) \lra \Vect$ be a defect TQFT, and let $\A = (\A_3, \A_2, \A_1, \A^\pm_0)$ be a list of defect labels for~$\zz$ that can decorate admissible skeleta of all bordisms in $\Bord_3$ to obtain morphisms in $\Borddefn3(\mathds{D})$. 
	Then~$\A$ is a special orbifold datum for~$\zz$ iff~$\zz$ applied to $\A$-decorated BLT moves gives identities in $\Vect$. 
\end{proposition}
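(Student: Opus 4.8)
# Proof Proposal

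The plan is to prove the two implications separately, using the translation between admissible BLT moves and the special orbifold data moves established in Lemma~\ref{lem:PachnerFromBLT}, together with the $D_0$-completion formalism of Remark~\ref{rem:RTandD0}\,\ref{item:D0completion} to make sense of the $\A_0^\pm$-decorated $0$-strata as evaluations of~$\zz$ on defect balls with small balls removed.

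First I would handle the ``only if'' direction. Suppose~$\A$ is a special orbifold datum. An $\A$-decorated BLT move is, by definition, a local replacement of one $\A$-decorated stratified $3$-ball by another inside an $\A$-decorated admissible skeleton, where the two balls agree on their boundary. Since~$\zz$ is a (symmetric monoidal) functor on $\Borddefn3(\mathds D)$, and since gluing in $\Bord_3$ is functorial, it suffices to show that~$\zz$ assigns the same morphism to the two $\A$-decorated defect balls appearing on the two sides of each $\A$-decorated BLT move, viewed as bordisms from the empty set (or from a fixed boundary object) to their common boundary. By Lemma~\ref{lem:PachnerFromBLT}, each admissible BLT move is obtained as a finite composite of special orbifold data moves (the $\A$-decorated versions of the identities \eqrefO{1}--\eqrefO{8} in Figure~\ref{fig:SpecialOrbifoldData} and their variants). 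Each such identity, after application of~$\zz$, holds precisely by the defining conditions on~$\A$ in Definition~\ref{def:special-orb-data}. Concatenating these equalities along the decomposition of the BLT move given by the lemma yields the desired invariance; the local-to-global step is the standard argument that a functor respecting a move on a sub-ball respects the move on the whole bordism, because the complement is unchanged.

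For the ``if'' direction, assume that~$\zz$ applied to every $\A$-decorated BLT move yields an identity in $\Vect$. I would exhibit each of the defining identities \eqrefO{1}--\eqrefO{8} as (a consequence of) an $\A$-decorated BLT move or a short composite of them. By the bulleted list in the proof of Lemma~\ref{lem:PachnerFromBLT}: \eqrefO{1} is a T-move, \eqrefO{2}--\eqrefO{7} are six of the nine L-moves, and \eqrefO{8} is a B-move; moreover the remaining three L-moves (the orientation-reversed variants on the right-hand sides of \eqrefO{4}--\eqrefO{6}) are obtained from T- and B-moves together with \eqrefO{2}--\eqrefO{7}, so that every admissible BLT move is in the subgroup generated by the moves corresponding to \eqrefO{1}--\eqrefO{8}. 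Conversely, since the special orbifold data moves are themselves admissible BLT moves (being a distinguished subset of the $32$ admissible moves), the hypothesis directly gives that~$\zz$ identifies the two sides of each of \eqrefO{1}--\eqrefO{8}. These are exactly the conditions that, by Definition~\ref{def:special-orb-data}, make the tuple $\A = (\A_3,\A_2,\A_1,\A_0^\pm)$ a special orbifold datum. One should also note that the hypothesis in the statement already supplies the ``typing'' data --- namely that $\A_3,\A_2,\A_1,\A_0^\pm$ have the right sources, targets, and folding so that they decorate admissible skeleta --- which is precisely the remaining content of Definition~\ref{def:special-orb-data} not encoded by the move invariance.

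The main obstacle, and the only genuinely non-formal point, is making rigorous the passage between ``invariance under $\A$-decorated BLT moves'' phrased locally (as a replacement inside an arbitrary bordism) and the equalities of the Figure~\ref{fig:SpecialOrbifoldData} type phrased for the specific defect balls with prescribed boundary. This requires knowing that every $\A$-decorated stratified $3$-ball occurring on either side of \eqrefO{1}--\eqrefO{8} actually arises as (a sub-ball of) an admissible $\A$-decorated skeleton of some bordism --- i.e.\ that these local models are globally realisable --- and that the $D_0$-completion $\zz^\bullet$ correctly computes the evaluation when $\A_0^\pm$-decorated $0$-strata are present. The first is handled by observing that the balls in Figure~\ref{fig:SpecialOrbifoldData} are, by the Remark following Definition~\ref{def:special-orb-data}, the Poincar\'e duals of (pieces of) oriented triangulated balls, hence embed in admissible skeleta by the results of Section~\ref{subsubsec:AdmissibleSkeleta}; the second is exactly the content of Remark~\ref{rem:RTandD0}\,\ref{item:D0completion}, so in both cases I would cite rather than reprove. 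What remains is then purely a matter of chasing the equivalences ``admissible BLT moves $\Leftrightarrow$ special orbifold data moves'' of Lemma~\ref{lem:PachnerFromBLT} through the functor~$\zz$, which is routine.
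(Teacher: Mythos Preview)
Your proposal is correct and follows essentially the same approach as the paper: both directions reduce to the equivalence of admissible BLT moves and special orbifold data moves established in Lemma~\ref{lem:PachnerFromBLT}, with the special orbifold data moves being precisely the decorated versions of \eqrefO{1}--\eqrefO{8}. The paper's proof is a two-sentence appeal to this lemma, while you spell out the two implications and the local-to-global passage in more detail, but the core argument is the same.
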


\begin{proof}
	In Lemma \ref{lem:PachnerFromBLT} we showed the equivalence of undecorated BLT moves and undecorated special orbifold data moves.
	From this the equivalence of the corresponding decorated moves between decorated skeleta follows immediately.
\end{proof}

\subsubsection{Definition of orbifold TQFTs}
\label{subsubsec:OrbifoldTQFTs}

Let $\zz \colon \Borddefn3(\mathds{D}) \lra \Vect$ be a defect TQFT, and let~$\A$ be a special orbifold datum for~$\zz$. 
Given an $\A$-decorated skeleton~$\mathcal S$ of a bordism 
\be 
M \colon \Sigma \lra \Sigma'
\ee
in $\Bord_3$, we obtain a new defect bordism $F(\mathcal S)$, which we call a \textsl{foamification} of~$M$ represented by $\mathcal S$. 
Viewed as a morphism in $\Borddefn{3}(\D)$, the defect bordism $F(\mathcal S)$ has source and target objects which are $\D$-decorated defect surfaces whose 1- and 0-strata are labelled by~$\A_2$ and~$\A_1$, respectively (corresponding to 2- and 1-strata of~$M$ which end on~$\partial M$). 
We denote these source and target objects $F(\Sigma, \mathcal G)$ and $F(\Sigma', \mathcal G')$, respectively, where $\mathcal G, \mathcal G'$ are the decorated skeleta of $\Sigma, \Sigma'$ induced from~$\mathcal S$. 
Thus 
\be
\label{eq:FoamificationMapForBeginners}
F(\mathcal S) \colon F(\Sigma, \mathcal G) \lra F(\Sigma', \mathcal G')
\ee 
in $\Borddefn{3}(\D)$. 

By definition, the evaluation of~$\zz$ on an $\A$-decorated skeleton~$\mathcal S$ is given by $\zz(F(\mathcal S))$. 
In particular, $\zz$ can be evaluated on defect 3-balls around each side of the identities in Figure~\ref{fig:SpecialOrbifoldData}, and doing so gives identities in $\Vect$. 
This in turn implies that if $\partial M = \varnothing$, we have $\zz(F(\mathcal S)) = \zz(F(\mathcal S'))$ for any other $\A$-decorated skeleton~$\mathcal S'$ of~$M$. 
Hence setting 
\be 
\zza(M) := \zz(F(\mathcal S)) \qquad \text{if } \partial M = \varnothing
\ee 
does not depend on the choice of $\A$-decorated skeleton~$\mathcal S$ of~$M$, thanks to Proposition~\ref{prop:SODBLT} and Theorem~\ref{thm:ConnectSkeleta}. 

To prepare for the definition of~$\zza$ on bordisms with nonempty boundary, let~$\Sigma$ be an object in $\Bord_3$. 
Any choice of $\A$-decorated skeleton~$\mathcal S$ of the cylinder $C_\Sigma := \Sigma \times [0,1]$ gives rise to a linear map 
\be 
\Phi_{\mathcal G}^{\mathcal G'} := \zz\big( F(\mathcal S) \big) \colon 
\zz \big( F(\Sigma,\mathcal G) \big) \lra \zz \big( F(\Sigma,\mathcal G') \big) \, , 
\ee 
where $\mathcal G, \mathcal G'$ are the decorated skeleta of~$\Sigma$ induced by~$\mathcal S$ as in~\eqref{eq:FoamificationMapForBeginners}. 
By definition of special orbifold data, the linear maps $\Phi_{\mathcal G}^{\mathcal G'}$ do not depend on the choice of $\A$-decorated skeleta~$\mathcal S$ in the interior of~$C_\Sigma$, and for arbitrary decorated admissible skeleta $\mathcal G, \mathcal G', \mathcal G''$ of~$\Sigma$ we have 
\be
\Phi_{\mathcal G'}^{\mathcal G''} \circ \Phi_{\mathcal G}^{\mathcal G'} = \Phi_{\mathcal G}^{\mathcal G''} \, . 
\ee 
In particular, the maps $\Phi_{\mathcal G}^{\mathcal G}$ are idempotents. 

\begin{construction}
	\label{constr:OrbifoldTQFT}
	Let~$\mathcal A$ be a special orbifold datum for a defect TQFT~$\zz$. 
	The \textsl{$\A$-orbifold TQFT}  
	\be 
	\zza \colon \Bord_3 \lra \Vect
	\ee  
	is defined as follows: 
	\begin{enumerate}
		\item 
		For an object $\Sigma \in \Bord_3$, we set 
		\be 
		\zza(\Sigma) = \textrm{colim} \big\{ \Phi_{\mathcal G}^{\mathcal G'} \big\} , 
		\ee 
		where $\mathcal G, \mathcal G'$ range over all admissible $\A$-decorated skeleta of~$\Sigma$. 
		\item 
		For a morphism $M \colon \Sigma \lra \Sigma'$ in $\Bord_3$, we set $\zza(M)$ to be 
		\be 
		\label{eq:ZAonMorphisms}
		\begin{tikzcd}
		\zza(\Sigma) \arrow[hook]{r} 
		& \zz \big( F(\Sigma, \mathcal G) \big) \arrow{rr}{\zz( F(\mathcal S) )}
		& 
		& \zz\big(  F(\Sigma', \mathcal G') \big) \arrow[two heads]{r}
		& \zza(\Sigma') \, , 
		\end{tikzcd}  
		\ee 
		where~$\mathcal S$ is an arbitrary $\A$-decorated skeleton representing~$M$, the first map is obtained from the universal property of the colimit, and the last map is part of the data of the colimit. 
	\end{enumerate}
\end{construction}

It is straightforward to verify from Proposition~\ref{prop:SODBLT} and Theorem~\ref{thm:ConnectSkeleta} that the definition of $\zza(M)$ in~\eqref{eq:ZAonMorphisms} does not depend on the choice of admissible $\A$-decorated skeleton~$\mathcal S$. 
Moreover, by construction the state spaces of~$\zza$ are isomorphic to the images of the idempotents, 
\be 
\zza(\Sigma) \cong \textrm{Im} \big( \Phi_{\mathcal G}^{\mathcal G} \big) \, . 
\ee 

\begin{theorem}
	\label{thm:MainResultCRS1}
	Let~$\mathcal A$ be a special orbifold datum for a defect TQFT~$\zz$. 
	Then $\zza \colon \Bord_3 \lra \Vect$ as in Construction~\ref{constr:OrbifoldTQFT} is a symmetric monoidal functor. 
\end{theorem}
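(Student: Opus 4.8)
The plan is to verify the three defining properties of a symmetric monoidal functor on $\Bord_3$: functoriality (preservation of composition and identities), and compatibility with the monoidal structure and the symmetric braiding. The key conceptual point throughout is that all of the relevant equalities reduce, via Proposition~\ref{prop:SODBLT} and Theorem~\ref{thm:ConnectSkeleta}, to the statement that the evaluation $\zz(F(\mathcal S))$ of the underlying defect TQFT on a foamification depends only on the bordism $M$ and not on the chosen $\A$-decorated skeleton $\mathcal S$ -- a fact already established in the discussion preceding the theorem. I would begin by recording this independence as the central lemma in the form most useful for the proof: for any $\A$-decorated skeleta $\mathcal S, \mathcal S'$ of $M$ that restrict to the same decorated skeleta $\mathcal G, \mathcal G'$ on $\partial M$, one has $\zz(F(\mathcal S)) = \zz(F(\mathcal S'))$ as maps $\zz(F(\Sigma,\mathcal G)) \to \zz(F(\Sigma',\mathcal G'))$; together with the cocycle relation $\Phi_{\mathcal G'}^{\mathcal G''}\circ\Phi_{\mathcal G}^{\mathcal G'} = \Phi_{\mathcal G}^{\mathcal G''}$ and $\Phi_{\mathcal G}^{\mathcal G} = \Phi_{\mathcal G}^{\mathcal G}$ idempotent, this is what makes the colimit in Construction~\ref{constr:OrbifoldTQFT} well behaved, namely $\zza(\Sigma) \cong \operatorname{Im}(\Phi_{\mathcal G}^{\mathcal G})$ compatibly over all $\mathcal G$.

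\emph{Functoriality.} First I would check $\zza(\id_\Sigma) = \id_{\zza(\Sigma)}$: choosing the $\A$-decorated skeleton of the cylinder $C_\Sigma$ to be a product skeleton $\mathcal G \times [0,1]$, the foamification $F(\mathcal S)$ is (isomorphic to) a cylinder in $\Borddefn{3}(\D)$, so $\zz(F(\mathcal S)) = \Phi_{\mathcal G}^{\mathcal G}$, and threading this through the hook/quotient maps in~\eqref{eq:ZAonMorphisms} gives the identity on $\operatorname{Im}(\Phi_{\mathcal G}^{\mathcal G}) \cong \zza(\Sigma)$. For composition $M' \circ M$, I would pick $\A$-decorated skeleta $\mathcal S$ of $M$ and $\mathcal S'$ of $M'$ whose restrictions to the common boundary $\Sigma'$ agree (possible after applying BLT moves to match them, by Theorem~\ref{thm:ConnectSkeleta}); then $\mathcal S \cup \mathcal S'$ is an $\A$-decorated skeleton of $M' \circ M$, and functoriality of $\zz$ together with gluing of foamifications gives $\zz(F(\mathcal S \cup \mathcal S')) = \zz(F(\mathcal S')) \circ \zz(F(\mathcal S))$. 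The only subtlety is that in~\eqref{eq:ZAonMorphisms} one composes through the intermediate space $\zza(\Sigma')$ rather than $\zz(F(\Sigma',\mathcal G'))$; one inserts the identity in the form $\text{(hook)}\circ\text{(quotient)} = \Phi_{\mathcal G'}^{\mathcal G'}$, which is absorbed because $\zz(F(\mathcal S))$ already lands in the image of $\Phi_{\mathcal G'}^{\mathcal G'}$ (it factors through the idempotent, as $\zz(F(\mathcal S))$ can be precomposed/postcomposed with cylinder skeleta). This bookkeeping with the idempotents is routine but is where care is needed.

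\emph{Monoidality and symmetry.} For the monoidal structure I would observe that a disjoint union $M \sqcup N$ admits $\A$-decorated skeleta of the form $\mathcal S \sqcup \mathcal T$, that $F(\mathcal S \sqcup \mathcal T) = F(\mathcal S) \sqcup F(\mathcal T)$, and that $\zz$ is symmetric monoidal; hence $\zz(F(\mathcal S)\sqcup F(\mathcal T)) = \zz(F(\mathcal S)) \otimes \zz(F(\mathcal T))$, and passing to the colimit/image gives a natural isomorphism $\zza(\Sigma \sqcup \Sigma') \cong \zza(\Sigma) \otimes \zza(\Sigma')$ together with $\zza(\varnothing) \cong \Bbbk$ (the empty skeleton). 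Compatibility of these isomorphisms with the associator and unitors of $\Vect$ follows from the corresponding compatibilities for $\zz$. For the braiding, the symmetric braiding in $\Bord_3$ is the mapping cylinder of a twist on a disjoint union; choosing a product $\A$-decorated skeleton on this cylinder, its foamification is again a twist mapping cylinder in $\Borddefn{3}(\D)$, so $\zz$ sends it to the symmetry isomorphism of $\Vect$, and this descends to the images of the idempotents. I expect the main obstacle to be none of the geometry -- which is all reduction to Theorem~\ref{thm:ConnectSkeleta} and Proposition~\ref{prop:SODBLT} -- but rather the careful verification that the colimit construction of Construction~\ref{constr:OrbifoldTQFT} is functorial and monoidal \emph{coherently}, i.e. that the maps induced by universal properties on both sides genuinely agree; concretely, one must check that the natural transformations witnessing monoidality are independent of all auxiliary choices of skeleta, which again follows from the central independence lemma but requires spelling out the compatibility of the hook and quotient maps in~\eqref{eq:ZAonMorphisms} with $\sqcup$ and with composition.
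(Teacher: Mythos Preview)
Your outline is essentially the argument the paper has in mind: the paper's proof simply says that the proofs of \cite[Thm.~3.10 and Prop.~3.18]{CRS1} (which carry out exactly the functoriality/monoidality/symmetry verification you sketch, for skeleta dual to triangulations) generalise verbatim once one has Proposition~\ref{prop:SODBLT} and Theorem~\ref{thm:ConnectSkeleta} in hand. So structurally you are on the right track.

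There is, however, one recurring slip that you should repair. The product stratification $\mathcal G\times[0,1]$ of the cylinder $C_\Sigma$ is \emph{not} an admissible skeleton: each of its 3-strata has the form $D^\circ\times[0,1]$ and meets both the in- and the out-boundary, contradicting Definition~\ref{def:Skeleta}\,\ref{item:Skeleton1} (cf.\ Remark~\ref{rem:3CellsBoundary}; in the language of Appendix~\ref{app:A} it is only a pseudo-skeleton). In particular your equation ``$\zz(F(\mathcal S))=\Phi_{\mathcal G}^{\mathcal G}$'' is inconsistent as written: if $F(\mathcal S)$ really were a cylinder in $\Borddefn{3}(\D)$ then $\zz(F(\mathcal S))$ would be the identity on $\zz(F(\Sigma,\mathcal G))$, whereas $\Phi_{\mathcal G}^{\mathcal G}$ is in general a proper idempotent. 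The fix is that you do not need any particular skeleton: by the very definition of $\Phi_{\mathcal G}^{\mathcal G}$, \emph{every} admissible skeleton of $C_\Sigma$ restricting to $\mathcal G$ on both ends evaluates to $\Phi_{\mathcal G}^{\mathcal G}$, and then (hook)$\,\circ\,\Phi_{\mathcal G}^{\mathcal G}\,\circ\,$(quotient) is the identity on the colimit precisely because the colimit is constructed so that $\Phi_{\mathcal G}^{\mathcal G}$ acts as the identity there. The same correction applies to your treatment of the braiding: rather than a ``product skeleton'' on the twist cylinder, take any admissible skeleton and use that the swap bordism is a disjoint union of two cylinders with relabelled boundary, so its evaluation is the symmetry of $\Vect$ composed with $\Phi_{\mathcal G}^{\mathcal G}\otimes\Phi_{\mathcal G'}^{\mathcal G'}$, which again descends correctly to the images.
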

\begin{proof}
In light of the discussion in Section~\ref{subsubsec:SOD}, the proofs of Thm.~3.10 and Prop.~3.18 in \cite{CRS1} generalise to the case of $\A$-decorated skeleta whose underlying stratifications are not Poincar\'{e} duals of triangulations. 
\end{proof}

\begin{remark}
It is typically easier to evaluate~$\zza$ in terms of $\A$-decorated skeleta which are not Poincar\'{e} duals of triangulations. 
For example, instead of computing the invariant $\zza(S^3)$ from a triangulation of the 3-sphere (which involves at least five tetrahedra), one can choose the $\A$-decorated skeleton~$\mathcal S$ consisting only of an embedded $\A_2$-decorated 2-sphere that divides~$S^3$ into two $\A_3$-decorated 3-balls.  
This skeleton has no 1-strata and no 0-strata. 
\end{remark}

\subsection{Ribbon categories associated to special orbifold data} 
\label{subsec:RibbonCategoriesFromSOD}

Let $\zz \colon \Borddefn{3}(\D) \lra \Vect$ be a defect TQFT, and let~$\A$ be a special orbifold datum for the completed TQFT~$\widehat\zz$ of Definition~\ref{def:LineDefectCompletionOfZ}. 
In this section we describe a ribbon category~$\wa$ that is naturally associated to~$\zz$ and~$\A$. 
As will be explained in more detail in \cite{CMRSS2}, for a TQFT~$\zz$ of Reshetikhin--Turaev type associated to a modular fusion category~$\mathcal C$, our~$\wa$ is equivalent to the category of Wilson lines~${\mathcal C_{\A}}$ introduced in \cite{MuleRunk}. 

\medskip 

Recall the 3-category~$\tz$ (reviewed in Section~\ref{subsubsec:3catForDefectTQFT}), the 2-categories $\mathcal W(u,v)$ associated to a pair of labels $u,v \in D_3$ in Section~\ref{subsec:MonoidalCatFromDefectTQFT}, and the line-completed TQFT~$\zzhat$ (Definition~\ref{def:LineDefectCompletionOfZ}). 
We write 
\be
\mathcal W := \End_{\mathcal W(\A_3,\A_3)}(\A_2)
\ee 
for the monoidal category of endomorphisms of~$\A_2$. 

\begin{definition}
  \label{definition:WA}
The category~$\wa$ is defined as follows: 
\begin{itemize}
	\item 
	Objects of~$\wa$ are tuples $\mathcal X = (X,\tau_1^X, \tau_2^X)$, with $X\in\mathcal W$, and 
	\begin{align}
	\label{eq:Tcrossings}
	\tau_1^X & \in \zzhat \left(  
	\includegraphics[scale=1.0, valign=c]{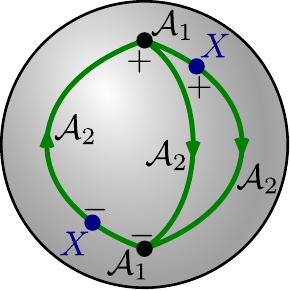} 
                   \right) 
	, 
	&& 
	\tau_2^X  \in \zzhat \left( 
	\includegraphics[scale=1.0, valign=c]{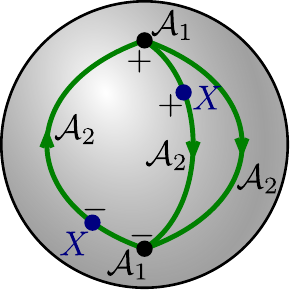}  
           \right) 
	,
	\end{align}
	are vectors, referred to as \textsl{crossings}, which correspond to 3-isomorphisms
	\be\label{eq:tau_i_crossing-def-1}
	\includegraphics[scale=1.0, valign=c]{pic_WAdef_Diagram_tau1.pdf} 
	\, , \qquad 
	\includegraphics[scale=1.0, valign=c]{pic_WAdef_Diagram_tau2.pdf} 
	\ee 
	in the 3-category~$\tz$. 
	Their inverses are denoted
	\be\label{eq:tau_i_crossing-def-2}
	\includegraphics[scale=1.0, valign=c]{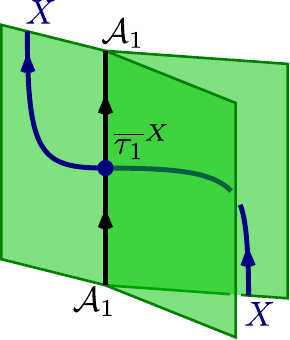}  
	\, , \qquad 
	\includegraphics[scale=1.0, valign=c]{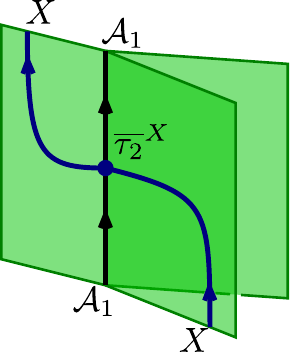}  
	\, , 
	\ee 
	and the crossings have to satisfy the identities in Figure~\ref{fig:CrossingIdentities} when~$\zzhat$ is applied to both respective sides, each viewed as a defect 3-ball. 
	(That $\tau_i^X$ and $\overline\tau_i^X$ are each others' inverse is expressed in \eqrefT{4} and \eqrefT{5}.)	
	\item 
	A morphism $\mathcal X \lra \mathcal X'$ in~$\wa$ is a morphism $f\colon X \lra X'$ in~$\mathcal W$ such that
	\begin{align}
	\zzhat\left( 
	\includegraphics[scale=1.0, valign=c]{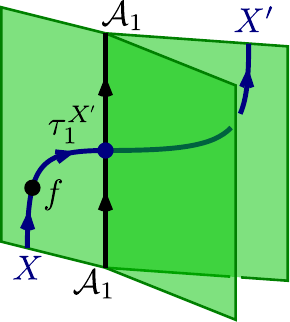}  
	\right) & = 
		\zzhat\left( 
	\includegraphics[scale=1.0, valign=c]{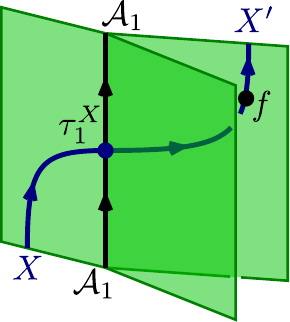} 
	\right) 
	, 
	\label{eq:M1}
	&\\ 
	\zzhat\left( 
	\includegraphics[scale=1.0, valign=c]{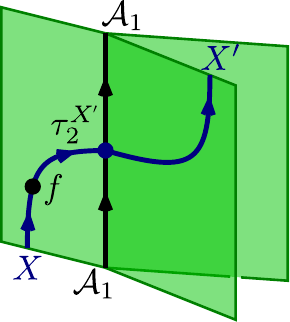} 
	\right) & = 
	\zzhat\left( 
	\includegraphics[scale=1.0, valign=c]{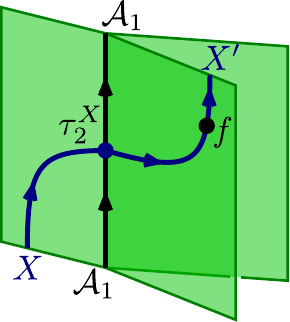} 
	\right) . 
	\label{eq:M2}
	\end{align}
	\item 
	Composition and identities in~$\wa$ are as in~$\mathcal W$. 
\end{itemize}
\end{definition}

\begin{figure}
	\captionsetup[subfigure]{labelformat=empty}
	\centering
	\hspace{-25pt}
	\makebox[1.1\textwidth]{
	\begin{subfigure}[b]{0.51\textwidth}
		\noindent
		\includegraphics[scale=1.0, valign=c]{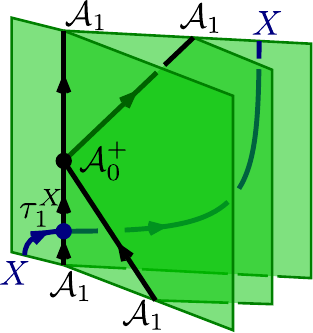} $=$
		\includegraphics[scale=1.0, valign=c]{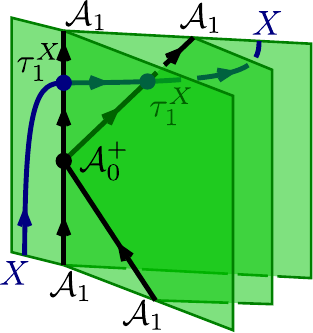}
		\caption{}
		\label{eq:T1}
	\end{subfigure}\hspace{-1em}\raisebox{5.5em}{(T1)}
	\begin{subfigure}[b]{0.54\textwidth}
		\noindent
		\includegraphics[scale=1.0, valign=c]{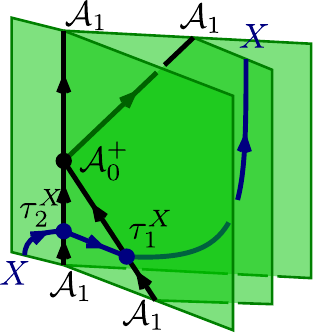} $=$
		\includegraphics[scale=1.0, valign=c]{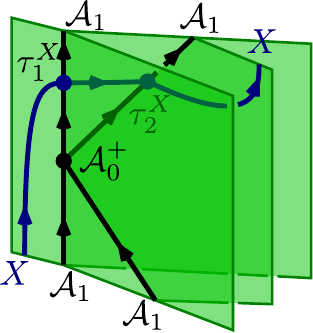}
		\caption{}
		\label{eq:T2}
	\end{subfigure}\hspace{-2em}\raisebox{5.5em}{(T2)}}\\
	\vspace{-10pt}
	\begin{subfigure}[b]{0.6\textwidth}
		\centering
		\includegraphics[scale=1.0, valign=c]{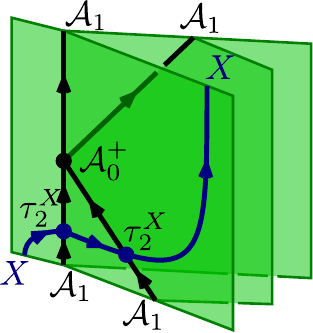} $=$
		\includegraphics[scale=1.0, valign=c]{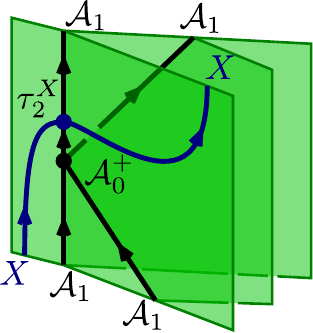}
		\caption{}
		\label{eq:T3}  
	\end{subfigure}\hspace{-2em}\raisebox{5.5em}{(T3)}\\
	\vspace{-15pt}
	\begin{subfigure}[b]{0.95\textwidth}
		\centering
		\includegraphics[scale=1.0, valign=c]{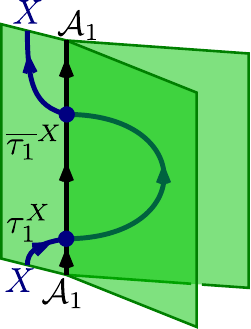} $=$
		\includegraphics[scale=1.0, valign=c]{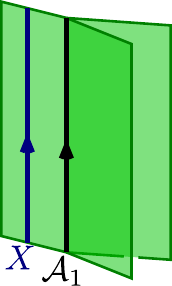}  , \quad
		\includegraphics[scale=1.0, valign=c]{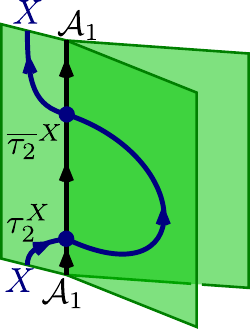} $=$
		\includegraphics[scale=1.0, valign=c]{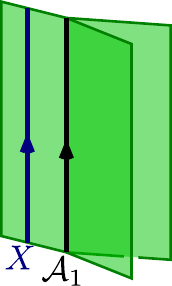}
		\caption{}
		\label{eq:T4}
	\end{subfigure}\hspace{-1em}\raisebox{5.5em}{(T4)}\\
	\vspace{-15pt}
	\begin{subfigure}[b]{0.95\textwidth}
		\centering
		\includegraphics[scale=1.0, valign=c]{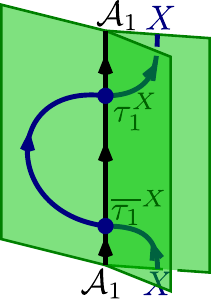} $=$
		\includegraphics[scale=1.0, valign=c]{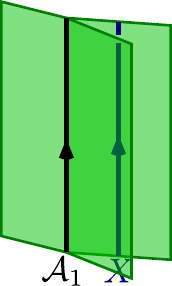}  , \quad
		\includegraphics[scale=1.0, valign=c]{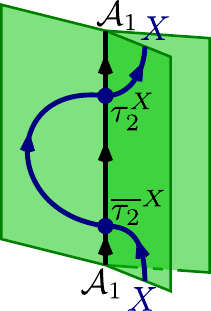} $=$
		\includegraphics[scale=1.0, valign=c]{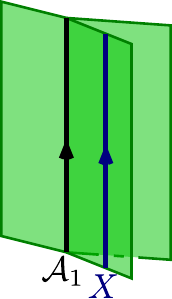}
		\caption{}
		\label{eq:T5}
	\end{subfigure}\hspace{-1em}\raisebox{5.5em}{(T5)}\\
	\vspace{-15pt}
	\begin{subfigure}[b]{0.95\textwidth}
		\centering
		\includegraphics[scale=1.0, valign=c]{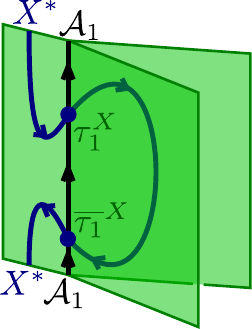} $=$
		\includegraphics[scale=1.0, valign=c]{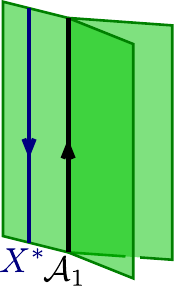}  , \quad
		\includegraphics[scale=1.0, valign=c]{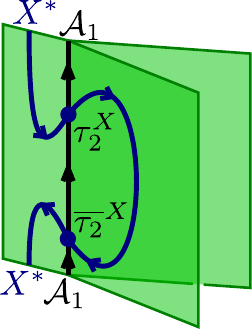} $=$
		\includegraphics[scale=1.0, valign=c]{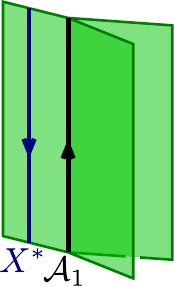}
		\caption{}
		\label{eq:T6}
	\end{subfigure}\hspace{-1em}\raisebox{5.5em}{(T6)}\\
	\vspace{-15pt}
	\begin{subfigure}[b]{0.95\textwidth}
		\centering
		\includegraphics[scale=1.0, valign=c]{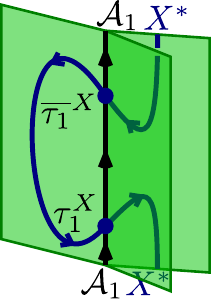} $=$
		\includegraphics[scale=1.0, valign=c]{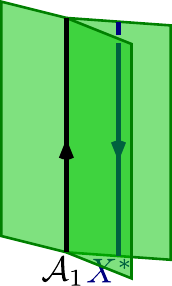}  , \quad
		\includegraphics[scale=1.0, valign=c]{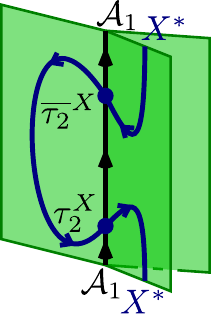} $=$
		\includegraphics[scale=1.0, valign=c]{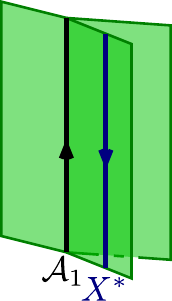}
		\caption{}
		\label{eq:T7}
	\end{subfigure}\hspace{-1em}\raisebox{5.5em}{(T7)}
	\vspace{-15pt}
	\caption{Defining conditions for objects in~$\wa$ (after application of $\widehat\zz$, i.\,e.\ in the 3-category $T_{\widehat{\zz}}\cong\tz$).}
	\label{fig:CrossingIdentities}
\end{figure}

\medskip 

We endow~$\wa$ with a rigid monoidal structure as follows: 
\begin{itemize}
	\item 
	The tensor product of~$\mathcal X$ with~$\mathcal Y$ in~$\wa$ is 
	\be 
	\big(X, \tau_1^X, \tau_2^X\big) \otimes_\A \big(Y, \tau_1^Y, \tau_2^Y\big) 
		= 
		\big(X\otimes Y, \tau_1^{X,Y}, \tau_2^{X,Y}\big) , 
	\ee 
	where~$\otimes$ on the right-hand side denotes the tensor product of~$\mathcal W$, and the crossings are 
	\begin{align}
	\tau_1^{X,Y} & = 
	\includegraphics[scale=1.0, valign=c]{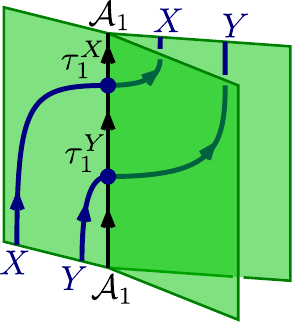} 
	\, , \quad 
	\tau_2^{X,Y}  = 
	\includegraphics[scale=1.0, valign=c]{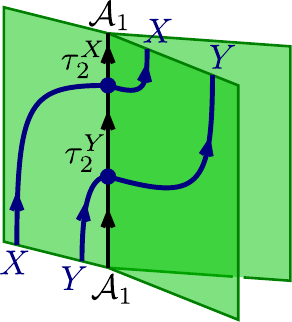} 
	\, . 
	\end{align}
	\item 
	The unit object of~$\wa$ is $\one_{\wa} = (\one, \tau_1^\one, \tau_2^\one)$, where~$\one$ is the unit object of~$\mathcal W$, and $\tau_1^\one, \tau_2^\one$ are obtained from the unitors of $\A_1$ in~$\tz$. 
	\item 
	Associators and unitors in~$\wa$ are as in~$\mathcal W$. 
	\item 
	The dual of an object $(X, \tau_1^X, \tau_2^X)$ in~$\wa$ is $(X^*, \tau_1^{X^*}, \tau_2^{X^*})$, where $X^* \in \mathcal W$ is the dual of $X\in\mathcal W$, and 
	\begin{align}
	\tau_1^{X^*} & = 
	\includegraphics[scale=1.0, valign=c]{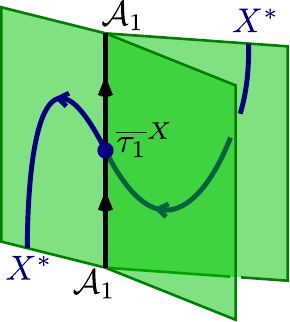} 
	\, , \quad 
	\tau_2^{X^*}  = 
	\includegraphics[scale=1.0, valign=c]{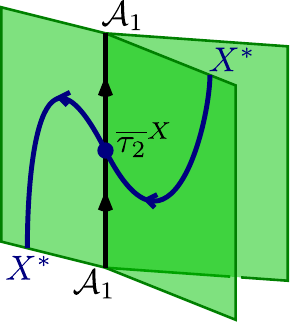}  
	\, , 
	\end{align}
	while the adjunction morphisms in~$\wa$ are those of~$\mathcal W$. 
\end{itemize}

In the following we will sometimes omit the orientations of $1$-strata when they are clear from the context.

\begin{proposition}\label{prop:WA-ribbon-cat}
	The rigid monoidal category~$\wa$ is pivotal, and together with the braiding morphisms 
	\begin{align}
		c_{\mathcal X, \mathcal Y} & = 
		\zzhat \left( 
		\includegraphics[scale=1.0, valign=c]{pic_WAdef_cXY.pdf}
		\right) 
		, \quad 
		c_{\mathcal X, \mathcal Y}^{-1} = 
		\zzhat \left(
		\includegraphics[scale=1.0, valign=c]{pic_WAdef_cXYinv.pdf}
		\right)  
		, 
	\end{align}
	it becomes a ribbon category.
\end{proposition}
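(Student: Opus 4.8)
The plan is to establish the three assertions in turn: that the rigid structure on $\wa$ refines to a pivotal one, that the morphisms $c_{\mathcal{X},\mathcal{Y}}$ form a braiding, and that the resulting braided pivotal category is ribbon. The guiding principle throughout is that every structure morphism of $\wa$ is computed by applying $\widehat{\zz}$ to a defect ball, so that equalities of such morphisms follow either (a) from the corresponding equalities already available in the pivotal monoidal category $\mathcal{W}$, (b) from the defining relations \eqrefT{1}--\eqrefT{7} of Figure~\ref{fig:CrossingIdentities} on the crossings $\tau_i$, or (c) from ambient isotopies of the relevant stratified balls rel boundary, which $\widehat{\zz}$ respects by functoriality and isotopy invariance.

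For pivotality: $\mathcal{W}$ is pivotal by construction, being an endomorphism category in the pivotal $2$-category $\mathcal{W}(\A_3,\A_3)$, so it carries a monoidal natural isomorphism $\delta\colon\mathrm{Id}\Rightarrow(-)^{**}$. I would show that for each object $\mathcal{X}=(X,\tau_1^X,\tau_2^X)$ the component $\delta_X\colon X\to X^{**}$ is a morphism $\mathcal{X}\to\mathcal{X}^{**}$ in $\wa$, i.e.\ that it satisfies \eqref{eq:M1} and \eqref{eq:M2} with respect to $\tau_i^X$ and $\tau_i^{X^{**}}$. Unravelling $\tau_i^{X^{**}}$ via the dual-crossing formulas applied twice, this reduces, using naturality of $\delta$ in $\mathcal{W}$ and the compatibility of $\delta$ with the adjunction morphisms, to a defect-ball identity that holds after applying $\widehat{\zz}$. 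Since morphisms, tensor products, and unit/associativity data in $\wa$ are those of $\mathcal{W}$, monoidality and naturality of $\{\delta_X\}$ descend immediately, giving the pivotal structure.

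For the braiding: the vector defining $c_{\mathcal{X},\mathcal{Y}}$ lies in $\widehat{\zz}\big((X\otimes Y)^{*}\circ(Y\otimes X)\big)=\Hom_{\mathcal{W}}(X\otimes Y,\,Y\otimes X)$ by the very definition of Hom-spaces in $\mathcal{W}$, so the only real point is that it is a morphism in $\wa$, i.e.\ that it intertwines $\tau_i^{X,Y}$ with $\tau_i^{Y,X}$; this is exactly conditions \eqref{eq:M1}/\eqref{eq:M2}, which I would verify by sliding the $\A_1$-strand past the $X$--$Y$ crossing in the defining defect ball and invoking \eqrefT{1}, \eqrefT{2}, \eqrefT{3}. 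That $c_{\mathcal{X},\mathcal{Y}}^{-1}$ is a two-sided inverse is the Reidemeister-II move on the two strands, an isotopy of defect balls. Naturality of $c$ in each variable follows from functoriality of $\widehat{\zz}$: a morphism $f\colon\mathcal{X}\to\mathcal{X}'$ is a $3$-morphism supported on the $X$-strand and may be slid through the crossing. The two hexagon axioms compare two defect balls that are ambient isotopic rel boundary (three braided strands with their $\A_1$-crossings), hence become equal under $\widehat{\zz}$.

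Finally, for the ribbon property I would use the braiding together with the pivotal structure $\delta$ from the first step to build the twist $\theta_{\mathcal{X}}$ as the usual composite, identify it with the framing-change endomorphism of the $X$-strand inside a defect ball, and note that the twist axioms $\theta_{\mathcal{X}\otimes\mathcal{Y}}=(\theta_{\mathcal{X}}\otimes\theta_{\mathcal{Y}})\circ c_{\mathcal{Y},\mathcal{X}}\circ c_{\mathcal{X},\mathcal{Y}}$, $\theta_{\one_{\wa}}=\id$, and $(\theta_{\mathcal{X}})^{*}=\theta_{\mathcal{X}^{*}}$ are all isotopy identities of framed strands in defect balls, hence hold after applying $\widehat{\zz}$. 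I expect the main obstacle to be the verification that $c_{\mathcal{X},\mathcal{Y}}$ is a morphism of $\wa$ together with the hexagon axioms: unlike the other steps, these genuinely require tracking how the $X$-, $Y$- and $\A_1$-strands are routed through explicit stratified balls and feeding in the crossing relations \eqrefT{1}--\eqrefT{3}, rather than merely quoting pivotality of $\mathcal{W}$ or isotopy invariance. Making the ``isotopic defect ball'' arguments rigorous, exhibiting homeomorphisms rel boundary instead of suggestive pictures, is the other place where care is needed, but this is routine given the conventions set up in Section~\ref{subsec:MonoidalCatFromDefectTQFT}.
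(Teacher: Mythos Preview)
Your treatment of pivotality and the braiding is essentially in line with the paper: both defer these parts to the argument of \cite[Sect.\,3.2]{MuleRunk}, carried over to the present setting of defect balls and the completed TQFT~$\zzhat$. Your identification of \eqref{eq:M1}/\eqref{eq:M2} and the relations \eqrefT{1}--\eqrefT{3} as the key inputs for showing that $c_{\mathcal X,\mathcal Y}$ lies in~$\wa$ is correct.

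The gap is in your final step. You claim that the twist axioms, in particular $(\theta_{\mathcal X})^{*}=\theta_{\mathcal X^{*}}$ (equivalently, that the left and right twists in~\eqref{eq:TwistInWA} agree), are ``isotopy identities of framed strands in defect balls''. They are not. The two defect balls for the left and right twist differ by having the $X$-loop on opposite sides of the $\A_1$-decorated 1-stratum that appears in the definition of the braiding; there is no ambient isotopy rel boundary taking one to the other. Relating them genuinely requires moving the $X$-strand across $\A_1$-lines and creating/removing bubbles, i.e.\ it uses the crossing relations \eqrefT{4}--\eqrefT{6}, the morphism conditions~\eqref{eq:M2}, the orbifold identity~\eqrefO{8}, and the auxiliary identities of \cite[Lem.\,3.4]{MuleRunk}. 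The paper flags exactly this point: the argument in \cite{MuleRunk} established the ribbon property via a semisimplicity shortcut not available here, and the paper supplies the missing explicit computation~\eqref{eq:WA-ribbon_proof_step1}--\eqref{eq:WA-ribbon_proof_step2}. Your proposal as written would be incomplete without a comparable calculation; the sentence ``isotopy identities of framed strands'' hides precisely the non-trivial content of the proof.
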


Before giving the proof, let us recall that, as usual in a ribbon category, the ribbon twist of $\mathcal X \in \wa$ and its inverse can be defined in terms of dualities and the braiding as follows:
\begin{align}
\label{eq:TwistInWA}
\theta_{\mathcal X} := 
\zzhat \left(
\includegraphics[scale=1.0, valign=c]{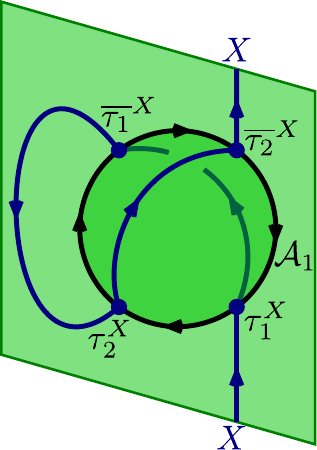}
\right) =
\zzhat \left(
\includegraphics[scale=1.0, valign=c]{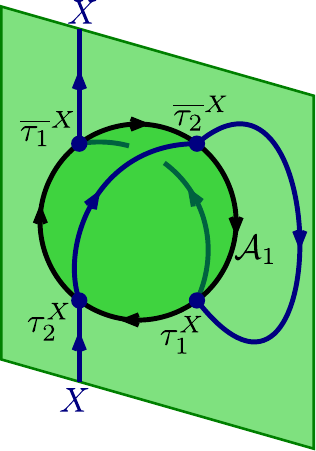}
\right) ,\\
\label{eq:TwistInvInWA}
\theta_{\mathcal X}^{-1} := 
\zzhat \left(
\includegraphics[scale=1.0, valign=c]{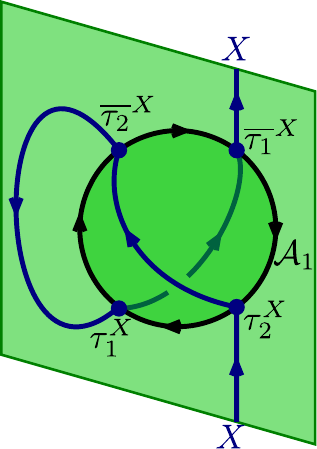}
\right) =
\zzhat \left(
\includegraphics[scale=1.0, valign=c]{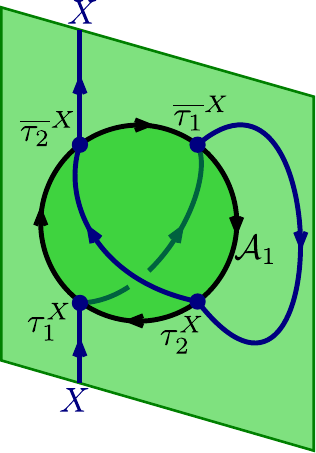}
\right) .
\end{align}

\begin{proof}[Proof of Proposition~\ref{prop:WA-ribbon-cat}]
The argument that $\wa$ is pivotal and braided is as in \cite[Sect.\,3.2]{MuleRunk} (with the parameters $\psi_i, \omega_i$ of loc.\ cit.\ set to~1, and with all string diagrams replaced by the corresponding 3-dimensional diagrams, to which~$\zzhat$ is applied).
However, the proof of the ribbon property in \cite{MuleRunk} used a shortcut that relies on semisimplicity (see Remark~3.7\,(ii) there), so here we need to add an extra calculcation.
 	
	First note that as in \cite[Lem.\,3.4]{MuleRunk}, for every $\mathcal X, \mathcal Y \in \wa$ we have
	\be
	\label{eq:aux_braiding_ids} 
	\includegraphics[scale=1.0, valign=c]{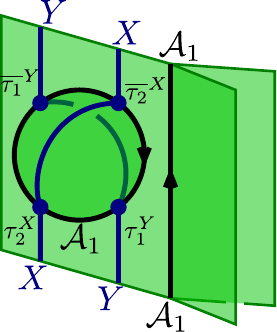} =
	\includegraphics[scale=1.0, valign=c]{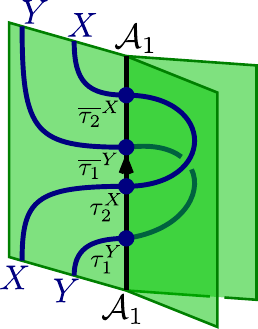}
	\, , \quad 
	\includegraphics[scale=1.0, valign=c]{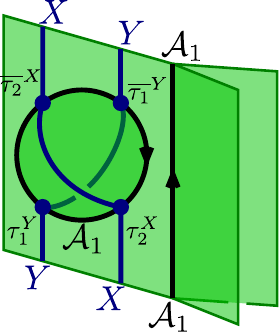} =
	\includegraphics[scale=1.0, valign=c]{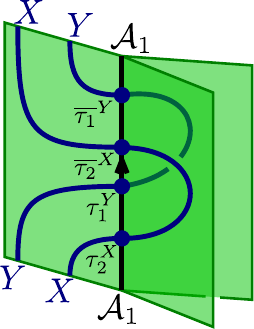}
	\ee
	To show that a pivotal braided category is ribbon, one needs to check that the left and right twists agree, i.\,e.\  that the equality in \eqref{eq:TwistInWA} holds.
	One has:
	\be
	\label{eq:WA-ribbon_proof_step1}
	\includegraphics[scale=1.0, valign=c]{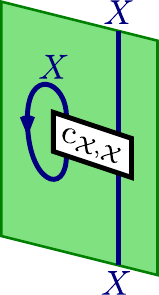} =
	\includegraphics[scale=1.0, valign=c]{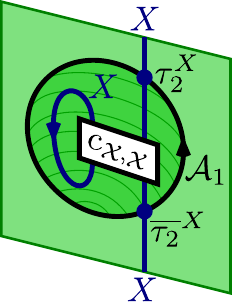} =
	\includegraphics[scale=1.0, valign=c]{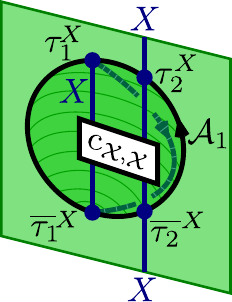} \, ,
	\ee
	where in the first equality we used \eqrefO{8}, \eqrefT{5} and \eqref{eq:M2} to create a bubble and move the coupon on it, and in the second equality we used \eqrefT{4} to move the $X$-strand onto the $2$-stratum to the back 
	(note that both this $2$-stratum and the strand lying in it have opposite to paper/screen plane orientation, hence the stripy pattern).
	Next, using the auxiliary identities \eqref{eq:aux_braiding_ids} together with \eqrefT{5} and \eqrefT{6} one gets:
			\begin{align}
			\includegraphics[scale=1.0, valign=c]{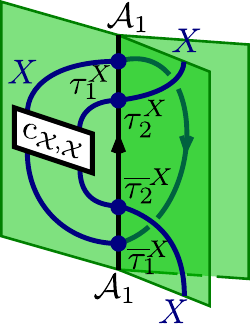} 
			& =
			\includegraphics[scale=1.0, valign=c]{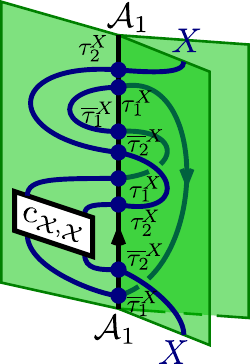} =
			\includegraphics[scale=1.0, valign=c]{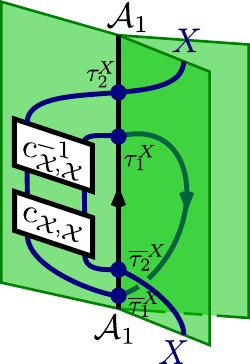} \nonumber 
			\\
			& = 
			\includegraphics[scale=1.0, valign=c]{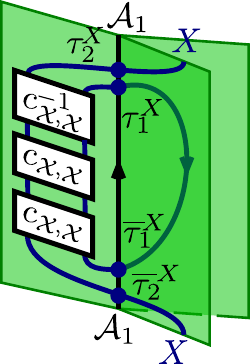} =
			\includegraphics[scale=1.0, valign=c]{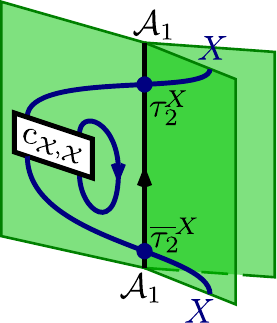}\, ,
			\label{eq:WA-ribbon_proof_step2}
			\end{align}
	which upon substituting back to \eqref{eq:WA-ribbon_proof_step1} yields the desired result. 
\end{proof}

\begin{remark}
  \label{rem:WAinGrayCat}
  \begin{enumerate}[label={(\roman*)}]
  \item \label{item:W-internal}
  The notion of a special orbifold datum can be formulated internal to an arbitrary Gray category with duals~$\mathcal T$, see \cite[Sect.\,4.2]{CRS1}. 
  In the case that $\mathcal T = \mathcal T_\zz$ from Section~\ref{subsubsec:SOD}, this reproduces Definition~\ref{def:special-orb-data}. 
  Our construction of a ribbon category~$\wa$ generalises to any special orbifold datum~$\A$ in a Gray category with duals~$\mathcal T$, by interpreting all the above 3-dimensional diagrams as Gray diagrams (see \cite{BMS,CMS}) of~$\mathcal T$.
  \item 
  It is illustrative to consider the following simplified version 
  (ignoring the duals) of the categorical construction in \ref{item:W-internal}:  Let~$\mathcal T$ be the delooping of the symmetric monoidal 2-category $\operatorname{Cat}$ with the cartesian product as monoidal structure. 
  That is, $\mathcal T$ has only one object, the 1-morphisms are categories, 2- and 3-morphisms are functors and natural transformations.
  Consider data  $\A = (\A_3,\A_2,\A_1,\A_0^\pm)$ as in Section~\ref{subsubsec:SOD}, subject to the axioms
  \eqrefO{1}--\eqrefO{3}. 
  This corresponds to a non-unital monoidal category $\A$.
  Now the analogue of $\mathcal W_{\A}$ has as objects tuples  $\mathcal X = (X,\tau_1^X, \tau_2^X)$ as in Definition~\ref{definition:WA}, subject to the axioms \eqrefT{1}--\eqrefT{5}. 
  Writing $\otimes$ for the monoidal product of $\A$, an object $F$ of $\mathcal W_{\A}$ is thus a functor $F\colon \A \lra \A$ with coherent isomorphisms $F(a \otimes b) \cong a \otimes F(b) \cong F(a) \otimes b$, i.\,e.\ $\mathcal W_{\A} \cong \operatorname{Fun}_{\A,\A}(\A,\A)$ is the category of bimodule endofunctors of~$\A$. 
  This is automatically unital, and in the case that~$\A$ has a unit object, it coincides with the Drinfeld centre of~$\A$. 
  In the case of spherical fusion categories, the full ribbon equivalence is proved by (tedious) direct computation in \cite[Thm.\,4.2]{MuleRunk}.
\end{enumerate}
\end{remark}

\subsection{Orbifold graph TQFTs}
\label{subsec:OrbifoldGraphTQFTs}

Let~$\zz$ be a defect TQFT, let~$\A$ be a special orbifold datum for the completed TQFT~$\widehat\zz$ of Section~\ref{subsec:MonoidalCatFromDefectTQFT}, and recall the associated ribbon category~$\wa$ of Section~\ref{subsec:RibbonCategoriesFromSOD}. 
In this section we extend the orbifold TQFT $\zzhata \colon {\Bord}_3 \lra \Vect$ of Construction~\ref{constr:OrbifoldTQFT} to a graph TQFT 
\be 
\zzwa \colon \Bordribn{3}(\wa) \lra \Vect
\ee 
on bordisms with embedded $\wa$-labelled ribbon graphs. 
To this end we use ribbon diagrams as in Section~\ref{subsuc:RibbonDiagramsOmegaMoves} to represent (uncoloured) ribbon graphs and then decorate them using the data from $\wa$.
To show that the construction of~$\zzwa$ is independent of the choice of such representations, we prove invariance under  $\omega$-moves of ribbon diagrams.

\subsubsection{Decorated ribbon diagrams}

Let $M$ be a bordism and $R$ an embedded ribbon graph in~$M$. 
In Section~\ref{subsuc:RibbonDiagramsOmegaMoves} the set $\mathscr S(M,R)$
of positive admissible ribbon diagrams in~$M$ that represent~$R$ was introduced.
By design, elements of  $\mathscr S(M,R)$ can be decorated using an orbifold datum $\A$ and the ribbon category $\wa$. 
This is formalised in Definition~\ref{def:AdecoratedRibbonDiagrams} below, which can be viewed as a generalisation of Definition~\ref{def:AdecoratedSkeleta} to non-trivial ribbon graphs. 

As in Section~\ref{subsec:Review3dDefectTQFT}, for a ribbon category~$\mathcal C$ we denote $\mathcal C$-coloured ribbon graphs by calligraphic letters like~$\mathcal R$ (whose underlying uncoloured ribbon graphs are then denoted~$R$). 
Accordingly, we write $\mathscr S(M,\mathcal R)$ for the set of $\mathcal C$-coloured positive admissible ribbon diagrams representing a $\mathcal C$-coloured ribbon graph~$\mathcal R$ in~$M$. 
An element $(S, \mathscr{d}) \in \mathscr S(M,\mathcal R)$ consists of an admissible skeleton~$S$ and a $\mathcal C$-coloured knotted plexus~$\mathscr d$, whose underlying uncoloured knotted plexus~$d$ obtains its $\mathcal C$-colouring from~$\mathcal R$. 
It follows that $\mathscr S(M,\varnothing) = \mathscr S(M)$. 

In the present setting, the ribbon category $\mathcal C$ used to colour $\mathcal R$ is given by $\wa$.

\begin{definition}
\label{def:AdecoratedRibbonDiagrams}
Let $\zz\colon \Borddefn3(\mathds{D}) \lra \Vect$ be a defect TQFT, and let~$\A$ be a special orbifold datum for~$\zzhat$. 
An \textsl{$\mathcal A$-decorated ribbon diagram} $(\mathcal S, \mathscr d)$ of a morphism $(M,\mathcal R)$ in $\Bordribn{3}(\wa)$ is an element $(S, \mathscr{d}) \in \mathscr S(M,\mathcal R)$ together with a decoration as follows: 
\begin{enumerate}[label={(\roman*)}]
	\item 
	$\mathcal S$ is an $\A$-decorated skeleton of~$M$ with underlying skeleton~$S$;
	\item 
	if a switch of $(\mathcal S, \mathscr d)$ involves an $\mathcal X$-labelled ribbon of~$\mathcal R$ traversing an $\A_1$-labelled 1-stratum of~$\mathcal S$, then the switch is labelled by $\tau^X_1, \tau^X_2, \overline\tau^X_1$ or~$\overline\tau^X_2$ as dictated by orientations, cf.~\eqref{eq:tau_i_crossing-def-1}, \eqref{eq:tau_i_crossing-def-2}; 
	\item 
	over- and under-crossings in~$\mathscr d$ are replaced by coupons labelled with the corresponding braiding morphisms in~$\wa$. 
	Each braiding coupon is oriented such that on its source
	side the two ribbons involved in the crossing are pointing towards the coupon, and on its target side they are pointing away form the coupon.	
\end{enumerate}
The set of $\A$-decorated ribbon diagrams of the pair $(M,\mathcal R)$ is denoted $\mathscr S_\A(M,\mathcal R)$. 
\end{definition}

Given an $\A$-decorated ribbon diagram $(\mathcal S, \mathscr d)$ of a morphism 
\be 
(M,\mathcal R) \colon \Sigma \lra \Sigma'
\ee
in $\Bordribn{3}(\wa)$, we obtain a new defect bordism $F(\mathcal S, \mathscr d)$, which, in accordance with Section~\ref{subsubsec:OrbifoldTQFTs}, we call a \textsl{foamification} of $(M,\mathcal R)$ represented by $(\mathcal S, \mathscr d)$. 
Note that $F(\mathcal S, \varnothing) = F(\mathcal S)$. 

Viewed as a morphism in $\Borddefn{3}(\widehat\D)$, the defect bordism $F(\mathcal S, \mathscr d)$ has source and target objects which are $\widehat\D$-decorated defect surfaces whose 1-strata are labelled by~$\A_2$ and whose 0-strata precisely correspond to the endpoints of~$\mathcal R$ and of $\A_1$-lines on $\partial M$. 
We denote these source and target objects by $F(\Sigma, \mathcal G)$ and $F(\Sigma', \mathcal G')$, respectively, where $\mathcal G, \mathcal G'$ are the decorated skeleta of $\Sigma, \Sigma'$ induced from $(\mathcal S, \mathscr d)$. 
Thus 
\be
\label{eq:FoamificationMap}
F(\mathcal S, \mathscr d) \colon F(\Sigma, \mathcal G) \lra F(\Sigma', \mathcal G')
\ee 
in $\Borddefn{3}(\widehat\D)$. 

By definition, the evaluation of~$\zzhat$ on an $\A$-decorated ribbon diagram $(\mathcal S, \mathscr d)$ is given by $\zzhat(F(\mathcal S, \mathscr d))$. 
In particular, $\zzhat$ can be evaluated on defect 3-balls around each side of $\A$-decorated versions of the $\omega$-moves in Figure~\ref{fig:omegaMoves}. 

\begin{proposition}
\label{prop:SODandOmegaMoves}
Let~$\mathcal A$ be a special orbifold datum for a completed defect TQFT~$\zzhat$. 
Applying $\zzhat$ to $\A$-decorated $\omega$-moves gives identities in $\Vect$. 
\end{proposition}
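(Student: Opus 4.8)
The plan is to treat each type of $\omega$-move separately. The moves of type $\omega_0$ are admissible BLT moves by definition, and the associated identity in $\Vect$ follows directly from Proposition~\ref{prop:SODBLT} (equivalently, from the fact that $\A$ is a special orbifold datum for $\widehat\zz$): an $\A$-decorated $\omega_0$-move does not touch the knotted plexus, so it is literally an $\A$-decorated BLT move on the underlying skeleton, and $\widehat\zz$ evaluated on such a move is the identity. The moves $\omega_1,\omega_2,\omega_3$ are framed Reidemeister moves, so invariance of $\widehat\zz(F(\mathcal S,\mathscr d))$ under them is just invariance of $\widehat\zz$ under the corresponding isotopies of the foamification $F(\mathcal S,\mathscr d)$ inside a defect $3$-ball — together with the fact that over/under-crossings have been replaced by braiding coupons $c_{\mathcal X,\mathcal Y}^{\pm1}$, whose naturality and the hexagon/braid relations (which hold in the ribbon category $\wa$ by Proposition~\ref{prop:WA-ribbon-cat}) account for the algebraic side of the Reidemeister relations.

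The substantive work is with the moves $\omega_4,\dots,\omega_{10}$, which are the moves that pass a $\wa$-labelled strand across, or slide a braiding coupon past, the $\A_1$-decorated $1$-strata of the skeleton, and which introduce/remove bubbles carrying new $2$-strata. Here the strategy is: fix an admissible orientation for each $\omega$-move (there are finitely many per type, as explained after Figure~\ref{fig:omegaMoves}), decorate both sides as in Definition~\ref{def:AdecoratedRibbonDiagrams}, form the two foamifications inside a defect $3$-ball, and show that $\widehat\zz$ evaluates them to the same vector. I would do this by rewriting the right-hand foamification into the left-hand one using only (a) the defining orbifold relations \eqrefO{1}--\eqrefO{8} of Figure~\ref{fig:SpecialOrbifoldData}, (b) the crossing relations \eqrefT{1}--\eqrefT{7} of Figure~\ref{fig:CrossingIdentities} together with the invertibility of $\tau_i^X$, and (c) the morphism conditions \eqref{eq:M1}, \eqref{eq:M2} and the auxiliary braiding identities \eqref{eq:aux_braiding_ids}. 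Concretely: $\omega_9,\omega_{10}$ — in which a strand is pushed across a $1$-stratum and a bubble appears — are handled by \eqrefO{8} (create/remove the bubble), then \eqrefT{4}/\eqrefT{5} (slide the strand onto the new $2$-stratum), exactly the manipulation already used in the proof of Proposition~\ref{prop:WA-ribbon-cat}; $\omega_4$ — moving a strand past a trivalent $\A_1$-vertex — is \eqrefT{1}/\eqrefT{2}; $\omega_5,\dots,\omega_8$ — the remaining slide moves involving $\A_0^\pm$-points and $1$-strata — are combinations of \eqrefT{3}, \eqrefT{6}, \eqrefT{7} with \eqrefO{2}--\eqrefO{7}. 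In each case one reads off from the picture which $\tau_i$ (or $\overline\tau_i$) decorates each switch on the two sides, and the relevant \eqrefT{}-identity is precisely the statement that the two decorated configurations are equal after applying $\widehat\zz$. For the moves where the two sides have genuinely different underlying skeleta (a bubble is present on one side), one first uses $\omega_0$-invariance (already established) to reduce to skeleta differing only locally, then applies the argument above.

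The main obstacle I expect is bookkeeping rather than conceptual: one must verify that for \emph{every} admissible orientation of \emph{every} $\omega$-move, the induced labels on switches (a choice among $\tau_1^X,\tau_2^X,\overline\tau_1^X,\overline\tau_2^X$ forced by orientations) together with the orientations of the $2$-strata appearing in any bubble, match up so that exactly one of the relations in Figures~\ref{fig:SpecialOrbifoldData} and~\ref{fig:CrossingIdentities} — possibly composed with an admissible reorientation of a new $2$-stratum, achievable by $\omega_0$-moves as in Remark~\ref{rem:flip_orientation_flower} and the proof of Lemma~\ref{lem:PachnerFromBLT} — does the job. Since the relations \eqrefT{1}--\eqrefT{7} were set up precisely to encode the compatibility of the crossings $\tau_i^X$ with the local pieces of an $\A$-decorated skeleton, and since Definition~\ref{definition:WA} builds $\tau_i^{X,Y}$, $\tau_i^{X^*}$, $\tau_i^\one$ by exactly the same local moves, this matching is guaranteed; but making it fully explicit for all $\sim 10$ move types times their orientation variants is the part that requires care. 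I would present one or two representative cases in detail (e.g.\ $\omega_4$ and $\omega_9$) and assert that the remaining cases are analogous, deferring the complete orientation check to the appendix in the spirit of the treatment of Theorem~\ref{thm:ConnectSkeleta}.
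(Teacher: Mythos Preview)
Your overall strategy is the same as the paper's: treat $\omega_0$ via Proposition~\ref{prop:SODBLT}, treat $\omega_1,\omega_2,\omega_3$ as framed Reidemeister moves in the ribbon category~$\wa$, and handle $\omega_4,\dots,\omega_{10}$ by matching each move to a combination of the \eqrefO{}-, \eqrefT{}- and \eqref{eq:M1},\eqref{eq:M2}-identities. So the architecture is right.

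Where you diverge from the paper is in the assignment of specific identities to specific moves, and in particular in your estimate of where the difficulty lies. You describe $\omega_4$ as ``moving a strand past a trivalent $\A_1$-vertex'' and claim it is handled directly by \eqrefT{1}/\eqrefT{2}. In the paper, $\omega_4$ is in fact the \emph{hardest} case and gets the only detailed computation: one first creates a bubble using \eqrefO{8}, \eqrefT{5}, \eqref{eq:M2}, then moves the $X$-strand onto the back $2$-stratum via \eqrefT{4}, and finally uses the auxiliary braiding identities~\eqref{eq:aux_braiding_ids} together with \eqrefT{5}, \eqrefT{6}; the ribbon twist~$\theta_{\mathcal X}$ appears nontrivially in the intermediate steps. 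So $\omega_4$ is not a single \eqrefT{}-identity but a genuine calculation comparable to (and reusing parts of) the proof of Proposition~\ref{prop:WA-ribbon-cat}. Conversely, several of the moves you flag as requiring combinations of \eqrefO{2}--\eqrefO{7} with \eqrefT{3},\eqrefT{6},\eqrefT{7} are in the paper dispatched more directly: $\omega_5$ by \eqrefT{4}, $\omega_6$ and $\omega_8$ by \eqref{eq:M1},\eqref{eq:M2}, $\omega_7$ by~\eqref{eq:aux_braiding_ids}, and $\omega_9,\omega_{10}$ by derived identities (T12$'$), (T13$'$) from \cite{MuleRunk}. Your choice of representatives to write out in detail (``e.g.\ $\omega_4$ and $\omega_9$'') is therefore unfortunate: you would discover that $\omega_4$ does not reduce to \eqrefT{1}/\eqrefT{2}, and that the ``analogous'' claim for the remaining cases is not symmetric in difficulty. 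The fix is simply to redo the case analysis with the actual local pictures of the $\omega$-moves in hand; the ingredients you list are sufficient, but the bookkeeping you anticipate is concentrated in $\omega_4$ rather than spread uniformly.
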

\begin{proof}
Invariance under $\omega_0$-moves follows from Proposition~\ref{prop:SODBLT}. 
Invariance under moves of type $\omega_1, \omega_2, \omega_3$ follows from the fact that they are framed Reidemeister moves which hold in every ribbon category. 

Invariance under the remaining $\omega$-moves follows from the defining properties of the category~$\wa$ and the results of \cite{MuleRunk} which directly carry over to our more general setting: 
	for one choice of admissible orientations of 2-strata, invariance under~$\omega_{10}$ follows from~\eqrefT{5} together with the identity \cite[(T12$'$)]{MuleRunk}; 
	for~$\omega_9$, use \cite[(T13$'$)]{MuleRunk}; 
	for~$\omega_8$ and~$\omega_6$, use~\eqref{eq:M1} and~\eqref{eq:M2}; 
	for~$\omega_7$, use \cite[Lem.\,3.4]{MuleRunk}; 
	for~$\omega_5$, use~\eqrefT{4}. 
Showing invariance under the $\omega_4$-move, which we reformulate as the identity 
\be 
\label{eq:omega4again}
\includegraphics[scale=1.0, valign=c]{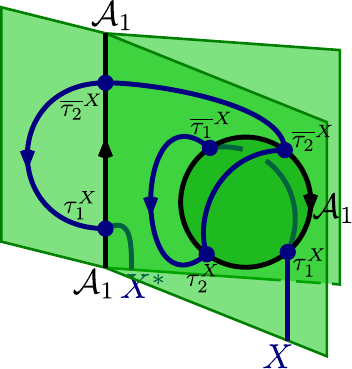}  
= 
\includegraphics[scale=1.0, valign=c]{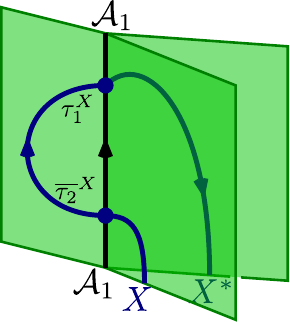}  
\, , 
\ee 
is more involved, and we give more details. 

Let us recall the identities \eqref{eq:aux_braiding_ids} which were used in the proof of Proposition \ref{prop:WA-ribbon-cat}.
Taking $X=Y$ and closing the left-most strands to a loop, we obtain
\be 
\label{eq:TwistNearT}
\includegraphics[scale=1.0, valign=c]{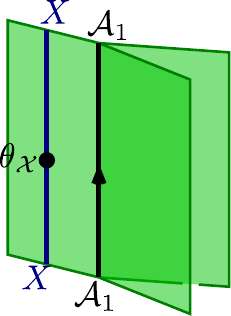}
=
\includegraphics[scale=1.0, valign=c]{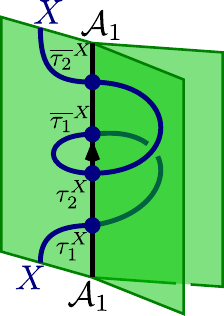}  
\, , \quad 
\includegraphics[scale=1.0, valign=c]{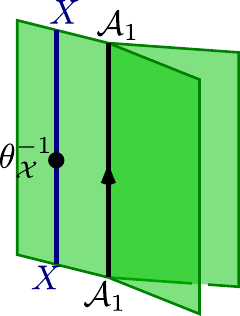}
=
\includegraphics[scale=1.0, valign=c]{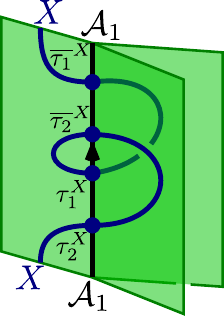}  
\, , 
\ee 
where we used the representations~\eqref{eq:TwistInWA}, \eqref{eq:TwistInvInWA} of the ribbon twist and its inverse. 
Hence the left-hand side of~\eqref{eq:omega4again} is 
\be 
\includegraphics[scale=1.0, valign=c]{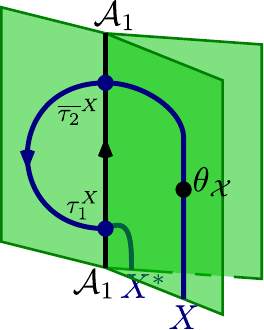}   
=
\includegraphics[scale=1.0, valign=c]{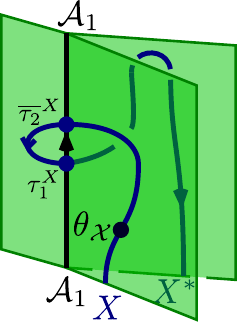}
= 
\includegraphics[scale=1.0, valign=c]{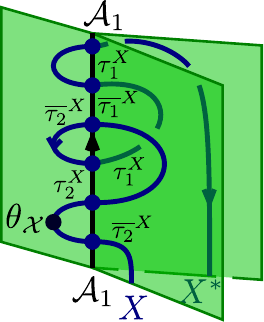}   
= 
\includegraphics[scale=1.0, valign=c]{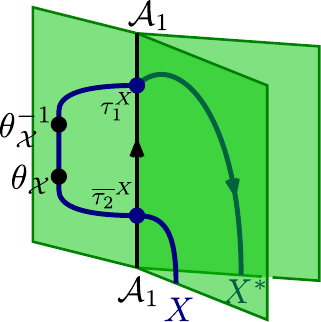}   
\, , 
\ee 
where in the first step we deformed the $X$-labelled line, in the second step we used \eqrefT{4}--\eqrefT{7} and a consequence of~\eqref{eq:M2}, and in the last step the second identity of~\eqref{eq:TwistNearT}.
\end{proof}

Together with Proposition~\ref{prop:omegaMoves}, the above result implies: 

\begin{corollary}
\label{cor:ZCAonSameBndManifolds}
Let $(\mathcal S, \mathscr d)$ and $(\mathcal S', \mathscr d')$ be $\A$-decorated representations of a $\wa$-coloured ribbon graph~$\mathcal R$ in a bordism~$M$. 
If $\mathcal S, \mathcal S'$ respectively $\mathscr d, \mathscr d'$ agree on the boundary, i.\,e.\ if $\mathcal S|_{\partial M} = \mathcal S'|_{\partial M}$ and $\mathscr d|_{\partial M}=\mathscr d'|_{\partial M}$, then 
\be
\zzhat\big(F(\mathcal S, \mathscr d)\big) = \zzhat\big(F(\mathcal S', \mathscr d')\big) . 
\ee
\end{corollary}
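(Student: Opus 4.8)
The plan is to reduce the statement to Propositions~\ref{prop:omegaMoves} and~\ref{prop:SODandOmegaMoves}, the only real inputs needed. First I would observe that $(\mathcal S,\mathscr d)$ and $(\mathcal S',\mathscr d')$ are two elements of $\mathscr S_\A(M,\mathcal R)$ whose underlying uncoloured positive admissible ribbon diagrams $(S,d)$ and $(S',d')$ both lie in $\mathscr S(M,R)$ and agree on $\partial M$. By Proposition~\ref{prop:omegaMoves}, these two uncoloured ribbon diagrams are related by a finite sequence of moves of type $\omega_0,\omega_1,\dots,\omega_{10}$, each performed inside an embedded open $3$-ball in the interior of $M$ (so that the boundary data is untouched throughout). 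The key point is that the $\A$-decoration (of strata and of switches by $\tau^X_i,\overline\tau^X_i$, and of crossings by braiding coupons in~$\wa$) is carried along this sequence in a canonical way: on the part of the diagram outside the $3$-ball where a given move takes place, the decoration is simply unchanged, while inside the ball the move transforms one locally $\A$-decorated configuration into another. Thus the sequence of uncoloured $\omega$-moves lifts to a sequence of $\A$-decorated $\omega$-moves connecting $(\mathcal S,\mathscr d)$ to $(\mathcal S',\mathscr d')$, provided one checks that at each step the $\A$-decoration on both sides of the move agrees outside the ball (which is immediate) and is a legitimate $\A$-decoration inside (which is exactly the content of the local pictures in Figures~\ref{fig:SpecialOrbifoldData},~\ref{fig:CrossingIdentities} and the definition of the braiding coupons).

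Next I would invoke that $\widehat\zz$ is a symmetric monoidal functor on $\Borddefn{3}(\widehat\D)$ and in particular is invariant under gluing and depends only on diffeomorphism classes. Each $\A$-decorated $\omega$-move, when one forms the foamification, amounts to replacing a defect $3$-ball $B$ inside $F(\mathcal S,\mathscr d)$ by a differently-decorated defect $3$-ball $B'$ with the same decorated boundary $\partial B = \partial B'$. Writing $F(\mathcal S,\mathscr d) = N \cup_{\partial B} B$ for the complementary piece $N$, functoriality of $\widehat\zz$ together with Proposition~\ref{prop:SODandOmegaMoves} — which says precisely that $\widehat\zz(B) = \widehat\zz(B')$ as morphisms $\varnothing \to \widehat\zz(\partial B)$ (or as elements of the appropriate $\Hom$-space after cutting) — gives $\widehat\zz(N\cup_{\partial B} B) = \widehat\zz(N \cup_{\partial B} B')$. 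Composing these equalities along the whole sequence of moves yields $\widehat\zz(F(\mathcal S,\mathscr d)) = \widehat\zz(F(\mathcal S',\mathscr d'))$, which is the claim.

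The main obstacle I anticipate is the bookkeeping in the lifting step: one must be sure that the $\A$-decoration behaves coherently under the $\omega$-moves, i.e. that every uncoloured move appearing in the sequence furnished by Proposition~\ref{prop:omegaMoves} does admit an $\A$-decorated lift. This is not automatic a priori — for instance, a switch that is created or destroyed must be assignable a consistent crossing label $\tau^X_i$ compatible with orientations, and a new $2$-stratum created by a bubble-type move must receive the $\A_2$-label together with compatible crossings — but it is exactly what the defining conditions \eqrefO{1}--\eqrefO{8} and \eqrefT{1}--\eqrefT{7} were set up to guarantee, and this is what Proposition~\ref{prop:SODandOmegaMoves} already encodes locally. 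So in practice this step is a routine (if slightly tedious) case-by-case verification against Figure~\ref{fig:omegaMoves}, and the substantive mathematical input has already been isolated in the two cited results; the corollary is then just their combination via functoriality of $\widehat\zz$.
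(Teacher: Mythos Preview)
Your proposal is correct and matches the paper's approach exactly: the paper simply states that the corollary follows from Proposition~\ref{prop:omegaMoves} together with Proposition~\ref{prop:SODandOmegaMoves}, and your write-up spells out precisely this combination (lift the sequence of $\omega$-moves to $\A$-decorated $\omega$-moves, then use local invariance and functoriality of $\widehat\zz$). The bookkeeping you flag as a potential obstacle is indeed routine and is already absorbed into the formulation of Proposition~\ref{prop:SODandOmegaMoves}.
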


\subsubsection{Definition of the orbifold graph TQFT}

We finally define the orbifold graph TQFT $\zzwa \colon \Bordribn{3}(\wa) \lra \Vect$.  
According to Corollary~\ref{cor:ZCAonSameBndManifolds}, the linear map $\zzhat(F(\mathcal S, \mathscr d))$ depends only on the choice of decorated skeleton of the boundary of the bordism~$M$ with embedded ribbon graph~$\mathcal R$ represented by $(\mathcal S, \mathscr d)$. 

The dependence on the boundary decomposition is removed analogously to the construction of the orbifold TQFT~$\zza$ in Section~\ref{subsubsec:OrbifoldTQFTs}. 
Namely, let~$\Sigma$ be an object in $\Bordribn{3}(\wa)$. 
Each puncture~$p_i$ of~$\Sigma$ comes with a label $(\mathcal X_i,v_i,\varepsilon_i)$ as in Section~\ref{subsec:Review3dDefectTQFT}, where $\mathcal X_i \in \wa$. 
We view the cylinder $\Sigma \times [0,1]$ as a bordism $\mathcal C_\Sigma$ with embedded $\wa$-coloured ribbon graph~$\mathcal R_\Sigma$ that happens to consist only of straight ribbons labelled by the objects~$\mathcal X_i$. 
Any choice of $\A$-decorated ribbon diagram $(\mathcal S, \mathscr d)$ of $(\mathcal C_\Sigma, \mathcal R_\Sigma)$ gives rise to a linear map 
\be 
\Psi_{\mathcal G}^{\mathcal G'} := \zzhat\big( F(\mathcal S, \mathscr d) \big) \colon 
	\zzhat \big( F(\Sigma,\mathcal G) \big) \lra \zzhat \big( F(\Sigma,\mathcal G') \big) , 
\ee 
where $\mathcal G, \mathcal G'$ are the decorated skeleta of~$\Sigma$ induced by $(\mathcal S, \mathscr d)$ as in~\eqref{eq:FoamificationMap}. 

By Corollary~\ref{cor:ZCAonSameBndManifolds} the linear maps $\Psi_{\mathcal G}^{\mathcal G'}$ do not depend on the choice of $\A$-decorated ribbon diagram $(\mathcal S, \mathscr d)$ in the interior of~$\mathcal C_\Sigma$, and for arbitrary decorated admissible skeleta $\mathcal G, \mathcal G', \mathcal G''$ of~$\Sigma$ we have 
\be
\Psi_{\mathcal G'}^{\mathcal G''} \circ \Psi_{\mathcal G}^{\mathcal G'} = \Psi_{\mathcal G}^{\mathcal G''} \, . 
\ee 
In particular, each map $\Psi_{\mathcal G}^{\mathcal G}$ is an idempotent. 

\begin{construction}
\label{constr:OrbifoldGraphTQFT}
Let~$\mathcal A$ be a special orbifold datum for a completed defect TQFT~$\zzhat$. 
The \textsl{orbifold graph TQFT} 
\be 
\zzwa \colon \Bordribn{3}(\wa) \lra \Vect
\ee  
is defined as follows: 
\begin{enumerate}
	\item 
	For an object $\Sigma \in \Bordribn{3}(\wa)$, we set 
	\be 
	\zzwa (\Sigma) = \textrm{colim} \big\{ \Psi_{\mathcal G}^{\mathcal G'} \big\} , 
	\ee 
	where $\mathcal G, \mathcal G'$ range over all admissible $\A$-decorated skeleta of~$\Sigma$. 
	\item 
	For a morphism $(M,\mathcal R) \colon \Sigma \lra \Sigma'$ in $\Bordribn{3}(\wa)$, we set $\zzwa (M,\mathcal R)$ to be 
	\be 
	\label{eq:ZCAonMorphisms}
	\begin{tikzcd}
	\zzwa(\Sigma) \arrow[hook]{r} 
	& \zzhat \big( F(\Sigma, \mathcal G) \big) \arrow{rr}{\zzhat( F(\mathcal S, \mathscr d) )}
	& 
	& \zzhat\big(  F(\Sigma', \mathcal G') \big) \arrow[two heads]{r}
	& \zzwa(\Sigma') \, , 
	\end{tikzcd}  
	\ee 
	where $(\mathcal S, \mathscr d)$ is an arbitrary $\A$-decorated ribbon diagram representing $(M,\mathcal R)$. 
\end{enumerate}
\end{construction}

By Corollary~\ref{cor:ZCAonSameBndManifolds}
the definition of~$\zzwa(M,\mathcal R)$ in~\eqref{eq:ZCAonMorphisms} does not depend on the choice of admissible $\A$-decorated skeleton $(\mathcal S, \mathscr d)$. 
Moreover, by construction the state spaces of~$\zzwa$ are isomorphic to the images of the idempotents, 
\be 
\zzwa(\Sigma) \cong \textrm{Im} \big( \Psi_{\mathcal G}^{\mathcal G} \big) \, . 
\ee 

We have thus shown our main result, which is that~$\zzwa$ is indeed a graph TQFT: 

\begin{theorem}
\label{thm:MainResultForEulerComplete}
Let~$\mathcal A$ be a special orbifold datum for a completed defect TQFT~$\zzhat$. 
Then $\zzwa \colon \Bordribn{3}(\wa) \lra \Vect$ as in Construction~\ref{constr:OrbifoldGraphTQFT} is a symmetric monoidal functor. 
\end{theorem}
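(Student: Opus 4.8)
The plan is to verify, step by step, that the assignment $\zzwa$ of Construction~\ref{constr:OrbifoldGraphTQFT} is well-defined on morphisms, functorial, and symmetric monoidal, exactly paralleling the proof of Theorem~\ref{thm:MainResultCRS1} (i.e.\ Thm.~3.10 and Prop.~3.18 of \cite{CRS1}) but now keeping track of the embedded $\wa$-coloured ribbon graph. First I would note that well-definedness of $\zzwa$ on objects is immediate: the maps $\Psi_{\mathcal G}^{\mathcal G'}$ form a filtered system of idempotents by the discussion preceding Construction~\ref{constr:OrbifoldGraphTQFT}, so the colimit exists and equals $\mathrm{Im}(\Psi_{\mathcal G}^{\mathcal G})$ for any chosen~$\mathcal G$. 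Next, well-definedness on morphisms: given $(M,\mathcal R)\colon\Sigma\to\Sigma'$, the linear map in~\eqref{eq:ZCAonMorphisms} is independent of the chosen $\A$-decorated ribbon diagram $(\mathcal S,\mathscr d)$ representing $(M,\mathcal R)$. This follows from Corollary~\ref{cor:ZCAonSameBndManifolds} together with the colimit construction: any two representatives agreeing on $\partial M$ give the same $\zzhat(F(\mathcal S,\mathscr d))$ by Corollary~\ref{cor:ZCAonSameBndManifolds}, and a change of the induced boundary decorations $\mathcal G\leadsto\widetilde{\mathcal G}$ is absorbed by precomposing with $\Psi_{\mathcal G}^{\widetilde{\mathcal G}}$, which is exactly the identification used in forming the colimit; here one also uses that $\zzwa(\Sigma)\cong\mathrm{Im}(\Psi_{\mathcal G}^{\mathcal G})$ so that the hook and the epimorphism in~\eqref{eq:ZCAonMorphisms} are mutually inverse up to the idempotent.

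Then I would check functoriality. For the identity morphism on $\Sigma$, represented by the trivial ribbon diagram on the cylinder $\mathcal C_\Sigma$, the map in~\eqref{eq:ZCAonMorphisms} is $\Psi_{\mathcal G}^{\mathcal G}$ composed with the colimit projections, which is the identity on $\zzwa(\Sigma)$. For composition $(M',\mathcal R')\circ(M,\mathcal R)$, one glues an $\A$-decorated ribbon diagram for $(M,\mathcal R)$ to one for $(M',\mathcal R')$ along a common decorated skeleton of the shared boundary surface; the resulting diagram represents the glued bordism with the glued ribbon graph (the ribbon graphs glue since their loose strands meet $\partial$ transversally and their boundary punctures match), and since $\zzhat$ is monoidal, $\zzhat(F(\text{glued})) = \zzhat(F(\mathcal S',\mathscr d'))\circ\zzhat(F(\mathcal S,\mathscr d))$. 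The cancellation of the hook--epi pair at the intermediate surface $\Sigma'$ then yields $\zzwa((M',\mathcal R')\circ(M,\mathcal R)) = \zzwa(M',\mathcal R')\circ\zzwa(M,\mathcal R)$. Finally, symmetric monoidality: disjoint unions of decorated ribbon diagrams represent disjoint unions of decorated bordisms-with-graphs, $\zzhat$ is symmetric monoidal, and the colimit defining $\zzwa(\Sigma\sqcup\Sigma'')$ factors as a tensor product of the two individual colimits (filtered colimits commute with the finite tensor product of vector spaces); the symmetry is handled by evaluating on the mapping cylinder of the twist, with a trivial ribbon diagram, just as for $\zza$.

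The main obstacle I expect is not any single calculation but the bookkeeping in the well-definedness step: one must be sure that \emph{every} way a given $\wa$-coloured ribbon graph $\mathcal R$ can be pushed into \emph{every} admissible skeleton of $M$ — including all choices of switches, of braiding coupons, and of the skeleton itself — yields the same linear map after the colimit identification. This is precisely what Proposition~\ref{prop:omegaMoves} (all such representatives are connected by $\omega_0,\dots,\omega_{10}$-moves) combined with Proposition~\ref{prop:SODandOmegaMoves} (each $\A$-decorated $\omega$-move is an identity after $\zzhat$) and Corollary~\ref{cor:ZCAonSameBndManifolds} delivers; so the work is in correctly invoking these, particularly to ensure that the boundary decoration, which \emph{is} allowed to change, is always accounted for by the colimit structure rather than by an $\omega$-move. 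Once this is in place, the remaining functoriality and monoidality checks are routine, being formally identical to those in \cite{CRS1} with the ribbon-graph data carried along passively.
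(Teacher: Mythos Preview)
Your proposal is correct and follows essentially the same approach as the paper. The paper's own treatment is even more terse: after establishing Corollary~\ref{cor:ZCAonSameBndManifolds} and recording that $\zzwa(M,\mathcal R)$ is independent of the chosen $\A$-decorated ribbon diagram, it simply declares ``We have thus shown our main result'', implicitly appealing (as in the proof of Theorem~\ref{thm:MainResultCRS1}) to the arguments of \cite[Thm.~3.10, Prop.~3.18]{CRS1} carried over verbatim with the ribbon data riding along; your write-up makes those routine functoriality and monoidality checks explicit, which is a reasonable expansion but not a different strategy.
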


\medskip

There are few examples of special orbifold data for a generic 3-dimensional defect TQFT~$\zz$. 
However, if one passes to the so-called ``Euler completion''~$\zz^\odot$, one finds far more examples, cf.\ \cite{CRS2,CRS3,MuleRunk2}. 
In Appendix~\ref{app:ConstructionForNonEulerCompletedCase} we spell out the details of special orbifold data~$\A$ for~$\zz^\odot$ as well as $\A$-decorated skeleta and ribbon diagrams, the ribbon category~$\wa$, and the associated orbifold graph TQFT, all in terms of the non-completed TQFT~$\zz$. 
Conceptually, Appendix~\ref{app:ConstructionForNonEulerCompletedCase} offers nothing new, but the details are relevant for applications, in particular for the treatment in \cite{CMRSS2}.

\newpage
\appendix

\section{Proof of Theorem \ref{thm:ConnectSkeleta}} 
\label{app:A}

Here we prove that if two admissible skeleta of a 3-bordism agree on the boundary, then they are related by admissible BLT moves.

\subsection{Skeleta of 2-manifolds}

We use the notation introduced in Section~\ref{subsec:Skeleta} to define skeleta for closed 2-manifolds, in analogy to the 3-dimensional case. 

\begin{definition} \label{def:skeleta}
	Let $\Sigma$ be a closed 2-dimensional manifold. 
	A skeleton $S$ for $\Sigma$ is a stratification such that 
	the following additional requirements are satisfied:
	\begin{enumerate}
		\item 
		Every 2-stratum is an open disc.
		\item 
		Each $x \in \Sigma$ has a neighbourhood that is isomorphic to one of the following open stratified discs $B_x$: 
		\begin{enumerate}[label=\arabic*)]
			\item 
			If $x \in S^{(2)}$, then $B_x$ contains no 1- or 0-strata:
			\be	
			\includegraphics[scale=1.25, valign=c]{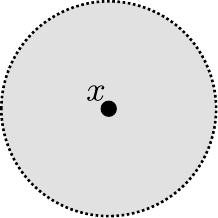}
			\, . 
			\ee 
			\item 
			If $x \in S^{(1)}$, then $B_x$ is given by
			\be	
			\includegraphics[scale=1.25, valign=c]{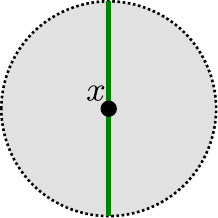}
			\, . 
			\ee 
			\item 
			If $x \in S^{(0)}$, then $B_x$ is given by
			\be
			\includegraphics[scale=1.25, valign=c]{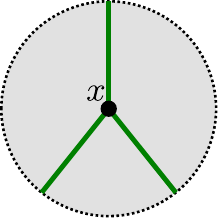}
			\, .
			\ee	
		\end{enumerate}	
	\end{enumerate}
\end{definition}	

Admissibility is defined via local orders,
analogously to the 3-dimensional case in Section~\ref{subsubsec:AdmissibleSkeleta}.
Moreover, the 2-dimensional analogues of the BLT moves are the bubble move and the dual of the 2-2 Pachner move, which we call the \textsl{b-move} and \textsl{l-move}, respectively:
\begin{align}
& \includegraphics[scale=1.25, valign=c]{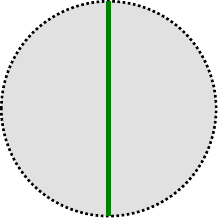} \longrightarrow
\includegraphics[scale=1.25, valign=c]{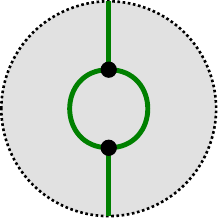}
\\ 
& \includegraphics[scale=1.25, valign=c]{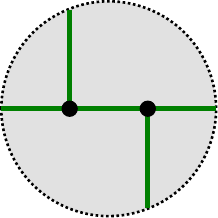} \longrightarrow
\includegraphics[scale=1.25, valign=c]{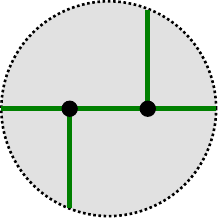}
\end{align} 
A bl move is called \textsl{admissible} if the skeleta on both sides are admissible.

The following theorem is the 2-dimensional version (without boundary) of the 3-dimensional statement that is the topic of this appendix. 
The 3-dimensional proof will be similar in structure. 

\begin{theorem}
	\label{thm:ClosedSurfacesAndblMoves}
	Let $\Sigma$ be a closed 2-manifold. 
	Then any two admissible skeleta~$S$ and~$S'$ of $\Sigma$ are connected by a finite sequence of admissible bl moves.
\end{theorem}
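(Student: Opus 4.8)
The plan is to mimic the standard argument that connects two triangulations by Pachner moves, but done carefully at the level of oriented skeleta so that every intermediate step remains admissible. First I would reduce the problem to the case of skeleta dual to triangulations: given an arbitrary admissible skeleton $S$ of $\Sigma$, I claim one can refine it by a sequence of admissible bl moves until it is the Poincar\'e dual of an (admissible) triangulation. The refinement step is local --- if a 2-stratum of $S$ has too many edges in its boundary (more than three) or if two 2-strata meet along more than one edge, one chops it up using l-moves (the dual 2-2 Pachner move); a b-move introduces a bigon face that can then be used to split things. The only subtlety is orientation: by Remark~\ref{rem:ltovsorient}\,\ref{item:RemarkLocalOrder1} an admissible skeleton is the same as a skeleton with a local order, and a local order on a skeleton dual to a triangulation is the same as an ordering of the simplicial complex (Remark~\ref{rem:ltovsorient}\,\ref{item:RemarkLocalOrder2}), so I must check that every time I perform a b- or l-move I can extend the existing local order to the new 3-strata (here 2-strata, since $\dim\Sigma=2$) consistently; this is where one verifies that one of the finitely many orientation choices for each newly created stratum is forced to be admissible, as in the analysis of the BLT moves.

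The second ingredient is that any two \emph{admissible triangulations} of $\Sigma$ are connected by a finite sequence of \emph{admissible} Pachner moves, i.e.\ oriented 2-2 and 1-3 (and inverse 1-3) moves whose source and target are again admissibly ordered. The unoriented statement is the classical theorem of Pachner; what I would add is a bookkeeping argument showing the orderings can be carried along. Concretely, given two orderings, one first makes them agree by barycentrically subdividing and re-choosing vertex orders (each such elementary change is realised by admissible 1-3 moves followed by admissible 2-2 moves), and then applies the unoriented Pachner sequence, at each move fixing up the vertex order of the newly introduced vertex. One then translates admissible Pachner moves on $T$ into admissible bl moves on the dual skeleton $T^*$: the dual of a 2-2 move is exactly the l-move, the dual of a 1-3 move is a composite of a b-move and l-moves (a 2-dimensional echo of Lemma~\ref{lem:PachnerFromBLT}), and one checks these realisations respect admissibility.

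Putting the pieces together: given admissible skeleta $S, S'$ of $\Sigma$, refine each by admissible bl moves to skeleta $T^*$, $T'^*$ dual to admissible triangulations; connect $T$ and $T'$ by admissible Pachner moves; dualise to get a sequence of admissible bl moves from $T^*$ to $T'^*$; and concatenate with the refinement sequences (the second one reversed). Since every move used is admissible and invertible, this gives the desired finite sequence of admissible bl moves from $S$ to $S'$.

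The main obstacle I expect is not the topology --- the unoriented refinement-to-triangulation and Pachner connectivity statements are standard and are essentially the content of \cite{TVireBook} --- but the orientation bookkeeping: verifying that at \emph{every} elementary move the local order (equivalently, the admissible orientation of 2-strata) on the part of the skeleton that is not touched extends uniquely and consistently to the modified region, and that the ``fix-up'' moves needed to reconcile two different vertex orders can themselves be taken admissible. This is exactly the point flagged in the text as ``the main novelty is that we carefully check that everything can be made admissibly oriented''. I would handle it by the same kind of finite case analysis used for the 32 admissible BLT moves, exploiting that a local order is determined by the 2-stratum orientations alone, so only finitely many configurations arise at each move and each can be checked by hand.
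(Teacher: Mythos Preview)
Your plan is essentially the same as the paper's: refine an arbitrary admissible skeleton by admissible bl moves until it is dual to a triangulation, then connect via (duals of) Pachner moves, checking admissibility throughout. The one noteworthy execution difference is in the ordering step: rather than connecting two admissibly (locally) ordered triangulations directly as you sketch, the paper refines further to skeleta dual to \emph{globally} ordered triangulations (``d-skeleta''), for which connectivity by globally ordered Pachner moves is already established in \cite[Prop.\,3.3]{CRS1}, and bl moves are known to dominate these (cf.\ \cite{FHK}). The passage from an admissible skeleton dual to a triangulation to a d-skeleton is accomplished by a clean trick: apply a 1-3 Pachner move on every triangle with the new edges oriented toward the new vertex, which makes every old edge admissibly flippable, flip as needed, then undo the 1-3 moves. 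This avoids the somewhat vague ``barycentrically subdivide and re-choose vertex orders'' step in your outline and replaces it with a concrete manoeuvre.
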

\begin{proof}
	We call two admissible skeleta equivalent if they are are connected by a finite sequence of admissible bl moves. 
	Given a globally ordered triangulation of $\Sigma$, its dual is a particular case of an admissible skeleton which we refer to as d-skeleton (short for ``skeleton dual to a globally ordered triangulation'').
	By \cite[Prop.\,3.3]{CRS1} any two d-skeleta are related by a finite sequence of moves that are dual to globally ordered 2-dimensional Pachner moves (see Section~\ref{subsubsec:BLTmoves}). 
	It is well known (see e.\,g.\ \cite{FHK}) that the admissible bl moves are stronger than the moves dual to globally ordered Pachner moves, thus any two d-skeleta are equivalent. 
	
	We are thus left with showing that an admissible skeleton $S$ is equivalent to a d-skeleton.
	Analogously to the 3-dimensional case in Lemma~\ref{lem:TriangCharact} below, one finds that a skeleton is dual to a triangulation iff
	every stratum~$s$ satisfies:
	(i) $s$ is contractible, 
	(ii) the germs of 2-strata adjacent to~$s$ belong to different 2-strata, and
	(iii) if the sets of germs of adjacent 2-strata agree for~$s$ and another stratum~$t$, then already $s=t$. 
	If conditions (i)--(iii) hold for an individual stratum~$s$, we say that~$s$ is locally dual to a triangulation.
	
	By the b-move the contractibility condition is easy to achieve starting from any admissible skeleton. 
	If the two germs of adjacent 2-strata of a 1-stratum belong to the same 2-stratum, by a sequence of one b- and two l-moves, we can create a ``copy of the 1-stratum'' such that in the resulting stratification the original 1-stratum and the newly created 1-stratum are locally dual to a triangulation, and all other strata remain locally dual to a triangulation if they are. 
	If a 0-stratum is not locally dual to a triangulation, it is straightforward to provide a similar combination of b- and l-moves to make it locally dual to a triangulation.
	
	We can thus assume that $S$ is dual to a triangulation and we now need to show it is equivalent to a d-skeleton, i.\,e.\ we need to fix the orientations. 
	We do this similar to the proof of Lemma~\ref{lemma:connect-adm-oriented} below: 
	Pass to the dual triangulation; carry out a 1-3 Pachner move on all triangles and orient the new edges towards the new vertices. 
	Now that both orientations of each old edge are allowed, reverse the orientations of the old edges as required (this is done via b- and t-moves in the dual picture); undo all the 1-3 Pachner moves.
\end{proof}

The next statement links the above discussion of the 2-dimensional skeleta to the 3-dimensional case that is the focus of this paper. 
Namely, it is straightforward to check that the following is true:

\begin{proposition}
	Any skeleton $S$ of a 3-manifold $M$ induces a skeleton $\del S$ of $\del M$, where the $i$-strata of $\del S$ are obtained by intersecting the $(i+1)$-strata of $S$ with $\del M$. 
	If $S$ is oriented or admissible then so is $\del S$. 
	\qed
\end{proposition}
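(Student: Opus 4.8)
The statement to prove is the final \textbf{Proposition}: any skeleton $S$ of a 3-manifold $M$ induces a skeleton $\del S$ of $\del M$, obtained by intersecting $(i+1)$-strata of $S$ with $\del M$, and orientedness/admissibility descend along this restriction.

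\medskip

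The plan is to verify the defining conditions of a 2-dimensional skeleton (Definition~\ref{def:skeleta}) for $\del S$ directly from the list of allowed local neighbourhoods in Figure~\ref{fig:skeleta}. First I would check that $\del S$ is a well-defined stratification of $\del M$: since $S$ is a stratification of the manifold-with-boundary $M$ with each stratum~$s$ satisfying $\del s = s\cap\del M$ (the condition from Section~\ref{sssec:stratMan}), the sets $S^{(i+1)}\cap\del M$ form a filtration of $\del M$ and each $s\cap\del M$ is a smooth manifold of dimension $\dim s - 1$. The incidence condition for $\del S$ follows from that for $S$. The key step is then the local analysis: for a point $x\in\del M$ lying in an $(i+1)$-stratum of $S$, its neighbourhood $B_x$ is one of the \emph{half-ball} cases (ii$'$), (iii), (v) of Figure~\ref{fig:skeleta} (the cases where $x\in\del M$), and intersecting each such half-ball with the bounding plane $\del M$ yields precisely one of the three 2-dimensional local models in Definition~\ref{def:skeleta}: the boundary half-ball around an interior point of a 2-stratum gives a disc with no lower strata; the case $x\in S^{(2)}\cap\del M$ gives a disc with a single 1-stratum; the case $x\in S^{(1)}\cap\del M$ gives the tripod neighbourhood of a 0-stratum. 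Note that by condition~\ref{item:Skeleta6} of Figure~\ref{fig:skeleta} no 0-stratum of $S$ meets $\del M$, so no other cases arise. Finally, condition~(i) of Definition~\ref{def:skeleta}, that every 2-stratum of $\del S$ is an open disc, follows because each such 2-stratum is a boundary component (a face) of a 3-stratum of $S$, which by condition~\ref{item:Skeleton1} of Definition~\ref{def:Skeleta} is an open half-ball, whose boundary face is an open disc.

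\medskip

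For the orientation and admissibility claims: if $S$ is oriented, each stratum carries a prescribed orientation, and a stratum $s\cap\del M$ of $\del S$ inherits the boundary orientation from $s$ (using the outward-normal-first convention fixed for $M$ and $\del M$ in Section~\ref{sssec:stratMan}); one checks this is compatible with the requirement that top-dimensional strata of $\del S$ carry the orientation induced from $\del M$. For admissibility, I would use the characterisation in Remark~\ref{rem:ltovsorient}\,\ref{item:RemarkLocalOrder1}: an admissible skeleton is equivalently an unoriented skeleton together with a local order on germs of 3-strata. Given a local order on $S$, a 2-stratum $\sigma$ of $\del S$ is a face of a unique 3-stratum $A$ of $S$ (with $A$ an open half-ball meeting $\del M$), so the local order on the one relevant germ, together with the outward normal, determines an orientation of $\sigma$; the compatibility conditions of Definition~\ref{def:LocalOrder} for $S$ restrict to the analogous conditions needed for a local order on $\del S$ (the 2-dimensional analogue), and by the 2-dimensional version of Convention~\ref{conv:indOrd} this induced local order induces exactly the boundary orientation of $\del S$.

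\medskip

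I expect no serious obstacle here --- this is the ``straightforward to check'' remark the paper makes. The only point requiring genuine care is the bookkeeping of orientations: one must verify that restricting the right-hand-rule convention (Convention~\ref{conv:indOrd}) for 2-strata of $S$ to the boundary reproduces the 2-dimensional right-hand-rule convention for $\del S$, and in particular that the local order on $S_3(s)$ for a boundary-incident stratum $s$ restricts correctly to the single-element germ set of the corresponding boundary stratum. Since a point of $\del M$ in an $i$-stratum of $\del S$ lies in an $(i+1)$-stratum of $S$ whose germ set $S_3(x)$ has $4-(i+1)=3-i$ elements --- matching the expected count $3-i$ for a 2-dimensional skeleton --- the counting is consistent, and the verification reduces to inspecting each of the three half-ball pictures in Figure~\ref{fig:skeleta}.
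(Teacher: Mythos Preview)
Your proposal is correct and is precisely the routine verification the paper omits: the statement is marked with a bare \qed\ after the remark that it is ``straightforward to check'', and your case-by-case restriction of the half-ball neighbourhoods in Figure~\ref{fig:skeleta} to $\del M$, together with the observation that the germ count $4-(i+1)=3-i$ transports the local order, is exactly that check. The only cosmetic slip is the label ``(ii$'$)'', which does not appear in Figure~\ref{fig:skeleta}; the three relevant boundary cases are (i) with $x\in\del M$, (iii), and (v).
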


\subsection{Pseudo-skeleta}

We define a slightly more general class of stratifications than skeleta that will be useful in the proof of Theorem \ref{thm:ConnectSkeleta}:
A \textsl{pseudo-skeleton} of a 3-manifold~$M$ is a stratification such that: 
		\begin{enumerate}
                \item 
                For each 3-stratum $s$ there are exactly three allowed cases: 
                if $s \cap \del M = \varnothing$, then~$s$ is a 3-ball; if~$s$ intersects exactly one of $\del_{\textup{in}}M$ and $\del_{\textup{out}} M$, then~$s$ is a half open ball; and if~$s$ intersects both $\del_{\textup{in}}M$ and $\del_{\textup{out}} M$, then~$s$ is the cylinder $D^{\circ} \times [0,1]$, where $D^\circ$ is the interior of the 2-disc.
			\item The same local conditions hold as in the definition of a skeleton.	
		\end{enumerate}
We say a stratified 3-bordism $(M,S)$ is \textsl{skeletal} if $S$ is a skeleton, and \textsl{pseudo-skeletal} if $S$ is a pseudo-skeleton.
A morphism of $\Bordstr_3$ (cf.\ Section~\ref{subsubsec:StratifiedBordisms}) is \textsl{\mbox{(pseudo-)}skeletal} if one (and thus all) of its representative stratified 3-manifolds are.

Note that in a pseudo-skeleton of~$M$, if a 3-stratum is a cylinder $D^{\circ} \times [0,1]$, its boundary-discs $D^{\circ} \times \{0\}$ and $D^{\circ} \times \{1\}$ are not allowed to both lie on $\del_{\textup{in}} M$ nor both on $\del_{\textup{out}} M$.

For pseudo-skeleta, 
Remark~\ref{rem:3CellsBoundary} gets replaced by the following observation:

\begin{lemma}
  A pseudo-skeleton $S$ of $M$ is a skeleton if and only if no 3-stratum of $S$ intersects  both the in- and out-boundary of $M$.  \qed
\end{lemma}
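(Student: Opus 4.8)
The plan is to observe that a pseudo-skeleton and a skeleton of~$M$ are subject to exactly the same local neighbourhood conditions (those listed in Figure~\ref{fig:skeleta}), so the only possible discrepancy between the two notions lies in the allowed global diffeomorphism types of the 3-strata. Consequently the lemma reduces to comparing the three cases permitted for a 3-stratum of a pseudo-skeleton with the two cases permitted by condition~\ref{item:Skeleton1} of Definition~\ref{def:Skeleta}.

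For the ``only if'' direction it suffices to quote Remark~\ref{rem:3CellsBoundary}: if $S$ is a skeleton then condition~\ref{item:Skeleton1} of Definition~\ref{def:Skeleta} already forbids any 3-stratum of~$S$ from intersecting both $\del_{\textup{in}}M$ and $\del_{\textup{out}}M$.

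For the ``if'' direction, assume $S$ is a pseudo-skeleton in which no 3-stratum meets both $\del_{\textup{in}}M$ and $\del_{\textup{out}}M$. For a 3-stratum~$s$ the cylinder case $s \cong D^\circ \times [0,1]$ of the pseudo-skeleton axioms is then excluded, leaving only: $s$ is an open 3-ball when $s \cap \del M = \varnothing$, and $s$ is a half-open (equivalently, open half-) ball when $s$ meets exactly one of $\del_{\textup{in}}M$, $\del_{\textup{out}}M$, i.\,e.\ when $s \cap \del M \neq \varnothing$. This is precisely condition~\ref{item:Skeleton1}, and since the local conditions are shared verbatim, $S$ is a skeleton. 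I expect no real obstacle here; the only point worth a line of care is matching ``$s$ meets $\del M$'' in Definition~\ref{def:Skeleta} with the disjunction ``$s$ meets $\del_{\textup{in}}M$ or $s$ meets $\del_{\textup{out}}M$'' used in the pseudo-skeleton axioms, together with identifying the two (equivalent) pieces of terminology for the half-ball case.
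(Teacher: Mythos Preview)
Your argument is correct and is exactly the unpacking the paper has in mind: the lemma is marked with a \qed and given no proof precisely because the two notions share the local conditions verbatim and differ only in the clause on 3-strata, so the equivalence is immediate once one observes that the cylinder case is characterised by meeting both boundary components. Your write-up simply makes this explicit.
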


\subsection{Refinement to dual of a triangulation relative to boundary}

The main content of this section is Lemma \ref{lem:adm-triang}, which tells us that any admissible skeleton with sufficiently nice boundary can be refined to the dual of a triangulation using admissible BLT moves. 
We begin with a technicality that will be needed in its proof, namely how to delete a 2-stratum from a skeleton.

Recall the notation introduced in Section~\ref{sssec:stratMan}, and let~$M$ be a fixed 3-manifold from now on.
	Let $S$ be a stratification of $M$ with filtration $\varnothing\subset F^{(0)} \subset \ldots \subset F^{(3)}=M$, and let $s$ be a 2-stratum of $S$. We define the filtration $\varnothing\subset \tilde{F}^{(0)} \subset \ldots \subset \tilde{F}^{(3)}=M$ by $\tilde{F}^{(i)} := F^{(i)}\setminus(\bar{s} \cap S^{(i)}) = (F^{(i)} \setminus \bar{s}) \cup F^{(i-1)}$.
More concisely, $\tilde{F}$ is obtained from $F$ by deleting all strata that are contained in $\bar{s}$. 
In general, the filtration $\tilde F$ is not a stratification, but we have:

\begin{lemma} \label{lem:dels}
	Let $S$ be a skeleton of $M$.
	\begin{enumerate}
		\item 
		The filtration $\tilde{F}$ defines a stratification of~$M$, denoted $S \setminus \bar{s}$.
		\item 
		If in addition to (i), $s$ is contractible and
		different germs of 3-strata incident with $s$ are induced by different 3-strata such that not both of them intersect $\del_{\textup{in}}M$ or both $\del_{\textup{out}} M$, then $S \setminus \bar{s}$ is a pseudo-skeleton.    
		\item 
		If in addition to (i) and (ii), at least one of the 3-strata incident with $s$ does not intersect $\del M$, then $S \setminus \bar{s}$ is a skeleton.
	\end{enumerate}
\end{lemma}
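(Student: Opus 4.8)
The statement is Lemma~\ref{lem:dels}, describing when deleting (the closure of) a $2$-stratum $s$ from a skeleton $S$ of $M$ yields again a stratification, a pseudo-skeleton, or a skeleton. I would prove the three parts in order, each time checking that the filtration $\tilde F$ satisfies the relevant local conditions at every point, by a case analysis according to which stratum of $S$ a given point originally lay in.

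\textbf{Part (i).} The only thing to check is that each $\tilde S^{(j)} := \tilde F^{(j)}\setminus\tilde F^{(j-1)}$ is a smooth $j$-manifold compatible with the subspace topology, and that the incidence/frontier condition of Section~\ref{sssec:stratMan} holds. I would observe that $\tilde S^{(3)} = S^{(3)}\cup s\cup(\text{the }1\text{- and }0\text{-strata in }\bar s)$, while strata not meeting $\bar s$ are untouched. Using the explicit list of local neighbourhoods in Figure~\ref{fig:skeleta}, at a point $x\in s$ (case (ii) of the figure) the neighbourhood $B_x$ has $s$ as its only $2$-stratum, so deleting $s$ makes $B_x$ contain no lower strata: it becomes a manifold chart for $\tilde S^{(3)}$. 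At a point $x$ on a $1$-stratum $L$ in $\bar s$ (case (iv) or (v)), deleting all the $2$-strata in $\bar s$ incident to $L$ — and there could be one, two or three such $2$-strata, but by hypothesis (for (i) this is not yet assumed, so I must allow all of Figure~\ref{fig:skeleta}) — one checks from the pictures that what remains is again diffeomorphic to an open ball with at most the $2$-strata \emph{not} in $\bar s$ surviving; in particular around $x$ the set $\tilde S^{(3)}$ is a smooth $3$-manifold. The analogous check at a $0$-stratum in $\bar s$ (case (vi)) uses that $\bar s$ contains a ``full corner'' of the local picture. The frontier condition $\tilde s'\cap\overline{\tilde t'}\neq\varnothing\Rightarrow \tilde s'\subset\overline{\tilde t'}$ for the new strata is inherited from $S$ since closures only shrink or merge. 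This establishes (i).

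\textbf{Part (ii).} Now assume $s$ is contractible and the two germs of $3$-strata incident to $s$ come from two \emph{distinct} $3$-strata $A,B$ of $S$, not both meeting $\del_{\textup{in}}M$ and not both meeting $\del_{\textup{out}}M$. Deleting $\bar s$ merges $A$ and $B$ (and all strata in $\bar s$) into a single $3$-stratum $C$ of $\tilde S$. I would argue $C\cong A\cup_{s} B$ is obtained by gluing two balls (or half-balls) along a contractible piece $s$ of their boundaries; a straightforward argument (van Kampen plus the classification of the resulting model, using contractibility of $s$) shows $C$ is a ball if $A,B$ are both interior balls, a half-ball if exactly one of $A,B$ meets $\del M$ (in one of $\del_{\textup{in}}$, $\del_{\textup{out}}$), and the cylinder $D^\circ\times[0,1]$ if $A$ meets $\del_{\textup{in}}M$ and $B$ meets $\del_{\textup{out}}M$ (or vice versa) — the excluded cases being precisely the two ruled out by hypothesis. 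All other $3$-strata are unchanged, and the local neighbourhood conditions at every point hold by the case analysis of part (i) (now with exactly two surviving local $2$-stratum sheets replaced by one merged picture, matching Figure~\ref{fig:skeleta}). Hence $\tilde S = S\setminus\bar s$ is a pseudo-skeleton.

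\textbf{Part (iii).} Under the extra hypothesis that at least one of $A,B$, say $A$, does not meet $\del M$, the merged stratum $C = A\cup_s B$ is an interior ball if $B$ is too, and a half-ball if $B$ meets $\del M$; in neither case is $C$ a cylinder $D^\circ\times[0,1]$ crossing from $\del_{\textup{in}}$ to $\del_{\textup{out}}$. By the characterisation in the Lemma just above part (iii) of the excerpt (``a pseudo-skeleton is a skeleton iff no $3$-stratum meets both the in- and out-boundary''), $\tilde S$ is a skeleton. I would phrase (iii) as a one-line corollary of (ii) plus that lemma.

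\textbf{Main obstacle.} The genuinely non-routine point is the local analysis at the $1$- and $0$-strata contained in $\bar s$: one must verify, reading off Figure~\ref{fig:skeleta}, that after removing \emph{all} strata in $\bar s$ the remaining local picture is still one of the allowed neighbourhoods (ball/half-ball with the surviving $2$-strata), rather than something degenerate — in particular that the surviving $2$-strata reglue smoothly across the deleted $1$-strata. For part (ii), the other delicate point is the gluing argument identifying the diffeomorphism type of the merged $3$-stratum $C$ from the contractibility of $s$ and the boundary behaviour of $A$ and $B$; I expect this to be the step requiring the most care, though it is conceptually elementary.
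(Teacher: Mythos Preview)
Your overall approach matches the paper's: a local case analysis via Figure~\ref{fig:skeleta} for (i), identifying the topology of the merged 3-stratum for (ii), and (iii) as an immediate consequence. The paper is terser --- for (i) it records the formula $\tilde S^{(i)} = (S^{(i)}\setminus \bar s)\cup(S^{(i-1)}\cap\bar s)$ and notes that a smooth structure on each $\tilde S^{(i)}$ is automatic once it is a topological $i$-manifold (dimension $\le 3$), then appeals to the local models; for (ii) it only says the hypothesis makes the merged 3-stratum contractible and calls the rest ``straightforward''; (iii) is ``direct from the definitions''. Your more explicit gluing analysis in (ii) (ball $\cup_{\text{disc}}$ ball, etc.) is a fine way to spell this out.

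There is, however, a concrete error in your set-up for (i). Your formula $\tilde S^{(3)} = S^{(3)}\cup s\cup(\text{1- and 0-strata in }\bar s)$ is wrong: from $\tilde F^{(i)} = (F^{(i)}\setminus\bar s)\cup F^{(i-1)}$ one computes $\tilde S^{(3)} = S^{(3)}\cup s$ only. The 1-strata contained in $\bar s$ land in $\tilde S^{(2)}$ (this is the $S^{(i-1)}\cap\bar s$ term for $i=2$), and the 0-strata in $\bar s$ land in $\tilde S^{(1)}$. This matters for your case analysis: at a point $x$ on a 1-stratum $L\subset\bar s$, what you must verify is that $\tilde S^{(2)}$ is a 2-manifold near $x$ --- concretely, when $s$ contributes one of the three local sheets at $L$, the two remaining sheets glue smoothly across $L$ --- not that $\tilde S^{(3)}$ is a 3-manifold there. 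Likewise at 0-strata in $\bar s$ you should be checking $\tilde S^{(1)}$. Once you correct which $\tilde S^{(j)}$ each point belongs to, your local picture argument lines up with the paper's.
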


\begin{proof}	
	For (i) we notice that clearly $\tilde{F}^{(i-1)} \subset \tilde{F}^{(i)}$ for all~$i$, so we only need to endow each $\tilde{S}^{(i)}$ with a smooth structure. 
	Since we are in dimension 3, a smooth structure on $\tilde{S}^{(i)}$ is uniquely determined once we understand $\tilde{S}^{(i)}$ to be a topological manifold.
	Elementary point-set manipulations show that $\tilde{S}^{(i)}= (S^{(i)}\setminus \bar{s}) \cup (S^{(i-1)}\cap \bar{s})$. 
	Now if $x \in S^{(i)}\setminus \bar{s}$, then~$x$ has a neighbourhood $U$ such that $U \cap \tilde{F}^{(i)}$ is homeomorphic to $\R^i$ since $S^{(i)}$ is a topological manifold. 
	If $x \in S^{(i-1)}\cap \bar{s}$, then the existence of such a neighbourhood follows from the existence of the neighbourhoods depicted in Figure \ref{fig:skeleta}.
	
	For (ii) we note that the additional assumption guarantees that after deleting~$s$, all 3-strata are still contractible.
	The other conditions necessary to make $S \setminus \bar{s}$ a pseudo-skeleton are checked straightforwardly.
	Part (iii) follows directly from the definitions. 
\end{proof}

If we want to delete a 2-stratum $s$ from an admissible skeleton, then there are restrictions on the orientations of strata of~$S$. 
Indeed, let~$t$ be an $(i-1)$-stratum in $\bar{s}$, $i\in \{1,2,3\}$. 
Then it is incident with two $i$-strata~$r_1$ and~$r_2$ of $S$ (with distinct germs at $t$, but~$r_1$ and~$r_2$ may be equal in $S$) that are not contained in $\bar{s}$. 
After deleting $\bar{s}$, $r_1$ and $r_2$ merge to form an $i$-stratum $r$ of $S\setminus \bar{s}$ and the local orders at $r_1$ and $r_2$ induce local orders at $r$.
If the two induced local orders at $r$ agree we say that the local orders at $r_1$ and $r_2$ are \textsl{compatible}. 
If this holds for all strata $t$ in $\bar{s}$ then we say that the orientations adjacent to $s$ are compatible. 
In that case we can canonically endow $S \setminus \bar{s}$ with an admissible orientation that is inherited from $S$.

Combining this with the conditions from Lemma \ref{lem:dels}, we arrive at the following definition:
Let~$S$ be an admissible skeleton of~$M$. 
A contractible 2-stratum~$s$ in~$S$ is called \textsl{superfluous} if 
\begin{enumerate}
		\item 
		different germs of 3-strata incident with $s$ are induced by different 3-strata,
		\item 
		at least one of the 3-strata incident with $s$ does not intersect $\del M$,
		\item 
		the orientations adjacent to $s$ are compatible.
\end{enumerate}
If $s$ is superfluous then the admissible structure of $S$ turns $S\setminus \bar{s}$ canonically into an admissible skeleton. 
The name ``superfluous'' is justified by the following fact.

\begin{lemma} \label{lem:delsBLT}
	Let $S$ be an admissible skeleton of $M$, and let~$s$ be a superfluous 2-stratum of~$S$. 
	Then there exists a sequence of admissible BLT moves between~$S$ and $S\setminus \bar{s}$.
\end{lemma}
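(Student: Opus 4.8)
The plan is to adapt the corresponding argument of \cite[Ch.\,11]{TVireBook} to the admissibly oriented setting, carefully tracking orientations at each step; note that we may not invoke Theorem~\ref{thm:ConnectSkeleta} here, since that result depends, through Lemma~\ref{lem:adm-triang}, on the present lemma. The idea is to simplify the frontier $\del\bar s := \bar s\setminus s$ of~$s$ by admissible $T$-, $L$- and $B$-moves until $\bar s$ becomes a ``standard bubble'', and then to remove it by a single inverse bubble move.

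First, the set-up. Since~$s$ is contractible and, by condition~(i) of superfluousness, the two germs of $3$-strata incident with~$s$ come from distinct $3$-strata~$A$ and~$B$, the closure~$\bar s$ is an embedded closed $2$-disc and $\del\bar s$ is a circle~$C$; the $0$-strata of~$S$ lying on~$C$ subdivide it into $1$-strata (arcs), or~$C$ is itself a single $1$-stratum. By condition~(ii) we may take $A\cap\del M=\varnothing$, so that a regular neighbourhood~$U$ of $\bar s\cup\bar A$ in~$M$ is an open $3$-ball disjoint from $\del M$; all moves below will be supported in~$U$. The crucial feature of this set-up is that $\del\bar A$ is a stratified $2$-sphere, one of whose $2$-strata is a copy of~$s$.

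Then comes the reduction. We induct on the number of $0$-strata on~$C$ and then on the number of $1$-strata meeting~$C$: a $0$-stratum~$p$ on~$C$ is eliminated by a suitable combination of triangle and lune moves near~$p$ that merges the two arcs of~$C$ meeting at~$p$, and once~$C$ is a $1$-stratum-circle, further triangle and lune moves push the $2$-strata meeting~$C$ into a standard position. The bookkeeping is the one used in the proof of Theorem~\ref{thm:ClosedSurfacesAndblMoves}, carried out on the stratified $2$-sphere $\del\bar A$, with each elementary $2$-dimensional move realised by an admissible $3$-dimensional BLT move supported near $\del\bar A$ in~$U$. In the end $\bar A$ is a standard bubble: $s$ is one hemisphere of $\del\bar A$, the complementary hemisphere lies inside a single $2$-stratum~$\gamma$ of~$S$, and~$C$ consists of one $1$-stratum, or of two arcs separated by two $0$-strata, i.\,e.\ the left-hand side of the bubble move~\eqrefO{8}. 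An inverse bubble move then deletes~$s$, $C$, the $0$-strata on it and the $3$-stratum~$A$, and the result is precisely $S\setminus\bar s$ with its canonically inherited admissible orientation.

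The hard part is the simultaneous orientation bookkeeping. The $3$-strata always carry the orientation induced by~$M$; whenever a move introduces a new $2$-stratum there is a choice of orientation for it, and a choice that is not the admissible one can always be corrected using only $T$-, $B$- and lune moves --- the device recorded in Remark~\ref{rem:flip_orientation_flower}, which is also why no circularity arises. Condition~(iii), compatibility of the orientations of the strata adjacent to~$s$, is exactly what guarantees that the source and target objects of the final inverse bubble move can be oriented so as to agree with those of~$S$ and of $S\setminus\bar s$ simultaneously. The main obstacle is therefore not topological but organisational: one must run the reduction so that no step ever forces a freshly created $2$-stratum to carry a non-admissible orientation, and so that the bubble normal form reached at the end is admissibly oriented compatibly with both~$S$ and $S\setminus\bar s$. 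This is the concrete instance for the present lemma of the ``make the constructions of \cite{TVireBook} admissible in every step'' programme described in Section~\ref{sec:TopologicalPreliminaries}.
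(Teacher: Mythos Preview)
Your approach is essentially the paper's: pick the interior $3$-ball~$A$ bounded in part by~$s$, use $L$- and $T$-moves to contract~$\bar s$ to a bubble on a single adjacent $2$-stratum, then remove it by an inverse $B$-move, keeping track of admissibility throughout. The overall strategy is correct.

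There is, however, a circularity in your orientation bookkeeping. You write that a wrong orientation choice on a newly created $2$-stratum ``can always be corrected using only $T$-, $B$- and lune moves --- the device recorded in Remark~\ref{rem:flip_orientation_flower}''. But Remark~\ref{rem:flip_orientation_flower} explicitly \emph{invokes} Lemma~\ref{lem:delsBLT}: it flips the orientation of a superfluous $2$-stratum by deleting it (via the present lemma) and then re-inserting it with the opposite orientation. So you cannot appeal to that remark here, despite your parenthetical claim that ``no circularity arises''. The paper avoids this by the more elementary observation that at each $L$- or $T$-move in the reduction, one can simply \emph{choose} the orientation of the new $2$-stratum so that the result is admissible and the orientations adjacent to~$s$ remain compatible; this is the content of the paper's sentence ``A quick check confirms that in each admissible BLT move that we apply, we can choose orientations such that this is indeed the case.'' Replacing your appeal to Remark~\ref{rem:flip_orientation_flower} by this direct verification fixes the argument.
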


\begin{proof}
	Let $B$ be a 3-stratum of $S$ that does not intersect $\del M$ and such that $s \subset \bar{B}$.
	Then the topological boundary $\Sigma$ of $B$ is a 2-sphere. 
	Noting that $s$ may originally be adjacent to several other 2-strata in~$\Sigma$ (see Figure~\ref{fig:flower} for an illustration), we use the L- and T- moves on $\bar{s}$, until $\bar{s}$ forms a bubble on a single 2-stratum of $\Sigma \setminus s$, at which point we can delete it with a B-move.
	We have to be careful that in each step the orientations adjacent to~$s$ remain compatible because this guarantees that after deleting~$s$ all the strata in $S \setminus \bar{s}$ carry the same orientation as they did originally.
	A quick check confirms that in each admissible BLT move that we apply, we can choose orientations such that this is indeed the case.
\end{proof}

\begin{figure}
	\centering
	\captionsetup[subfigure]{labelformat=parens}
	\begin{subfigure}[b]{0.48\textwidth}
		\centering
		\includegraphics[scale=1.25, valign=c]{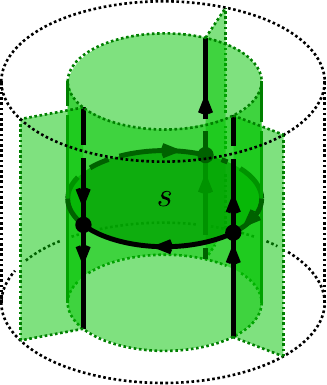}
		\caption{}
	\end{subfigure}
	\begin{subfigure}[b]{0.48\textwidth}
		\centering
		\includegraphics[scale=1.25, valign=c]{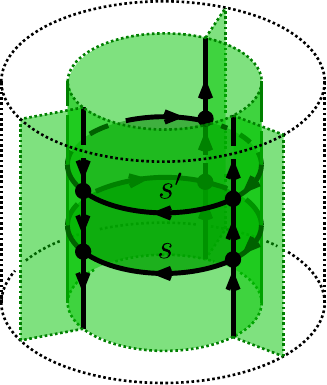}
		\caption{}
	\end{subfigure}
	\caption{(a) Neighbourhood of a 2-stratum~$s$ of an admissible skeleton~$S$.  
		(b) Neighbourhood of~$s$ and a shifted copy~$s'$ in the admissible skeleton~$S'$ (that agrees with~$S$ away from this neighbourhood).}
	\label{fig:flower}
\end{figure}

\begin{corollary}
	\label{cor:flower}
	Let~$S$ be an admissible skeleton of~$M$, let~$s$ be a contractible 2-stratum in~$S$ not intersecting $\partial M$, and let~$S'$ be the admissible skeleton which topologically differs from~$S$ only in that it has another copy~$s'$ of~$s$ which is connected to~$s$ by a cylinder (see Figure~\ref{fig:flower}). 
	Then~$S$ and~$S'$ are connected by admissible BLT moves. 
\end{corollary}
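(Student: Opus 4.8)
The plan is to deduce the statement from Lemma~\ref{lem:delsBLT} by recognising~$s'$ as a superfluous 2-stratum of~$S'$. Write~$B_0$ for the 3-ball of~$S'$ lying between~$s$ and~$s'$ (the ``cylinder'' visible in Figure~\ref{fig:flower}), and let~$C$ be the 3-stratum of~$S'$ on the far side of~$s'$ from~$B_0$; thus~$C$ is the 3-stratum of~$S$ adjacent to~$s$ on the side into which~$s'$ is pushed (possibly~$C$ also occurs on the other side of~$s$). First I would check that~$s'$ is superfluous in~$S'$: it is contractible, being a copy of the contractible stratum~$s$; the two germs of 3-strata incident with~$s'$ are those of~$B_0$ and of~$C$, which are distinct 3-strata of~$S'$ because~$B_0$ is newly created; and~$B_0$ does not meet~$\partial M$, since~$s$ does not and~$B_0$ may be taken to be a thin collar of~$s$. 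This leaves the compatibility of the orientations adjacent to~$s'$.

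That last condition is the only delicate point, and I would treat it as the core of the argument. One must produce a local order on~$S'$ that restricts to the given local order of~$S$ away from a neighbourhood of~$s$ and that, at every point of~$s'$, places the germ of~$B_0$ immediately adjacent (say, as the immediate predecessor) to the germ of the 3-stratum lying beyond~$s'$; one then checks that this choice propagates consistently across the ``belt'' 2-strata joining~$\partial s$ to~$\partial s'$, and that the induced orientation of all of~$S'$ is admissible. This is an orientation-bookkeeping exercise entirely parallel to the one in the proof of Lemma~\ref{lem:delsBLT} (and to Remark~\ref{rem:flip_orientation_flower}), where the freedom to orient the new 2-strata around such a ``flower'' is exactly what is exploited.

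Granting that~$s'$ is superfluous, Lemma~\ref{lem:delsBLT} supplies a finite sequence of admissible BLT moves relating~$S'$ and~$S'\setminus\overline{s'}$ (apply it with the auxiliary 3-stratum of its statement taken to be~$B_0$, which meets the required hypotheses as a 3-ball not meeting~$\partial M$ with~$s'\subset\overline{B_0}$). It then remains to identify~$S'\setminus\overline{s'}$ with~$S$: deleting~$s'$ together with the 1- and 0-strata of~$\partial s'$ merges~$B_0$ into~$C$ and merges each belt piece with the adjacent part of the corresponding 2-stratum of~$S$, so that the resulting stratification differs from~$S$ only by an ambient isotopy of~$M$ fixing~$\partial M$; as isotopic stratifications are isomorphic and BLT moves are taken up to isomorphism, this yields a chain of admissible BLT moves from~$S'$ to~$S$. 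I expect the verification of the orientation-compatibility condition to be the main obstacle; the rest is a direct application of Lemma~\ref{lem:delsBLT} together with a routine topological identification.
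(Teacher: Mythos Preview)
Your proposal is correct and follows the same approach as the paper's two-line proof: recognise $s'$ as superfluous in $S'$ and invoke Lemma~\ref{lem:delsBLT}. The orientation-compatibility check you flag as ``the main obstacle'' is in fact immediate from the hypothesis that $S'$ is already admissible and agrees with $S$ outside the collar (so the belt pieces merge back into the original petals with matching local orders), which is why the paper simply asserts ``$s'$ is superfluous'' without further comment.
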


\begin{proof}
Note that $s'$ is superfluous. Start from the skeleton with $s'$ and use Lemma~\ref{lem:delsBLT}.
\end{proof}

\begin{remark}
	\label{rem:flip_orientation_flower}
	Note that if $s$ is superfluous then also the skeleton $S'$ obtained by reversing the orientation of $s$ is admissible (recall Remark~\ref{rem:ltovsorient}\,\ref{item:RemarkLocalOrder1}), and the orientation-reversed version of~$s$ is superfluous in $S'$.
	In this way, Lemma~\ref{lem:delsBLT} can be used to reverse the orientation of a superfluous $2$-stratum.

Combining this with Corollary~\ref{cor:flower}, it follows that if the skeleton $S'$ obtained from $S$ by flipping the orientation of $s$ is still admissible and $s$ does not touch $\del M$, then $S$ is connected to $S'$ by BLT moves even if $s$ is not superfluous (after making a copy $s'$ of $s$, $s$ becomes superfluous, and so its orientation can be flipped; then remove $s'$).
\end{remark}

We now give a characterisation of skeleta that are dual to triangulations.

\begin{lemma} 
	\label{lem:TriangCharact}
	A skeleton $S$ of $M$ is dual to a triangulation if and only if
	\begin{enumerate}
		\item every stratum of $S$ is contractible,
		\item for each stratum $s$ of $S$, the canonical map 
		\be 
		S_3(s) \lra \{\textrm{3-strata incident with } s\}
		\ee 
		is injective (and thus bijective), 
		\item if two strata $s$ and $t$ satisfy $S_3(s)=S_3(t)$, then $s=t$.
	\end{enumerate}
\end{lemma}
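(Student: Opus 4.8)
The plan is to prove the two implications separately; the forward one is routine, and the converse is where the work lies. For \emph{``only if''}, suppose $S=T^{*}$ is the Poincaré dual of a triangulation $T$ of $M$, so that $j$-simplices $\sigma$ of $T$ correspond bijectively to $(3-j)$-strata $\sigma^{*}$ of $S$ with $\sigma\subseteq\tau\iff\tau^{*}\subseteq\overline{\sigma^{*}}$. Then (i) holds because each $\sigma^{*}$ is the interior of a PL ball, hence contractible; (ii) holds because the $3$-strata incident with a $k$-stratum $\sigma^{*}$ are exactly the dual cells $v^{*}$ of the $4-k$ vertices $v$ of $\sigma$, which are pairwise distinct, so that the surjection $S_{3}(\sigma^{*})\to\{\text{$3$-strata incident with }\sigma^{*}\}$ between two sets of cardinality $4-k$ is a bijection; and (iii) holds because $S_{3}(\sigma^{*})=S_{3}(\tau^{*})$ forces $\sigma$ and $\tau$ to have the same vertex set, hence $\sigma=\tau$ and $\sigma^{*}=\tau^{*}$.

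For \emph{``if''}, assume (i)--(iii) and write $v(s)$ for the set of $3$-strata incident with a stratum $s$; by (ii) one has $|v(s)|=|S_{3}(s)|=4-\dim s$, and by (iii) the assignment $s\mapsto v(s)$ is injective. I would define an abstract simplicial complex $\mathcal T$ with vertex set $S_{3}$, whose simplices are declared to be exactly the subsets of the form $v(s)$. The first point to check is that $\mathcal T$ is closed under passage to non-empty faces, i.e.\ that for each stratum $s$ and each $\varnothing\neq A\subseteq v(s)$ there is a stratum $t$ with $s\subseteq\bar t$ and $v(t)=A$; this is a finite inspection of the local neighbourhood list of Figure~\ref{fig:skeleta}, in which the strata incident with the central stratum realise via $v(\cdot)$ precisely all non-empty subsets of the (at most four) local $3$-strata, combined with the remark that the global stratum $t$ through such a local realiser satisfies $A\subseteq v(t)\subseteq v(s)$ and $|v(t)|=4-\dim t=|A|$, whence $v(t)=A$. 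Next I would verify that (i)--(iii) together with the local models force $S$ to be a \emph{regular} CW complex with the $k$-strata as open $k$-cells — here (i) is what excludes circular $1$-strata and analogous degeneracies, and (ii) excludes a $3$-stratum abutting a lower stratum from two sides — so that the order complex $\mathrm{bd}(S)$ of the face poset $P_{S}=\bigl(\{\text{strata of }S\},\ s\le t\iff s\subseteq\bar t\bigr)$ is a simplicial complex with $|\mathrm{bd}(S)|\cong M$.

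The final step identifies $\mathrm{bd}(S)$ with the barycentric subdivision $\mathrm{bd}(\mathcal T)$. I claim $s\mapsto v(s)$ is an anti-isomorphism from $P_{S}$ onto the face poset of $\mathcal T$: it is a bijection by construction and (iii); $s\subseteq\bar t$ visibly gives $v(t)\subseteq v(s)$; and conversely $v(t)\subseteq v(s)$ forces $\dim t\ge\dim s$ by cardinalities and, by the face-realisation step, produces a stratum $t'$ with $s\subseteq\bar{t'}$ and $v(t')=v(t)$, whence $t=t'$ by (iii) and $s\subseteq\bar t$. An anti-isomorphism of posets induces an isomorphism of their order complexes, so $\mathrm{bd}(S)\cong\mathrm{bd}(\mathcal T)$ via $\hat s\mapsto\widehat{v(s)}$, giving $|\mathcal T|=|\mathrm{bd}(\mathcal T)|\cong|\mathrm{bd}(S)|\cong M$, so $\mathcal T$ is a triangulation of $M$. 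Tracking the homeomorphism: an open simplex of $\mathrm{bd}(S)$ spanned by a flag $s_{0}\subsetneq\bar s_{1}\subsetneq\dots\subsetneq\bar s_{d}$ lies in the stratum $s_{d}$ and is carried to the open simplex of $\mathrm{bd}(\mathcal T)$ spanned by $v(s_{d})\subsetneq\dots\subsetneq v(s_{0})$; hence the stratum $s$, being the union of those open simplices whose flag has largest entry $s$, is carried onto the union of those open simplices whose $\mathcal T$-flag has smallest entry $v(s)$, i.e.\ onto the open dual cell of the simplex $v(s)$. Therefore $S\cong\mathcal T^{*}$.

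The hard part will be the middle claim that (i)--(iii) and the neighbourhood list of Figure~\ref{fig:skeleta} force $S$ to be a regular CW complex (so that $\mathrm{bd}(S)$ triangulates $M$), together with the accompanying case analysis for the face-realisation property; everything afterwards is formal manipulation of posets and order complexes. This step would closely follow the corresponding discussion in \cite{TVireBook}. The case of $M$ with boundary is treated in the same way, working relative to $\partial M$ and using the boundary neighbourhoods of Figure~\ref{fig:skeleta}.
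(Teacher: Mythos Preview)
Your proof is correct and follows the same overall strategy as the paper: both construct the abstract simplicial complex with vertex set $S_3$ and simplices the sets $v(s)$ of $3$-strata incident to a stratum $s$, using (ii) for the cardinality and (iii) for injectivity. The paper is very terse about why $|(X,\Sigma)|\to M$ is a homeomorphism, whereas you fill this in via the regular CW structure and barycentric subdivisions; this extra care is a genuine improvement in rigour, but the underlying idea is the same.
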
 
\begin{proof}
	The conditions (i)--(iii) are satisfied for any dual of a triangulation.
	
	Conversely, from a skeleton~$S$ that satisfies these conditions, we obtain a simplicial complex, i.\,e.\ a set~$X$ (of vertices) together with a set $\Sigma \subset \mathcal{P}X$ (of simplices) such that for all $B \in \Sigma$ and $A \subset B$ we have $A \in \Sigma$ (face condition). 
	Indeed, first we set $X := S_3$, the set of 3-simplices of~$S$. 
	Using the local conditions from Figure~\ref{fig:skeleta} we see that each $i$-stratum~$s$ in~$S$ is incident to exactly $4-i$ germs of 3-strata around~$s$. 
	Thus we have maps $f_i\colon \{i\textrm{-strata of }\ S\} \too \{(4-i)\textrm{-element subsets of } X\}$ for each $i\in \{0,1,2,3\}$.
	The set~$\Sigma$ is the union of the images of the~$f_i$.
	Each~$f_i$ is injective, as follows directly from condition (iii).
	Because of this we obtain a map $|(X,\Sigma)| \too M$ which is a homeomorphism by definition of $(X,\Sigma)$. 
	This triangulation is dual to~$S$ by construction.
\end{proof}

\begin{lemma}
	\label{lem:adm-triang}
	Let $S$ be an admissible skeleton of $M$ such that $\del S$ is dual to a triangulation of $\del M$. Then there exists an admissible skeleton $S'$ of $M$ that is dual to a triangulation, satisfies $\del S' = \del S$, and is connected to $S$ by a sequence of admissible BLT moves.
\end{lemma}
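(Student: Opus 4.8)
The plan is to follow the structure of the proof of Theorem~\ref{thm:ClosedSurfacesAndblMoves}, now in dimension three and relative to the boundary. By Lemma~\ref{lem:TriangCharact} it suffices to produce, starting from $S$ and using only admissible BLT moves supported in small balls in the interior $M^\circ$, an admissible skeleton $S'$ every stratum~$s$ of which is (i) contractible, (ii) has $S_3(s)$ mapping injectively to its set of incident $3$-strata, and (iii) is determined by its germ set $S_3(s)$. Since all moves take place inside $M^\circ$, the boundary skeleton $\del S$ -- which by hypothesis is already dual to a triangulation, hence already satisfies (i)--(iii) near $\del M$ -- is never touched, so $\del S' = \del S$; and $S'$ is admissible because every move used is admissible (Remark~\ref{rem:ltovsorient}\,\ref{item:RemarkLocalOrder1}). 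The argument repairs (i), then (ii), then (iii), and the crucial point is that each repair is \emph{local and monotone}: it fixes the stratum in question while keeping conditions (i)--(iii) for every stratum that already satisfied them, so the procedure terminates after finitely many steps.

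First, contractibility: in a skeleton the $3$-strata are balls and the $0$-strata are points, so only $2$-strata and circular $1$-strata can fail (i). A circular $1$-stratum is cut open by inserting a $0$-stratum on it via a short B--T--L sequence in a ball meeting the circle in an arc; a non-disc $2$-stratum $s$ is subdivided into discs by budding a small bubble off $s$ with a B-move and then dragging the resulting new $1$-stratum across $s$ along a cutting arc with L- and T-moves, as in the ``attaching a bubble'' procedure of \cite[Sect.\,11.3--11.4]{TVireBook}. Next, for condition (ii): if two germs of $3$-strata around a stratum $s$ are induced by the same $3$-ball $B$ of $S$, then when $s$ is a $2$-stratum the proof of Corollary~\ref{cor:flower} (via Lemma~\ref{lem:delsBLT}) produces, by admissible BLT moves, a parallel copy $s'$ of $s$ with a collar, and pushing $s'$ slightly into $B$ splits $B$ into two distinct $3$-balls; when $s$ is a $1$- or $0$-stratum the same B--L--T combination doubles $s$ together with a collar of its closure and inserts a new $2$-stratum between the copies, using the pseudo-skeleton bookkeeping of Lemma~\ref{lem:dels} whenever a deletion threatens to create a cylinder $3$-stratum joining $\del_{\mathrm{in}}M$ and $\del_{\mathrm{out}}M$. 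In all of these the new strata lie in a single $3$-ball of $S$ whose local order is inherited from $S$, so their orientations can be chosen to keep the skeleton admissible, using Remark~\ref{rem:flip_orientation_flower} to flip the orientation of a newly created contractible $2$-stratum whenever an intermediate skeleton would otherwise be non-admissible.

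Finally, for condition (iii): if two distinct strata $s \neq t$ still satisfy $S_3(s) = S_3(t)$, insert by admissible BLT moves a small bubble $2$-stratum across one of the $3$-balls common to $s$ and $t$; this refines that ball and makes the germ sets of $s$ and $t$ differ, again without disturbing strata that already satisfied (i)--(iii). After finitely many such repairs all three conditions of Lemma~\ref{lem:TriangCharact} hold, so the resulting admissible skeleton $S'$ is dual to a triangulation, it agrees with $S$ on $\del M$, and it is connected to $S$ by a finite sequence of admissible BLT moves by construction.

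The main obstacle is the combinatorial bookkeeping hidden in the phrase ``does not spoil (i)--(iii) for the other strata'': one needs a genuine termination/monotonicity argument (e.g.\ an induction on suitably ordered counts of strata violating each of the three conditions), and, simultaneously, one must verify in each of the finitely many local pictures that the orientations of every newly created $2$-stratum extend to a local order in the sense of Convention~\ref{conv:indOrd}, so that each move is one of the $32$ admissible BLT moves rather than a merely oriented one. This is precisely the place where the construction of \cite{TVireBook} has to be made compatible with admissibility at every step, and it is carried out by the bubble-and-slide technique together with Lemma~\ref{lem:delsBLT} and Remark~\ref{rem:flip_orientation_flower}.
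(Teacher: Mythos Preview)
Your approach is essentially the same as the paper's: repair conditions (i)--(iii) of Lemma~\ref{lem:TriangCharact} in order by local BLT moves in the interior, leaving $\del S$ untouched. Two simplifications in the paper streamline exactly the places you flag as delicate. First, the paper observes that all repairs can be carried out using only B-, L-, and T-moves and \emph{never} the inverse lune move $L^{-1}$; since for each of B, L, T one can always choose an admissible orientation on the newly created strata, this single restriction handles admissibility uniformly and removes the need for the case-by-case orientation checks you sketch at the end. Second, for condition~(ii) the paper notes that the local neighbourhoods of Figure~\ref{fig:skeleta} force: if (ii) holds for all $2$-strata then it holds automatically for all $1$- and $0$-strata; hence only $2$-strata need to be doubled via Corollary~\ref{cor:flower}, and your separate treatment of $1$- and $0$-strata for (ii) is unnecessary. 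For (iii) the paper gives explicit dimension-by-dimension local pictures rather than your generic bubble insertion, which makes the monotonicity bookkeeping immediate.
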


\begin{proof}
	We show that we can pass to a skeleton satisfying the conditions from Lemma~\ref{lem:TriangCharact} using BLT moves that do not include the inverse lune move. 
	This allows us to always choose orientations of the targets such that the resulting moves will be admissible.
	
	To guarantee condition (i) we show that we can pass to sufficiently fine subdivisions of $S$. 
	We note that all 3-strata are contractible by definition of skeleta and 0-strata are so trivially.
	If $s$ is a 1-stratum we can cut it up by creating a bubble on a 2-stratum incident with it, and then sliding the bubble onto~$s$.     
	This can be implemented through a bubble and a lune move.
	By iterating this procedure we can guarantee that all 1-strata are contractible.
	Let now~$s$ be a 2-stratum.
	Since~$s$ is contained in the boundary of a 3-ball, it has genus~$0$.
	If~$s$ is closed then it is a sphere and we can decompose~$s$ into contractible pieces with a bubble move and the above argument for 1-strata.
	Otherwise the surface $\Sigma = \bar{s}$ is a sphere with a finite number of topological boundary components\footnote{Each such boundary component is an $S^1$ and can be made up from several 0- and 1-strata of $S$ or $\del S$.}.
	Each of the these boundary components contains some interval that is contained in the interior of $M$. 
	Indeed, suppose there was a boundary component $b$ with $b \subset \del M$. 
	Since all strata of $\del S$ are contractible, $b$ contains at least one 0-stratum $p$. 
	Then from the allowed neighbourhoods in Figure \ref{fig:skeleta} we see that there has to be a 1-stratum protruding from $p$ into the interior of $M$ that is part of the boundary of $\Sigma$. 
	This allows us to cut $s$ along a curve $\gamma$ connecting any two different boundary components by using a bubble and a lune move to create a bubble on $\gamma$ and stretching it along $\gamma$ using an isotopy. At the end we use two more lune moves to traverse the 1-strata connected by $\gamma$.
	This guarantees that~$s$ can be subdivided into contractible strata.
	All 1-strata created in the process are contractible.
        
	We now suppose that $S$ satisfies condition (i).
	Conditions (ii) and (iii) are automatically satisfied if any of the involved strata have non-trivial intersection with $\del M$ because of our assumption on $\del S$. 
	Hence in what follows we can assume that all the strata we consider do not intersect $\del M$.
	We first notice that condition (ii) is trivially satisfied for all 3-strata. 
	Furthermore if it is satisfied by all 2-strata, then also by all 1- and 0-strata.
	Indeed, this follows from the local conditions in Figure \ref{fig:skeleta}. 
	Thus let $s$ be a 2-stratum. 
	By Corollary \ref{cor:flower} we can make a copy $s'$ of $s$, guaranteeing that both $s$ and $s'$ satisfy condition (ii).
	
	We now suppose that $S$ satisfies conditions (i) and (ii), and we want to pass to a skeleton that additionally satisfies condition (iii).
	If $s,t$ are strata and $S_3(s) = S_3(t)$ then they have to be of the same dimension $i$, since for an $i$-stratum $s$ we have $|S_3(s)| = 4-i$.
	If $i=2$ and $S_3(s) = S_3(t)$ for $s\neq t$, then we can make a copy $s'$ of $s$ as in Corollary~\ref{cor:flower}, such that both $s$ and $s'$ are adjacent to the newly created 3-stratum while $t$ is not.
	A quick check confirms that this guarantees that after these moves, we have $S_3(s) \neq S_3(t)$, and that for all 2-strata $r\neq s'$, we have $S_3(s') \neq S_3(r)$. 
	If $i=1$ and $s\neq t$,  then~$s$ and~$t$ cannot be incident to a common 0-stratum because this 0-stratum would have a neighbourhood as depicted in Figure~\ref{fig:skeleta}, where all 1-strata have different sets of germs of 3-strata.
	We can then implement
	\be
	\includegraphics[scale=1.25, valign=c]{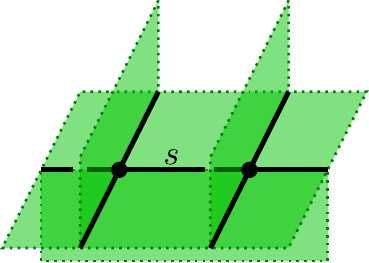} \longrightarrow
	\includegraphics[scale=1.25, valign=c]{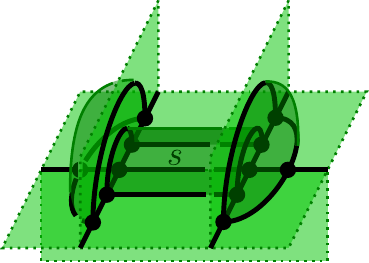}
	\ee
	around $s$ (which can be done without using inverse lune moves), guaranteeing that $S_3(s) \neq S_3(t)$. 
	Also none of the newly created 1- or 2-strata will violate condition (iii).
	Finally, if $i=0$ we use
	\be
	\includegraphics[scale=1.25, valign=c]{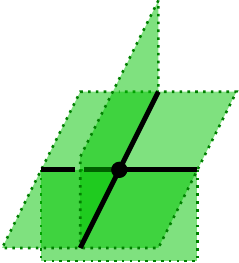} \longrightarrow
	\includegraphics[scale=1.25, valign=c]{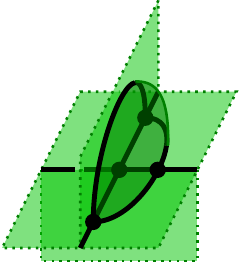} ~.
	\ee
	In none of these steps conditions (i) or (ii) are violated. 
\end{proof}

\subsection{Fixing the orientations}

Our next goal is to show that an admissible skeleton that is dual to a triangulation is connected to one that is dual to a globally ordered triangulation by admissible BLT moves.

\begin{lemma}
	\label{lemma:connect-adm-oriented}
	Let $S,S'$ be admissible skeleta of~$M$ such that
	\begin{enumerate}
		\item 
		the underlying unoriented skeleta agree, i.\,e.\ $\underline{S} = \underline{S'}$,
		\item 
		$S$ and $S'$ are dual to triangulations,
		\item 
		$S$ and $S'$ agree on the boundary of~$M$, i.\,e.\ $\del S = \del S'$.
	\end{enumerate}
	Then there is a sequence of admissible BLT moves from~$S$ to~$S'$.
\end{lemma}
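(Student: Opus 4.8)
The plan is to dualise everything, run the argument on the level of ordered triangulations — in close analogy with the last paragraph of the proof of Theorem~\ref{thm:ClosedSurfacesAndblMoves} — and then transport the result back to skeleta using Lemma~\ref{lem:PachnerFromBLT} and Remark~\ref{rem:flip_orientation_flower}. By Lemma~\ref{lem:TriangCharact} the hypotheses say that $S$ and $S'$ are duals of triangulations $T,T'$ of $M$ with the \emph{same} underlying unoriented triangulation, and by Remark~\ref{rem:ltovsorient}\,\ref{item:RemarkLocalOrder2} their admissible orientations correspond to orderings of this common triangulation, i.e.\ to orientations of its $1$-simplices with no directed loop around a single simplex. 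The condition $\del S=\del S'$ says that the two orderings agree on the boundary, so $T'$ is obtained from $T$ by reversing the orientations of some finite set~$E$ of \emph{interior} $1$-simplices. Since admissible BLT moves are stronger than admissible Pachner moves (Lemma~\ref{lem:PachnerFromBLT}), and since reversing the orientation of a contractible $2$-stratum not touching $\del M$, provided the result is still admissible, can be realised by admissible BLT moves (Remark~\ref{rem:flip_orientation_flower}, via Corollary~\ref{cor:flower} and Lemma~\ref{lem:delsBLT}), it suffices to connect $T$ to $T'$ by a sequence of admissible Pachner moves interspersed with such orientation reversals of interior $1$-simplices.

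First I would subdivide. Perform an oriented $1$-$4$ Pachner move on every tetrahedron~$\sigma$ of~$T$, one at a time, introducing a new vertex~$e_\sigma$ in the interior and declaring every $e_\sigma$ to be larger than all old vertices (the mutual order of the $e_\sigma$ chosen by some fixed total order on the set of tetrahedra). Each of the four resulting tetrahedra carries the linear order of a face of~$\sigma$ with~$e_\sigma$ appended on top, so its source and target are admissibly ordered and the move is an admissible Pachner move. Next perform an oriented $2$-$3$ move on every interior triangle $\{x,y,z\}$ of~$T$, which after the first batch of moves is the common face of exactly the two tetrahedra $\{x,y,z,e_{\sigma_1}\},\{x,y,z,e_{\sigma_2}\}$ (with $\sigma_1,\sigma_2$ the tetrahedra of~$T$ adjacent to that face), orienting the new edge $e_{\sigma_1}e_{\sigma_2}$ compatibly with the fixed order on the $e_\sigma$; again all participating tetrahedra get genuine linear orders, so each such move is admissible, and distinct old triangles sit in disjoint families of new tetrahedra, so the order of these moves is immaterial. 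Call the result $\widetilde T$, and let $\widetilde{T'}$ be obtained from $T'$ by exactly the same sequence of moves (same new vertices and edges, same choices of order).

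The key point is that in $\widetilde T$ every old interior $1$-simplex $\ell=xy$ is \emph{freely reorientable}: after the above subdivisions the only triangles containing~$\ell$ are of the form $\{x,y,e_\sigma\}$ for tetrahedra~$\sigma$ of~$T$ through~$\ell$, and there both other edges point towards~$e_\sigma$, so reversing~$\ell$ cannot create a directed $3$-cycle; and the only tetrahedra containing~$\ell$ are of the form $\{x,y,e_{\sigma_1},e_{\sigma_2}\}$, in which reversing~$\ell$ merely swaps $x$ and $y$ inside the ambient linear order. Dualising, the $2$-stratum of the skeleton dual to~$\widetilde T$ corresponding to~$\ell$ is contractible, disjoint from~$\del M$, and stays admissible when its orientation is flipped, so Remark~\ref{rem:flip_orientation_flower} realises this reversal by admissible BLT moves. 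Reversing, one at a time, precisely those old interior $1$-simplices in~$E$ where $T$ and $T'$ disagree — which does not spoil the free reorientability of the remaining ones, since the triangles $\{x,y,e_\sigma\}$ are insensitive to the orientations of old edges — turns $\widetilde T$ into $\widetilde{T'}$. Finally undo the $2$-$3$ and $1$-$4$ moves by the corresponding $3$-$2$ and $4$-$1$ moves now carrying the orientations of~$T'$; these are again admissible Pachner moves, since the reconstructed triangles and tetrahedra inherit genuine linear orders from~$T'$, hence sequences of admissible BLT moves. Composing everything yields a sequence of admissible BLT moves from~$S$ to~$S'$.

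I expect the main effort to be in the bookkeeping of the third step: verifying precisely which triangles and tetrahedra contain a given old interior $1$-simplex after the $1$-$4$ and $2$-$3$ subdivisions (so that the dual $2$-stratum really becomes freely reorientable), and checking that each intermediate triangulation is admissibly ordered so that every Pachner move invoked is admissible in the sense needed for Lemma~\ref{lem:PachnerFromBLT}. A secondary point to keep track of is that all moves used — the $1$-$4/4$-$1$ moves, the $2$-$3/3$-$2$ moves on interior triangles, and the reversals of interior $1$-simplices — leave $\del M$ and its induced skeleton untouched, so that $\del S=\del S'$ is preserved throughout.
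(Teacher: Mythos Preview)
Your proposal is correct and follows essentially the same approach as the paper: pass to the dual ordered triangulations, perform admissible $1$-$4$ moves on all tetrahedra (new vertices on top), then admissible $2$-$3$ moves on all interior old triangles, so that every old interior edge becomes freely reorientable and can be flipped via Remark~\ref{rem:flip_orientation_flower}, and finally undo the subdivision on the $T'$ side. Your write-up is in fact more explicit than the paper's in identifying the triangles and tetrahedra containing a given old edge after subdivision and in keeping the new-edge orientations synchronised between $\widetilde T$ and $\widetilde{T'}$.
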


\begin{proof}
	Let $T,T'$ be the triangulations dual to $S,S'$, respectively.
	By assumption we know that they agree as unoriented triangulations. Since they are admissibly oriented the orientations of all simplices are implied once the orientations of edges are fixed. The admissibility condition says that there is no closed loop of oriented edges that lies in the boundary of a single 3-simplex. 
	
	If~$e$ is an edge of~$T$ (or~$T'$), then we say that~$e$ can be \textsl{admissibly flipped} if the oriented triangulation $T(e^*)$ (or $T'(e^*)$) that is obtained by reversing the orientation of $e$ is admissible. By Remark~\ref{rem:flip_orientation_flower}, if~$e$ can be admissibly flipped then~$T$ and $T(e^*)$ (or~$T'$ and $T'(e^*)$) are connected by a sequence of admissible BLT moves.  
	
	We will construct triangulations $\widetilde T, \widetilde T'$ such that 
	\begin{itemize}
		\item 
		there are sequences of admissible BLT moves from~$T$ to~$\widetilde T$, and from~$T'$ to~$\widetilde T'$, 
		\item 
		the 1-skeleta of $T$ and~$T'$ embed into $\widetilde T$ and $\widetilde T'$, respectively (and we identify vertices and edges of~$T$ or~$T'$ with their images under these embeddings),
		\item 
		$\underline{\widetilde T} = \underline{\widetilde T'}$, and $\widetilde T$ differs from $\widetilde T'$ only in the orientations of edges of $\underline{T}=\underline{T'}$,
		\item 
		each edge of $T$ can be admissibly flipped in $\widetilde T$, and each edge of~$T'$ can be admissibly flipped in $\widetilde T'$. 
	\end{itemize}
	From this the claim follows because we then have a sequence of admissible BLT moves $T \too \widetilde T \too \widetilde T' \too T'$.
	
	By Lemma~\ref{lem:PachnerFromBLT} all admissible Pachner moves can be implemented via admissible BLT moves. 
	We first construct an intermediate triangulation $\bar T$ (and analogously $\bar T'$): 
	we apply an admissible 1-4 move to each 3-simplex of~$T$ and orient each newly created edge towards the newly created vertex it is incident with. 
	Then the 2-skeleton of $T$ embeds into $\bar T$. 
	Furthermore, if $s$ is any 2-simplex of $T$ in $\bar T$ that does not lie in the boundary, it is incident to exactly two 3-simplices all of whose other 2-faces are not contained in~$T$. 
	Thus we can apply 2-3 moves at each of the 2-simplices of~$T$ in~$\bar T$ (and the two adjacent 3-simplices in~$\bar T$) to arrive at the triangulation $\widetilde T$. 
	We orient the edges newly created by the 2-3 moves arbitrarily. 
	The choices of orientations in the construction of $\bar T$ (and $\bar T'$) guarantee that all these moves are admissible. 
	By design, all non-boundary 2-simplices of $T,T'$ have been erased in $\widetilde T, \widetilde T'$, and so all edges of~$T$ can be admissibly flipped in~$\widetilde T$, and analogously all edges of~$T'$ can be admissibly flipped in~$\widetilde T'$
	Thus all four conditions from above are satisfied.	
\end{proof}

\subsection{Fixing the boundary}

We provide the technical tools to deal with a boundary that is not dual to a triangulation.    

In the following we will denote (representatives of) morphisms of $\Bordstr_3$ by pairs $(M,S)$, or simply by~$M$, if there is no need to refer to the stratification~$S$ of the bordism~$M$. 
Composition in $\Bordstr_3$ is denoted by juxtaposition. 
           
\begin{lemma}
	\label{lem:compskel}
	Let $M,N$ be pseudo-skeletal morphisms of $\Bordstr_3$. Then the following statements hold whenever the respective compositions make sense.
	\begin{enumerate}
		\item $M N$ is pseudo-skeletal.
		\item If at least one of $M$, $N$ is skeletal then so is $M N$.
	\end{enumerate}
\end{lemma}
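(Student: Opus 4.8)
\textbf{Proof plan for Lemma~\ref{lem:compskel}.}
The plan is to analyse the stratification of the composite $MN$ stratum by stratum, since the conditions defining pseudo-skeleta and skeleta are either local (the neighbourhood conditions, which are unchanged by gluing away from the gluing surface) or concern the diffeomorphism type of the $3$-strata. First I would observe that composition in $\Bordstr_3$ glues along a common boundary surface $\Sigma$, and that a tubular collar of $\Sigma$ in $MN$ is covered by the collars of $\Sigma$ coming from $M$ and from $N$; since each $3$-stratum of a (pseudo-)skeleton meeting the boundary does so as a half-ball or (in the pseudo-skeletal case) as a cylinder $D^\circ\times[0,1]$, and since $\del S$ is a skeleton of $\Sigma$ with contractible $2$-strata (discs) by the Proposition above, gluing matches these half-discs/cylinder-ends up along discs. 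Hence every point of $MN$ not on $\Sigma$ retains its old neighbourhood, and every point on $\Sigma$ acquires a neighbourhood obtained by doubling a half-ball neighbourhood across $\Sigma$, which is again one of the allowed local models in Figure~\ref{fig:skeleta}. This establishes the local conditions for $MN$ in both parts.

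For part (i), it then remains to check the condition on $3$-strata: each $3$-stratum $s$ of $MN$ is either entirely contained in $M\setminus\Sigma$ or in $N\setminus\Sigma$ (in which case it is one of the three allowed types by hypothesis on $M$ or $N$), or it is the union $s = s_M \cup_D s_N$ of a $3$-stratum $s_M$ of $M$ and one $s_N$ of $N$ glued along a disc $D\subset\Sigma$ (namely a $2$-stratum of $\del S$). Here I would run through the cases according to which of $\del_{\textup{in}},\del_{\textup{out}}$ the pieces $s_M, s_N$ touch. The key point is that if $s_M$ is a half-ball attached to the outgoing boundary of $M$ and $s_N$ is a half-ball attached to the incoming boundary of $N$, the union is a ball; if one of them is already a cylinder $D^\circ\times[0,1]$ reaching the opposite boundary, or if $s_M$ reaches $\del_{\textup{in}}M$ and $s_N$ reaches $\del_{\textup{out}}N$, the union is again a cylinder $D^\circ\times[0,1]$; and the excluded configurations in the definition of pseudo-skeleton (a cylinder whose two boundary discs both lie on $\del_{\textup{in}}$ or both on $\del_{\textup{out}}$) cannot arise precisely because $s_M$ and $s_N$ are themselves pseudo-skeletal $3$-strata obeying that restriction. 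So $s$ is a ball, a half-ball, or a cylinder of the permitted kind, and $MN$ is pseudo-skeletal.

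For part (ii), by the Lemma of this subsection ($S$ is a skeleton iff no $3$-stratum meets both $\del_{\textup{in}}M$ and $\del_{\textup{out}}M$) it suffices to show that no $3$-stratum of $MN$ meets both the incoming boundary $\del_{\textup{in}}(MN)=\del_{\textup{in}}M$ and the outgoing boundary $\del_{\textup{out}}(MN)=\del_{\textup{out}}N$. A $3$-stratum contained in $M$ cannot meet $\del_{\textup{out}}N$, and one contained in $N$ cannot meet $\del_{\textup{in}}M$, so the only danger is a glued stratum $s=s_M\cup_D s_N$; but then $s_M$ meets $\del_{\textup{in}}M$ and, since $M$ is assumed skeletal (say), $s_M$ does not meet $\del_{\textup{out}}M=\Sigma$ — contradicting $s_M\supset D\subset\Sigma$. (If instead $N$ is the skeletal one, the symmetric argument applies.) Hence $MN$ is skeletal. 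I expect the main obstacle to be purely bookkeeping: making the case analysis on how boundary-touching $3$-strata of $M$ and $N$ fit together genuinely exhaustive, and checking carefully that the doubled neighbourhoods across $\Sigma$ really do appear in the list of Figure~\ref{fig:skeleta} rather than producing some model only allowed for manifolds with boundary; both are routine once one uses that $\del S$ and $\del N$ are genuine skeleta of $\Sigma$ with contractible strata.
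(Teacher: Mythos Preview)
Your proposal is correct and follows essentially the same approach as the paper. The paper's proof is much terser --- it says only that ``both claims follow straightforwardly from the definitions'' and, for part~(ii), notes that if a $3$-stratum of $MN$ intersects both the in- and out-boundary of $MN$, then its restrictions to $M$ and to $N$ must each intersect both in- and out-boundaries of $M$ and $N$ respectively --- but your detailed case analysis of how $3$-strata glue and your contradiction argument for~(ii) are exactly an unpacking of this.
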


\begin{proof}
	Both claims follow straightforwardly from the definitions. 
	For part~(ii) we note that if a 3-stratum intersects both the in- and out-boundary of $MN$, then both of its restrictions to~$M$ and~$N$ must also intersect the respective in- and out-boundaries non-trivially.
\end{proof}

Two morphisms $(M,S)$ and $(M,S')$ of $\Bordstr_3$ with the same underlying bordism $M$ but possibly different stratifications $S$ and $S'$ are called \textsl{equivalent}, $(M,S)\sim(M,S')$, if $S$ and $S'$ are related by a sequence of admissible BLT moves. 

The resulting equivalence relation on the $\Hom$ sets of $\Bordstr_3$ is compatible with composition in the following way:

\begin{lemma}
	\label{lem:BLTEquivalenceAndComposition}
	If $M \sim N$ and $X \sim Y$, then $M X \sim N Y$. \qed
\end{lemma}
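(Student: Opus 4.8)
The statement to prove is Lemma~\ref{lem:BLTEquivalenceAndComposition}: if $M \sim N$ and $X \sim Y$, then $MX \sim NY$, where $\sim$ is the equivalence relation generated by admissible BLT moves on pseudo-skeletal morphisms of $\Bordstr_3$ with a fixed underlying bordism.

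The plan is to reduce the two-sided statement to two one-sided statements and then exploit the locality of BLT moves. First I would observe that, by transitivity of $\sim$ and associativity of composition, it suffices to prove the single claim: if $M \sim M'$, then $MX \sim M'X$ and $XM \sim XM'$ (for any composable $X$), since one can then chain $MX \sim NX \sim NY$. So the heart of the matter is showing that applying an admissible BLT move to one factor of a composite induces an admissible BLT move on the composite.

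Second, recall that a BLT move is by definition a \emph{local} modification: it replaces an embedded stratified open $3$-ball $B$ (carrying the left-hand side of the move) inside the stratified manifold with another stratified $3$-ball (the right-hand side), and this is carried out away from the rest of the stratification. Since composition in $\Bordstr_3$ is gluing along a common boundary collar, and the embedded $3$-ball $B \subset M$ witnessing the move $M \rightsquigarrow M'$ can be taken to lie in the interior of $M$ (away from the in- and out-boundary), the same embedded ball $B$ sits inside the glued manifold $MX$ as an embedded open $3$-ball, and performing the identical replacement there produces precisely $M'X$. The admissibility of the move is a condition purely on the orientations/local orders of strata meeting $\partial B$ and is unaffected by what happens outside $B$; so if the move on $M$ was admissible, so is the induced move on $MX$. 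I would also note that one should first invoke Lemma~\ref{lem:compskel} to ensure $MX$ is again pseudo-skeletal (so that the relation $\sim$ even makes sense on it), and that the underlying bordism of $MX$ is unchanged by the move, which is what the definition of $\sim$ requires. Iterating over the finite sequence of moves witnessing $M \sim M'$ gives $MX \sim M'X$; the argument for $XM \sim XM'$ is identical with the roles swapped.

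The main (and essentially only) technical point is the bookkeeping that the local ball $B$ realizing a BLT move can always be chosen disjoint from the gluing region of the composite and from the in/out-boundary, so that gluing and move-application genuinely commute. For an interior move this is immediate; one should remark that all BLT moves are interior moves (they involve $3$-balls, not half-balls, cf.\ Figure~\ref{fig:BLTMoves}), so no move ever touches $\partial(MX)$ or the collar along which the gluing happens. Given that observation, everything else is routine and the lemma follows. As the excerpt itself indicates (``\qed'' with no written proof), this is expected to be a short formal verification rather than a substantive argument.
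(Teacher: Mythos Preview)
Your proposal is correct and is exactly the argument the paper has in mind: the lemma is stated with an immediate \qed\ because BLT moves are local replacements in open $3$-balls interior to the bordism, hence disjoint from the gluing collar, so that a move on one factor is literally the same move on the composite. Your reduction to the one-sided case and the observation that all BLT moves are interior moves make this explicit, and the appeal to Lemma~\ref{lem:compskel} is a harmless (if not strictly necessary) sanity check.
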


Let $\Sigma$ be a closed 2-manifold, $S$ a skeleton of $\Sigma$, and $s$ a 2-stratum of $S$. 
We construct a pseudo-skeletal 3-bordism $e_{\Sigma, S, s}\colon (\Sigma,S) \too (\Sigma,S)$ as follows:
Start with the cylinder $(\Sigma,S) \times I$. 
In the cylinder $I\times s$ we insert a single new 2-stratum $s'$ that is a copy of $s$, shifted away from $s$ along $I$ and a respective copy of each 0- and 1-stratum that is incident with $s$ in $\Sigma$. 
For example: 
\be
\label{eq:e_Sig-S-s}
	(\Sigma,S) = \includegraphics[scale=1.25, valign=c]{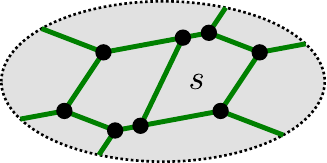} \;\Longrightarrow\;
	e_{\Sigma,S,s} = \includegraphics[scale=1.25, valign=c]{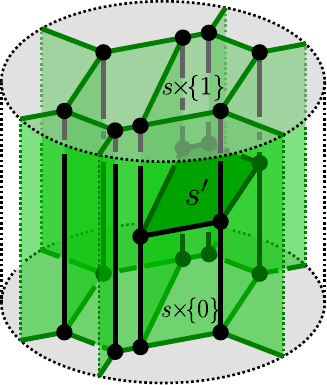} ~.
\ee

\begin{lemma} \label{lem:e}
	Let $\Sigma$ be a 2-manifold with skeleton $S$, let~$s$ be a 2-stratum of $S$, and let $M,N$ be skeletal 3-bordisms. 
	Then we have $e_{\Sigma,S,s}  M \sim M$ and $N  e_{\Sigma,S,s} \sim N$ whenever these compositions make sense.
\end{lemma}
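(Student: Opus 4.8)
The plan is to recognise the extra $2$-stratum of $e_{\Sigma,S,s}$ as a \emph{superfluous} $2$-stratum once it sits inside $e_{\Sigma,S,s}\,M$ (resp.\ $N\,e_{\Sigma,S,s}$), and then to invoke Lemma~\ref{lem:delsBLT}. I treat $e_{\Sigma,S,s}\,M$; the statement $N\,e_{\Sigma,S,s}\sim N$ follows by the mirror argument with the roles of in- and out-boundary exchanged. First I would note that, since $M$ is skeletal, $e_{\Sigma,S,s}\,M$ is skeletal by Lemma~\ref{lem:compskel}\,(ii), and that one may choose the orientations of the strata of $e_{\Sigma,S,s}$ so as to agree with those of $M$ on the common boundary: this is possible because the new strata of $e_{\Sigma,S,s}$ are parallel copies of strata of $S$, so the local-order/orientation dictionary of Convention~\ref{conv:indOrd} and Remark~\ref{rem:ltovsorient}\,\ref{item:RemarkLocalOrder1} yields an admissible skeleton of $e_{\Sigma,S,s}\,M$. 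Let $\tilde s$ be the $3$-stratum of the skeleton of $M$ that restricts to $s$ on $\del M$ (recall that the $2$-strata of the induced boundary skeleton $\del S$ are exactly the intersections of the $3$-strata of $M$'s skeleton with $\del M$). In $e_{\Sigma,S,s}\,M$ the $3$-stratum $\tilde s$ and the cylindrical $3$-strata of $e_{\Sigma,S,s}$ lying over $s$ merge, and the newly inserted $2$-stratum $s'$ — a push-off of the boundary disc $s=\tilde s\cap\del M$ into the interior — separates the resulting $3$-stratum into two pieces: the ``$M$-side'' ball $C\cong\tilde s\cup(s\times[0,\tfrac12])$ and the collar half-ball $s\times[\tfrac12,1]$.

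Next I would verify that $s'$ is superfluous in the sense of the definition preceding Lemma~\ref{lem:delsBLT}: (a) $s'$ is contractible, being a copy of the open disc $s$; (b) the two germs of $3$-strata incident with $s'$ are induced by the distinct $3$-strata $C$ and $s\times[\tfrac12,1]$; (c) the $3$-stratum $C$ does not meet $\del(e_{\Sigma,S,s}\,M)$ — indeed $\tilde s$ meets the out-boundary $\Sigma$ of $M$, hence by Remark~\ref{rem:3CellsBoundary} it does not meet the in-boundary of $M$, while the collar part $s\times[0,\tfrac12]$ meets only the interior; (d) the orientations adjacent to $s'$ are compatible, since deleting $s'$ simply re-merges $C$ and $s\times[\tfrac12,1]$ into the single $3$-stratum $\tilde s\cup(s\times I)$ with its original local order — here one uses that $s'$ was inserted as a parallel copy of $s$. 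By Lemma~\ref{lem:delsBLT} there is then a finite sequence of admissible BLT moves connecting $e_{\Sigma,S,s}\,M$ to $(e_{\Sigma,S,s}\,M)\setminus\overline{s'}$. Deleting $s'$ together with the copied $1$- and $0$-strata in $\overline{s'}$ turns $e_{\Sigma,S,s}$ back into the trivial cylinder $(\Sigma,S)\times I$, so $(e_{\Sigma,S,s}\,M)\setminus\overline{s'}=\big((\Sigma,S)\times I\big)\,M = M$ as a morphism of $\Bordstr_3$ (composition with the identity cylinder absorbs the collar). Hence $e_{\Sigma,S,s}\,M\sim M$.

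The main obstacle I anticipate is item (d): one must check that inserting the parallel copy $s'$ is compatible with admissibility and that the two local orders induced on the re-merged $3$-stratum agree, i.e.\ carefully tracking orientations through Convention~\ref{conv:indOrd}. A minor secondary point is the bookkeeping identifying $(e_{\Sigma,S,s}\,M)\setminus\overline{s'}$ with $M$ — namely that composition with the identity cylinder $(\Sigma,S)\times I$ does nothing at the level of morphisms of $\Bordstr_3$, so that the equivalence $\sim$ (which is phrased for two stratifications of one fixed bordism) applies after this identification, using also Lemma~\ref{lem:compskel}\,(ii) to ensure all intermediate objects remain skeletal.
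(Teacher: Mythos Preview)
Your proposal is correct and follows exactly the approach the paper intends: the paper's proof is the single sentence ``This is a direct application of Lemma~\ref{lem:delsBLT}'', and what you have written is precisely the verification that $s'$ is superfluous in the composed skeletal bordism so that Lemma~\ref{lem:delsBLT} applies. Your anticipated obstacles (the orientation compatibility in~(d) and the identification of $(e_{\Sigma,S,s}\,M)\setminus\overline{s'}$ with~$M$) are the right details to check and are handled as you describe.
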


\begin{proof}
	This is a direct application of Lemma \ref{lem:delsBLT}. 
\end{proof}

Note that this is in general not true if~$M$ or~$N$ are only pseudo-skeletal, as can be seen by taking them to be an identity in $\Bordstr_3$.

Any bl move for skeleta of 2-manifolds can be implemented via a pseudo-skeletal 3-bordism. 
More precisely: 
Let $\Sigma$ be a 2-manifold, $S$ a skeleton for $\Sigma$, and let $S'$ be obtained from $S$ via an application of a 2-dimensional bubble move in a disc $D  \subset \Sigma$. 
Then we define the pseudo-skeletal 3-bordism $M^{\textrm{b}}_{\Sigma,S,D}\colon (\Sigma, S) \too (\Sigma, S')$ by 
\be
M^{\textrm{b}}_{\Sigma,S,D} = 
\includegraphics[scale=1.25, valign=c]{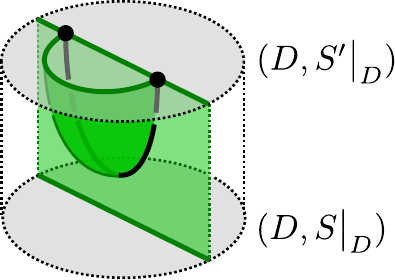}
\ee
which away from $D \times I$ is just the cylinder over $\Sigma \setminus D$.
Similarly, if $S'$ is obtained from $S$ via the dual of a 2-2 Pachner move we define the pseudo-skeletal 3-bordism $M^{\textrm{l}}_{\Sigma,S,D}\colon (\Sigma, S) \too (\Sigma, S')$ by
\be
	M^{\textrm{l}}_{\Sigma,S,D} =
	\includegraphics[scale=1.25, valign=c]{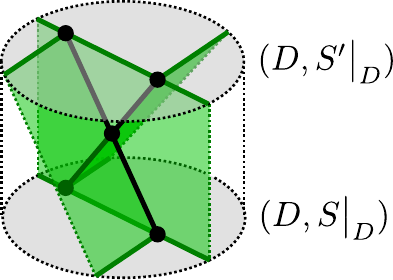}
\ee 
The bordisms $M^{\textrm{b}^{-1}}_{\Sigma,S,D}$ and $M^{\textrm{l}^{-1}}_{\Sigma,S,D}$ for the inverse moves $\textrm{b}^{-1}$ and $\textrm{l}^{-1}$ are defined analogously.

We can also implement all admissible 2-dimensional moves in this way. 
The admissible orientations on the corresponding 3-bordisms are uniquely determined by the ones on the boundary. 
More precisely, let $(M,S)$ be any of the 3-bordisms $M^{\textrm{b}^{\pm 1}}_{\Sigma,S,D}$, $M^{\textrm{l}^{\pm 1}}_{\Sigma,S,D}$ above,  and let $s$ be a 2-stratum of $S$. Then $S_3(s) \cong (\del S)_2(s \cap \del M)$.
The latter comes equipped with an order by assumption and this then defines the order on $S_3(s)$. 
A case-by-case check shows that this indeed defines a local order on $S$ in the sense of Definition~\ref{def:LocalOrder}.

\begin{lemma}
	\label{lem:Composing3dblMoves}
	Dropping sub indices we have
	\begin{enumerate}
		\item 
		$M^{\textrm{b}} \circ M^{{\textrm{b}}^{-1}} \sim e$, with $e$ as in \eqref{eq:e_Sig-S-s},	
		\item 
		$M^{{\textrm{b}}^{-1}} \circ M^{\textrm{b}} \sim 1$,
		\item 
		$M^{\textrm{l}} \circ M^{{\textrm{l}}^{-1}} \sim 1$,
		\item 
		$M^{{\textrm{l}}^{-1}} \circ M^{\textrm{l}} \sim 1$.
	\end{enumerate}
\end{lemma}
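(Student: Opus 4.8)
The plan is to write down each of the four composite bordisms explicitly and then simplify it using the tools already developed. Note first that by Lemma~\ref{lem:compskel} each of the composites $M^{\textrm b}\circ M^{{\textrm b}^{-1}}$, $M^{{\textrm b}^{-1}}\circ M^{\textrm b}$, $M^{\textrm l}\circ M^{{\textrm l}^{-1}}$, $M^{{\textrm l}^{-1}}\circ M^{\textrm l}$ is again pseudo-skeletal, so all four statements make sense, and by Lemma~\ref{lem:BLTEquivalenceAndComposition} the relation $\sim$ is compatible with composition. Each composite has underlying bordism $\Sigma\times I$ and, away from $D\times I$, coincides with the cylinder over $\Sigma$ --- which is the appropriate identity cylinder in cases (ii)--(iv) and the cylinder over $(\Sigma,S')$ in case (i); all of the interesting structure is confined to a ball inside $D\times I$, and by the discussion immediately preceding the lemma the admissible orientation of each composite is already forced by the orientation it induces on the (trivial) boundary. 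So it suffices to inspect this localized piece and remove the excess strata by admissible BLT moves.

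For (ii), (iii) and (iv) I would argue that the excess structure inside $D\times I$ is a small embedded stratified $2$-sphere (made of a couple of disc $2$-strata meeting along $1$-strata, together with, in the lune cases, the arc that is created and then annihilated) bounding a $3$-ball that is disjoint from $\partial(\Sigma\times I)$: geometrically it is the locus swept out by the bubble, resp.\ by the new $2$-stratum of the $2$-$2$ move, as it is created and then destroyed. This $3$-ball together with its bounding $2$-strata can be peeled off exactly as in the proof of Lemma~\ref{lem:delsBLT}: use $L$- and $T$-moves to slide the outer part of the sphere until it forms a bubble sitting on a single $2$-stratum, then remove it with a $B$-move. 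Since all orientations in sight are the ones inherited from the trivial boundary, at each step an admissible choice of orientations exists, just as in the cited proof, and the end result of the sequence of moves is the plain cylinder, i.e.\ $1$.

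Case (i) is where the asymmetry of the bubble move shows up, and it is the main obstacle. Here one first removes a bubble from $S'$ and then creates a new one, and the net effect is \emph{not} the plain cylinder over $(\Sigma,S')$ but that cylinder equipped with one extra disc $2$-stratum $s'$ which is a time-shifted copy of one of the two discs $s$ forming the bubble, together with copies of the $0$- and $1$-strata incident with it --- that is, precisely the bordism $e_{\Sigma,S',s}$ of~\eqref{eq:e_Sig-S-s}. The point is that this leftover $s'$ is attached to the whole ``flower'' of $2$-strata surrounding $s$ (cf.\ Figure~\ref{fig:flower}) rather than sitting inside a single $2$-stratum, so it is not superfluous and Lemma~\ref{lem:delsBLT} no longer lets us delete it --- consistently with the remark following Lemma~\ref{lem:e}. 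The bulk of the work, and the part most prone to slips, is therefore the careful bookkeeping of the stratifications (and their forced orientations) of the four composites, and in particular pinning down precisely why removing-then-recreating a bubble leaves the shifted copy $e$ whereas adding-then-removing one does not.
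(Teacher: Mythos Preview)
Your overall strategy---localise to $D\times I$ and remove the excess strata by admissible BLT moves---is the right one, but your execution diverges from the paper in a way that leaves a gap in case~(i) and is needlessly indirect in cases~(ii)--(iv).

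For (ii)--(iv) the paper's argument is much shorter than yours: in each case the composite differs from the cylinder by a \emph{single} BLT move. Concretely, $M^{{\textrm b}^{-1}}M^{\textrm b}$ contains one $3$-dimensional bubble in the interior and is reduced to~$1$ by a single inverse $B$-move; $M^{\textrm l}M^{{\textrm l}^{-1}}$ and $M^{{\textrm l}^{-1}}M^{\textrm l}$ each contain a single lune and are reduced to~$1$ by a single inverse $L$-move. There is no need to invoke the machinery of Lemma~\ref{lem:delsBLT}; indeed that lemma is stated for skeleta and the composites here are a priori only pseudo-skeletal, so appealing to it requires an extra remark. Your description of the excess as ``a small embedded stratified $2$-sphere bounding a $3$-ball'' is apt for~(ii) but not quite the geometry of the lune cases.

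The more serious issue is case~(i). You assert that $M^{\textrm b}M^{{\textrm b}^{-1}}$ is already (the isomorphism class of) $e_{\Sigma,S',s}$, but this is not so: the composite carries genuinely more stratification than~$e$, and the paper removes the surplus by a $3$-dimensional lune move followed by an isotopy. Your paragraph on~(i) correctly explains why one should expect the answer to be~$e$ rather than~$1$---the leftover copy of the bubble disc is attached along its full flower and hence is not superfluous---but it does not actually exhibit the BLT moves that realise the equivalence. That step is precisely where the content of~(i) lies.
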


\begin{proof}
	The equivalence of part~(i) is implemented by a 3-dimensional lune move and an isotopy: 
	\be
		\includegraphics[scale=1.25, valign=c]{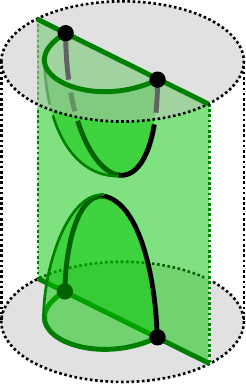} ~ \rightarrow ~
		\includegraphics[scale=1.25, valign=c]{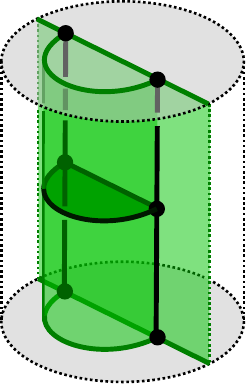} ~ .
	\ee
	Part~(ii) involves an inverse bubble move.
	Part~(iii) comes about with an inverse lune move, 
		\be
			\includegraphics[scale=1.25, valign=c]{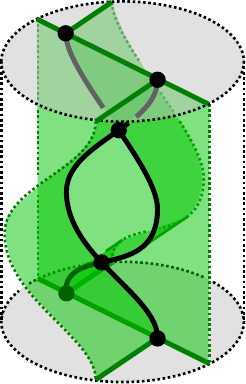} ~ \rightarrow ~
			\includegraphics[scale=1.25, valign=c]{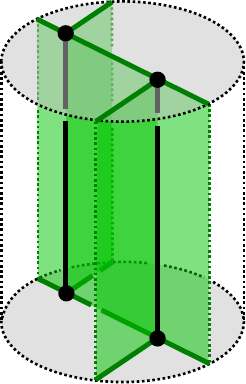} ~ ,
		\ee
	and similarly for part~(iv). 
\end{proof}

\subsection{The proof of Theorem \ref{thm:ConnectSkeleta}}

\begin{proof}
	Let $S,S'$ be admissible skeleta of~$M$ that agree on~$\partial M$. 
	Then we can find $X_1, \ldots, X_n$ and $Y_1,\ldots,Y_m$, where each of the $X_i, Y_j$ is one of the 3-bordisms $M^{\pm {\textrm{b}}}_{\cdots}, M^{\pm {\textrm{l}}}_{\cdots}$, 
	such that with $X:=X_1\cdots X_m$ and $Y:=Y_1\cdots Y_m$, $X (M,S) Y$ and $X (M,S') Y$ have the same boundary that is dual to a globally ordered triangulation
	(Theorem~\ref{thm:ClosedSurfacesAndblMoves}). 
	
	By Lemmas \ref{lem:adm-triang} and \ref{lemma:connect-adm-oriented} there are skeletal 3-bordisms $(M,R)\sim(M,S)$, $(M,R')\sim(M,S')$ such that $R,R'$ are dual to globally oriented triangulations.
	As described in \cite[Sect.\,3.1]{CRS1} $R$ and~$R'$ are connected by a sequence of globally oriented Pachner moves.
	By Lemma \ref{lem:PachnerFromBLT} these skeleta are a fortiori related by admissible BLT moves. 
	Hence with Lemma~\ref{lem:BLTEquivalenceAndComposition} we find $X (M,S) Y \sim X (M,S') Y$, and thus: 
	\begin{equation}
		(M,S) \overset{(*)}\sim X^{-}X (M,S) YY^{-}
		 \sim X^{-} X (M,S') Y Y^{-}   \overset{(*)}\sim (M,S') \, . 
	\end{equation}
	Here $X^{-}$ is obtained from $X$ by reversing order of composition while swapping~b with ${\textrm{b}}^{-1}$, and~l with ${\textrm{l}}^{-1}$.
	In $(*)$ we need Lemmas~\ref{lem:compskel} and~\ref{lem:e}:
	If $X_i^{-}X_i \sim 1$, there is nothing to do; but if $X_i^{-}X_i \sim e$, then we know that $X_{i+1}\cdots X_n (M,S)$ is skeletal, and thus together with Lemma~\ref{lem:Composing3dblMoves}: $X_i^{-}X_iX_{i+1}\cdots X_n (M,S) \sim e X_{i+1}\cdots X_n (M,S) \sim X_{i+1}\cdots X_n (M,S)$. 
	In this way we obtain $(X^{-}X) (M,S) \sim (M,S)$ by induction. 
	The same applies for $Y$.
\end{proof}

\section{Orbifold data for non-Euler-complete theories} 
\label{app:ConstructionForNonEulerCompletedCase}

We fix a defect TQFT $\zz\colon \Borddefn3(\mathds{D}) \lra \Vect$. 
Recall from \cite[Sect.\,2.5]{CRS1} the construction of its Euler completion $\zz^\odot \colon \Borddefn3(\mathds{D}^\odot) \lra \Vect$, which has the following properties: 
(i) $\zz$ naturally factors through~$\zz^\odot$; 
(ii) $(\zz^\odot)^\odot$ is equivalent to $\zz^\odot$; and 
(iii) the tensor product of~$\zz^\odot$ with the Euler defect TQFT $\zz^{\textrm{Eu}}_\Psi$ is equivalent to~$\zz^\odot$. 
Here for any list $\Psi = (\psi_1,\psi_2,\psi_3)$ of invertible scalars, $\zz^{\textrm{Eu}}_\Psi$ is the invertible defect TQFT which assigns~$\Bbbk$ to every surface, and 
$
\zz^{\textrm{Eu}}_\Psi(M) = \prod_{j=1}^{3} \prod_{s\in M_j} \psi_j^{\chi_{\textrm{sym}}(s)}
$ 
for a stratified bordism~$M$, where 
\be 
\chi_{\textrm{sym}}(-) := 2\chi(-) - \chi(\partial-) 
\ee 
is the \textsl{symmetric Euler characteristic} (see \cite[Ex.\,2.14]{CRS1} for details). 
For example, if $D$ is a 2-stratum consisting of a half-disc that intersects the boundary in an interval $I$, then $\chi(D)=\chi(I)=1$, and so $\chi_{\textrm{sym}}(D)=1$.

\subsubsection*{Special orbifold data}

As explained in \cite[Sect.\,3.4.1\,\&\,4.2]{CRS1}, a \textsl{special orbifold datum} for $\zz^\odot$ is a tuple 
\be 
\A = (\A_3, \A_2, \A_1, \A^\pm_0, \psi, \phi)
\, , 
\ee 
where $\A_3, \A_2, \A_1, \A^\pm_0$ are elements as in Section~\ref{subsubsec:SOD}, and 
\be 
\phi \in \textrm{Aut}_{\mathcal W(\A_3,\A_3)}(1_{1_{\A_3}}) 
\, , \qquad 
\psi \in \textrm{Aut}_{\mathcal W(\A_3,\A_3)}({1_{\A_3}}) \, , 
\ee 
such that the identities depicted in Figure~\ref{fig:SpecialOrbifoldDataPhiPsi} hold.

\begin{figure}
	\captionsetup[subfigure]{labelformat=empty}
	\centering
	\vspace{-50pt}
	\begin{subfigure}[b]{0.5\textwidth}
		\centering
		\includegraphics[scale=0.85, valign=c]{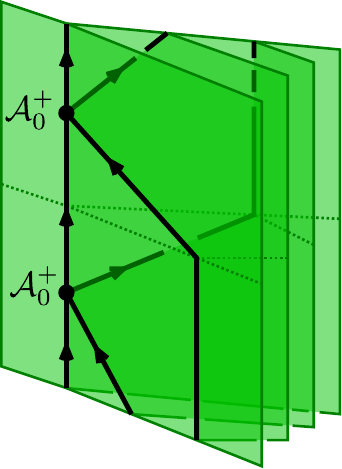} $=$
		\includegraphics[scale=0.85, valign=c]{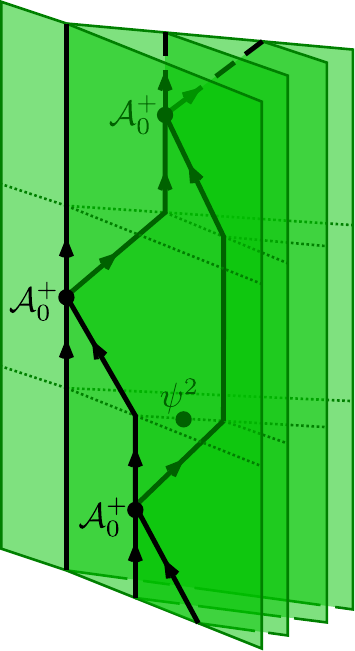}
		\caption{}
		\label{eq:OPSI1}
	\end{subfigure}\raisebox{8em}{(O${}_\psi$1)}\\
	\vspace{-15pt}
	\hspace{-60pt}
	\begin{subfigure}[b]{0.53\textwidth}
		\centering
		\includegraphics[scale=0.85, valign=c]{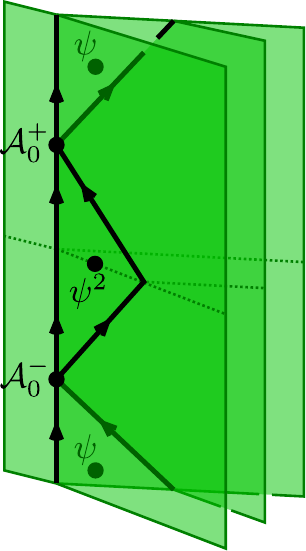} $=$
		\includegraphics[scale=0.85, valign=c]{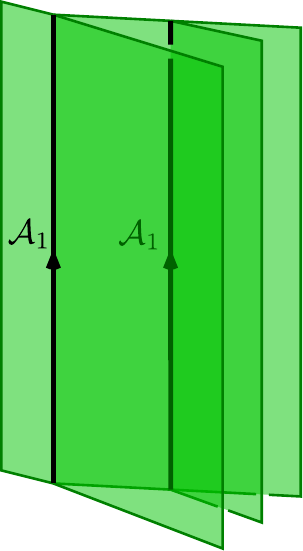}
		\caption{}
		\label{eq:OPSI2}
	\end{subfigure}\hspace{-2em}\raisebox{6.5em}{(O${}_\psi$2)}
	\hspace{-15pt}
	\begin{subfigure}[b]{0.53\textwidth}
		\centering
		\includegraphics[scale=0.85, valign=c]{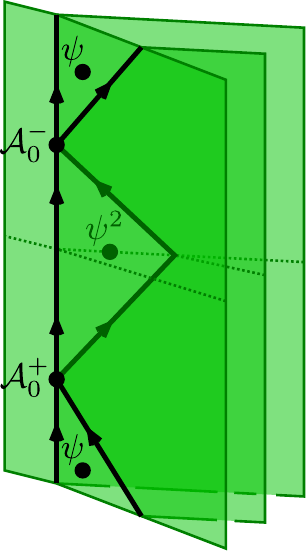} $=$
		\includegraphics[scale=0.85, valign=c]{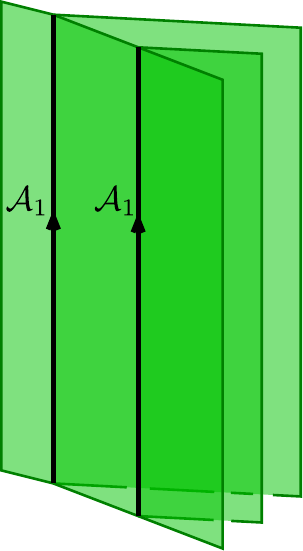}
		\caption{}
		\label{eq:OPSI3}
	\end{subfigure}\hspace{-2em}\raisebox{6.5em}{(O${}_\psi$3)}\\
	\vspace{-15pt}
	\hspace{-60pt}
	\begin{subfigure}[b]{0.53\textwidth}
		\centering
		\includegraphics[scale=0.85, valign=c]{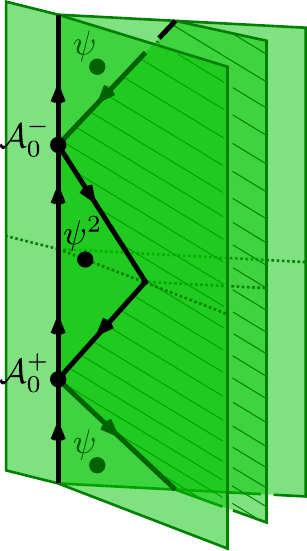} $=$
		\includegraphics[scale=0.85, valign=c]{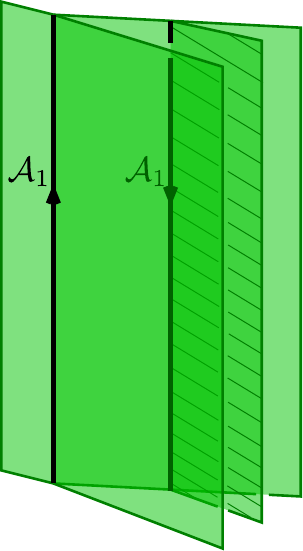}
		\caption{}
		\label{eq:OPSI4}
	\end{subfigure}\hspace{-2em}\raisebox{6.5em}{(O${}_\psi$4)}
	\hspace{-15pt}
	\begin{subfigure}[b]{0.53\textwidth}
		\centering
		\includegraphics[scale=0.85, valign=c]{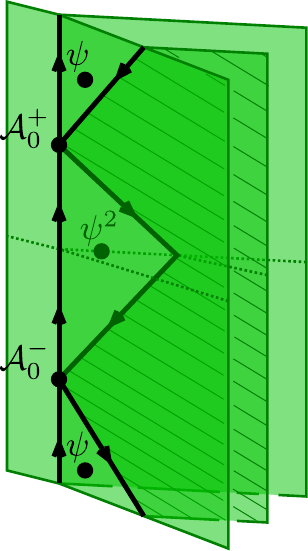} $=$
		\includegraphics[scale=0.85, valign=c]{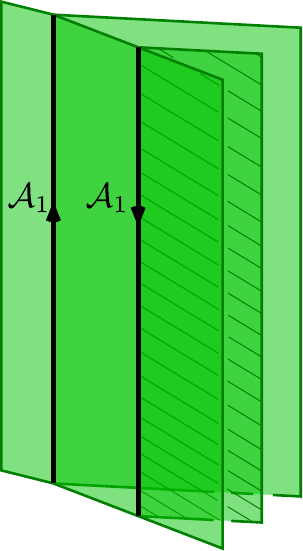}
		\caption{}
		\label{eq:OPSI5}
	\end{subfigure}\hspace{-2em}\raisebox{6.5em}{(O${}_\psi$5)}\\
	\vspace{-15pt}
	\hspace{-60pt}
	\begin{subfigure}[b]{0.53\textwidth}
		\centering
		\includegraphics[scale=0.8, valign=c]{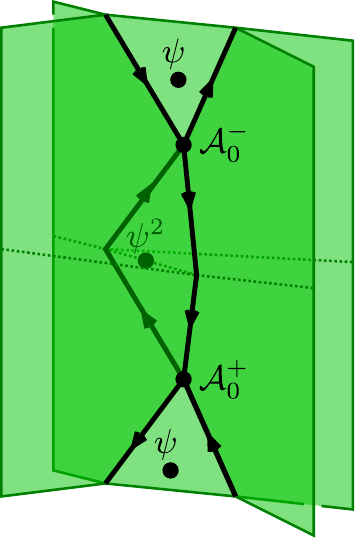} $=$
		\includegraphics[scale=0.8, valign=c]{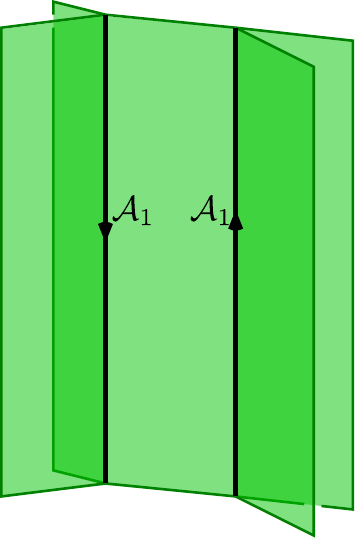}
		\caption{}
		\label{eq:OPSI6}
	\end{subfigure}\hspace{-1.5em}\raisebox{6em}{(O${}_\psi$6)}
	\hspace{-15pt}
	\begin{subfigure}[b]{0.53\textwidth}
		\centering
		\includegraphics[scale=0.8, valign=c]{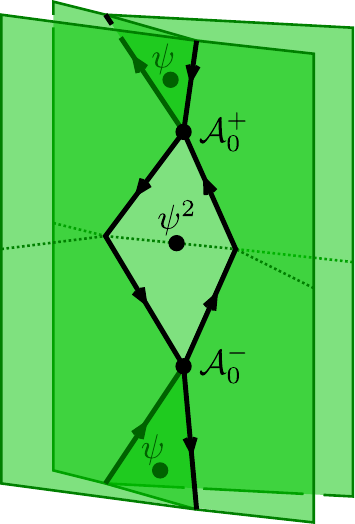} $=$
		\includegraphics[scale=0.8, valign=c]{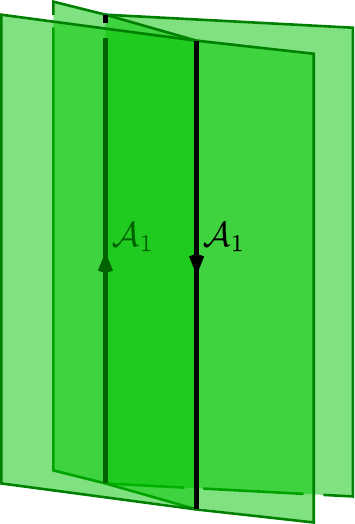}
		\caption{}
		\label{eq:OPSI7}
	\end{subfigure}\hspace{-1.5em}\raisebox{6em}{(O${}_\psi$7)}\\
	\vspace{-15pt}
	\hspace{-60pt}
	\begin{subfigure}[b]{1.0\textwidth}
		\centering
		\includegraphics[scale=0.8, valign=c]{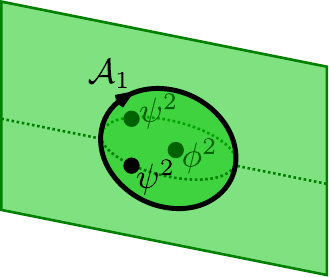} $=$
		\includegraphics[scale=0.8, valign=c]{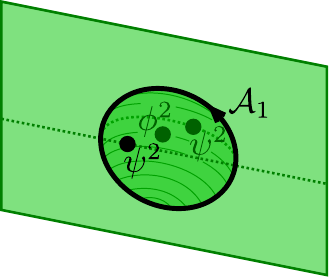} $=$
		\includegraphics[scale=0.8, valign=c]{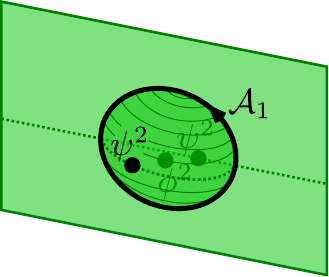} $=$
		\includegraphics[scale=0.8, valign=c]{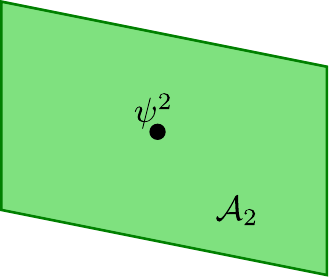}
		\caption{}
		\label{eq:OPSI8}
	\end{subfigure}\hspace{-2em}\raisebox{4em}{(O${}_\psi$8)}
	\vspace{-15pt}
	\caption{Defining conditions on special orbifold data~$\A$ with $\phi$ and $\psi$; the labels~$\A_j$ are suppressed for most $j$-strata.
	The application of $\zz$ on each side the equations is implied.	
}
	\label{fig:SpecialOrbifoldDataPhiPsi}
\end{figure}

\subsubsection*{$\boldsymbol{\A}$-decorated skeleta}

Let $\A = (\A_3, \A_2, \A_1, \A^\pm_0, \psi, \phi)$ be a special orbifold datum for~$\zz^\odot$. 
An \textsl{$\mathcal A$-decorated skeleton} $\mathcal S$ of a bordism~$M$ is an admissible skeleton~$S$ of~$M$ together with a decoration as follows: 
\begin{itemize}
	\item 
	each 3-stratum~$B$ of~$S$ is decorated by~$\A_3$ with an insertion of $\phi^{\chi_{\textrm{sym}}(B)}$, 
	\item 
	each 2-stratum~$F$ of~$S$ is decorated by~$\A_2$ with an insertion of $\psi^{\chi_{\textrm{sym}}(F)}$, 
	\item 
	each 1-stratum of~$S$ is decorated by~$\A_1$, 
	\item 
	for $\varepsilon \in \{+,-\}$, each $\varepsilon$-oriented 0-stratum of~$S$ is decorated by~$\A^\varepsilon_0$. 
\end{itemize}

\subsubsection*{Ribbon category $\boldsymbol{\wa}$}

The category~$\wa$ is defined as in Section~\ref{subsec:RibbonCategoriesFromSOD}, except for the following changes: 
\begin{itemize}
	\item 
	Objects of~$\wa$ are tuples $\mathcal X = (X,\tau_1^X, \tau_2^X, \overline{\tau}_1^X, \overline{\tau}_2^X)$, with $X\in\mathcal W$, where the crossings $\tau_1^X, \tau_2^X$ are as in~\eqref{eq:Tcrossings}, and where in addition
	\begin{align}
	\overline{\tau}_1^X & \in \zzhat \left( 
	\includegraphics[scale=1.0, valign=c]{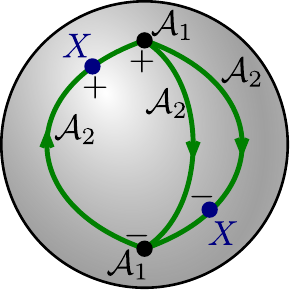}  
	\right) 
	,
	&& 
	\overline{\tau}_2^X  \in \zzhat \left( 
	\includegraphics[scale=1.0, valign=c]{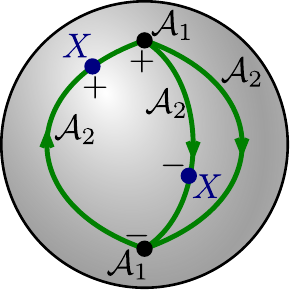}  
	\right) ~,
	\end{align}
	such that the identities in Figure~\ref{fig:CrossingIdentitiesPhiPsi} hold when $\zzhat$ is applied to both sides (viewed as defect 3-balls). 
	
	Note that as in the situation studied in \cite{MuleRunk}, the  \textsl{pseudo-inverses} $\overline{\tau}_1^X, \overline{\tau}_2^X$ are uniquely determined by $\tau_1^X, \tau_2^X$.\footnote{This may be easiest to see in the language of Gray categories discussed in Remark~\ref{rem:WAinGrayCat}.} 
	Hence we may, and will, shorten the notation to $\mathcal X = (X, \tau_1^X, \tau_2^X)$. 
	\item 
	The crossings in the tensor product 
	$
	(X, \tau_1^X, \tau_2^X\big) \otimes_\A \big(Y, \tau_1^Y, \tau_2^Y) 
	= 
	(X\otimes Y, \tau_1^{X,Y}, \tau_2^{X,Y})
	$ 
	involve additional $\psi$-insertions:
	\begin{align}
	\tau_1^{X,Y} & = 
	\includegraphics[scale=1.0, valign=c]{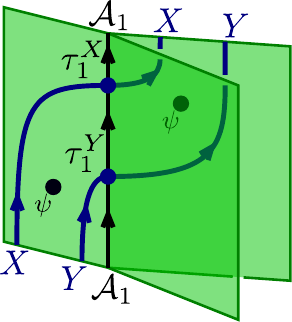} 
	\, , \quad 
	\tau_2^{X,Y}  = 
	\includegraphics[scale=1.0, valign=c]{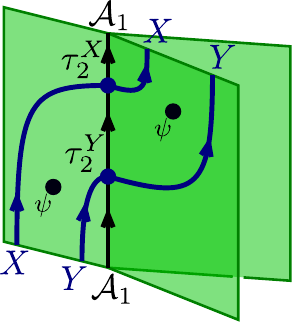} 
	\, . 
	\end{align}
	\item 
	The adjunction morphisms in~$\wa$ are
	\begin{align}
	\ev_{\mathcal X} & = 
	\includegraphics[scale=1.0, valign=c]{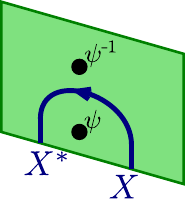}
	\, ,
	&
	\coev_{\mathcal X} & = 
	\includegraphics[scale=1.0, valign=c]{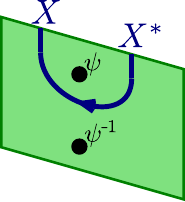}
	\, ,
	\\
	\tev_{\mathcal X} & = 
	\includegraphics[scale=1.0, valign=c]{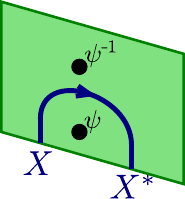}
	\, ,
	&
	\tcoev_{\mathcal X} & = 
	\includegraphics[scale=1.0, valign=c]{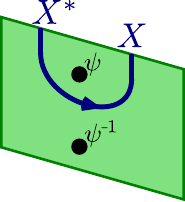}
	\, .
	\end{align}
	\item
	The braiding morpisms in~$\wa$ are
	\begin{align}
	c_{\mathcal X, \mathcal Y} & = 
	\zzhat \left( 
	\includegraphics[scale=1.0, valign=c]{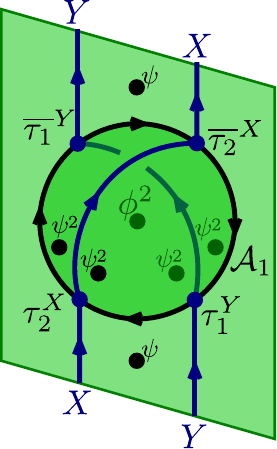}
	\right) 
	, \quad 
	c_{\mathcal X, \mathcal Y}^{-1} = 
	\zzhat \left( 
	\includegraphics[scale=1.0, valign=c]{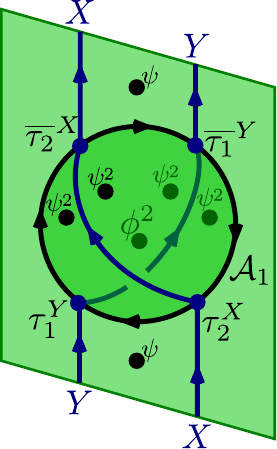}
	\right)  
	, 
	\end{align}
\end{itemize}

\begin{figure}
	\captionsetup[subfigure]{labelformat=empty}
	\centering
	\vspace{-15pt}
	\makebox[1.2\textwidth]{
		\hspace{-100pt}
		\begin{subfigure}[b]{0.6\textwidth}
			\centering
			\includegraphics[scale=1.0, valign=c]{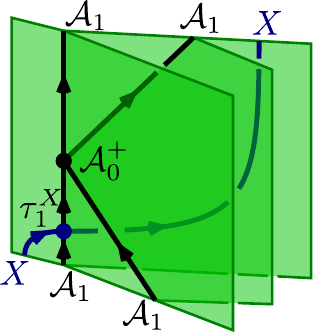} $=$
			\includegraphics[scale=1.0, valign=c]{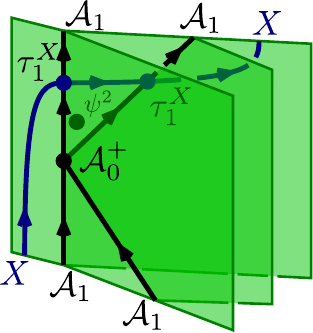}
			\caption{}
			\label{eq:TPSI1}
		\end{subfigure}\hspace{-2em}\raisebox{5.5em}{(T${}_\psi$1)}
		\hspace{-30pt}
		\begin{subfigure}[b]{0.6\textwidth}
			\centering
			\includegraphics[scale=1.0, valign=c]{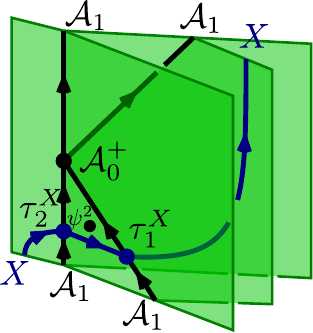} $=$
			\includegraphics[scale=1.0, valign=c]{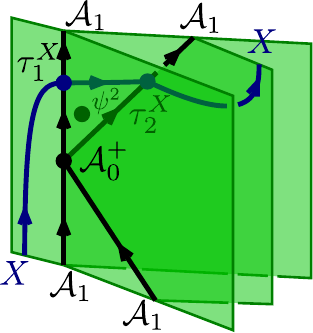}
			\caption{}
			\label{eq:TPSI2}
		\end{subfigure}\hspace{-2em}\raisebox{5.5em}{(T${}_\psi$2)}}\\
	\vspace{-10pt}
	\begin{subfigure}[b]{0.6\textwidth}
		\centering
		\includegraphics[scale=1.0, valign=c]{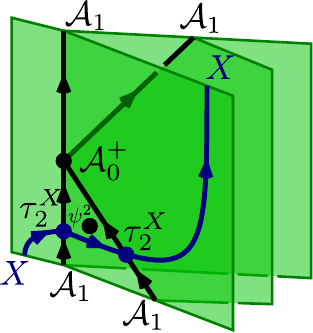} $=$
		\includegraphics[scale=1.0, valign=c]{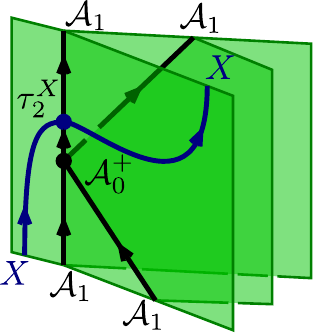}
		\caption{}
		\label{eq:TPSI3}
	\end{subfigure}\hspace{-2em}\raisebox{5.5em}{(T${}_\psi$3)}\\ 
	\vspace{-15pt}
	\begin{subfigure}[b]{0.95\textwidth}
		\centering
		\includegraphics[scale=1.0, valign=c]{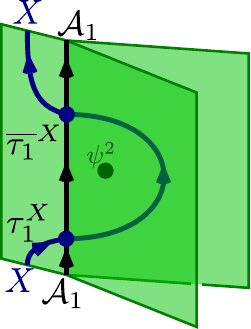} $=$
		\includegraphics[scale=1.0, valign=c]{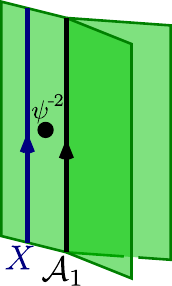}  , \quad
		\includegraphics[scale=1.0, valign=c]{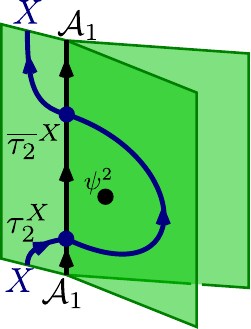} $=$
		\includegraphics[scale=1.0, valign=c]{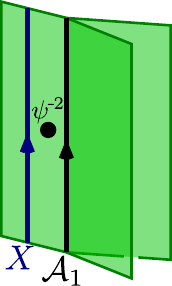}
		\caption{}
		\label{eq:TPSI4}
	\end{subfigure}\hspace{-1em}\raisebox{5.5em}{(T${}_\psi$4)}\\
	\vspace{-15pt}
	\begin{subfigure}[b]{0.95\textwidth}
		\centering
		\includegraphics[scale=1.0, valign=c]{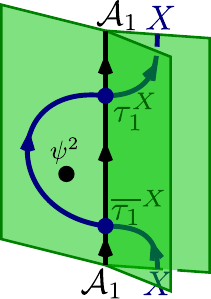} $=$
		\includegraphics[scale=1.0, valign=c]{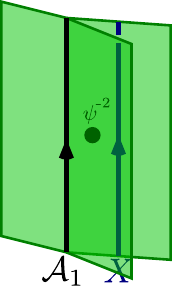}  , \quad
		\includegraphics[scale=1.0, valign=c]{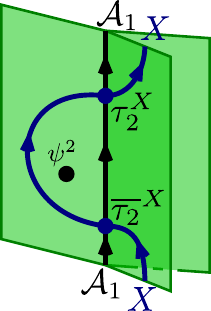} $=$
		\includegraphics[scale=1.0, valign=c]{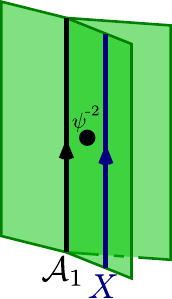}
		\caption{}
		\label{eq:TPSI5}
	\end{subfigure}\hspace{-1em}\raisebox{5.5em}{(T${}_\psi$5)}\\
	\vspace{-15pt}
	\begin{subfigure}[b]{0.95\textwidth}
		\centering
		\includegraphics[scale=1.0, valign=c]{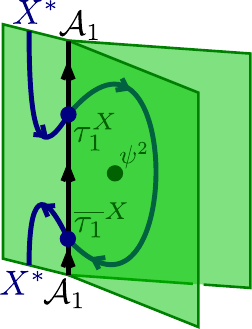} $=$
		\includegraphics[scale=1.0, valign=c]{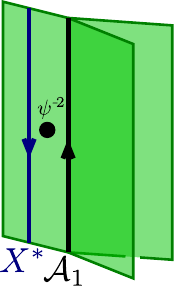}  , \quad
		\includegraphics[scale=1.0, valign=c]{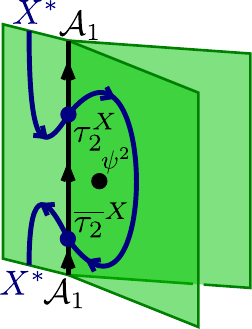} $=$
		\includegraphics[scale=1.0, valign=c]{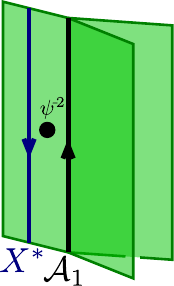}
		\caption{}
		\label{eq:TPSI6}
	\end{subfigure}\hspace{-1em}\raisebox{5.5em}{(T${}_\psi$6)}\\
	\vspace{-15pt}
	\begin{subfigure}[b]{0.95\textwidth}
		\centering
		\includegraphics[scale=1.0, valign=c]{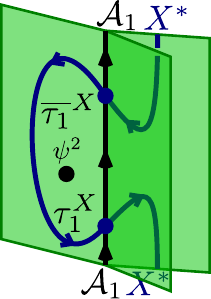} $=$
		\includegraphics[scale=1.0, valign=c]{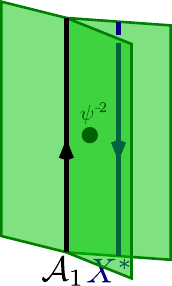}  , \quad
		\includegraphics[scale=1.0, valign=c]{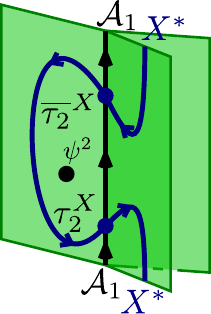} $=$
		\includegraphics[scale=1.0, valign=c]{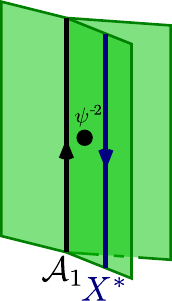}
		\caption{}
		\label{eq:TPSI7}
	\end{subfigure}\hspace{-1em}\raisebox{5.5em}{(T${}_\psi$7)}
	\vspace{-15pt}
	\caption{Defining conditions for objects in~$\wa$ with $\psi$}
	\label{fig:CrossingIdentitiesPhiPsi}
\end{figure}

\subsubsection*{$\boldsymbol{\A}$-decorated ribbon diagrams}

Now let $\A = (\A_3, \A_2, \A_1, \A^\pm_0, \psi, \phi)$ be a special orbifold datum for~$\zzhat^\odot$. 
An \textsl{$\mathcal A$-decorated ribbon diagram} $(\mathcal S, \mathscr d)$ of a bordism~$M$ with embedded $\wa$-coloured ribbon graph~$\mathcal R$ is an element $(S, \mathscr{d}) \in \mathscr S(M,\mathcal R)$ together with a decoration as follows: 
\begin{enumerate}[label={(\roman*)}]
	\item 
	$\mathcal S$ is an $\A$-decorated skeleton of~$M$ with underlying skeleton~$S$, except that $\psi$- and $\phi$-insertions are as described in parts~\ref{item:SdecoratedSkeleton} and~\ref{item:SdecoratedSkeletonPhiInsertion} below; 
	\item 
	if a switch of $(\mathcal S, \mathscr d)$ involves an $(X,\tau_1,\tau_2)$-labelled ribbon of~$\mathcal R$ traversing an $\A_1$-labelled 1-stratum of~$\mathcal S$, then the switch is labelled by $\tau_1, \tau_2, \overline\tau_1$ or~$\overline\tau_2$ as appropriate; 	
	\item 
	over- and under-crossings in~$\mathscr d$ are replaced by coupons labelled with the corresponding braiding morphisms in~$\wa$; 
	\item 
	\label{item:SdecoratedSkeleton}
	if a 2-stratum~$F$ of~$S$ is subdivided by strands of~$d$, the $\psi$-insertions pertaining to~$F$ are as follows: there is one $\psi^{\chi_{\textrm{sym}}(F_i)}$-insertion on every connected component~$F_i$ of $F\setminus d$; moreover, for each coupon~$c$ of~$\mathscr d$, the leftmost and rightmost 2-stratum components adjacent to~$c$ have one additional $\psi$-insertion each; 
	when computing $\psi^{\chi_{\textrm{sym}}(F_i)}$, boundary segments of coupons in~$\mathscr d$ are treated like boundary segments of $\partial M$ (see~\eqref{eq:PsiDistribution} below for an example); 
	\item 
	\label{item:SdecoratedSkeletonPhiInsertion}
	each 3-stratum in the interior of~$M$ obtains a $\phi^2$-insertion, and each 3-stratum adjacent to $\partial M$ obtains a $\phi$-insertion. 
\end{enumerate}

We remark that part~\ref{item:SdecoratedSkeleton} in the above definition is needed for compatibility with composition in the category~$\wa$. 
Indeed, if~$f$ and~$g$ are composable labels of two coupons in~$\mathscr d$, then~$\zzhata$ evaluates to the same vector on discs around either the two coupons or around one coupon labelled $g\circ f$: 
\be
\label{eq:PsiDistribution}
\zzhata \left( \includegraphics[scale=1.0, valign=c]{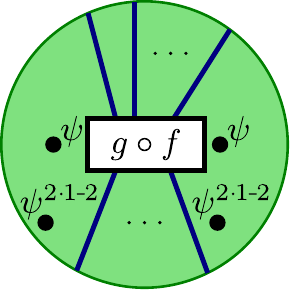} \right)
\;=\; 
\zzhata \left( \includegraphics[scale=1.0, valign=c]{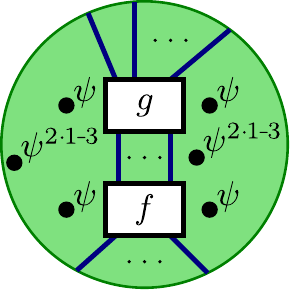} \right)
\ee 
Note that the above also illustrates the rules to have $\psi$-insertions to the left and right of coupons, and that boundary segments of coupon are treated like boundary segments of $\partial M$ when computing symmetric Euler characteristics $\chi_{\textrm{sym}}(F_i) = 2\chi(F_i) - \chi(\partial F_i)$ for $\psi$-insertions. 
In particular, $\chi_{\textrm{sym}}(F_i)=0$ for each rectangle bounded by two $\wa$-labelled strands. This is the reason that no $\psi$-insertions appear in between these strands.

\subsubsection*{Orbifold graph TQFT}

\begin{theorem}
	Let~$\mathcal A$ be a special orbifold datum for a completed defect TQFT~$\zzhat^\odot$. 
	Carrying out Construction~\ref{constr:OrbifoldGraphTQFT} for $\A$-decorated ribbon diagrams and the above ribbon category~$\wa$ with $\psi$- and $\phi$-insertions produces a symmetric monoidal functor 
	\be 
	(\zzhat_\A^\odot)^\Gamma \colon \Bordribn{3}(\wa) \lra \Vect \, . 
	\ee 
\end{theorem}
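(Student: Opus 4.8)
The plan is to treat this statement as the ``Euler-decorated'' counterpart of Theorem~\ref{thm:MainResultForEulerComplete} and to re-run Construction~\ref{constr:OrbifoldGraphTQFT} with $\zzhat$ replaced by $\zzhat^\odot$, carrying the $\psi$- and $\phi$-insertions along throughout and checking at each step that they exactly compensate the changes in symmetric Euler characteristics of the strata involved. As a preliminary one establishes the $\psi$-decorated analogue of Proposition~\ref{prop:WA-ribbon-cat}, that $\wa$ equipped with the structure morphisms listed in the appendix is a ribbon category: this follows from the arguments of \cite[Sect.\,3.2]{MuleRunk} with the relevant parameter set to~$\psi$ instead of~$1$, together with an extra computation as in the proof of Proposition~\ref{prop:WA-ribbon-cat} but adapted to the $\psi$-decorated conditions of Figure~\ref{fig:CrossingIdentitiesPhiPsi}. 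One also records that the $\psi$- and $\phi$-insertions define a legitimate decoration: since $\psi$ and $\phi$ are automorphisms of the respective identity $1$- and $2$-morphisms in $\mathcal T_{\zzhat^\odot}\cong\mathcal T_{\zz^\odot}$ (Proposition~\ref{proposition:tzhat} applied to $\zz^\odot$), the precise position of an insertion on a given stratum is immaterial, and since each skeleton has finitely many strata, every $\chi_{\textrm{sym}}(s)$ is a well-defined integer.

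The heart of the matter is the analogue of Propositions~\ref{prop:SODBLT} and~\ref{prop:SODandOmegaMoves}, namely that $\zzhat^\odot$ applied to $\A$-decorated $\omega$-moves gives identities in $\Vect$, where now the decoration also involves $\psi$- and $\phi$-insertions. For a move supported in a ball~$B$ the underlying skeleton changes only inside~$B$, and one computes the change of $\sum_s \chi_{\textrm{sym}}(s)$ over the $2$- and $3$-strata met by~$B$; here one uses that $\chi_{\textrm{sym}}$ is additive under gluing along full boundaries, cf.\ \cite[Ex.\,2.14, Sect.\,2.5]{CRS1}, so that the total $\psi$- and $\phi$-contribution is genuinely an invariant of the decorated diagram. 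For the $\omega_0$-moves (BLT) the required identities are precisely the $\psi$/$\phi$-decorated special orbifold data conditions of Figure~\ref{fig:SpecialOrbifoldDataPhiPsi}; for $\omega_1,\omega_2,\omega_3$ they hold in any ribbon category, the $\psi$-insertions being already absorbed into the braiding and duality morphisms of $\wa$, and hence follow from the ribbon structure just established; and for $\omega_4,\dots,\omega_{10}$ one repeats the proof of Proposition~\ref{prop:SODandOmegaMoves} using the $\psi$-decorated object conditions of Figure~\ref{fig:CrossingIdentitiesPhiPsi} and the $\psi$-decorated forms of \cite[(T12$'$), (T13$'$), Lem.\,3.4]{MuleRunk}.

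A point special to this setting is part~\ref{item:SdecoratedSkeleton} of the definition of an $\A$-decorated ribbon diagram (the leftmost/rightmost $\psi$-insertions next to coupons), which is needed for compatibility with composition in~$\wa$. One must check, as in~\eqref{eq:PsiDistribution}, that collapsing two coupons labelled~$f$ and~$g$ into a single coupon labelled $g\circ f$ deletes a $2$-stratum component whose change in $\chi_{\textrm{sym}}$ — computed treating coupon boundaries like segments of $\partial M$ — is exactly cancelled by the prescribed $\psi$-insertions; this is a finite local verification. Together with the position-independence noted above, it guarantees that $(M,\mathcal R)\mapsto\zzhat^\odot(F(\mathcal S,\mathscr d))$ is well defined on morphisms of $\Bordribn{3}(\wa)$.

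Granting these invariances, the conclusion follows verbatim from the body of the paper: Proposition~\ref{prop:omegaMoves} yields the analogue of Corollary~\ref{cor:ZCAonSameBndManifolds}, so that $\zzhat^\odot(F(\mathcal S,\mathscr d))$ depends only on the induced decorated skeleton of $\partial M$; the colimit-over-idempotents construction of Construction~\ref{constr:OrbifoldGraphTQFT} then produces the functor $(\zzhat^\odot_\A)^\Gamma\colon\Bordribn{3}(\wa)\lra\Vect$, whose symmetric monoidality is automatic from the construction exactly as in the proofs of Theorems~\ref{thm:MainResultCRS1} and~\ref{thm:MainResultForEulerComplete}. I expect the only real obstacle to be bookkeeping: tracking all symmetric Euler characteristics through the full list of BLT- and $\omega$-moves and matching them against the numerous $\psi$/$\phi$-decorated identities of Figures~\ref{fig:SpecialOrbifoldDataPhiPsi} and~\ref{fig:CrossingIdentitiesPhiPsi}, rather than anything conceptually new — consistent with the paper's remark that this appendix ``offers nothing new'' conceptually.
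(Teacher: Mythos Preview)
Your proposal is correct and matches the paper's approach exactly: the paper states this theorem without proof, having already remarked that Appendix~\ref{app:ConstructionForNonEulerCompletedCase} ``offers nothing new'' conceptually, so the intended argument is precisely to re-run the proofs of Proposition~\ref{prop:WA-ribbon-cat}, Proposition~\ref{prop:SODandOmegaMoves}, Corollary~\ref{cor:ZCAonSameBndManifolds} and Theorem~\ref{thm:MainResultForEulerComplete} with the $\psi$- and $\phi$-insertions tracked throughout. Your outline of that bookkeeping --- in particular the use of the $\psi$-decorated identities in Figures~\ref{fig:SpecialOrbifoldDataPhiPsi} and~\ref{fig:CrossingIdentitiesPhiPsi} and the coupon-composition check~\eqref{eq:PsiDistribution} --- is exactly what is required.
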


\end{document}